 \titleformat{\subparagraph}[hang]{\normalfont}{\thesubparagraph}{0pt}{\underline}
 \titleformat{\paragraph}[hang]{\normalfont}{\theparagraph}{0pt}{\myuline}
\newtheorem{theorem}{Theorem}[section]
\newtheorem{lemma}[theorem]{Lemma}
\newtheorem{proposition}[theorem]{Proposition}
\newtheorem{definition}[theorem]{Definition}
\newtheorem{remark}[theorem]{Remark}
\newcommand{\bea}{\begin{eqnarray}}
\newcommand{\eea}{\end{eqnarray}}
\def\beaa{\begin{eqnarray*}}
\def\eeaa{\end{eqnarray*}}
\newcommand{\MM}{\mathcal{M}}
\def\DD{{\mathcal D}}
\def\TT{{\mathcal T}}
\def\PP{\mathcal{P}}
\def\A{{\bf A}}
\def\D{{\bf D}}
\def\F{{\bf F}}
\def\R{{\bf R}}
\def\W{{\bf W}}
\def\g{{\bf g}}
\def\M{{\bf M}}
\def\CCC{{\Bbb C}}
\def\Ddot{\dot{\D}}
\def\a{{\alpha}}
\def\al{{\alpha}}
\def\b{{\beta}}
\def\be{{\beta}}
\def\ga{\gamma}
\def\de{\delta}
\def\ep{\epsilon}
\def\la{\lambda}
\def\om{\omega}
\def\vphi{\varphi}
\def\si{\sigma}
\def\th{\theta}
\def\ze{\zeta}
\def\nab{\nabla}
\def\mub{\underline{\mu}}
\def\trch{{\mbox tr}\, \chi}
\def\chih{{\hat \chi}}
\def\chib{{\underline \chi}}
\def\chibh{{\underline{\chih}}}
\def\etab{{\underline \eta}}
\def\omb{{\underline{\om}}}
\def\bb{{\underline{\b}}}
\def\aa{{\underline{\a}}}
\def\xib{{\underline \xi}}
\def\Xib{\underline{\Xi}}
\def\lap{{\Delta}}
\def\atr{\,^{(a)}\mbox{tr}}
\def\trchb{{\tr \,\chib}}
\def\atrch{\atr\chi}
\def\atrchb{\atr\chib}
\def\rhod{\,\dual\rho}
\def\nabc{\,^{(c)}\nab}
\def\varo{{\varrho}}
\def\varoc{\check{\varrho}}
\def\bff{\mathfrak{b}}
\def\qf{\frak{q}}
\def\pf{\mathfrak{p}}
\def\xf{\mathfrak{x}}
\def\ff{\frak{f}}
\def\Ffr{\mathfrak{F}}
\def\Bfr{\mathfrak{B}}
\def\Xfr{\mathfrak{X}}
\def\sk{\mathfrak{s}}
\def\Pfr{\mathfrak{P}}
\def\Qfr{\mathfrak{Q}}
\def\bF{\,^{(\bold{F})} \hspace{-2.2pt}\b}
\def\bbF{\,^{(\bold{F})} \hspace{-2.2pt}\bb}
\def\rhoF{\,^{(\bold{F})} \hspace{-2.2pt}\rho}
\def\BF{\,^{(\bold{F})} \hspace{-2.2pt} B}
\def\BBF{\,^{(\bold{F})} \hspace{-2.2pt}\underline{B}}
\def\PF{\,^{(\bold{F})} \hspace{-2.2pt} P}
\def\Lb{{\,\underline{L}}}
\def\Xho{\,^{(h)}X}
\def\pr{{\partial}}
\def\c{\cdot}
\def\dual{{\,\,^*}}
\def\div{{\mbox div\,}}
\def\curl{{\mbox curl\,}}
\def\lot{\mbox{ l.o.t.}}
\def\Hb{\,\underline{H}}
\def\Ab{\underline{A}}
\def\Bb{\underline{B}}
\def\Xb{\underline{X}}
\def\Xh{\widehat{X}}
\def\Xbh{\widehat{\Xb}}
\def\tr{\mbox{tr}}
\def\hot{\widehat{\otimes}}
\def\squared{\dot{\square}}
\def\DDc{\,^{(c)} \DD}
\def\DDb{\ov{\DD}}
\def\DDbc{\ov{\DDc}}
\def\DDov{\ov{\DD}}
\def\Us{{\,^{(s)}U}}
\def\Ua{{\,^{(a)}U}}
\def\nn{\nonumber}
\def\ov{\overline}
\def\Mb{\underline{M}}
\def\DDs{ \, \DD \hspace{-2.4pt}\dual    \mkern-20mu /}
\begin{document}

 \title{\LARGE \textbf{Electromagnetic-gravitational perturbations \\ of Kerr-Newman spacetime:\\ the Teukolsky and Regge-Wheeler equations}}
 
 \author[1]{{\Large Elena Giorgi\footnote{egiorgi@princeton.edu}}}

\affil[1]{\small  Princeton University, Gravity Initiative, Princeton, 
United States \vspace{0.2cm} \ }

\maketitle

\begin{abstract} We derive the equations governing the linear stability of Kerr-Newman spacetime to coupled electromagnetic-gravitational perturbations. The equations generalize the celebrated Teukolsky equation for curvature perturbations of Kerr, and the Regge-Wheeler equation for metric perturbations of Reissner-Nordstr\"om.  Because of the ``apparent indissolubility of the coupling between the spin-1 and spin-2 fields", as put by Chandrasekhar \cite{Chandra}, the stability of Kerr-Newman spacetime can not be obtained through standard decomposition in modes. Due to the impossibility to decouple the modes of the gravitational and electromagnetic fields, the equations governing the linear stability of Kerr-Newman have not been previously derived.
Using a tensorial approach that was applied to Kerr \cite{GKS}, we produce a set of generalized Regge-Wheeler equations for perturbations of Kerr-Newman, which are suitable for the study of linearized stability by physical space methods. The physical space analysis overcomes the issue of coupling of spin-1 and spin-2 fields and represents the first step towards an analytical proof of the stability of the Kerr-Newman black hole.

\end{abstract}

\bigskip\bigskip

\tableofcontents

\section{Introduction }

One of the fundamental problems in General Relativity is to understand the final state of evolution of initial data for the Einstein equation. Through gravitational collapse and dispersion of gravitational waves,  the geometry to which solutions to the Einstein equation are expected to relax outside the event horizon of a black hole is the one given by the known stationary and axisymmetric explicit solutions: the Kerr and the Kerr-Newman black hole.

According to General Relativity, the interaction between gravitational and electromagnetic fields in a spacetime is governed by the \textbf{Einstein-Maxwell equation}:
\bea\label{Einstein-Maxwell-intro}
\operatorname{Ric}_{\mu\nu}(g)=2 F_{\mu \lambda} {F^{\lambda}}_\nu - \frac 1 2 g_{\mu\nu} F^{\alpha\beta} F_{\alpha\beta}, \qquad   D_{[\mu} F_{\nu \lambda]}=0, \qquad D^\mu F_{\mu\nu}=0
\eea
where $\operatorname{Ric}$ denotes the Ricci curvature of the metric, $D$ its covariant derivative, and $F$ is an antisymmetric 2-tensor, called the electromagnetic tensor, which satisfies the Maxwell equations.

The \textbf{Kerr-Newman metric} \cite{Newman} is the most general known explicit black hole solution to the Einstein-Maxwell equation \eqref{Einstein-Maxwell-intro}, and it is a $3$-parameter family which describes the gravitational field around an isolated rotating charged black hole of mass $M$, angular momentum $Ma$ and electric charge $Q$, within the subextremal range $\sqrt{a^2+Q^2} < M$.  Its expression in Boyer-Lindquist coordinates $(t, r, \th, \vphi)$ is given by
\beaa
\g_{M, a, Q}=-\frac{\Delta}{|q|^2}\left( dt- a \sin^2\th d\vphi\right)^2+\frac{|q|^2}{\Delta}dr^2+|q|^2 d\th^2+\frac{\sin^2\th}{|q|^2}\left(a dt-(r^2+a^2) d\vphi \right)^2,
\eeaa
where 
\beaa
\Delta &=& r^2-2Mr+a^2+Q^2, \qquad |q|^2 = r^2+a^2\cos^2\theta.
\eeaa
The Kerr-Newman metric generalizes the Reissner-Nordstr\"om solution (for $a=0$), and also the Kerr (for $Q=0$) and Schwarzschild metric (for $Q=a=0$), which are solutions to the Einstein vacuum equation. As such, the Kerr-Newman spacetime plays a fundamental role in describing the final state of evolution in General Relativity. 

As part of the resolution of the description of the final state, we focus on the issue of stability of the Kerr-Newman black hole, which consists in showing that solutions to the Einstein equation which are given as small perturbations of the initial data of such a black hole asymptotically converge in time to a member of the Kerr-Newman family. 
The stability of the Kerr-Newman family can be analyzed at different levels:
\begin{enumerate}
\item the {\textit{linear stability}} consists in the analysis of the linearized Einstein-Maxwell equation around the background metric $\g_{M, a, Q}$. It can be further divided into (a) mode stability, and (b) full linear stability. 
\item the {\textit{non-linear stability}} consists in the analysis of the full Einstein-Maxwell equation for a perturbation of a member of the Kerr-Newman family. 
\end{enumerate}

The mode analysis (a) of the Einstein equation consists in analyzing only special solutions, the so-called mode solutions. In the simplified case of the linear wave equation
\bea\label{wave-equation}
\square_{\g_{M, a, Q}} \psi=0,
\eea
where $\square_{\g_{M, a, Q}}$ is the D'Alembertian associated to the Kerr-Newman metric, mode solutions are solutions of the  separated form 
\bea\label{mode-solution}
\psi(r, t, \theta, \phi)= e^{- i \omega t} e^{i m\phi} R(r) S(\th)
\eea
where $\omega \in \mathbb{C}$ is the time frequency, and $m$ is the azimuthal mode. 
Because of the integrability of the geodesic flow in the Kerr-Newman metric, functions of the form \eqref{mode-solution} are solutions to the wave equation \eqref{wave-equation}, as long as$R(r)$ satisfies a radial ODE and $S(\th)$ satisfies an angular ODE (which defines spheroidal harmonics $S_{\om m \ell}$). 
 The mode stability consists in proving that solutions of the form \eqref{mode-solution} with finite initial energy do not have imaginary part of $\omega$ which is positive, i.e. do not exponentially grow in time. 
 The mode stability of Schwarzschild, Reissner-Nordstr\"om and Kerr black hole was obtained as a combination of many results in black hole perturbation theory by the physics community in the 70s and 80s, see \cite{Regge-Wheeler}, \cite{Zerilli}, \cite{Bardeen-Press}, \cite{Chandra}, \cite{Teukolsky}, \cite{Whiting}. 
 
 Particularly relevant are the case of axial metric perturbations of Schwarzschild, which are governed by the so-called \textbf{Regge-Wheeler equation} \cite{Regge-Wheeler}, of the form
 \bea\label{RW-intro}
 \square_{\g_{M}} \psi= \frac{4}{r^2} \left( 1-\frac{2M}{r} \right) \psi.
 \eea
 Observe that the potential on the right hand side of \eqref{RW-intro} is positive in the exterior of the black hole. Inspired by \eqref{RW-intro}, we denote by \textit{Regge-Wheeler equation} any equation of the form $\square_g \psi - V\psi=0$ for a positive real potential $V$.

 In the case of gravitational perturbations of Kerr, in order to obtain an equation decoupled from any other component, one needs to consider perturbations at the level of curvature. The extreme null Weyl scalars then satisfy the \textbf{Teukolsky equation} of spin $s=\pm2$ and $s=\pm1$ for gravitational and electromagnetic perturbations of Kerr respectively \cite{Teukolsky}, of the form 
 \bea\label{Teukolsky-eq-intro}
 \begin{split}
\TT^{[s]}(\psi)&:= \Box_{\g_{M,a}} \psi^{[s]} + \frac{2s}{|q|^2}(r-M) \partial_r \psi^{[s]} +\frac{2s}{|q|^2} \left( \frac{a(r-M)}{\Delta} + i \frac{\cos\th}{\sin^2\th} \right) \partial_\vphi \psi^{[s]} \\
&+\frac{2s}{|q|^2} \left(\frac{M(r^2-a^2)}{\Delta} - r - i a \cos\th \right) \partial_t \psi^{[s]} -\frac{s}{|q|^2} (s \cot^2 \th-1) \psi^{[s]}=0,
\end{split}
\eea
 which is also a separable equation. 
 In \cite{Regge-Wheeler} and  \cite{Whiting} (see also \cite{Yakov} \cite{Rita}), the mode analysis of the Regge-Wheeler and the Teukolsky equation was proved, and a transformation theory \cite{Chandra} (now known as \textbf{Chandrasekhar transformation}) was discovered to connect the metric perturbations approach to the curvature perturbations one. The results in the mode analysis are collected in the monumental book by Chandrasekhar \cite{Chandra}. 
 
 Nevertheless, mode stability for the equation \eqref{wave-equation} is still consistent with the unboundedness of (finite initial energy) solutions as the time increases, i.e. it does not exclude the possibility that
 \bea\label{limsup}
 \limsup_{t \to \infty}\psi(r, t, \theta, \phi)= \infty
 \eea
  This is because statements at the level of the single mode solutions do not imply boundedness statements for the infinite superposition of those modes. The full linear stability (b) consists in proving a uniform bound for a general solution $\psi$ of \eqref{wave-equation}, and therefore excluding \eqref{limsup}.

Extensive progress has been obtained in the last fifteen years which allowed to go beyond the mode analysis in Kerr spacetime, tackling the full linear stability (b) for the linear wave equation. A robust geometric interpretation of the redshift effect \cite{redshift}, a physical space analysis of the trapping region and the superradiance \cite{lectures}, a hierarchy of $r$-weighted decay \cite{rp} all contributed to a complete understanding of the boundedness of solutions to the linear wave equation, finally proving that \eqref{limsup} indeed does not happen. 
 The complete resolution of boundedness and decay statements for the linear wave equation \eqref{wave-equation} was obtained for slowly rotating Kerr solutions \cite{small-a} and then for the full subextremal range $|a|<M$  \cite{big-a} (see also \cite{And-Mor}, \cite{Tataru}).  Similarly, proofs of boundedness and decay statements for the Teukolsky equation have been obtained in Schwarzschild \cite{DHR} (see also \cite{mu-tao}, \cite{pei-ken}, \cite{pei-ken-2},  \cite{Johnson2}) in Reissner-Nordstr\"om \cite{Giorgi4}\cite{Giorgi5} and in Kerr, for slowly rotating \cite{ma2}\cite{TeukolskyDHR} and very recently in the full subextremal range \cite{Y-R}. These results have been used to obtain proof of the full linear stability of Schwarzschild \cite{DHR}, Reissner-Nordstr\"om, for small charge \cite{Giorgi6} and then in the full subextremal range \cite{Giorgi7}, and for slowly rotating Kerr \cite{Kerr-lin1}, \cite{Kerr-lin2}.   Concerning the full non-linear stability of black hole solutions to the Einstein equation,  the only known result is the proof of non-linear stability of Schwarzschild under the class of symmetry of axially symmetric polarized perturbations \cite{stabilitySchwarzschild}.  In the presence of a positive cosmological constant, the Kerr--de Sitter and the Kerr--Newman--de Sitter family  with small angular momentum have also been proved to be non-linearly stable \cite{Hintz-Vasy}\cite{Hintz-M}.
 
 \medskip 
 
Quite strikingly, the Kerr-Newman solution stands up as genuinely different from the similar cases of Kerr or Reissner-Nordstr\"om, even in the simplest possible form of stability, i.e. the mode stability as studied by the black hole perturbation theory community. 
As stated by Chandrasekhar in Section 111 of \cite{Chandra}, ``the methods that have proved to be so successful in treating the gravitational perturbations of the Kerr spacetime do not seem to be applicable (nor susceptible to easy generalizations) for treating the coupled electromagnetic-gravitational perturbations of the Kerr-Newman spacetime." 
The techniques applied in those early works, which relied on decomposition in frequency modes of perturbations of the solutions, failed to be extended to the case of Kerr-Newman spacetime, despite the manifest similarity of the metric to the Kerr case. Again as pointed out by Chandrasekhar in Section 111 of \cite{Chandra}, ``the principal obstacle is in finding separated equation" and in the ``apparent indissolubility of the coupling between the spin-1 and spin-2 fields in the perturbed spacetime". Following the same procedure as in the case of Kerr or Reissner-Nordstr\"om, one reaches a point where the equations can not be decoupled or separated any further. In page 583 of \cite{Chandra}, Chandrasekhar gives an explanation of ``why the system of equations proves intractable in contrast to apparently similar system of equations encountered in the treatment of the perturbations of the Reissner-Nordstr\"om and Kerr spacetimes". The reason has to do with the interaction of the spin-1 and spin-2 fields in a non-spherically symmetric background. We summarize his argument here and describe how we intend to overcome such difficulties towards an analytical proof of the stability of the Kerr-Newman black hole.

\subsection{Why the analytical proof of mode stability for Kerr-Newman fails}

Being the most general explicit black hole solution of the Einstein equation coupled with matter, the Kerr-Newman spacetime has been at the center of analytical and numerical research for decades. 
 Numerical works strongly support the mode stability of Kerr-Newman spacetime \cite{numerics1}, \cite{numerics2}, and the Kerr-Newman metric is expected to be stable as a solution to the fully non-linear Einstein-Maxwell equation. Nevertheless, an analytical proof of even its mode stability is missing, and the state of the art on this problem is pretty much the same as described by Chandrasekhar \cite{Chandra} in 1983. We now explain what are the main issues.
 
 The Einstein-Maxwell equation \eqref{Einstein-Maxwell-intro} governs the interaction between the gravitational radiation, encoded in the left hand side of the equation (i.e. the curvature), and the electromagnetic radiation, encoded in the right hand side (i.e. the electromagnetic tensor). From the study of perturbations of Kerr, we know that the gravitational and the electromagnetic radiation are transported by a spin-2 field $\psi^{[2]}$ and a spin-1 $\psi^{[1]}$ respectively. This is more precisely related to the fact that the extreme null component of the Weyl curvature is a 2-tensor on the sphere $(\th, \vphi)$, while the extreme null component of the electromagnetic tensor is a 1-tensor on the sphere. 
 
 When taken independently, the gravitational and electromagnetic perturbations of Kerr satisfy the Teukolsky equation \eqref{Teukolsky-eq-intro} for spin $s=\pm2$ or $s=\pm1$ respectively. On the other hand, when considering coupled electromagnetic-gravitational perturbations of Kerr-Newman, one should expect a \textbf{system of coupled Teukolsky equations}, as in the case of Reissner-Nordstr\"om \cite{Giorgi4} \cite{Giorgi5}, of the schematic form:
 \bea\label{Teukolsky-system-intro}
 \begin{split}
 \TT^{[1]}(\psi^{[1]})&= \div ( \psi^{[2]})  \\
  \TT^{[2]}(\psi^{[2]})&= \nab \hot ( \psi^{[1]}) 
  \end{split}
 \eea
 where the angular operators on the right hand side relate 1-tensors and 2-tensors. More precisely, if $\psi^{[1]}$ is a 1-tensor and $\psi^{[2]}$ is a symmetric traceless 2-tensor, then $\div(\psi^{[2]})_{a}:=\nab^{b} \psi^{[2]}_{ab}$ is a 1-tensor, and $2\nab \hot (\psi^{[1]})_{ab}=\nab_a \psi^{[1]}_b+\nab_b\psi^{[1]}_a-\de_{ab} \div \psi^{[1]}$ is a symmetric traceless 2-tensor on the sphere.
 
 The issue in the analysis of a coupled system like \eqref{Teukolsky-system-intro} comes from the decomposition in modes.
 The mode decomposition of the Teukolsky variables
\beaa
\psi^{[s[}(t, r, \th, \phi) &=&e^{-i \omega t} e^{i m \phi} R^{[s]}(r)S^{[s]}_{m\ell} ( a \om, \cos\th) 
\eeaa
involves the \textit{spin $s$-weighted spheroidal harmonics} $ S^{[s]}_{m\ell} ( a \om, \cos\th) $ which are eigenfunctions of the spin $s$-weighted laplacian
\beaa
\lap^{[s]}&=& \frac{1}{\sin\th} \partial_\th (\sin\th \partial_\th) - \frac{m^2+2ms \cos\th +s^2}{\sin^2\th}+a^2\om^2\cos^2\th-2a\om s \cos\th.
\eeaa
For $a=0$, they reduce to the spherical harmonics $ S^{[s]}_{m\ell} ( 0, \cos\th) = Y^{[s]}_{m\ell} (\cos\th) $. \textit{Spin-weighted spherical harmonics} of different spins are simply related through the angular operators $\div$ and $\nab\hot$, and have the same eigenvalues. Schematically:
\beaa
\nab\hot (Y^{[1]}_{m\ell})&=&-\lambda Y^{[2]}_{m\ell}\\
\div (Y^{[2]}_{m\ell})&=&\lambda Y^{[1]}_{m\ell}
\eeaa
On the other hand, in the general axisymmetric case, as in Kerr or Kerr-Newman, the spin-weighted spheroidal harmonics of different spins are not simply related through those angular operators. 

We are now ready to explain the ``apparent indissolubility of the coupling between the spin-1 and spin-2 fields" \cite{Chandra} for electromagnetic-gravitational perturbations of Kerr-Newman, in contrast with Reissner-Nordstr\"om or Kerr. In a spherically symmetric background, as in Reissner-Nordstr\"om, the fact that the spherical harmonics of different spins are simply related through the angular derivatives implies that the decomposition in modes of the system of Teukolsky equations \eqref{Teukolsky-system-intro} passes through. When considering the separated versions of the equations, one obtains
\beaa
 \TT^{[1]}(Y^{[1]}_{m\ell})&= \div ( Y^{[2]}_{m\ell})= \lambda Y^{[1]}_{m\ell}  \\
  \TT^{[2]}(Y^{[2]}_{m\ell})&= \nab \hot ( Y^{[1]}_{m\ell}) =-\lambda Y^{[2]}_{m\ell}
\eeaa
 giving two decoupled equations for the spin-1 and the spin-2 fields. For gravitational perturbations of Kerr one only uses the spin-2 decomposition for $\TT^{[2]}(Y^{[2]}_{m\ell})=0$, so the problem of the coupling does not arise.

 In electromagnetic-gravitational perturbations of the axially symmetric Kerr-Newman, the interaction between the spin-2 and spin-1 prevents the separability in modes. In particular, when trying to derive equations for $S^{[1]}_{m\ell}$ and for $S^{[2]}_{m\ell}$, one cannot separate them:
 \beaa
 \TT^{[1]}(S^{[1]}_{m\ell})&= \div ( S^{[2]}_{m\ell}) \\
  \TT^{[2]}(S^{[2]}_{m\ell})&= \nab \hot ( S^{[1]}_{m\ell}) 
\eeaa
as the right hand side of the first equation cannot be written in terms of $S^{[1]}_{m\ell}$, and the right hand side of the second equation cannot be written in terms of $S^{[2]}_{m\ell}$.

These are the main obstacles to separability of the equations in the case of electromagnetic-gravitational perturbations of Kerr-Newman spacetime. As Chandrasekhar ends at page 583 of \cite{Chandra}, ``one might be inclined to conclude that a decoupling of the system of equations and a separation of the variables will be possible, if at all, only by contemplating equations of order 4 or higher".

\subsection{Towards the full linear stability of Kerr-Newman}  
 
  In treating the coupled electromagnetic-gravitational perturbations of Kerr-Newman spacetime, the decomposition in modes of the equations, which had the objective of simplifying the analysis of the perturbations, actually makes them unsolvable as consequence of the discussion in the previous section.
Observe that such failure is explicitly related to the fact that the equations as analyzed in \cite{Chandra} required the decomposition in spheroidal harmonics, which yields the problem of non-separability of the decomposition. There is no reason to believe that if one does not decompose in modes nor separate the equations using the spheroidal  harmonics such problems could not be circumvented.

Our approach to solve this issue is to abandon the decomposition in modes, and perform a \textbf{physical space analysis} of the equations.
Following the road map that mathematicians have taken in the last few years in interpreting in physical space the mode analysis done by the physics community, the Kerr--Newman solution may be the case where a physical space approach could succeed where the mode analysis in physics failed.  Observe that our proof of boundedness of a general solution through a physical space analysis will in particular imply the absence of exponentially growing modes, therefore proving mode stability.

We summarize here the four main ingredients in the analysis: the formalism to study perturbations of the Kerr-Newman spacetime, the identification of the gauge-invariant quantities in the linear perturbations, the derivation of the system of coupled Teukolsky equations, and finally the derivation, through the Chandrasekhar transformation, of a system of generalized Regge-Wheeler equations. 

 \subsubsection{The GKS formalism}

 As a first step, we present the formalism which we use to treat perturbations of axially symmetric Petrov Type D spacetimes, like Kerr or Kerr-Newman.  One way to analyze the perturbations is to use the Newman-Penrose formalism, which consists in decomposing all the components in null frames, obtaining complex scalars. We instead make use of a more geometric formalism, more commonly used in the mathematical community, and first developed in the proof of non-linear stability of Minkowski space \cite{Ch-Kl}. Such formalism was extended  in \cite{GKS} for general Petrov Type D spacetime in the context of the non-linear stability of Kerr.  
 
 We recall that a Petrov Type D spacetime's Weyl curvature $\W_{\mu\nu\a\b}$ is diagonalizable by two linearly independent eigenvectors, the so-called principal null-directions. We call the outgoing null direction $e_4$ and the ingoing one $e_3$. The tangent space orthogonal to them is spanned by two orthonormal vectors, $e_a$, for $a=1,2$. Observe that in Kerr, the orthogonal structure determined by the principal null frames $e_3, e_4$ is not integrable, i.e. $e_1$ and $e_2$ do not span the tangent space of a 2-surface, like in Schwarzschild. This can be seen in the non symmetry of the 2-tensors $\chi$ and $\chib$ defined by
 \beaa
 \chi(e_a, e_b)= g(D_a e_4, e_b), \qquad  \chib(e_a, e_b)= g(D_a e_3, e_b), \qquad a,b=1,2
 \eeaa
 which in the case of an integrable horizontal structure would be the null second fundamental forms of the embedding of the sphere in the spacetime, therefore being symmetric. In the case of Kerr or Kerr-Newman, the 2-tensor $\chi$ and $\chib$ are not symmetric in $a$ and $b$. 
 
 In the GKS formalism (from the authors in \cite{GKS}), the non-integrability of the horizontal structure is allowed, and all components are decomposed in null frames, obtaining a range of complex 2-tensors, 1-tensors and scalars.  See Section \ref{section-preliminaries} for the description of the formalism, with particular attention to the comparison with the Newman-Penrose formalism in Section \ref{section:connection-NP}. We extend it here to the case of the Einstein-Maxwell equation, by deriving the equations in their full generality in Section \ref{section-equations}.

 We then apply this general formalism to the case of Kerr-Newman and its linear perturbations. More precisely, the GKS quantities which vanish in Kerr-Newman,  are considered to be $O(\ep)$, where $\ep$ is a smallness parameter, in linear perturbations of Kerr-Newman, see Section \ref{kn-section}. The next step is to identify the $O(\ep)$-quantities which govern the linear perturbations.

 \subsubsection{The identification of the gauge-invariant quantities}

 The first issue to specifically treat electromagnetic-gravitational perturbations of Kerr-Newman spacetime is to identify what are the Teukolsky variables which represent the electromagnetic and gravitational radiations respectively. Since those variables have a physical meaning, they should be independent of the choice of coordinates  to a certain extent, or more precisely being (quadratically) invariant under infinitesimal tetrad transformations. For example, the spin-2 complex Teukolsky variable given by
 \beaa
 A_{ab}&=& \W(e_4, e_a, e_4, e_b) + i \dual \W(e_4, e_a, e_4, e_b),
 \eeaa
where $\dual$ denotes the Hodge dual, is a symmetric traceless 2-tensor on the horizontal structure (which corresponds to $\Psi_0$ in Newman-Penrose formalism) and is known to be invariant under infinitesimal rotations of the frame. More precisely, if a rotation is applied to the frame $(e_3, e_4, e_1, e_2)$ into a new frame $(e_3', e_4', e_1', e_2')$ which is $\ep$-close to the previous one, the variable $A'$ computed with respect to the primed frame is $\ep^2$-close to the original one, i.e. $A'=A+O(\ep^2)$. The quantity $A$ is precisely the Teukolsky variable representing gravitational perturbations of Kerr, and satisfies the Teukolsky equation of spin $2$ \cite{Teukolsky}\cite{GKS}. 

For electromagnetic-gravitational perturbations of Kerr-Newman, the Teukolsky variable $A$ is not sufficient to describe the full perturbation. In particular, one would need a quantity satisfying a spin $1$ Teukolsky equation as the electromagnetic contribution. The Teukolsky variable of spin $1$ in electromagnetic perturbations of Kerr, i.e.
\beaa
\BF_a&=& \F(e_4, e_a) + i \dual \F(e_4, e_a)
\eeaa
where $\F$ is the electromagnetic tensor, is not invariant under infinitesimal rotations of the frame, and therefore cannot represent electromagnetic radiation. This problem appears already in the perturbations of Reissner-Nordstr\"om spacetime, where $\BF$ also fails to be gauge-invariant. In the case of Reissner-Nordstr\"om, two additional quantities $\ff$ and $\tilde{\b}$, a 2-tensor and a 1-tensor respectively, were identified to be gauge-invariant and satisfy a coupled system of Teukolsky equation \cite{Giorgi4}\cite{Giorgi5}.

Inspired by the quantities in Reissner-Nordstr\"om, in Section \ref{section-gauge} we define the symmetric traceless 2-tensor $\Ffr$ and the 1-tensor $\Bfr$ which are quadratically invariant upon infinitesimal rotation of the frame (see \eqref{definition-F-Bianchi} and \eqref{definition-mathfrak-B} for the precise definition). In addition to those, we have the gauge-invariant 1-tensor $\Xfr$ which is auxiliary in the derivation.

 \subsubsection{The system of coupled Teukolsky equations}
 
 As described above, we have defined four gauge-invariant quantities for linear electromagnetic-gravitational perturbations of Kerr-Newman, given by
 \beaa
 A, \qquad \Ffr, \qquad \qquad \qquad \Bfr, \qquad \Xfr
 \eeaa
 where $A$ and $\Ffr$ are symmetric traceless 2-tensors, and therefore good candidates to represent gravitational radiation, and $\Bfr$ and $\Xfr$ are 1-tensors, to represent electromagnetic radiation. As in the case of Reissner-Nordstr\"om, it turns out that $\Ffr$ and $\Bfr$ are the most significant quantities, while $A$ and $\Xfr$ can be thought of as auxiliary quantities.

 Observe that under a rotation of the frame given by a conformal rescaling of the null vectors $e_3$ and $e_4$, i.e. if $e_3'=\lambda e_3$ and $e_4'=\lambda^{-1}e_4$ for a real scalar $\lambda$, the quantities $A$ and $\Xfr$ change as $A'=\lambda^2A$, $\Xfr'=\lambda^2 \Xfr$, while $\Ffr$ and $\Bfr$ change as $\Ffr'=\lambda \Ffr$, $\Bfr'=\lambda \Bfr$. We say that $\Ffr$ and $\Bfr$ are of conformal type $1$, and $A$ and $\Xfr$ of conformal type $2$. 
 
In Section \ref{Teukolsky-equations-section}, we derive the wave-like equations satisfied by $A$, $\Ffr$ and $\Bfr$ as a consequence of the Einstein-Maxwell equations. We obtain the following system of coupled Teukolsky equations, see Theorem \ref{Teukolsky-equations-theorem}:
 \beaa
 \TT_1(\mathfrak{B})&=&\M_1[\mathfrak{F}, \mathfrak{X}] \\
\TT_2(\mathfrak{F})&=&\M_2[A, \mathfrak{X}, \mathfrak{B}]\\
\TT_3(A)&=&\M_3[\Ffr, \Xfr]
\eeaa
where the $\TT_s$ are Teukolsky operators, and $\M_s$ denotes the dependence on the right hand side of each equation. 

The projection to the first component of the Teukolsky operators gives the Teukolsky equation for complex scalars. It turns out that in Kerr-Newman, not only the spin of the variable is not the only parameter appearing in the Teukolsky equation, but so does its conformal type. We define for a complex scalar $\psi$ of spin $s$ and conformal type $c$ the Teukolsky operator
\beaa
\mathcal{T}^{[s,c]}(\psi)&:=&\square_{\g_{M, a,Q}} \psi + \frac{2c}{|q|^2}(r-M) \partial_r \psi +\frac{2}{|q|^2} \left( c\frac{a(r-M)}{\Delta} + s i \frac{\cos\th}{\sin^2\th} \right) \partial_\vphi \psi \\
&&+\frac{2}{|q|^2} \left(c \big(\frac{M(r^2-a^2)-Q^2 r}{\Delta} - r \big)- s i a \cos\th \right) \partial_t \psi + \frac{1}{|q|^2} (s-s^2 \cot^2 \th) \psi
\eeaa
 By comparing it to \eqref{Teukolsky-eq-intro}, one can see that for $c=s$, the Teukolsky operator $\mathcal{T}^{[s,s]}(\psi)$ reduces to the standard Teukolsky operator in Kerr. In Kerr-Newman, it is important to keep the distinction since $\Ffr$ is of spin $2$ and conformal type $1$.

Due to the non-separability in modes, our goal is to analyze the Teukolsky equations in physical space. Unfortunately, this is not possible, even in the case of Schwarzschild or Kerr. Recall that to prove boundedness of the energy for a solution of the wave equation $\square \psi=0$, one multiplies it by $\partial_t \psi$ and integrate it by parts. For example, in the case of Minkowski:
\beaa
0=\square \psi \cdot \partial_t \psi &=& \big(-\partial_t^2 \psi + \partial_x^2 \psi \big)\cdot \partial_t \psi \\
&=& -\partial_t^2 \psi \cdot \partial_t \psi - \partial_t \partial_x \psi \cdot \partial_x \psi + \partial_x( \partial_x \psi \cdot \partial_t \psi )\\
&=& -\frac 1 2 \partial_t ( |\partial_t \psi|^2+ |\partial_x \psi|^2 )+\text{boundary terms}
\eeaa
Upon integration on a causal domain, one can neglect the boundary terms obtaining conservation of the energy. 
Similarly for a Regge-Wheeler equation, the term with the potential can be written as a boundary term:
\beaa
0&=& \big(\square \psi - V \psi \big) \cdot \partial_t \psi \\
&=& -\frac 1 2\partial_t ( |\partial_t \psi|^2+ |\partial_x \psi|^2 )- \frac 1 2V \partial_t(  |\psi|^2)+\text{boundary terms}\\
&=& -\frac 1 2\partial_t ( |\partial_t \psi|^2+ |\partial_x \psi|^2 + V |\psi|^2)+\text{boundary terms}
\eeaa
If the potential $V$ is positive, one obtains the conservation of a positive definite energy. The Teukolsky equation is instead of the form $\square \psi-V \psi =c_1 \partial_r \psi + c_2 \partial_\vphi \psi +c_3 \partial_t \psi$, and so clearly one cannot obtain boundedness of the energy directly in this way, because of the presence of the first order terms. This motivates our search for a more amenable system of equations, of the Regge-Wheeler type.

 \subsubsection{The system of generalized Regge-Wheeler equations}

 One would like to transform the system of Teukolsky equations, which are intractable to physical space energy estimates, to a system of Regge-Wheeler-type equations. Such transformation is related to the passage from curvature perturbations to metric perturbations, and was referred to as ``transformation theory" in \cite{Chandra}. Chandrasekhar describes such transformation in the mode analysis as consisting in taking derivatives along the null direction of the Teukolsky variables, in order to obtain solutions to the Regge-Wheeler equation. Dafermos-Holzegel-Rodnianski crucially extended the Chandrasekhar transformation to a physical space one, first in Schwarzschild \cite{DHR} and then in Kerr \cite{TeukolskyDHR}, see also \cite{ma2}. In \cite{stabilitySchwarzschild} and \cite{GKS}, physical space non-linear analogue of the Chandrasekhar transformation have also been introduced.

 Following this idea, in Section \ref{Chandra-section} we define the Chandrasekhar-transformed of the quantities $\Bfr$ and $\Ffr$ in the case of linear perturbations of Kerr-Newman, and obtain the main result of the paper, see Theorem \ref{main-theorem-RW} for the precise statement. 
 
 \begin{theorem}\label{thm-intro} Consider a linear electromagnetic-gravitational perturbation of Kerr-Newman spacetime $\g_{M, a, Q}$, with associated gauge-invariant quantities $\mathfrak{B}$ and $\mathfrak{F}$. 
Then there exist a 1-tensor $\pf$ and a symmetric traceless 2-tensor $\qf^\F$, obtained as Chandrasekhar-transformed of $\Bfr$ and $\Ffr$ respectively, such that $\pf$ and $\qf^\F$ satisfy
 the following coupled system of generalized Regge-Wheeler equations:
\bea
 \square_{\g_{M, a, Q}} \pf-i  \frac{2a\cos\th}{|q|^2}\nab_t \pf  -V_1  \pf &=&4Q^2 \frac{(r-ia\cos\th)^3 }{|q|^5} \div  \qf^\F  + \lot \label{final-eq-1-intro}\\
\square_{\g_{M, a, Q}} \qf^\F-i  \frac{4a\cos\th}{|q|^2}\nab_t \qf^\F -V_2  \qf^\F &=&-   \frac 1 2\frac{(r+i a \cos\th)^3}{|q|^5}  \nab \hot  \pf  + \lot \label{final-eq-2-intro}
 \eea
with real positive potentials $V_1$ and $V_2$, and $\lot$ denotes lower order terms with respect to $\pf$ and $\qf^\F$.
\end{theorem}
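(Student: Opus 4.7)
The plan is to take the system of coupled Teukolsky equations for $A$, $\Ffr$, $\Bfr$ established in Section~\ref{Teukolsky-equations-section} as starting point, and to define $\pf$ and $\qf^\F$ as images of $\Bfr$ and $\Ffr$ under a physical-space Chandrasekhar transformation, in the spirit of \cite{DHR, TeukolskyDHR} for Kerr and of \cite{Giorgi4, Giorgi5} for Reissner-Nordstr\"om. For a complex horizontal tensor $\psi$ of spin $s$ and conformal type $c$, I would first introduce a weighted ingoing-null derivative
\[
\PP(\psi) \;:=\; \frac{1}{|q|^{\alpha}}\, \nab_3\bigl(\, w(r,\theta)\, \psi\,\bigr),
\]
with $w$ and $\alpha$ chosen so that $\PP$ acts as a generalized spin-lowering operator on the principal null frame of Kerr-Newman and matches, in the limiting regimes $Q=0$ and $a=0$, the transformations of \cite{TeukolskyDHR} and \cite{Giorgi4, Giorgi5} respectively. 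One then sets $\pf:=\PP(\Bfr)$, applied once since $\Bfr$ is a spin-$1$ quantity, and $\qf^\F := \PP\circ\PP(\Ffr)$, applied twice since $\Ffr$ is of spin $2$, with the weights readjusted at each stage to track the change of spin and conformal type produced by each application of $\nab_3$, so that both $\pf$ and $\qf^\F$ land at the Regge-Wheeler level of the Teukolsky hierarchy.

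The equations for $\pf$ and $\qf^\F$ are then derived by commuting $\square_{\g_{M,a,Q}}$ with $\PP$ and substituting from the Teukolsky system. The commutator $[\square,\PP]$ produces first-order terms in $\nab_3,\nab_4,\nab_t,\nab_\vphi$ acting on $\PP(\psi)$; the key computation is to verify that, for the correct choice of $w$ and $\alpha$, the first-order terms carried by the Teukolsky operators $\TT_1,\TT_2$ are exactly cancelled by those produced by $[\square,\PP]$, leaving only the imaginary Coriolis-type terms $2ia\cos\theta\,|q|^{-2}\nab_t\pf$ and $4ia\cos\theta\,|q|^{-2}\nab_t\qf^\F$ on the left-hand sides of \eqref{final-eq-1-intro}--\eqref{final-eq-2-intro}. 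The right-hand sides arise by applying $\PP$ to the coupling terms $\M_1[\Ffr,\Xfr]$ and $\M_2[A,\Xfr,\Bfr]$ of the Teukolsky system: since the angular operators $\div$ and $\nab\hot$ commute with $\PP$ up to explicitly computable lower-order contributions, the leading parts come out as $\div\qf^\F$ and $\nab\hot\pf$, multiplied by the complex weights $(r-ia\cos\theta)^3/|q|^5$ and $(r+ia\cos\theta)^3/|q|^5$ that encode the jump in conformal type. All remaining contributions, in particular the residual dependence on $A$ and $\Xfr$, are absorbed into $\lot$, whose control is afforded by the Teukolsky equation for $A$ and by the Bianchi identities in the GKS formalism of Section~\ref{section-equations}.

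The main obstacle, as in every Chandrasekhar-type derivation, is the simultaneous achievement of two properties: the exact cancellation of the first-order terms on the left-hand side, and the positivity of the resulting potentials $V_1,V_2$ on the exterior $\{r>r_+\}$. These two requirements over-determine the weight $w$ and the exponent $\alpha$, so the very existence of a consistent choice is already a nontrivial algebraic statement; its verification rests on combining the transport equations for $\chi,\chib,\eta,\etab$ with the Bianchi equations of the Einstein-Maxwell system in Kerr-Newman, derived in the general framework of Section~\ref{section-equations}. The positivity of $V_1,V_2$ is especially delicate because, in contrast with the spherically symmetric Reissner-Nordstr\"om or the vacuum Kerr case, the naive potential produced by the commutation contains non-manifestly-signed $\theta$-dependent contributions coming from terms of the form $\cos^2\theta/|q|^2$. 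One must show that the additional contributions of the electromagnetic-gravitational interaction, which are proportional to $Q^2$ and specific to Kerr-Newman, add with the favourable sign and in the correct combination to guarantee $V_1,V_2>0$ throughout the subextremal range $\sqrt{a^2+Q^2}<M$. This positivity is precisely the structural gain that motivates the passage to the Regge-Wheeler form and that renders the resulting coupled system amenable to the physical-space energy estimates targeted in the next stage of the stability program.
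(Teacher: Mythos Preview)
Your proposal contains a structural error in the definition of $\qf^\F$. You propose $\qf^\F := \PP\circ\PP(\Ffr)$, applying the Chandrasekhar operator twice ``since $\Ffr$ is of spin $2$''. But the number of ingoing-null derivatives needed to reach the Regge--Wheeler level is governed by the \emph{conformal type}, not the spin: in Kerr one applies two derivatives to $A$ because $A$ has conformal type $2$, whereas here $\Ffr$ has conformal type $1$ (the same as $\Bfr$), as the paper emphasizes in Section~\ref{Teukolsky-equations-section}. The paper therefore defines both $\pf$ and $\qf^\F$ via a \emph{single} weighted $\nabc_3$-derivative:
\[
\pf = q^{1/2}\,\ov{q}^{9/2}\bigl(\nabc_3\Bfr + C_1\Bfr\bigr), \qquad
\qf^\F = q\,\ov{q}^{2}\bigl(\nabc_3\Ffr + C_2\Ffr\bigr),
\]
with $C_1, C_2$ complex functions and the outer rescalings by powers of $q,\ov{q}$. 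Applying $\PP$ twice to $\Ffr$ would land at the wrong level of the hierarchy and the resulting equation would not close to a Regge--Wheeler form coupled to $\pf$.

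A second, more subtle point: your account of how the free parameters are fixed is not quite right. You say the cancellation of first-order terms and the positivity of $V_1,V_2$ together over-determine the weights. In the paper the logic is sequential and the constraints are exactly matched to the degrees of freedom: (i) the real parts of $C_1,C_2$ are fixed by cancelling the $\nabc_4$-terms in the commutator $[\PP_C,\TT]$; (ii) the rescaling functions $f_1,f_2$ are fixed by requiring that the only surviving first-order term be $i\nab_t$; (iii) the \emph{imaginary} parts of $C_1,C_2$ are fixed by requiring the potentials to be \emph{real}, not positive. Positivity is then checked a posteriori from the explicit formulas (Proposition~\ref{prop:potentials-real}), and is not itself a determining constraint. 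The remarkable fact---which the paper highlights---is that once (i)--(iii) are imposed, the coupling terms automatically acquire the adjoint structure $\ov q^3|q|^{-5}\,\ov\DD\cdot$ versus $q^3|q|^{-5}\,\DD\hot$ and the lower-order coefficients come out real or cancel, without any further freedom to spend.
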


These equations represent the main system of equations governing electromagnetic-gravitational perturbations of the Kerr-Newman spacetime.  
In particular observe that, in applying the Chandrasekhar transformation, the dependence on the auxiliary quantities $A$ and $\Xfr$ disappears. The above equations have the same structure as the generalized Regge-Wheeler equation in Kerr obtained in \cite{GKS}. Also, for $a=0$ the above system of equations reduces to the Regge-Wheeler system of Reissner-Nordstr\"om in \cite{Giorgi7}, for which boundedness of the energy was obtained in the full subextremal range. 

The above Theorem is proved through a careful (and lengthy) computation which consists in applying an ingoing null derivative to both Teukolsky equations for $\Bfr$ and $\Ffr$, and then choose a precise complex rescaling of the transformed quantities. Such rescaling is applied in order to obtain the above structure of the equations, for which boundedness of the energy can be obtained in physical space. Observe that the above equations are not precisely of the Regge-Wheeler form, but have additional terms, like the first order term $\nab_t$ and the coupling terms on the right hand side. Nevertheless, in Theorem \ref{thm-intro} we are careful to obtain precisely a structure which allows for the proof of boundedness of energy in physical space.  More precisely, upon multiplying equation \eqref{final-eq-1-intro} by $\nab_t \overline{\pf}$, and equation \eqref{final-eq-2-intro} by $\nab_t \overline{\qf^\F}$ and taking their real part, we obtain the following simplifications.
\begin{enumerate}
\item The Regge-Wheeler pieces of the equations, i.e. $\big(\square_{\g_{M, a, Q}} \pf-V_1  \pf\big) \nab_t \overline{\pf} $ and $\big(\square_{\g_{M, a, Q}} \qf^\F -V_2  \qf^\F\big) \nab_t\overline{\qf^\F} $, for real positive potentials, can be written as boundary term,
\item The first order terms, being of the form $ i \nab_t$, get cancelled in the energy estimates:
\beaa
i  \frac{2a\cos\th}{|q|^2}\nab_t \pf \nab_t \overline{\pf} +\overline{i  \frac{2a\cos\th}{|q|^2}}\nab_t \overline{\pf} \nab_t \pf=0\\
i  \frac{4a\cos\th}{|q|^2}\nab_t \qf^\F \nab_t \overline{\qf^\F} +\overline{i  \frac{4a\cos\th}{|q|^2}}\nab_t \overline{\qf^\F} \nab_t \qf^\F=0
\eeaa
\item The coupling terms, given by adjoint operators $\div$ and $\nab\hot$ multiplied by complex conjugate functions, get simplified upon summing the estimates for the two equations:
\beaa
\frac{(r-ia\cos\th)^3 }{|q|^5} \div  \qf^\F \nab_t \overline{\pf}-\frac{(r+i a \cos\th)^3}{|q|^5}  \nab \hot  \pf \nab_t \overline{\qf^\F}=\lot
\eeaa
\item The lower order terms have a favorable structure in using transport equations to be estimated. 
\end{enumerate}

As a consequence of the above theorem, the good properties of the equations obtained in Reissner-Nordstr\"om and in Kerr can be generalized to the case of Kerr-Newman. This strikingly compares with the equations in separated modes as described at the beginning of this introduction, which could not be generalized from the Kerr and Reissner-Nordstr\"om case. By avoiding the decomposition in modes, and maintaining the above equations for the 1-tensor $\pf$ and the 2-tensor $\qf^\F$, the issue of non-commutativity of the decomposition is not present and a physical space analysis of the above system is possible. 
In Section \ref{section-bdn-energy}, we sketch how to prove that solutions to the generalized Regge-Wheeler equations as obtained in Theorem \ref{thm-intro} have bounded energy. Such proof has to be combined with spacetime Morawetz estimates to obtain the complete statement of boundedness and decay for the Teukolsky system of equations. More precisely, such analysis would have to avoid decomposition in modes for the solutions, for example in the spirit of \cite{And-Mor} for small angular momentum. 
 Nevertheless, the Morawetz estimates, which will be obtained in a future work, are much less sensitive to the structure of the equation, and the procedure described in Section \ref{section-bdn-energy} to obtain bounded energy is crucial to justify the precise structure of the equations here derived.

\vspace{0.3cm}

This paper is organized as follows. In Section \ref{section-preliminaries}, we recall the general formalism introduced in \cite{GKS} and in Section \ref{section-equations}, we derive the Einstein-Maxwell equations in their full generality. In Section \ref{kn-section}, we introduce the Kerr-Newman spacetime and its linear perturbations. In Section \ref{section-gauge} we define the main gauge-invariant quantities in electromagnetic-gravitational  perturbations of Kerr-Newman spacetime. In Section \ref{Teukolsky-equations-section}, we derive the system of Teukolsky equations satisfied by the gauge-invariant quantities. Finally, in Section \ref{Chandra-section} we define the Chandrasekhar transformation in Kerr-Newman and derive the Regge-Wheeler-type equations for the perturbations, proving the main theorem of the paper. 

To facilitate the reading of the paper, we diverted most of the proofs (involving lengthy computations) to the Appendix. In Appendix \ref{appendix-section-a} we collect the explicit computations needed in the first five sections of the paper. In Appendix \ref{proof-main-thm} we derive the system of Teukolsky equations and in Appendix \ref{RWeqapp} we derive the system of generalized Regge-Wheeler equations.

\vspace{0.3cm}

\noindent\textbf{Acknowledgements.} The author is supported by the NSF grant DMS 2006741.

\section{The GKS formalism}\label{section-preliminaries}

In this section we collect the main definitions and preliminaries to the formalism introduced in \cite{GKS}. From the authors of \cite{GKS} we refer to this formalism as GKS formalism. We refer to Section 2 of \cite{GKS} for more details.

\subsection{Null pairs and horizontal structures}

Let $(\MM,\g)$ be a Lorentzian $4$-dimensional manifold. 
Consider an arbitrary null pair $e_3=\underline{L}$ and $e_4=L$, i.e. 
\beaa
\g(e_3, e_3)=\g(e_4, e_4)=0, \qquad \g(e_3,e_4)=-2.
\eeaa 
We say that a vectorfield $X$ is  horizontal  if
\beaa
\g(L,X)=\g(\Lb, X)=0. 
\eeaa
On the set of horizontal vectors, given a fixed orientation we define the induced volume form by $\in(X, Y):=\frac 1 2 \in(X, Y, \underline{L}, L)$. 

Observe that the commutator $[X,Y]$ of two horizontal vectorfields
may fail to be horizontal. We say that the pair $(L,\Lb)$ is integrable if the set of 
horizontal vectorfields  forms an integrable distribution,
i.e. $X, Y$ horizontal implies that $[X,Y]$ is horizontal.

Given an arbitrary vectorfield $X$ we denote by $\Xho$
its  horizontal projection,
\beaa
\Xho&=&X+ \frac 1 2 \g(X,\Lb)L+ \frac 1 2   \g(X,L)\Lb.
\eeaa
A  $k$-covariant tensor-field $U$ is said to be horizontal
if  for any $X_1,\ldots X_k$ we have,
\beaa
U(X_1,\ldots X_k)=U(\Xho_1,\ldots \Xho_k).
\eeaa
 \begin{definition}  For any horizontal $X,Y$ we define\footnote{In the particular case where the horizontal structure is integrable, $\ga$ is the induced metric, and $\chi$ and $\chib$ are the  null second fundamental forms.} 
  \bea
 \ga(X,Y)&=&\g(X,Y)
 \eea
 and   
\beaa
\chi(X,Y)=\g(\D_XL,Y),\qquad \chib(X,Y)=\g(\D_X\Lb,Y).
\eeaa
where $\D$ is the covariant derivative of $\g$. 
\end{definition}
Observe that    $\chi$
 and $\chib$  are  symmetric if and 
 only if   the horizontal structure is 
 integrable, as follows from 
 \beaa
  \chi(X,Y)-\chi(Y,X)&=&\g(\D_X L, Y)-\g(\D_YL,X)=-\g(L, [X,Y]).
 \eeaa

 Given $X, Y$ horizontal vectors, the covariant derivative $\D_XY$ fails in general  to be horizontal.
 We thus define,\footnote{In the integrable case, $\nab$ coincides with the Levi-Civita connection
 of the metric induced on the integral surfaces of   the horizontal distribution.}
 \beaa
 \nab_X Y&:=&^{(h)}(\D_XY)=\D_XY- \frac 1 2 \chib(X,Y)L -  \frac 1 2 \chi(X,Y) \Lb.
 \eeaa

 Given a general covariant, horizontal tensor-field  $U$
 we define its horizontal covariant derivative according to
 the formula,
 \beaa
 \nab_Z U(X_1,\ldots X_k)=Z (U(X_1,\ldots X_k))&-&U(\nab_ZX_1,\ldots X_k)-
 \\
\ldots   &-& U(X_1,\ldots \nab_ZX_k).
 \eeaa
 Given $X$ horizontal, $\D_L X$ and $\D_\Lb X$ are in general not horizontal.
We thus define
 \beaa
 \nab_4 X&:=&^{(h)}(\D_L X)=\D_L X- \g(X, \D_L\Lb) L- \g(X, \D_L L) \Lb,\\
 \nab_3 X&:=&^{(h)}(\D_\Lb X)=\D_\Lb X- \g(X, \D_\Lb\Lb) L - \g(X, \D_\Lb L) \Lb. 
 \eeaa
  We can extend the operators $\nab_4$ and $\nab_3$ to
 arbitrary  $k$-covariant, horizontal tensor-fields  $U$ as above.
 Therefore, with the above definitions $\nab$, $\nab_4$ and $\nab_3$ take horizontal tensor-fields into horizontal tensor-fields.
We can then extend the definition of horizontal covariant derivative to any $X$ in the tangent space of $M$ and Y horizontal as 
 \bea\label{eq:def-horizontal-cov-der}
 \Ddot_X Y&:=&^{(h)}(\D_X Y)
 \eea
 and we can extend it to horizontal tensor-fields as above. 
 
  Given a  horizontal structure  defined by $e_3=\Lb$,  $e_4=L$, 
  we  associate a null frame by choosing   orthonormal  horizontal  vectorfields
  $e_1, e_2$  such that $\ga(e_a, e_b)=\de_{ab}$ for $a, b=1,2$.  
For an arbitrary orthonormal horizontal frame $(e_a)_{a=1,2}$, we denote $\nab_a Y=\nab_{e_a} Y$. We write $\nab Y$ to denote the 2-tensor whose contraction with $e_a$ results in $\nab_a Y$, i.e. $\nab Y(e_a)=\nab_a Y$.

In what follows we fix  a null pair $e_3=\Lb$ and $e_4=L$ and an orientation  on the horizontal tensors.

\begin{definition}
Given a 2-covariant horizontal tensor-field $U$ and an arbitrary orthonormal horizontal frame $(e_a)_{a=1,2}$ we define the trace of $U$ as
\beaa
\tr (U):=\de^{ab}U_{ab}=\de^{ab}\Us_{ab},
\eeaa
where $\Us_{ab}=\frac 12 (U_{ab}+U_{ba})$.
We define the anti-trace of $U$ by,
\beaa
\atr (U):=\in^{ab}U_{ab}=\in^{ab}\Ua_{ab},
\eeaa
where $\Ua_{ab}=\frac 12 (U_{ab}-U_{ba})$. 
\end{definition}
A general  horizontal,  2-tensor $U$  can be decomposed according to,
\beaa
U_{ab}&=&\Us_{ab}+\Ua_{ab}=\widehat{U}_{ab}+\frac 1 2 \de_{ab}\, \tr( U)+\frac 12 \in_{ab}\atr (U). 
\eeaa

 \begin{definition}\label{definition-SS-real}
We denote by $\sk_0=\sk_0(\MM)$ the set of pairs of real scalar functions on $\MM$, $\sk_1=\sk_1(\MM)$ the  set of real horizontal $1$-forms  on $\MM$, and by $\sk_2=\sk_2(\MM)$
  the set of real symmetric traceless   horizontal $2$-tensors on $\MM$.
  \end{definition}
We define the following operators on horizontal tensors.

\begin{definition}\label{def-duals}
We define the duals of $f \in \sk_1$ and $u \in \sk_2$ by
\beaa
\dual f_{a}&=&\in_{ab}f_b,\qquad (\dual u)_{ab}=\in_{ac} u_{cb}.
\eeaa
Given  $\xi, \eta\in\sk_1 $  we denote
\beaa
\xi\c \eta&:=&\de^{ab} \xi_a\eta_b,\\
\xi\wedge\eta&:=&\in^{ab} \xi_a\eta_b=\xi\c\dual \eta,\\
(\xi\hot \eta)_{ab}&:=&\frac 1 2 \big( \xi_a \eta_b +\xi_b \eta_a-\de_{ab} \xi\c \eta\big).
\eeaa
 Given   $\xi\in \sk_1 $,  $u\in \sk_2$ we denote
\beaa
(\xi\c u)_a&:=&\de^{bc} \xi_b u_{ac}.
\eeaa
Given     $u, v \in \sk_2$ we denote
\beaa
(u\wedge v)_{ab} &:=& \in^{ab}u_{ac}v_{cb}.
\eeaa
 For $f \in \sk_1$ and $u \in \sk_2$
we  define the frame dependent   operators,   
\beaa
\div f &=&\de^{ab}\nab_b f_a,\qquad 
\curl f=\in^{ab}\nab_af_b,\\
(\nab\hot f)_{ba}&=&\frac 1 2 \big(\nab_b f_a+\nab_a  f_b-\de_{ab}( \div f)\big)\\
(\div u)_a&=& \de^{bc} \nab_b u_{ca}.
\eeaa
\end{definition}

 \subsection{Ricci, electromagnetic and curvature components}\label{section-ricci-el-curv-comp}
  In what follows we define Ricci coefficients, electromagnetic and curvature components of a general spacetime $(\MM, \g)$. 

 \begin{definition} We define the horizontal  $1$-forms,
 \beaa
 \etab(X)&:=& \frac 1 2 \g(X, \D_L\Lb),\qquad \eta(X):= \frac 1 2  \g(X, \D_\Lb L),\\
 \xib(X)&:=& \frac 1 2  \g(X, \D_\Lb \Lb),\qquad \xi(X):= \frac 1 2 \g(X, \D_L L), \\
  \ze(X)&=&\frac 1 2 \g(\D_XL,\Lb).
 \eeaa
 and the scalars
 \beaa
 \omb&:=& \frac 1  4 \g(\D_\Lb\Lb, L),\qquad\quad  \om:=\frac 1 4 \g(\D_L L, \Lb).
 \eeaa
  \end{definition}
Observe that the quantities underlined are obtained from the non-underlined ones by interchanging the null vectors $e_3=\Lb$ and $e_4=L$. 
 
 \begin{definition}
 The horizontal tensor-fields $\chi,\chib, \eta, \etab, \ze,  \xi,\xib,\om, \omb$ are  called
 the connection coefficients of the null pair $(L,\Lb)$. Given  an  arbitrary  basis of 
  horizontal vectorfields $e_1,  e_2$, we write using the short hand notation $\D_a=\D_{e_a}, a=1,2$, 
  \beaa
\chib_{ab}&=&\g(\D_a\Lb, e_b),\qquad \chi_{ab}=\g(\D_aL, e_b),\\
\xib_a&=&\frac 1 2\g(\D_\Lb\Lb, e_a),\qquad \xi_a=\frac 1 2 \g(\D_L L, e_a),\\
\omb&=&\frac 1 4 \g(\D_\Lb\Lb, L),\qquad\quad  \om=\frac 1 4\g(\D_L L, \Lb),\qquad \\
\etab_a&=&\frac 1 2\g(\D_L\Lb, e_a),\qquad \quad \eta_a=\frac 1 2 \g(\D_\Lb L, e_a),\qquad\\
 \ze_a&=&\frac 1 2 \g(\D_{a}L,\Lb).
\eeaa
 \end{definition}
We easily  derive the  Ricci formulae,
\bea\label{conn-coeff}
\D_a e_b&=&\nab_a e_b+\frac 1 2 \chi_{ab} e_3+\frac 1 2  \chib_{ab}e_4,\nn\\
\D_a e_4&=&\chi_{ab}e_b -\ze_a e_4,\nn\\
\D_a e_3&=&\chib_{ab} e_b +\ze_ae_3,\nn\\
\D_3 e_a&=&\nab_3 e_a +\eta_a e_3+\xib_a e_4,\nn\\
\D_3 e_3&=& -2\omb e_3+ 2 \xib_b e_b,\label{ricci}\\
\D_3 e_4&=&2\omb e_4+2\eta_b e_b,\nn\\
\D_4 e_a&=&\nab_4 e_a +\etab_a e_4 +\xi_a e_3,\nn\\
\D_4 e_4&=&-2 \om e_4 +2\xi_b e_b,\nn\\
\D_4 e_3&=&2 \om e_3+2\etab_b e_b.\nn
\eea 
\begin{definition}
We  introduce the  notation
\beaa
\trch:=\tr(\chi), \quad \atr\chi:=\atr(\chi),\quad \trchb:=\tr(\chib), \quad \atr\chib:=\atr(\chib).
\eeaa
The symmetric traceless part of $\chi$ and $\chib$, denoted $\chih$ and $\chibh$, are called the (outgoing and ingoing respectively) shear of the horizontal distribution, while the scalars $ \trch$ and $ \trchb$  are the (outgoing and ingoing respectively) expansion 
of the distribution. The scalars $ \atr\chi$ and $\atr\chib$ measure the
integrability defects of the distribution.

In particular we can write
\bea
\chi_{ab}&=&\chih_{ab}+\frac 1 2 \de_{ab}\, \trch+\frac 12 \in_{ab}\atrch. \label{decompU-chi}
\eea

\end{definition}

Let $\F$ be an antisymmetric 2-tensor on $(\MM, \g)$. We define the null components of $\F$ as the horizontal vectors  $\b(\F),\bb(\F)$ by the formulas
\beaa
&&\bF(X)=\b(\F)(X)=\F(X, L) \\
&&\bbF(X)=\bb(\F)(X)=\F(X, \underline{L}) \\
&&\varrho(\F)(X, Y)=\F(X, Y)
\eeaa
For a 2-form $\F$, the dual $\dual \F$ denotes the Hodge dual on $(\MM, \g)$ of $\F$, defined by $\dual \F_{\a\b}=\frac 1 2 \in_{\mu\nu\a\b} \F^{\mu\nu}$. 

It is convient to express in terms of the following two scalar quantities 
 \bea
 \label{fo5'-em}
\rhoF= \rho(\F)=\frac 1 2 \F(\underline{L},L),\qquad \dual\rhoF=\dual\rho(\F)=
\frac 1 2   \dual \F (\underline{L}, L)\label{rho-dualrho}.
 \eea
Thus, $\varrho(\F)(X,Y)=-\rhod(\F)\, \in(X,Y)$ for horizontal vectors $X, Y$. 
i.e. $\F_{ab}=- \in_{ab}\dual \rho$.

Let $\W$ be a Weyl field on $(\MM, \g)$. We define the null components of the Weyl field $\W$, horizontal 2-tensors $\al(\W),\aa(\W),\varrho(\W)$ and horizontal 1-tensors $\be(\W),\bb(\W)$ by the formulas
\beaa
&&\al(\W)(X,Y)=\W(L,X,L,Y),\\
&&\aa(\W)(X,Y)=\W(\Lb,X,\Lb,Y),\\
&&\b(\W)(X)=\frac 1 2 \W(X,L,\Lb,L),\\
&&\bb(\W)(X)=\frac 1 2 \W(X,\Lb,\Lb, L),\\
&&\varrho(\W)(X,Y)= \W(X,\Lb,Y,L).
\eeaa
Recall that if $\W$ is a Weyl field its 
Hodge dual $\dual \W$, defined by 
${}^{\ast}\W_{\al\be\mu\nu}=\frac{1}{2}{\in_{\mu\nu}}^{\rho\si}\W_{\al\be\rho\si}$,  is also a Weyl field.   It is convenient to express it in terms
 of  the following  two scalar quantities,
 \bea
 \label{fo5'}
 \rho(\W)=\frac 1 4 \W(L,\Lb,L,\Lb),\qquad \dual\rho(\W)=
\frac 1 4  \dual \W(L,\Lb,L,\Lb)\label{rho-dualrho}.
 \eea
Thus, $\varrho(X,Y)=-\rho\,\ga(X,Y)+\rhod\, \in(X,Y)$ for horizontal vectors $X, Y$. 
We have
 \beaa
 \W_{a3b4}&=&\varrho_{ab}=(-\rho\de_{ab} +\dual \rho \in_{ab}),\\
 \W_{ab34}&=& 2 \in_{ab}\dual\rho,\\
 \W_{abcd}&=&-\in_{ab}\in_{cd}\rho,\\
 \W_{abc3}&=&\in_{ab}\dual \bb_c,\\
 \W_{abc4}&=&-\in_{ab}\dual \b_c.
 \eeaa

 \subsection{Complex notations}\label{complex-notations-section}

Recall Definition \ref{definition-SS-real} of the set of horizontal tensors $\sk_k$ on $\MM$. By definition \ref{def-duals}, the duals of real horizontal tensors are real horizontal tensors of the same type. We define the complexified version of horizontal tensors on $\MM$.
\begin{definition} We denote by $\sk_k(\CCC)$ the set of complex anti-self dual $k$-tensors on $\MM$. More precisely, $a+i b\in \sk_0(\CCC)$ is a complex scalar function on $\MM$ with $(a, b) \in \sk_0$, $F=f+ i \dual f \in \sk_1(\CCC)$ is a complex anti-self dual 1-tensor on $\MM$ with $f \in \sk_1$, and $U=u+ i \dual u \in \sk_2(\CCC)$ is a complex anti-self dual symmetric traceless 2-tensor on $\MM$ with $u \in \sk_2$.
\end{definition}
Observe that $F \in \sk_1(\CCC)$ and $U \in \sk_2(\CCC)$ are indeed anti-self dual tensors, i.e.
\beaa
\dual F=-i F, \qquad \dual U=-i U.
\eeaa
In particular $F_2=-i F_1$ and $U_{12}=-i U_{11}$, $U_{22}=- U_{11}$, where we denote $F_1:=F(e_1)$ and $U_{11}:=U(e_1, e_1)$ the contraction of the tensors with the horizontal frame.

 We extend the definitions for the Ricci, electromagnetic and curvature components given in Section \ref{section-ricci-el-curv-comp} to the complex case by using the anti-self dual tensors.
 \begin{definition}\label{definition-qu} Let $(\MM, \g)$ be a Lorentzian  $4$-dimensional manifold.
 We define the following complexified versions of the Ricci components
\beaa
&& X=\chi+i\dual\chi, \quad \Xb=\chib+i\dual\chib, \\
&& H=\eta+i\dual \eta, \quad \Hb=\etab+i\dual \etab, \\ 
&& \Xi=\xi+i\dual\xi, \quad \Xib=\xib+i\dual\xib, \\
&& Z=\ze+i\dual\ze.
\eeaa    
In particular, note that 
\beaa
\tr X = \trch-i\atrch, \quad \Xh=\chih+i\dual\chih, \quad \tr\Xb = \trchb -i\atrchb, \quad \Xbh=\chibh+i\dual\chibh.
\eeaa
 We define the following complexified versions of the electromagnetic components
\beaa
&& \BF= \bF + i \dual \bF,  \quad  \BBF= \bbF + i \dual \bbF\\
&& \PF=\rhoF + i \dual \rhoF, 
\eeaa     
and of the curvature components
\beaa
&&A=\a+i\dual\a,  \quad \Ab:=\aa+i\dual\aa, \\
&& B=\b+i\dual\b, \quad \Bb=\bb+i\dual\bb,\\
&& P=\rho+i\dual\rho. 
\eeaa

\end{definition}

With the above definition, the complex scalars, one-forms and symmetric traceless 2-tensors are respectively given by
\beaa
\tr X , \tr \Xb , P, \PF \in \sk_0(\CCC), \\
H, \Hb, Z, \Xi, \Xib, \BF, \BBF, B, \Bb \in \sk_1(\CCC), \\
\Xh, \Xbh, A, \Ab \in \sk_2(\CCC). 
\eeaa

\begin{definition}\label{definition-complex-operators} 
We define the complexified version of the $\nab$ horizontal derivative as
\beaa
\DD= \nab + i \dual \nab, \qquad \DDov= \nab- i \dual \nab
\eeaa
More precisely,
\begin{itemize}
\item For $a+ib \in \sk_0(\CCC)$ 
\beaa
\DD(a+ib) &:=& (\nabla+i\dual\nabla)(a+ib), \qquad \DDov(a+ib) := (\nabla-i\dual\nabla)(a+ib)\
\eeaa

\item For $F= f+ i \dual f \in\sk_1 (\mathbb{C})$, 
\beaa
\DDov\c(f+i\dual f) &:=& (\nabla-i\dual\nabla)\c(f+i\dual f)= 2 \big(\div f + i \curl f \big), \\
\DD\hot(f+i\dual f) &:=& (\nabla+i\dual\nabla)\hot(f+i\dual f)=2\big(\nab \hot f + i \dual (\nab \hot f)\big).
\eeaa

\item For  $U= u + i \dual u \in \sk_2 (\mathbb{C})$, 
\beaa
\DDov (u+i\dual u) &:=& (\nabla- i\dual\nabla) (u+i\dual u)= 2\big( \div u + i \dual(\div u) \big)
\eeaa
\end{itemize}
\end{definition}

 For $F \in \sk_1(\CCC)$, the operator $- \DD\hot$ is formally adjoint to the operator $\DDov \c U$ applied to $U \in \sk_2(\CCC)$, as shown in the following lemma.
 \begin{lemma}\label{lemma:adjoint-operators}
 For $F=f+i\dual f \in \sk_1(\CCC)$ and $U=u+i\dual u \in \sk_2(\CCC)$, we have
   \bea
 ( \DD \hot   F) \c   \ov{U}  &=&  -F \c (\DD \c \ov{U}) -( (H+\Hb ) \hot F )\c \ov{U} +\D_\a (F \c \ov{U})
 \eea
 \end{lemma}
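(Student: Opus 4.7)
The identity is a formal-adjointness statement for the complex horizontal operators $\DD\hot$ and $\DD\c$, with the extra term $((H+\Hb)\hot F)\c\ov{U}$ on the right being the correction coming from the non-integrability of the horizontal distribution. My plan is to prove it by a direct Leibniz computation on the horizontal 1-form $V := F\c\ov{U}$, followed by a translation between the horizontal divergence $\div$ and the spacetime divergence $\D_\alpha$ via the Ricci formulae \eqref{ricci}.

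The first step is to perform the integration by parts at the level of the real horizontal connection $\nab$: the Leibniz rule applied to $\nab^a(F^b\ov{U}_{ab})$, combined with the symmetry and tracelessness of $\ov{U}$ (which force $\nab^a F^b$ to contribute only through its symmetrized traceless part $\nab\hot F$), yields the horizontal identity
\[
\div V \;=\; (\nab\hot F)\c\ov{U} \;+\; F\c\div\ov{U}.
\]
Since $F$ is anti-self dual and $\ov{U}$ is self-dual, a short index computation using the commutation $\nab\hot\dual f = \dual\nab\hot f$ (and its 2-tensor analogue) gives $\DD\hot F = 2\,\nab\hot F$ and $\DD\c\ov{U} = 2\,\div\ov{U}$, so substituting rewrites the above identity entirely in the complex formalism.

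Next, I pass from the horizontal divergence $\div V$ to the spacetime divergence $\D_\alpha V^\alpha$. Using $\D_3 e_4 = 2\omb e_4 + 2\eta_b e_b$ and $\D_4 e_3 = 2\om e_3 + 2\etab_b e_b$, together with the identity $\D_a V_b = \nab_a V_b$ valid for horizontal $V$, one verifies that
\[
\D_\alpha V^\alpha \;=\; \div V \;+\; (\eta+\etab)\c V,
\]
where the correction $(\eta+\etab)\c V$ is precisely the obstruction to the horizontal distribution being integrable.

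Finally, the real correction term $(\eta+\etab)\c(F\c\ov{U})$ must be identified with the complex expression $((H+\Hb)\hot F)\c\ov{U}$. Using the algebraic identity $(a\hot b)\c u = a\c(b\c u)$ (a consequence of the symmetry and tracelessness of $u$) together with the anti-adjointness $\dual a\c b = -a\c\dual b$ on $1$-forms and on symmetric traceless $2$-tensors, and the definition $H+\Hb = (\eta+\etab) + i\dual(\eta+\etab)$, one sees that the two expressions agree up to the same factor of $2$ produced by $\DD$ in the previous step. Combining all three ingredients yields the stated identity. The proof is elementary but bookkeeping-heavy; the only delicacy is tracking the interplay of anti-self dual/self-dual signs with the factors of $2$ generated by the complex $\DD$-operator, and it is this bookkeeping, rather than any genuine analytic difficulty, that constitutes the main obstacle.
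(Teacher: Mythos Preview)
Your proposal is correct and follows essentially the same route as the paper: both proofs do the real integration-by-parts $(\nab\hot f)\c u = \nab_a(u_{ab}f_b) - (\div u)\c f$, then compute the horizontal-to-spacetime divergence correction $\D_\alpha\xi^\alpha - \div\xi = (\eta+\etab)\c\xi$ for horizontal $\xi$, and finally complexify. The only cosmetic difference is that you frame the first step in terms of the complex $F$ and $\ov{U}$ and then reduce to the real identity via $\DD\hot F = 2\nab\hot F$, $\DD\c\ov{U} = 2\div\ov{U}$, whereas the paper works directly with real $f,u$ and complexifies in one line at the end.
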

 \begin{proof}  We have
 \beaa
 ( \nab\hot   f) \c   u  &=&\frac 1 2 \big(\nab_a f_b+\nab_b f_a- \de_{ab} \div f \big) u_{ab} =(\nab_a f_b) u_{ab}= \nab_a (u_{ab} f_b)-(\div u) \c f 
 \eeaa
 Let $\xi \in \sk_1$. Then the difference between the spacetime and the horizontal divergence is given by
 \beaa
 \D_\a \xi^\a-  \nab_a \xi^a &=&-\frac 1 2\big( \D_3 \xi_4+\D_4 \xi_3) =(\eta+\etab)\c  \xi
 \eeaa
 which applied to $\xi= u \c f$ gives
  \beaa
 ( \nab\hot   f) \c   u  &=& \D_\a (u_{\a b} f_b)- (\eta+ \etab) \c (u \c f) -(\div u) \c f \\
 &=&  -(\div u) \c f -( (\eta+ \etab) \hot f )\c u +\D_\a (u_{\a b} f_b)
 \eeaa
By complexifying the above, we obtain the stated identity.
 \end{proof}

 \subsection{Frame transformations and conformal invariance}\label{conf-inv-section}

A general frame transformation of the null frame basis vectors $\{e_3, e_4, e_a\}$ into a transformed null frame $\{e'_3, e'_4, e'_a\}$ can be decomposed 
 into the following  three elementary   types:
 \begin{itemize}
 \item \textit{rotations of class I}, which leave the vector $e_4$ unchanged, 
 \bea
\label{SSMe:general.e4}
e_4'=  e_4 , \qquad e_3'=e_3 + \mub_a e_a +\frac 1 4 |\mub|^2  e_4,\qquad 
e_a'=e_a  + \frac 1 2 \mub_a e_4
\eea
 \item    \textit{rotations of class II}, which leave the vector $e_3$ unchanged, 
   \bea
\label{SSMe:general.e3}
e_3'=e_3, \qquad 
e_4'=  e_4+ \mu_a e_a +\frac 1 4| \mu|^2 e_3 \qquad 
e_a'= e_a +\frac 1 2 \mu_a e_3,
\eea
 \item  \textit{rotations of class III}, which leave the directions of $e_3$ and $e_4$ unchanged and rotate $e_a$,
   \bea
   e_3'= \la  e_3, \qquad e_4'=\la^{-1} e_4, \qquad e_a'= O_{ab} e^b
   \eea  
 \end{itemize}   
 where $\mu$ and $\mub$ are real one-forms, $\lambda$ is a real function, $O_{ab}$ is a orthogonal matrix, and the repeated indices indicate the sum on those.

%%%%%%%%%%%%%%%%%

\begin{definition} We say that a frame transformation is conformal if it is a rotation of class III with $O_{ab}=I_{ab}$ the identity matrix, i.e. such that    
\bea\label{eq:conf-type-trans}
   e_3'= \la  e_3, \qquad e_4'=\la^{-1} e_4, \qquad e_a'= e_a
   \eea 
   for a real function $\lambda$. 
\end{definition}

Note that  under  a conformal  frame transformation, the Ricci, electromagnetic, curvature components get modified in the following way:
\beaa
 \trchb'&=&\la^{-1} \trchb, \quad \atrchb'=\la^{-1} \atrchb,  \quad   \trch'=\la\trch, \quad \atrch'=\la \atrch \\
  \xi'&=& \la^2 \xi, \quad   \eta'=\eta, \quad \etab'=\etab,  \quad \xib'=\la^{-2}\xib\\
  \bF'&=& \la \bF, \quad \rhoF'=\rhoF, \quad \dual\rhoF'=\dual\rhoF, \quad \bbF'=\la^{-1} \bbF\\
   \a'&=&\la^2 \a,\quad \b'=\la \b,    \quad \rho'=\rho,    \quad \rhod'=\rhod,  \quad \bb'=\la^{-1} \bb,\quad  \aa'=\la^{-2} \aa
   \eeaa
and similarly for their complex counterparts, and
   \beaa
 \omb'&=& \la^{-1}\left(\omb +\frac{1}{2} e_3(\log \la)\right), \quad \om'= \la\left(\om -\frac{1}{2} e_4(\log \la)\right), \quad
 \ze'= \ze - \nab (\log \la).\\
\eeaa

\begin{definition}We say that  a horizontal tensor $f$ is conformal invariant of type $s$  if, under the  conformal  frame transformation \eqref{eq:conf-type-trans}, it changes as $f'=\la^s f $. 
\end{definition}
\begin{remark} 
Note that $s$ is precisely what   in \cite{Ch-Kl} is called       the  signature of the tensor\footnote{For a horizontal tensor $f$ defined in terms of the null frames $e_3$ and $e_4$, its signature is the number of $e_4=L$ used in its definition minus the number of $e_3=\Lb$. For example, the signature of $\a=\W(L, e_a, L, e_b)$ is $2-0=2$, while the signature of $\eta=\frac 1 2 \g( \D_{\Lb} L, e_a)$ is $1-1=0$.}. 
\end{remark}

Observe that if $f$  is conformal invariant of type $s$, then  $\nab_3 f, \nab_4 f, \nab_a f$ are not conformal invariant.
In GKS formalism, we correct the lacking of being conformal invariant by making the following  definition.  
\begin{lemma}
If $f$  is conformal invariant of type $s$, then 
 \begin{enumerate}
 \item $\nabc_3 f:= \nab_3f-2 s \omb f$ is conformal invariant of type $(s-1)$.
 \item $\nabc_4 f:= \nab_4f+2 s \om f$ is conformal invariant of type $(s+1)$.
 \item $\nabc_a f:= \nab_a f+ s \ze_a  f$ is conformal invariant of type $s$. 
  \end{enumerate}
Also, $\DDc f:= \nabc f + i \dual \nabc f= \DD f + s Z f $ is conformal invariant of type $s$.
\end{lemma}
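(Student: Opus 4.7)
The lemma is established by three essentially parallel direct calculations, one for each of $\nabc_3$, $\nabc_4$, $\nabc_a$, followed by a short assembly step for the complex statement $\DDc f=\DD f+sZf$. The unifying principle is that the correction term added to each horizontal derivative is engineered to cancel precisely the inhomogeneous piece produced by the Leibniz rule when applied to $f'=\la^s f$: the inhomogeneous term in the transformation law of the connection coefficient $\omb$, $\om$, or $\ze$ is tailored exactly for this cancellation.

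I illustrate item (1); the other two are strictly analogous. Expanding $\nab_3'f'=\nab_3'(\la^s f)$ by the Leibniz rule, and using the fact that the scaling of $e_3$ under the conformal transformation induces a corresponding scaling of the horizontal directional derivative $\nab_3$, one obtains a decomposition of the form
\beaa
\nab_3' f' &=& \la^{s-1}\nab_3 f \;+\; s\,\la^{s-2}e_3(\la)\,f.
\eeaa
On the other hand, using the transformation law $\omb'=\la^{-1}\bigl(\omb+\tfrac{1}{2}e_3(\log\la)\bigr)$ listed in the excerpt and $f'=\la^s f$,
\beaa
2s\,\omb'\,f' &=& 2s\,\la^{s-1}\omb\,f \;+\; s\,\la^{s-2}e_3(\la)\,f.
\eeaa
Subtracting, the $e_3(\la)$ terms cancel exactly, so $\nabc_3'f'=\la^{s-1}\bigl(\nab_3 f-2s\,\omb\,f\bigr)=\la^{s-1}\nabc_3 f$, i.e.\ $\nabc_3 f$ has conformal type $s-1$. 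Item (2) follows by the same mechanism using $\om'=\la\bigl(\om-\tfrac{1}{2}e_4(\log\la)\bigr)$ to obtain $\nabc_4'f'=\la^{s+1}\nabc_4 f$; item (3) uses $\ze'=\ze-\nab(\log\la)$ together with $\nab_a'=\nab_a$ (since $e_a'=e_a$) to obtain $\nabc_a'f'=\la^s\nabc_a f$.

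For the complex statement, once (1)--(3) are in hand, the key observation is that because $e_a'=e_a$ the induced volume form on the horizontal distribution is unchanged, so the Hodge dual $\dual$ on horizontal tensors is invariant under the conformal change of frame. Hence $\dual\nabc f$ transforms with the same conformal weight as $\nabc f$, namely $s$, and therefore $\DDc f=\nabc f+i\dual\nabc f$ is of type $s$. The identity $\DDc f=\DD f+sZf$ is then an immediate rewriting using $\DD=\nab+i\dual\nab$ and $Z=\ze+i\dual\ze$.

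I do not foresee a genuine obstacle. The argument is essentially bookkeeping: all of the substantive content is already encoded in the transformation laws for $\omb$, $\om$, $\ze$ stated in the excerpt, and what remains is to track conformal weights consistently. The only step requiring care is checking that the scaling of the operators $\nab_3,\nab_4,\nab_a$ induced by the scaling of the frame vectors $e_3,e_4,e_a$ is the one consistent with those transformation laws, so that the two inhomogeneous contributions indeed match in magnitude and cancel.
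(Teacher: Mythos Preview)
Your proof is correct and complete. The paper states this lemma without proof, so there is nothing to compare against; your direct verification---expanding $\nab_3' f'$, $\nab_4' f'$, $\nab_a' f'$ via Leibniz and matching the inhomogeneous terms against those in the stated transformation laws for $\omb$, $\om$, $\ze$---is exactly the natural argument and is what the paper is implicitly relying on. One small remark: your displayed formula $\nab_3' f' = \la^{s-1}\nab_3 f + s\la^{s-2}e_3(\la)f$ is consistent with $\nab_3' = \la^{-1}\nab_3$ (i.e.\ $e_3' = \la^{-1}e_3$), which is the convention forced by the paper's transformation law $\omb' = \la^{-1}(\omb + \tfrac12 e_3(\log\la))$ and by $\a' = \la^2\a$; the paper's displayed frame rule $e_3' = \la e_3$, $e_4' = \la^{-1}e_4$ appears to have the exponents swapped, but your computation follows the internally consistent set of transformation laws, which is the right thing to do.
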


%%%%%%%%%%%%%%%%%%%%%%%

%%%%%%%%%%%%%%%%%%%%%%%

 \subsection{Comparison with the Newman-Penrose formalism}\label{section:connection-NP}
The GKS formalism here recalled is strongly connected with the more familiar Newman-Penrose (NP) formalism. In NP formalism, one chooses a basis of null vectors  $(n, l, m, \ov{m})$
 with $n$ and $l$ real and $m$ complex, scaled such that $\g(n, l)=-1$. They are related to the null frame $(e_3, e_4, e_1, e_2)$ here presented for example by 
 \beaa
n=\frac 1 2 e_3, \qquad l=e_4,  \qquad m=\frac{1}{\sqrt{2}}(e_1+i e_2), \qquad \ov{m}= \frac{1}{\sqrt{2}}(e_1-i e_2).
 \eeaa
 In NP  formalism, the connection coefficients, electromagnetic and curvature components are all complex scalar functions obtained  by contracting the tensors with the null frame. For example, the extreme null curvature component $\Psi_0$ is the spin-2 complex scalar defined as 
 \beaa
 \Psi_0=-\W_{\mu\nu\gamma\sigma} l^\mu m^\nu l^\gamma m^\sigma=-\W(l, m, l, m).
 \eeaa
 In GKS formalism, the extreme null curvature component is a complex horizontal 2-tensor, as in
 \beaa
 A_{ab}=  \W(e_4, e_a, e_4, e_b)+ i \dual  \W(e_4, e_a, e_4, e_b)=\a_{ab}+ i \dual \a_{ab}.
 \eeaa
 The relation between $\Psi_0$ and $A_{ab}$ is the following: the projection of the horizontal 2-tensor $A$ into its first component gives, up to a scalar, precisely the complex scalar $\Psi_0$, i.e.
 \beaa
A(e_1, e_1)= \a(e_1, e_1)+ i \dual \a(e_1, e_1)=-\Psi_0.
 \eeaa
 Such relation also explains why the complex scalar $\Psi_0$ is of spin-2, as it can be realized as a projection of a 2-tensor.
 Similarly, in NP formalism the extreme electromagnetic component $\phi_0$ is the spin-1 complex scalar defined as
 \beaa
 \phi_0=-\F_{\mu\nu} l^\mu m^\nu =-\F(l, m).
 \eeaa
 In GKS formalism, the extreme electromagnetic component is a complex horizontal 1-tensor, as in
 \beaa
 \BF_a= \F(L, e_a) + i \dual \F(L, e_a)=\bF_a + i \dual \bF_a
 \eeaa
 and the projection of the horizontal 1-tensor $\BF$ into its first component gives, up to a scalar, precisely the complex scalar $\phi_0$, i.e.
 \beaa
\BF(e_1)= \bF(e_1)+ i \dual \bF(e_1)=-\phi_0.
 \eeaa
  The information about the spin of the complex scalars in NP formalism is encoded in the tensors in GKS formalism: a spin 2-scalar is substituted by a horizontal 2-tensor, and a spin 1-scalar by a horizontal 1-tensor.

For future reference, we collect here a table of conversion from NP and GKS formalism, where it is understood that the correspondence between the curvature, electromagnetic and Ricci components holds up to projection on the first component.
\begin{table}[h!]
\centering
 \begin{tabular}{||c c ||} 
 \hline
 NP formalism & GKS formalism \\ [0.5ex] 
 \hline\hline
        $D, \Delta$ & $\nab_4, \nab_3$  \\  
          $\delta, \ov{\delta}$ & $\DD \hot , \ov{\DD}\c $  \\ 
 \hline
$ \Psi_0, \Psi_4$ &  $A, \Ab$   \\ 
 $\Psi_1, \Psi_3$ & $B, \Bb$  \\
 $\Psi_2$ & $P$  \\ [1ex] 
 \hline
 $\phi_0$, $\phi_2$ & $\BF, \BBF$  \\
 $\phi_1$ & $\PF$  \\  [1ex] 
 \hline
 $\sigma, \lambda$ & $\Xh, \Xbh$  \\ 
 $\tau, \pi$ & $H, \Hb$  \\ 
   $\kappa, \nu$ & $\Xi, \Xib$  \\ 
     $\a, \b$ & $Z$  \\ 
       $\rho, \mu$ & $\tr X, \tr \Xb$  \\  
          $\ep, \gamma$ & $\om, \omb$  \\  [1ex] 
 \hline
 \end{tabular}
\end{table}

The conformal derivatives in GKS formalism are the equivalent of the spin and boost weight operators defined in GHP formalism. Just as in GHP formalism, the derivatives $\text{\thorn}$, $\text{\thorn}'$, $\text{\eth}$ , $\text{\eth}' $ absorb in their definitions the Ricci coefficients $\epsilon$, $\gamma$, $\alpha$ and $\beta$, similarly our $\nabc_3$, $\nabc_4$ and $\nabc_a$ absorb $\om$, $\omb$ and $Z$. 
\begin{table}[h!]
\centering
 \begin{tabular}{||c c ||} 
 \hline
 GHP formalism & GKS formalism \\ [0.5ex] 
 \hline\hline
        $\text{\thorn}, \text{\thorn}'$ & $\nabc_4, \nabc_3$  \\  
          $\text{\eth} , \text{\eth}' $ &   $\DDc \hot , \ov{\DDc}\c $\\ 
          \hline
 \end{tabular}
\end{table}

\section{The Einstein-Maxwell equations in full generality}\label{section-equations}

The Einstein-Maxwell equations are given by
\bea 
\R_{\mu\nu}&=&2 \F_{\mu \lambda} {\F_\nu}^{\lambda}- \frac 1 2 \g_{\mu\nu} \F^{\alpha\beta} \F_{\alpha\beta}, \label{Einstein-1}\label{Einstein-Maxwell-eq}\\
\D_{[\mu} \F_{\nu\lambda]}&=&0, \qquad \D^\mu \F_{\mu\nu}=0. \label{Maxwell}
\eea
where $\R_{\mu\nu}$ denotes the Ricci curvature of $(\MM, \g)$ and $\F_{\mu\nu}$ is an antisymmetric 2-tensor.
In this section we derive the null Einstein-Maxwell equations in full generality, for a spacetime with a non-integrable null frame, therefore paying particular attention to the symmetric and antisymmetric part of $\chi$ and $\chib$.

\subsection{The Maxwell equations}

The equation $\D_{[\mu} \F_{\nu\lambda]}=0$ in \eqref{Maxwell} gives three independent equations. 
The first one is obtained in the following way, using \eqref{conn-coeff}:
\beaa
0&=& \D_a \F_{3 4}+\D_3 \F_{4a}+\D_4 \F_{a3}\\
&=& \nab_a(\F_{3 4})-\F(\chib_{ab} e_b+\ze_a e_3 , e_4)-\F(e_3, -\ze_a e_4+\chi_{ab} e_b)\\
&+&\nab_3 (\F_{4a})- \F(2\omb e_4+2\eta_b e_b, e_a)- \F(e_4, \eta_a e_3+\xib_a e_4)\\
&+&\nab_4 (\F_{a3})- \F(\etab_a e_4+\xi_a e_3, e_3)- \F(e_a, 2\om e_3+2\etab_b e_b)\\
&=& 2\nab_a(\rhoF)-\chib_{ab}\bF_b+\chi_{ab}  \bbF_b-\nab_3 \bF_a+2\omb \bF_a- 2 \in_{ab} \eta_b \dual\rhoF+2 \eta_a \rhoF\\
&+&\nab_4 \bbF_a+2\etab_a \rhoF-2\om \bbF_a+ 2 \in_{ab} \etab_b \dual\rhoF
\eeaa
By writing 
\beaa
\chi_{ab}  \bbF_b&=& \left(\chih_{ab}+\frac 1 2 \de_{ab}\, \trch+\frac 12 \in_{ab}\atrch\right) \bbF_b= \chih_{ab} \bbF_b+\frac 1 2  \trch \bbF_a+\frac 12 \atrch \dual \bbF_a
\eeaa
and using the definition of Hodge duals, we obtain 
\bea\label{Maxwell-1}
\begin{split}
\nab_3 \bF_a-\nab_4 \bbF_a&= -\frac 1 2  \trchb \bF_a+2\omb \bF_a-\frac 12 \atrchb \dual \bF_a\\
&+\frac 1 2  \trch \bbF_a-2\om \bbF_a+\frac 12 \atrch \dual \bbF_a\\
&+2\nab_a(\rhoF)+2\left( \eta_a+\etab_a\right) \rhoF+ 2 \left(\dual \etab_a -\dual \eta_a\right)\dual\rhoF\\
&-\chibh_{ab} \bF_b+\chih_{ab} \bbF_b
\end{split}
\eea

The second equation is obtained in the following way:
\beaa
0&=& \D_a \F_{b 3}+\D_b \F_{3a}+\D_3 \F_{ab}\\
&=& \nab_a (\F_{b 3})-\F(\frac 1 2 \chi_{ab} e_3+\frac 1 2  \chib_{ab}e_4, e_3)- \F(e_b, \chib_{ac} e_c +\ze_ae_3)\\
&+&\nab_b (\F_{3a})- \F(\chib_{bc} e_c +\ze_b e_3, e_a)-\F(e_3, \frac 1 2 \chi_{ba} e_3+\frac 1 2  \chib_{ba}e_4)\\
&+&\nab_3 (\F_{ab})-\F(\eta_a e_3+\xib_a e_4, e_b) -\F(e_a, \eta_b e_3+\xib_b e_4)\\
&=& \nab_a \bbF_b-\nab_b \bbF_a+  \left(\chib_{ab}-\chib_{ba}\right) \rhoF+\chib_{ac}\in_{bc} \dual \rhoF - \ze_a \bbF_b+ \chib_{bc} \in_{ca}\dual \rhoF \\
&& + \ze_b \bbF_a-\in_{ab}\nab_3 \dual \rhoF +\eta_a \bbF_b+\xib_a \bbF_b -\eta_b \bbF_a-\xib_b \bF_a
\eeaa
Contracting the above with $\in^{ab}$ and recalling that $\curl \bbF= \in^{ab} \nab_a \bbF_b$, we have
\beaa
\nab_3 \dual \rhoF-\curl \bbF&=&   - \left(\trchb \dual \rhoF-\atrchb \rhoF\right)  +\left(\eta-\ze \right) \c  \dual \bbF  +\xib \c \dual \bF
\eeaa
The third equation is obtained from symmetrization of the above:
\beaa
\nab_4 \dual \rhoF-\curl \bF&=&   - \left(\trch \dual \rhoF+\atrch \rhoF\right)  +\left(\etab+\ze \right) \c  \dual \bF  +\xi \c \dual \bbF
\eeaa

The equation $\D^\mu \F_{\mu\nu}=\de_{bc} \D_b \F_{c\nu}-\frac 1 2 \D_4 \F_{3\nu}-\frac 1 2 \D_3\F_{4\nu}=0$ gives three additional independent equations. 
The first one is obtained in the following way:
\beaa
0&=& \de_{bc} \D_b \F_{ca}-\frac 1 2 \D_4 \F_{3a}-\frac 1 2 \D_3\F_{4a}\\
&=&\de_{bc} \left(\in_{ac}\nab_b \dual\rhoF-\F\left(\frac 1 2 \chib_{bc} e_4+\frac 1 2 \chi_{bc} e_3, e_a\right)-\F\left(e_c, \frac 1 2 \chib_{ba} e_4+\frac 1 2 \chi_{ba} e_3\right)\right)\\
&&+\frac 1 2 \nab_4 \bbF_a+\frac 1 2  \F(2\om e_3+2\etab_c e_c, e_a)+\frac 1 2  \F(e_3, \etab_a e_4+\xi_a e_3)\\
&&+\frac 1 2 \nab_3\bF_a+\frac 1 2 \F(2\omb e_4+2\eta_c e_c, e_a)+\frac 1 2 \F(e_4, \eta_a e_3+\xib_a e_4)\\
&=& \in_{ac}\nab_c \dual\rhoF+ \frac 1 2 \trchb \bF_a+ \frac 1 2 \trch\bbF_a-  \frac 1 2 \chib_{ca}\bF_c- \frac 1 2 \chi_{ca} \bbF_c \\
&&+\frac 1 2 \nab_4 \bbF_a-  \om \bbF_a- \etab_c \dual \rhoF \in_{ca}+\etab_a \rhoF+\frac 1 2 \nab_3\bF_a-\omb \bF_a-\eta_c \dual \rhoF \in_{ca} -\eta_a \rhoF
\eeaa
By writing 
\beaa
\chi_{ca}  \bbF_c&=& \left(\chih_{ca}+\frac 1 2 \de_{ca}\, \trch+\frac 12 \in_{ca}\atrch\right) \bbF_c= \chih_{ca} \bbF_c+\frac 1 2  \trch \bbF_a-\frac 12 \atrch \dual  \bbF_a
\eeaa
we obtain
\bea\label{Maxwell-2}
\begin{split}
\nab_4 \bbF_a+\nab_3\bF_a&=-  \frac 1 2 \trchb \bF_a-\frac 12 \atrchb \dual  \bF_a+2\omb \bF_a\\
&-  \frac 1 2 \trch\bbF_a-\frac 12 \atrch \dual  \bbF_a +  2\om \bbF_a\\
&-2\in_{ac}\nab_c \dual\rhoF - 2\left(\dual \eta_a+ \dual \etab_a \right)\dual \rhoF+2\left(\eta_a-\etab_a \right)\rhoF\\
& +  \chibh_{ca} \bF_c+  \chih_{ca} \bbF_c
\end{split}
\eea
Summing and subtracting \eqref{Maxwell-1} and \eqref{Maxwell-2} we obtain
\beaa
\nab_3 \bF-\nab(\rhoF)+\dual \nab \dual\rhoF&=& - \frac 1 2  \left(\trchb \bF+ \atrchb \dual \bF\right)+2\omb \bF\\
&&+2 \left(\eta \rhoF- \dual \eta \dual \rhoF\right) +\chih \c \bbF 
\eeaa
and 
\beaa
\nab_4 \bbF+\nab(\rhoF)+\dual \nab \dual\rhoF&= &  -\frac 1 2 \left(\trch \bbF+ \atrch \dual \bbF\right)+2\om \bbF\\
&& -2\left(\etab \rhoF+ \dual \etab \dual\rhoF\right)+\chibh \c \bF
\eeaa

The last equation is obtained by 
\beaa
0&=& \de_{bc} \D_b \F_{c4}-\frac 1 2 \D_4 \F_{34}\\
&=&\de_{bc}\left(\nab_b \bF_c-\F\left(\frac 1 2 \chib_{bc} e_4+\frac 1 2 \chi_{bc} e_3, e_4\right)-\F(e_c, -\ze_b e_4+\chi_{ba} e_a)\right)\\
&&-\frac 1 2 \left(2\nab_4 \rhoF-\F(2\om e_3+2\etab_a e_a, e_4)-\F(e_3, -2\om e_4+2\xi_a e_a)\right)\\
&=&\div \bF-  \trch \rhoF+ \ze \c \bF+ \atrch \dual \rhoF- \nab_4 \rhoF+\etab \c \bF-\xi \c \bbF
\eeaa
which gives
\bea\label{nabb-4-rhoF-general}
\begin{split}
 \nab_4 \rhoF-\div \bF&=-  \left(\trch \rhoF- \atrch \dual \rhoF\right)+\left( \ze+\etab\right) \c \bF-\xi \c \bbF \\
 \nab_3 \rhoF+\div \bbF&=  -\left(\trchb \rhoF+ \atrchb \dual \rhoF\right)+\left( \ze-\eta\right) \c \bbF+\xib \c \bF 
\end{split}
\eea

We summarize the Maxwell equations in the following Proposition.
\begin{proposition}\label{prop-Maxwell} We have, 
\beaa
\nab_3 \bF-\nab(\rhoF)+\dual \nab \dual\rhoF&=& - \frac 1 2  \left(\trchb \bF+ \atrchb \dual \bF\right)+2\omb \bF\\
&&+2 \left(\eta \rhoF- \dual \eta \dual \rhoF\right) +\chih \c \bbF\\
\nab_4 \bbF+\nab(\rhoF)+\dual \nab \dual\rhoF&=& - \frac 1 2  \left(\trch \bbF+ \atrch \dual \bbF\right)+2\om \bbF\\
&&+2 \left(-\etab \rhoF- \dual \etab \dual \rhoF\right) +\chibh \c \bF\\
 \nab_4 \rhoF-\div \bF&=&-  \left(\trch \rhoF- \atrch \dual \rhoF\right)+\left( \ze+\etab\right) \c \bF-\xi \c \bbF \\
  \nab_3 \rhoF+\div \bbF&=&- \left(\trchb \rhoF+ \atrchb \dual \rhoF\right)+\left( \ze-\eta\right) \c \bbF+\xib \c \bF \\
 \nab_4 \dual \rhoF-\curl \bF&=&   - \left(\trch \dual \rhoF+\atrch \rhoF\right)  +\left(\etab+\ze \right) \c  \dual \bF  +\xi \c \dual \bbF\\
  \nab_3 \dual \rhoF-\curl \bbF&=&   - \left(\trchb \dual \rhoF-\atrchb \rhoF\right)  +\left(\eta-\ze \right) \c  \dual \bbF  +\xib \c \dual \bF
\eeaa
In complex notations and using conformal derivatives we have
\beaa
\nabc_3 \BF- \DDc\PF&=&- \frac 1 2 \tr\Xb \BF+2\PF H +\frac 1 2 \Xh \c \ov{\BBF}\\
\nabc_4 \BBF+ \DDc\ov{\PF}&=&- \frac 1 2 \tr X \BBF-2\ov{\PF} \Hb +\frac 1 2 \Xbh \c \ov{\BF}\\
\nabc_4 \PF- \frac 1 2 \ov{\DDc} \c \BF&=&- \ov{\tr X} \PF +\frac 1 2 \ov{\Hb} \c \BF   -\frac 1 2 \Xi \c \ov{\BBF}\\
\nabc_3 \PF+ \frac 1 2 \DDc \c \ov{\BBF}&=&- \tr \Xb \PF -\frac 1 2 H \c \ov{\BBF}   +\frac 1 2 \ov{\Xib} \c \BF
\eeaa
\end{proposition}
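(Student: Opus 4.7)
\medskip

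\noindent\textbf{Proof proposal.} The plan is to derive the six real equations of the proposition from the two Maxwell equations in \eqref{Maxwell}, and then translate each of them into complex notation by pairing it with its Hodge dual via the definitions of Section \ref{complex-notations-section}.

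For the real form, I would split $D_{[\mu}\F_{\nu\lambda]}=0$ into the three inequivalent index patterns $(a,3,4)$, $(a,b,3)$, and $(a,b,4)$, noting that the last is obtained from the second by the interchange $e_3\leftrightarrow e_4$. For each pattern I would apply the Ricci formulas \eqref{ricci} to rewrite $\D$ as $\nab$ plus connection coefficients, then insert the decomposition $\chi_{ab}=\chih_{ab}+\tfrac 1 2 \de_{ab}\trch+\tfrac 1 2 \in_{ab}\atrch$ (and likewise for $\chib$); this produces identities of the type \eqref{Maxwell-1}. Particular care must go into the antisymmetric parts $\atrch,\atrchb$, which encode the non-integrability of the horizontal structure and generate the $\dual\bF$, $\dual\bbF$ source terms that would be absent in the Schwarzschild or Reissner-Nordström case. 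In parallel, $\D^\mu\F_{\mu\nu}=0$ splits into the cases $\nu=a,3,4$, producing \eqref{Maxwell-2} together with \eqref{nabb-4-rhoF-general} and its $e_3\leftrightarrow e_4$ counterpart. Adding and subtracting \eqref{Maxwell-1} and \eqref{Maxwell-2} isolates $\nab_3\bF$ from $\nab_4\bbF$ and yields the first two equations of the proposition; contracting the $(a,b,3)$ and $(a,b,4)$ identities with $\in^{ab}$ gives the curl equations for $\dual\rhoF$.

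To obtain the complex form, I would combine each real equation with its dual through $\BF=\bF+i\dual\bF$, $\BBF=\bbF+i\dual\bbF$, $\PF=\rhoF+i\dual\rhoF$, $H=\eta+i\dual\eta$, and so on, using that $\BF,\BBF,\PF\in\sk_\bullet(\CCC)$ are anti-self dual. The real coefficients $\om,\omb,\ze$ are absorbed into the conformal derivatives $\nabc_4=\nab_4+2s\om$, $\nabc_3=\nab_3-2s\omb$, $\DDc=\DD+sZ$ acting at the appropriate conformal weights ($\BF$ of type $1$, $\BBF$ of type $-1$, $\PF$ of type $0$). By Definition \ref{definition-complex-operators}, the pair $(\div,\curl)$ assembles into $\tfrac 1 2\ov{\DDc}\c$ when applied to $\BF,\BBF$, while gradients of the complex scalars $\PF,\ov{\PF}$ assemble into $\DDc\PF$; this converts the four real transport equations into the four complex ones stated in the proposition.

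The main obstacle is purely bookkeeping: matching each real term (especially the dualized pieces produced by $\atrch,\atrchb,\dual\eta,\dual\etab$) with the correct imaginary part of the complex combination, and verifying that the conformal types of $\BF,\BBF,\PF$ are consistent so that $\nabc_3,\nabc_4,\DDc$ absorb exactly the $\om,\omb,\ze$ terms appearing in the real equations. Once the pairing between the $\bF$- and $\dual\bF$-equations is established, the complex form follows mechanically from the real one.
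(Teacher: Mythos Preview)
Your proposal is correct and follows essentially the same approach as the paper: the real equations are obtained by evaluating $\D_{[\mu}\F_{\nu\lambda]}=0$ on the index patterns $(a,3,4)$, $(a,b,3)$, $(a,b,4)$ and $\D^\mu\F_{\mu\nu}=0$ on $\nu=a,3,4$, expanding via the Ricci formulas and the decomposition of $\chi,\chib$, and then adding and subtracting to isolate $\nab_3\bF$ and $\nab_4\bbF$; the complex equations are then obtained by pairing each real equation with its dual and absorbing $\om,\omb,\ze$ into the conformal derivatives at the appropriate conformal types. The paper carries this out explicitly for $\BF$ and $\PF$ and obtains the remaining two by the $e_3\leftrightarrow e_4$ symmetry, exactly as you describe.
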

\begin{proof} We derive the equation for $\BF$. From the above equation for $\bF$ and its dual, we have 
\beaa
\nab_3 \BF&=& \nab_3(\bF+ i \dual \bF)\\
&=& \nab\rhoF-\dual \nab \dual\rhoF+i\left(\dual \nab\rhoF+\nab \dual\rhoF\right) \\
&&- \frac 1 2  \left(\trchb \bF+ \atrchb \dual \bF\right) - \frac 1 2 i \left(\trchb \dual \bF- \atrchb  \bF\right)+2\omb \left(\bF+i\dual \bF\right)\\
&&+2 \left(\eta \rhoF- \dual \eta \dual \rhoF\right)+2 i\left(\dual \eta \rhoF+ \eta \dual \rhoF\right) +\chih \c \bbF +i\dual (\chih \c \bbF)
\eeaa
which gives
\beaa
\nab_3 \BF- \DD\PF&=&- \frac 1 2 \tr\Xb \BF+2\omb \BF+2\PF H +\frac 1 2 \Xh \c \ov{\BBF}
\eeaa
From the equations for $\rhoF$ and $\dual \rhoF$ we obtain
\beaa
\nab_4 \PF&=& \nab_4 (\rhoF + i \dual \rhoF)\\
&=& \div \bF+ i \curl \bF-  \left(\trch \rhoF- \atrch \dual \rhoF\right)  - i\left(\trch \dual \rhoF+\atrch\rhoF\right)  \\
&&+\left( \ze+\etab\right) \c \bF+i\left(\etab+\ze \right) \c  \dual \bF-\xi \c \bbF    +i\xi \c \dual \bbF
\eeaa
which gives
\beaa
\nab_4 \PF- \frac 1 2 \ov{\DD} \c \BF&=&- \ov{\tr X} \PF +\frac 1 2  \left(\ov{Z+\Hb} \right)\c \BF   -\frac 1 2 \Xi \c \ov{\BBF}
\eeaa
as desired. The other equations are obtained by symmetrization. Using the fact that $\BF$ is conformal invariant of type $1$, $\BBF$ is conformal of type $-1$ and $\PF$ is conformal of type $0$, we easily deduce the equations with conformal derivatives.
\end{proof}

\subsection{The Ricci identities}

We now compute the Ricci curvature $\R_{\mu\nu}$ of $(\MM, \g)$ in terms of the decomposition in frames according to the Einstein equation \eqref{Einstein-1}:
\beaa
\R_{a3}&=& 2 \F_{a \lambda} {\F_3}^{\lambda}=2\de_{bc}\F_{ab}\F_{3c} - \F_{a3}\F_{34}=2\dual \rhoF \dual \bbF_a - 2\rhoF\bbF_a, \\
\R_{a4}&=&2\dual \rhoF \dual \bF_a + 2\rhoF\bF_a, \\
\R_{33}&=&2 g^{\lambda\mu}\F_{3 \lambda} \F_{3\mu}=2 \de_{ab}\F_{3a} \F_{3b}= 2 \bbF \c \bbF , \\
\R_{44}&=& 2 \bF\cdot  \bF\\
\R_{34}&=&(\F_{34})^2+2\bF \c \bbF  +\left(-2\rhoF^2+2\dual \rhoF^2-2 \bF \c \bbF \right)\\
&=&2\rhoF^2 +2\dual \rhoF^2 \\
\R_{ab}&=& - \F_{a 3} \F_{b4}- \F_{a 4} \F_{b3}+2 \de_{cd} \F_{a c} \F_{bd} - \frac 1 2 \de_{ab} \left( -2\rhoF^2+2\dual \rhoF^2-2 \bF \c \bbF \right)\\
&=& - \bbF_{a} \bF_{b}- \bF_{a} \bbF_{b}+2  \dual\rhoF^2\in_{ac} \in_{bc}- \frac 1 2 \de_{ab} \left( -2\rhoF^2+2\dual \rhoF^2-2 \bF \c \bbF \right)\\
&=& - 2(\bbF \hot \bbF)_{ab}+\left( \rhoF^2+\dual \rhoF^2 \right) \de_{ab} 
\eeaa

Using the decomposition of the Riemann curvature in Weyl curvature and Ricci tensor:
\bea
\label{WeylRiemanngeneral}
\R_{\a\b\gamma\delta}=\W_{\a\b\gamma\delta}+\frac 1 2 (g_{\b\delta}\R_{\a\gamma}+g_{\a\gamma}\R_{\b\delta}-g_{\b\gamma}\R_{\a\delta}-g_{\a\delta}\R_{\b\gamma}),
\eea
we compute the components of the Riemann tensor:
\beaa
\R_{a33b}&=&\W_{a33b}-\frac 1 2 \de_{ab} \R_{33}=-\aa_{ab}- \left( \bbF\cdot  \bbF \right) \de_{ab}, \\
\R_{a34b}&=&\W_{a34b}+\R_{ab}-\frac 1 2 \de_{ab}\R_{34}=\rho \de_{ab} -\dual \rho \in_{ab}-2(\bF \hot \bbF)_{ab}, \\
 \R_{a334}&=& \W_{a334}- \R_{a3}=2\bb_a-2\dual \rhoF \dual \bbF_a + 2\rhoF\bbF_a, \\
\R_{3434}&=&\W_{3434}+2\R_{34}=4\rho + 4\rhoF^2 +4\dual\rhoF^2, \\
\R_{a3cb}&=&\W_{a3cb}+\frac 1 2 (\de_{ac} \R_{3b}-\de_{ab} \R_{3c})\\
&=&\in_{cb} \dual \bb_a+ \de_{ac} (\dual \rhoF \dual \bbF_b - \rhoF\bbF_b)-\de_{ab} (\dual \rhoF \dual \bbF_c - \rhoF\bbF_c), 
\eeaa

The Ricci identities are obtained from the definition of Riemann curvature and are given by, see \cite{GKS}:
\beaa
\nab_3 \chib_{ba}&=&2\nab_b\xib_a-2\omb\, \chib_{ba}-\chib_{bc}\chib_{ca} 
+ 2\big(- 2\ze_b\xib_a+\eta_b\xib_a+ \etab_a\xib_b\big)+\R_{b33a},\\
\nab_3\chi_{ba}&=&2 \nab_b\eta_a+ 2 \omb\chi_{ba}-\chib_{bc}\chi_{ca}+2(\xib_b\xi_a+\eta_a\, \eta_b)+\R_{a43b},\\
 \nab_4 \chib_{ba}&=& 2 \nab_b\etab_a+2 \om\chib_{ba}-\chi_{bc}\chib_{ca}
 +2(\xi_b\xib_a+\etab_a\, \etab_b)+\R_{a34b},\\
\nab_4\chi_{ba}&=&2\nab_b\xi_a- 2 \om\,\chi_{ba}-\chi_{bc}\chi_{ca}
+ 2\big( 2\ze_b\xi_a+\etab_b\xi_a+ \eta_a\xi_b\big)+\R_{b44a}.\\
\nab_3 \ze_a+2\nab_a\omb&=&-\chib_{ab}(\ze_b+\eta_b)+ 2 \omb(\ze-\eta)_a+\chi_{ab}\xib_b +2 \om \xib_a -\frac 1 2 \R_{a334}.
\\
\nab_4 \ze_a-2\nab_a\om&=&\chi_{ab}(-\ze_b+\etab_b)+2 \om(\ze+\etab)_a-\chib_{ab}\xi_b -2 \omb \xi_a +\frac 1 2 \R_{a443},
\\
\nab_3 \etab_a -\nab_4\xib_a &=&-  \chib_{ba}(\etab-\eta)_b -4 \om \xib_a  +\frac 1 2 \R_{a334}, \\
\nab_4 \eta_a    -    \nab_3\xi_a &=&-       \chi_{ba}(\eta-\etab)_b-4\omb \xi_a +\frac 1 2 \R_{a443},\\
\nab_3\om+\nab_4\omb &=&4\om\omb +\xi\c \xib +(\eta-\etab)\c\ze -\eta\c\etab+   \frac 1 4 \R_{3434}, \\
\nab_a\chi_{bc}+\ze_a\chi_{bc}&=&\nab_b\chi_{ac}+\ze_b\chi_{ac} +(\chi_{ab}-\chi_{ba})\eta_c+(\chib_{ab}-\chib_{ba})\xi_c+\R_{b4ac},\\
\nab_a\chib_{bc}-\ze_a\chib_{bc}&=&\nab_b\chib_{ac}-\ze_b\chib_{ac} +(\chib_{ab}-\chib_{ba})\etab_c+(\chi_{ab}-\chi_{ba})\xib_c+\R_{b3ac}, 
\eeaa

We summarize the result of their complexification, with the above values of the Riemann curvature in the following.

\begin{proposition} 
\label{prop-nullstr:complex} In complex notations and using the conformal derivatives we have the following Ricci identities:
\beaa
\nabc_3\tr\Xb +\frac{1}{2}(\tr\Xb)^2 &=& \DDc\c\ov{\Xib}+\Xib\c\ov{\Hb}+\ov{\Xib}\c H-\frac{1}{2}\Xbh\c\ov{\Xbh}-\BBF \c \ov{\BBF},\\
\nabc_4\tr X +\frac{1}{2}(\tr X)^2 &=& \DDc\c\ov{\Xi}+\Xi\c\ov{H}+\ov{\Xi}\c H-\frac{1}{2}\Xh\c\ov{\Xh}-\BF \c \ov{\BF},\\
\nabc_3\tr X +\frac{1}{2}\tr\Xb\tr X &=& \DDc\c\ov{H}+H\c\ov{H}+2P+\Xib\c\ov{\Xi}-\frac{1}{2}\Xbh\c\ov{\Xh},\\
\nabc_4\tr\Xb +\frac{1}{2}\tr X\tr\Xb  &=& \DDc\c\ov{\Hb}+\Hb\c\ov{\Hb}+2\ov{P}+\Xi\c\ov{\Xib}-\frac{1}{2}\Xh\c\ov{\Xbh},
\eeaa
\beaa
\nabc_3\Xbh+\Re(\tr\Xb) \Xbh&=& \DDc\hot \Xib+   \Xib\hot(H+\Hb)-\Ab,\\
\nabc_4\Xh+\Re(\tr X)\Xh&=& \DDc\hot \Xi+  \Xi\hot(\Hb+H)-A\\
\nabc_3\widehat{X} +\frac{1}{2}\tr\Xb\, \widehat{X}&=& \DDc\hot H  +H\hot H -\frac{1}{2}\ov{\tr X} \widehat{\Xb}+\frac{1}{2}\Xib\hot\Xi-\frac 1 2 \BF \hot \BBF,\\
\nabc_4\widehat{\Xb} +\frac{1}{2}\tr X\, \widehat{\Xb} &=& \DDc\hot\Hb  +\Hb\hot\Hb -\frac{1}{2}\ov{\tr\Xb} \widehat{X}+\frac{1}{2}\Xi\hot\Xib-\frac 1 2 \BF \hot \BBF,
\eeaa
\beaa
\nabc_3\Hb -\nabc_4\Xib &=&  -\frac{1}{2}\ov{\tr\Xb}(\Hb-H) -\frac{1}{2}\Xbh\c(\ov{\Hb}-\ov{H}) +\Bb+\PF\BBF,\\
\nabc_4H -\nabc_3\Xi &=&  -\frac{1}{2}\ov{\tr X}(H-\Hb) -\frac{1}{2}\Xh\c(\ov{H}-\ov{\Hb}) -B-\ov{\PF} \BF,\\
\frac{1}{2}\ov{\DDc}\c\Xh &=& \frac{1}{2}\DDc\ov{\tr X}-i\Im(\tr X)H-i\Im(\tr \Xb)\Xi-B+\ov{\PF}\BF,\\
\frac{1}{2}\ov{\DDc}\c\Xbh &=& \frac{1}{2}\DDc\ov{\tr\Xb}-i\Im(\tr\Xb)\Hb-i\Im(\tr X)\Xib+\Bb-\PF \BBF.
\eeaa
Also, for the non-conformal $Z$, $\om$ and $\omb$:
\beaa
\nab_3Z +\frac{1}{2}\tr\Xb(Z+H)-2\omb(Z-H) &=& -2\DD\omb -\frac{1}{2}\widehat{\Xb}\c(\ov{Z}+\ov{H})+\frac{1}{2}\tr X\Xib+2\om\Xib \\
&&-\Bb-\PF \BBF+\frac{1}{2}\ov{\Xib}\c\Xh,\\
\nab_4Z +\frac{1}{2}\tr X(Z-\Hb)-2\om(Z+\Hb) &=& 2\DD\om +\frac{1}{2}\widehat{X}\c(-\ov{Z}+\ov{\Hb})-\frac{1}{2}\tr\Xb\Xi-2\omb\Xi \\
&&-B-\ov{\PF}\BF-\frac{1}{2}\ov{\Xi}\c\Xbh,\\
\nab_3\om+\nab_4\omb -4\om\omb -\xi\c \xib -(\eta-\etab)\c\ze +\eta\c\etab&=&   \rho+\rhoF^2+\dual\rhoF^2
\eeaa
\end{proposition}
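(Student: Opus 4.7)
The plan is to obtain each complexified identity by taking the corresponding real Ricci identity listed just above the proposition, substituting in the explicit Riemann components computed from the Einstein--Maxwell equations, and then packaging the result in the anti-self-dual complex variables of Definition \ref{definition-qu}. Concretely, for each of the four $\nab_3\chib$, $\nab_3\chi$, $\nab_4\chib$, $\nab_4\chi$ identities I would split the two tensor indices $(b,a)$ into their trace part $\tr(\cdot)$, anti-trace part $\atr(\cdot)$, and symmetric traceless $\widehat{(\cdot)}$ part. The trace/anti-trace parts will feed into the equations for $\tr X,\tr\Xb$, while the symmetric traceless parts will feed into the equations for $\widehat{X},\widehat{\Xb}$. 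The same procedure applied to the identities for $\nab_3\ze+2\nab\omb$, $\nab_4\ze-2\nab\om$, $\nab_3\etab-\nab_4\xib$, $\nab_4\eta-\nab_3\xi$ and $\nab_3\om+\nab_4\omb$ will produce the $Z$ and $\om,\omb$ equations, and the Codazzi equations $\nab_a\chi_{bc}-\nab_b\chi_{ac}$ (together with its underlined counterpart) give, after tracing in $a,c$ and dualizing, the two $\ov{\DDc}\c \Xh$ and $\ov{\DDc}\c\Xbh$ identities.

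The complexification is routine once one recognizes that the combination $u+i\dual u$ applied to a real horizontal $2$-tensor extracts the anti-self-dual part, so e.g.\ $\nab_3\chi_{ba}+i\nab_3\dual\chi_{ba}=\nabc_3 X_{ba}+2\omb X_{ba}$, and then one splits $X_{ba}=\tfrac12\de_{ba}\tr X-\tfrac{i}{2}\de_{ba}\atr\chi+\Xh_{ba}$. For the quadratic curvature pieces, I would substitute the Einstein--Maxwell values already derived in the section:
\begin{align*}
R_{a33b}&=-\aa_{ab}-(\BBF\c\ov{\BBF})\de_{ab},\\
R_{a34b}&=\rho\de_{ab}-\dual\rho\in_{ab}-2(\bF\hot\bbF)_{ab},\\
R_{a443}&=2\b_a+2\dual\rhoF\dual\bF_a+2\rhoF\bF_a,
\end{align*}
and similarly for the remaining components. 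After complexification these collapse to the clean forms $-\Ab-\tfrac12\BBF\c\ov{\BBF}\,\de$, $P\,\de+\BF\hot\ov{\BBF}$, $B+\ov{\PF}\BF$ etc., which is exactly what appears on the right-hand side of the stated identities.

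The final step is to rewrite everything in conformal derivatives. By the lemma of Section \ref{conf-inv-section}, for a complex horizontal tensor of conformal type $s$ one has $\nabc_3(\cdot)=\nab_3(\cdot)-2s\omb(\cdot)$, $\nabc_4(\cdot)=\nab_4(\cdot)+2s\om(\cdot)$, $\DDc(\cdot)=\DD(\cdot)+sZ(\cdot)$. Since $\tr X,\tr\Xb,\Xh,\Xbh,H,\Hb$ are of conformal type $\pm1$ or $0$ and $\Xi,\Xib$ of type $\pm2$, the connection terms $\om,\omb,Z$ that appear explicitly on the left-hand side of the real identities (and implicitly through $\D$ versus $\nab$) are precisely absorbed into $\nabc$ and $\DDc$, leaving clean covariant expressions. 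The equations for $\nab_3 Z$, $\nab_4 Z$, $\nab_3\om+\nab_4\omb$ are stated with the ordinary $\nab$ since $Z$, $\om$, $\omb$ are themselves the conformal connection and not conformally invariant.

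The main difficulty, and where care is required, is the non-integrability of the horizontal structure, which means $\chi_{ab}\ne\chi_{ba}$ and hence $\atr\chi, \atr\chib$ do not vanish. This forces one to keep track of the antisymmetric pieces $\chi_{ab}-\chi_{ba}$ that appear in the Codazzi equations and in $\chib_{bc}\chi_{ca}$-type contractions, which translate into imaginary-part contributions $i\Im(\tr X)$, $i\Im(\tr\Xb)$ visible in the $\ov{\DDc}\c\Xh$ and $\ov{\DDc}\c\Xbh$ identities. The bookkeeping of these antisymmetric pieces --- together with the fact that several quadratic Ricci-coefficient products such as $\chib_{bc}\chi_{ca}$ do not immediately factor as $\tfrac12\tr\Xb\tr X$ or $\tfrac12\Xbh\c\Xh$ but require also cross terms involving $\atr$ --- is the only genuinely delicate part of the argument; once done identity-by-identity, the complexification collapses all such cross terms into the compact forms $\ov{\tr X}$, $\ov{\tr\Xb}$ that appear on the right-hand sides. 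All remaining steps are mechanical, and the lengthy computations are best relegated to the appendix as the author does.
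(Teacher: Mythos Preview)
Your proposal is correct and matches the paper's approach: the paper presents the real Ricci identities (citing \cite{GKS}), computes the Riemann components from the Einstein--Maxwell stress tensor, and then states the proposition as the direct complexification of these ingredients, with the conformal derivatives absorbing the $\om,\omb,Z$ terms exactly as you describe. The paper provides no further detail beyond ``We summarize the result of their complexification, with the above values of the Riemann curvature in the following,'' so your outline is in fact more explicit than what appears there.
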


Note that we are missing the traditional Gauss equation which, in the integrable case, connects the Gauss curvature of a sphere to a Riemann curvature component.  We now state the non-integrable analogue of the Gauss equation, see \cite{GKS}.

\begin{proposition}
The following identities hold true for $f\in \sk_1$ and $u\in\sk_2$:
\beaa
\big( \nab_a \nab_b- \nab_b \nab_a\big)  f_c&=&\frac 1 2 \in_{ab}(\atrch\nab_3+\atrchb \nab_4) f_c  -\frac  12 E_{cdab} f^d+ \R_{c d   ab}f^d\\
\big( \nab_a \nab_b- \nab_b \nab_a\big)  u_{st} &=&\frac 1 2 \in_{ab}(\atrch\nab_3+\atrchb \nab_4) u_{st}
 -\frac  12 E_{sdab} u_{dt}    -\frac 1 2 E_{tdab} u_{sd} \\
    &+& \R_{s d   ab}u_{dt}+\R_{td ab} u_{sd}
\eeaa
where
\bea\label{definition-E}
E_{cdab}:&=&  \chi_{ac}\chib_{bd} + \chib_{ac}\chi_{bd} - \chi_{bc}\chib_{ad}- \chib_{bc}\chi_{ad} 
\eea
\end{proposition}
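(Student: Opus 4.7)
The plan is to derive the identity by comparing the horizontal iterated derivative $\nab \nab f$, viewed as a $3$-tensor, with its spacetime counterpart $\D \D f$, and invoking the standard Riemann identity on the antisymmetrization of the latter. The $2$-tensor case then follows by applying the same procedure to each horizontal slot of $u$.

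First, I would expand $(\D\D f)(e_a, e_b, e_c)$ using the tensorial rule for the second covariant derivative together with the Ricci formulas $\D_a e_b = \nab_a e_b + \tfrac12 \chi_{ab} e_3 + \tfrac12 \chib_{ab} e_4$. Since $f \in \sk_1$ is horizontal, the auxiliary values $(\D_b f)(e_c) = \nab_b f_c$, $(\D_b f)(e_3) = -\chib_{bd} f^d$, $(\D_b f)(e_4) = -\chi_{bd} f^d$, $(\D_3 f)(e_c) = \nab_3 f_c$, $(\D_4 f)(e_c) = \nab_4 f_c$ all follow directly from the Ricci formulas and $f(e_3) = f(e_4) = 0$. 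Feeding these into the expansion and isolating the horizontal iterated derivative yields
\[
(\D\D f)(e_a, e_b, e_c) = (\nab\nab f)(e_a, e_b, e_c) - \tfrac12 \chi_{ab} \nab_3 f_c - \tfrac12 \chib_{ab} \nab_4 f_c + \tfrac12 \chi_{ac}\chib_{bd} f^d + \tfrac12 \chib_{ac}\chi_{bd} f^d.
\]

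Next, I would antisymmetrize this identity in $a, b$. Using $\chi_{ab} - \chi_{ba} = \atrch \in_{ab}$ and the analogous identity for $\chib$, the single-connection terms contribute $-\tfrac12 \in_{ab}(\atrch \nab_3 + \atrchb \nab_4) f_c$, while the bilinear ones reorganize precisely into $\tfrac12 E_{cdab} f^d$ via the definition \eqref{definition-E}. On the spacetime side the standard identity for the $3$-tensor $\D\D f$ in a general frame reads $(\D\D f)(e_a, e_b, e_c) - (\D\D f)(e_b, e_a, e_c) = \R_{cdab} f^d$, where the non-integrability Lie bracket $[e_a, e_b]$ is already absorbed into the tensorial definition. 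Equating the two antisymmetrizations and solving for $(\nab_a \nab_b - \nab_b \nab_a) f_c$ produces the stated formula.

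The $2$-tensor case is structurally identical: each horizontal slot $s, t$ of $u_{st}$ contributes a copy of the bilinear correction, yielding the combined term $-\tfrac12 E_{sdab} u_{dt} - \tfrac12 E_{tdab} u_{sd}$, and the antisymmetrization of the $3$-tensor $\D\D u$ produces $\R_{sdab} u_{dt} + \R_{tdab} u_{sd}$. The single-connection piece depends only on the derivative structure and not on the number of tensorial indices, so it still combines into $\tfrac12 \in_{ab}(\atrch \nab_3 + \atrchb \nab_4) u_{st}$. The main obstacle is computational bookkeeping: one must verify that no spurious Ricci coefficients (involving $\eta, \etab, \xi, \xib, \om, \omb, \ze$) survive. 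This is ensured by $f$ and $u$ being horizontal, so that $\D_3 f_c = \nab_3 f_c$ and $\D_4 f_c = \nab_4 f_c$ with no Ricci corrections, while the only non-horizontal corrections come from the $\chi_{ab}, \chib_{ab}$ terms in the decomposition of $\D_a e_b$ and $\D_a e_c$, which precisely produce the $E$ tensor and the $\atrch, \atrchb$ traces advertised in the identity.
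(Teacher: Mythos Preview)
Your proposal is correct and carries out the natural derivation: compare $\D\D f$ and $\nab\nab f$ via the Ricci formulas, then antisymmetrize and invoke the spacetime curvature identity. The paper itself does not supply a proof here but simply cites Proposition~2.34 of \cite{GKS}, so your argument is in fact more complete than what appears in the text; it is also the standard route one would expect the cited reference to take.
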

 \begin{proof} See Proposition 2.34 in \cite{GKS}. 
 \end{proof}

\subsection{The Bianchi identities}

The Bianchi identities for the Weyl curvature are given by 
\bea\label{J-def}
 \D^\a \W_{\a\b\gamma\delta}&=&\frac 1 2 (\D_\gamma \R_{\b\delta}-\D_{\delta}\R_{\b\gamma})=:J_{\b\gamma\delta} 
\eea
The null Bianchi identities are given by, see \cite{GKS},
    \beaa
    \nab_3\a-2  \nab\hot \b&=&-\frac 1 2 \big(\trchb\a+\atrchb\dual \a)+4\omb \a+
 2 (\ze+4\eta)\hot \b - 3 (\rho\chih +\rhod\dual\chih)+\mathfrak{a},\\
     \nab_4\aa+2 \nab\hot \bb&=&-\frac 1 2 \big(\trch\aa-\atrch\dual \aa)+4\om \aa+2
 (\ze-4\etab)\hot \bb - 3  (\rho\chibh -\rhod\dual\chibh)+\underline{\mathfrak{a}}.
 \eeaa
 where
 \beaa
\mathfrak{a}_{ab}&=& J_{ba4}+J_{ab4}-\frac 1 2 \de_{ab}J_{4 34}, \qquad \underline{\mathfrak{a}}_{ab}= J_{ba3}+J_{ab3}-\frac 1 2 \de_{ab}J_{343} 
\eeaa
We also have
 \beaa
\nab_4\beta - \div\a &=&-2(\trch\beta-\atrch \dual \b) - 2  \om\b +\a\c  (2 \ze +\etab) + 3  (\xi\rho+\dual \xi\rhod)-J_{4a4},\\
    \nab_3\bb +\div\aa &=&-2(\trchb\,\bb-\atrchb \dual \bb)- 2  \omb\bb-\aa\c(-2\ze+\eta) - 3  (\xib\rho-\dual \xib \rhod)+J_{3a3},\\
     \nab_3\b+\div\varrho&=&-(\trchb \b+\atrchb \dual \b)+2 \omb\,\b+2\bb\c \chih+3 (\rho\eta+\rhod\dual \eta)+    \a\c\xib+J_{3a4},\\
          \nab_4\bb-\div\varoc&=&-(\trch \bb+ \atrch \dual \bb)+ 2\om\,\bb+2\b\c \chibh
    -3 (\rho\etab-\rhod\dual \etab)-    \aa\c\xi-J_{4a3}
    \eeaa
    where
\beaa
\div\varo&=&- (\nab\rho+\dual\nab\rhod), \qquad \div\varoc=- (\nab\rho-\dual\nab\rhod).
\eeaa
Finally,
    \beaa
 \nab_4 \rho-\div \b&=&-\frac 3 2 (\trch \rho+\atrch \rhod)+(2\etab+\ze)\c\b-2\xi\c\bb-\frac 1 2 \chibh \c\a-\frac 1 2 J_{434},\\
   \nab_4 \rhod+\curl\b&=&-\frac 3 2 (\trch \rhod-\atrch \rho)-(2\etab+\ze)\c\dual \b-2\xi\c\dual \bb+\frac 1 2 \chibh \c\dual \a-\frac 1 2\dual J_{434} , \\
     \nab_3 \rho+\div\bb&=&-\frac 3 2 (\trchb \rho -\atrchb \rhod) -(2\eta-\ze) \c\bb+2\xib\c\b-\frac{1}{2}\chih\c\aa-\frac 1 2 J_{343},
 \\
   \nab_3 \rhod+\curl\bb&=&-\frac 3 2 (\trchb \rhod+\atrchb \rho)- (2\eta-\ze) \c\dual \bb-2\xib\c\dual\b-\frac 1 2 \chih\c\dual \aa+\frac 1 2 \dual J_{343}
   \eeaa

We compute the $J$s terms through the Ricci curvature, and then use the Maxwell equations to simplify them. We summarize the final Bianchi identities in the following, and defer the proof to the Appendix.

\begin{proposition}\label{prop:bianchi:complex} 
    In complex notations and using the conformal derivatives we have the following Bianchi identities:
    \beaa
 \nabc_3A+\frac{1}{2}\tr\Xb A  &=&\DDc\hot B  +4 H\hot B -3\ov{P}\Xh-2\ov{\PF}\left( -\frac 1 2 \DDc \hot \BF+\PF\Xh  \right)\\
 &&+\frac 1 2\nab_4(\BF \hot \BBF)+\frac 1 2 \nabc_3(\BF \hot \BF)\\
 &&+ \left(- \frac 1 2 \tr X \BBF +\frac 1 2 \Xbh \c \ov{\BF}+\frac 1 2  \Xh \c \BBF+\PF \Xi \right)\hot \BF\\
  \nabc_4\Ab+\frac{1}{2}\tr X \Ab  &=&-\DDc\hot \Bb  -4 \Hb\hot \Bb -3P\Xbh+2\PF \left( -\frac 1 2 \DDc \hot \BBF -\ov{\PF}\Xbh \right)\\
&&+\frac 1 2\nab_3(\BBF \hot \BF)+\frac 1 2 \nabc_4(\BBF \hot \BBF)\\
 &&+\left(- \frac 1 2 \tr \Xb \BF +\frac 1 2 \Xh \c \ov{\BBF}+\frac 1 2  \Xbh \c \BF-\PF \Xib \right)\hot \BBF
\eeaa
We also have
 \beaa
\nabc_4B -\frac{1}{2}\ov{\DDc}\c A &=& -2\ov{\tr X} B  +\frac{1}{2}A\c  \ov{\Hb}+\left( 3\ov{P} -2\ov{\PF}\PF\right)\,\Xi+ \ov{\PF} \nabc_4(\BF)\\
&&+\frac 1 2  \DDc(\BF\c  \ov{\BF})  ,\\
\nabc_3\Bb +\frac{1}{2}\ov{\DDc}\c\AA &=& -2\ov{\tr\Xb}\,\Bb  -\frac{1}{2}\Ab\c \ov{H}-\left(3P-2\ov{\PF}\PF\right) \,\Xib+ \PF \nabc_3(\BBF)\\
&&+\frac 1 2  \DDc(\BBF\c  \ov{\BBF})  
\eeaa
and
\beaa
\nabc_3B-\DDc\ov{P} &=& -\tr\Xb B+\ov{\Bb}\c \Xh+3\ov{P}H +\frac{1}{2}A\c\ov{\Xib}\\
&&+\ov{\PF}\DDc(\PF) -\frac 1 2 \tr \Xb \ov{\PF} \BF -\ov{\tr X}\PF \BBF\\
&& +\frac 1 2( \ov{\DDc }\c \BF)\BBF-\PF \Xh \c \BBF-\frac 1 2 \ov{\PF} \Xbh \c \BF,\\
\nabc_4\Bb+\DDc P &=& -\tr X\Bb+\ov{B}\c \Xbh-3P\Hb -\frac{1}{2}\Ab\c\ov{\Xi}\\
&&-\PF\DDc(\ov{\PF}) -\frac 1 2 \tr X \PF\BBF -\ov{\tr \Xb}\ov{\PF} \BF\\
&&+\frac 1 2( \ov{\DDc }\c \BBF)\BF+\PF \Xbh \c \BF+\frac 1 2 \ov{\PF} \Xh \c \BBF
\eeaa
Finally,
\beaa
\nabc_4P -\frac{1}{2}\DDc\c \ov{B} &=& -\frac{3}{2}\tr X P -\tr X \PF \ov{\PF}+\Hb\c\ov{B} -\ov{\Xi}\c\Bb -\frac{1}{4}\Xbh\c \ov{A}\\
&&+ \frac 1 2 \PF \DDc \c \ov{\BF} +   \ov{H}  \c  \ov{\PF}\BF \\
&&+ \nabc_3(\BF \hot \BF)+ (-  \tr \Xb \BF - \frac 1 2 \Xh \c \BBF)\hot \BF \\
\nabc_3P +\frac{1}{2}\ov{\DDc}\c\Bb &=& -\frac{3}{2}\ov{\tr\Xb} P -\ov{\tr \Xb} \PF \ov{\PF} -\ov{H}\c\Bb +\Xib\c \ov{B} -\frac{1}{4}\ov{\Xh}\c\Ab\\
&&- \frac 1 2 \PF \ov{\DDc} \c \BF -   \Hb  \c  \ov{\PF}\BBF \\
&&+ \nabc_4(\BBF \hot \BBF)+ (-  \tr X \BBF - \frac 1 2 \Xbh \c \BF)\hot \BBF
\eeaa
\end{proposition}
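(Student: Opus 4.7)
The plan is to start from the real null Bianchi identities already written out in the paragraph directly above the proposition statement, each of which carries an explicit $J_{\beta\gamma\delta}$ source defined in \eqref{J-def}. The first step is to compute these $J$ terms from the Ricci tensor, which has just been decomposed in frame components via the Einstein equation \eqref{Einstein-1}. Using the formulae for $\R_{ab},\R_{a3},\R_{a4},\R_{33},\R_{44},\R_{34}$ in terms of $\rhoF,\dual\rhoF,\bF,\bbF$ obtained in the preceding subsection, together with the Ricci formulae \eqref{ricci} to evaluate the covariant derivatives $\D_\gamma\R_{\beta\delta}$, one expands $J_{\beta\gamma\delta}=\tfrac12(\D_\gamma \R_{\beta\delta}-\D_\delta \R_{\beta\gamma})$ into a sum of (i) derivatives of electromagnetic scalars/1-forms $\rhoF,\dual\rhoF,\bF,\bbF$ and (ii) algebraic products of these with connection coefficients. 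Care must be taken with the non-integrability terms $\atrch,\atrchb$ and with the antisymmetric parts of $\chi,\chib$, since nothing here is assumed to come from an integrable horizontal distribution.

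The second step is the key simplification: every transport derivative of the electromagnetic components that appears in $J$ after this expansion is eliminated by substituting from the Maxwell equations of Proposition \ref{prop-Maxwell}. After the substitution, the entire $J$ contribution on the right-hand side of each real Bianchi identity collapses to algebraic expressions in $\rhoF,\dual\rhoF,\bF,\bbF$ multiplied by Ricci coefficients, plus a small number of residual transport derivatives of the form $\nab_4\bF$, $\nab_3\bbF$, $\nab\rhoF$, $\nab\dual\rhoF$ that Maxwell cannot fully reduce; these are precisely the terms that, after complexification, reappear in the proposition as $\nabc_4(\BF)$, $\nabc_3(\BBF)$ and $\DDc(\PF)$ acting on the source side.

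The third step is to complexify, by adding $i$ times the dual of each real Bianchi identity to the original. The combinations $A=\a+i\dual\a$, $B=\b+i\dual\b$, $P=\rho+i\dual\rho$, $\Ab=\aa+i\dual\aa$, $\Bb=\bb+i\dual\bb$, $\BF=\bF+i\dual\bF$, $\BBF=\bbF+i\dual\bbF$, $\PF=\rhoF+i\dual\rhoF$ from Definition \ref{definition-qu} absorb pairs of real equations into single complex ones. Simultaneously, the operators $\nab\hot,\,\div,\,\curl$ combine via Definition \ref{definition-complex-operators} into $\DD\hot$ and $\ov{\DD}\c$, and the quadratic electromagnetic sources rearrange into the clean wedge combinations $\BF\hot\BF$, $\BF\hot\BBF$, $\BF\c\ov{\BF}$ that one sees on the right-hand sides of the proposition. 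Finally, one passes to conformal derivatives using $\nabc_3 f=\nab_3 f-2s\omb f$, $\nabc_4 f=\nab_4 f+2s\om f$, $\DDc f=\DD f+sZf$, inserting for each variable its correct conformal type ($s=\pm 2$ for $A,\Ab$, $s=\pm 1$ for $B,\Bb,\BF,\BBF$, $s=0$ for $P,\PF$); the stray $\om,\omb,Z$ terms appearing in the real equations are then absorbed into these conformal derivatives by construction.

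The conceptual content is modest, but the main obstacle will be accountancy. Each substitution must be done with the correct conformal weight, the Hodge-dual identities $\dual(\xi\hot\eta)=\xi\hot\dual\eta=\dual\xi\hot\eta$ and $\in^{ab}\in^{cd}=\de^{ac}\de^{bd}-\de^{ad}\de^{bc}$ must be tracked through the quadratic $F\cdot F$ terms, and the non-integrable corrections coming from $\atrch,\atrchb$ and the antisymmetric parts of $\chi,\chib$ must be carried uniformly so that the final identities remain valid without assuming integrability. The sheer volume of bookkeeping is precisely why the explicit derivation is deferred to Appendix \ref{appendix-section-a}; no new idea is required beyond combining (a) the Einstein equation to evaluate $J$, (b) the Maxwell equations of Proposition \ref{prop-Maxwell} to eliminate unwanted transport derivatives, and (c) the complex/conformal formalism of Section \ref{complex-notations-section} and Section \ref{conf-inv-section} to repackage the result.
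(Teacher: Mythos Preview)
Your proposal is correct and follows essentially the same approach as the paper's proof in Appendix \ref{proof-compl=Bianchi}: first compute the $J_{\beta\gamma\delta}$ terms from the Ricci decomposition via \eqref{ricci} (Lemma \ref{lemma-formulas-J}), then simplify using the Maxwell equations of Proposition \ref{prop-Maxwell} (Lemma \ref{lemma:simplify-J-Maxwell}), then complexify by forming $\mathfrak{a}+i\dual\mathfrak{a}$, $J_{4a4}+i\dual J_{4a4}$, etc., and finally absorb the $\om,\omb,Z$ terms into conformal derivatives. Your description of the bookkeeping hazards (non-integrable corrections, conformal weights, Hodge-dual identities) matches what the explicit computation in the appendix tracks.
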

\begin{proof} See Appendix \ref{proof-compl=Bianchi}. 
\end{proof}

    \section{The Kerr-Newman spacetime and its linear perturbations}\label{kn-section}
    
    In this section we introduce the Kerr-Newman spacetime and its representation within the formalism above introduced. For a more complete description of the Kerr-Newman spacetime see \cite{Exact}. 

%%%%%%%%%%%%%%%

\subsection{The Kerr-Newman metric}\label{sec:values-KN}

The Kerr-Newman black hole $\g_{M, a, Q}$ represents the most general explicit solution of a stationary, rotating (with spin $a$) and charged (with charge $Q$) black hole of mass $M$.
We consider the Kerr-Newman metric in standard Boyer-Lindquist coordinates $(t, r, \th, \vphi)$:
$$\g_{M, a, Q}=-\frac{\Delta}{|q|^2}\left( dt- a \sin^2\th d\vphi\right)^2+\frac{|q|^2}{\Delta}dr^2+|q|^2 d\th^2+\frac{\sin^2\th}{|q|^2}\left(a dt-(r^2+a^2) d\vphi \right)^2,$$
where 
\bea\label{definition-q}
q=r+ i a \cos\th, \qquad \ov{q}=r- i a \cos\th
\eea
and
\beaa
\Delta &=& r^2-2Mr+a^2+Q^2,\\
|q|^2 &=& r^2+a^2(\cos\theta)^2.
\eeaa

The metric $\g_{M, a, Q}$ is a solution to the Einstein-Maxwell equations \eqref{Einstein-1} and \eqref{Maxwell}, with electromagnetic tensor $\F=d\A$, and vector potential $\A$ given by
\beaa
\A&=& -\frac{Qr}{|q|^2} \left( dt - a \sin^2\th d\phi\right).
\eeaa
 We note that $\partial_t$ and $\partial_\vphi$ are Killing vectorfields of the metric.

The Kerr-Newman metric is of Petrov Type D, i.e. its Weyl curvature can be diagonalized with two linearly independent eigenvectors, the so-called principal null directions.
The principal null frame is given\footnote{There is an indeterminacy in the principal null frame as one may replace the pair $(e_3, e_4)$ with $(\la^{-1}e_3, \la e_4)$ for any $\lambda>0$. The formulas in this section correspond to the choice of $\lambda$ such that $e_4$ is geodesic. } by
\beaa
&& e_4=\frac{r^2+a^2}{\Delta}\pr_t+\pr_r+\frac{a}{\Delta}\pr_\varphi,\qquad e_3=\frac{r^2+a^2}{|q|^2}\pr_t-\frac{\Delta}{|q|^2}\pr_r+\frac{a}{|q|^2}\pr_\varphi,\\ && e_1=\frac{1}{\sqrt{|q|^2}}\pr_\th,\qquad e_2=\frac{a\sin\th}{\sqrt{|q|^2}}\pr_t+\frac{1}{\sqrt{|q|^2}\sin\th}\pr_\varphi.
\eeaa
With respect to the principal null frame, we have 
\beaa
&&\chih=\chibh=\xi=\xib=0, \qquad \bF= \bbF=0, \qquad \a=\b=\bb=\aa=0
\eeaa
or their complexified versions\footnote{In NP formalism, this corresponds to the vanishing of $\sigma=\lambda=\kappa=\nu=0$, $\Phi_0=\Phi_1=\Phi_3=\Phi_4=0$, and $\phi_0=\phi_2=0$.},
\bea\label{eq:vanishing}
&&\Xh=\Xbh=\Xi=\Xib=0, \qquad \BF=\BBF=0, \qquad A=B=\Bb=\Ab=0.
\eea
With the above choice of principal null frame, the Ricci coefficients are given by
\beaa
&& \trch=\frac{2r}{|q|^2},\quad \atrch=\frac{2a\cos\th}{|q|^2}
, \qquad  \trchb=-\frac{2r\Delta}{|q|^4}, \quad \atrchb=\frac{2a\Delta\cos\th}{|q|^4}\\
&& \omb = \frac{a^2\cos^2\th(r-M)+Mr^2-a^2r-Q^2r}{|q|^4}, \qquad \om=0, \qquad \etab=-\ze
\eeaa
Also, we have 
\beaa
\eta_1&=& -\frac{a^2\sin\th \cos\th}{|q|^3}, \qquad \eta_2=\frac{a\sin\th r}{|q|^3}, \\
\dual \eta_1&=& \frac{a\sin\th r}{|q|^3}, \qquad \dual \eta_2= \frac{a^2\sin\th \cos\th}{|q|^3},\\
\etab_1&=& -\frac{a^2\sin\th \cos\th }{|q|^3}, \qquad \etab_2 =-\frac{a\sin\th r}{|q|^3},\\
\dual \etab_1&=& -\frac{a\sin\th r}{|q|^3}, \qquad \dual \etab_2 =\frac{a^2\sin\th \cos\th}{|q|^3}.
\eeaa
Their complexified values are given by
\beaa
&&  \tr X=\frac{2}{q}, \qquad \tr\Xb=-\frac{2\Delta  }{q \ov{q}^2}, \qquad  \Hb=-Z, \\
&&H_1=\frac{ai\sin\th\, q}{|q|^3}, \qquad H_2=\frac{a\sin\th \, q}{|q|^3}, \\
&& Z_1=\frac{ai\sin\th\, \ov{q}}{|q|^3},\qquad\,\, Z_2=\frac{a\sin\th\,\ov{q}}{|q|^3}.
\eeaa
The non-vanishing electromagnetic components are given by
\beaa
&&\rhoF=\frac{Q(r^2- a^2\cos^2\th)}{|q|^4}, \qquad \dual\rhoF=\frac{2 a Q r \cos\th}{|q|^4}
\eeaa
with complexified value
\beaa
\PF=\frac{Q}{\ov{q}^2}.
\eeaa
The non-vanishing curvature components are given by
\beaa
\rho&=& \frac{1}{|q|^6} (-2Mr^3+2Q^2r^2+6M  a^2 \cos^2\th r-2Q^2a^2\cos^2\th),\\
 \dual \rho&=& \frac{a\cos\th }{|q|^6} (6Mr^2-4Q^2 r- 2Ma^2 \cos^2\th)
\eeaa
with complexified value
\beaa
&& P=-\frac{2M}{q^3}+\frac{2Q^2}{q^3 \ov{q}}. \nonumber
\eeaa

\subsection{The Einstein-Maxwell equations in Kerr-Newman}

 Using the vanishing of the Ricci, curvature and electromagnetic components given by \eqref{eq:vanishing},  one can see that many of the  Einstein-Maxwell equations obtained in Section \ref{section-equations} become trivial in Kerr-Newman. We denote those which are not trivially satisfied as reduced equations, and we collect them in the following. 

 \begin{proposition}\label{prop:reduced-equations}The reduced Maxwell equations in Kerr-Newman are
  \bea
\nabc_4 \PF&=&- \ov{\tr X} \PF , \qquad \nabc_3 \PF=- \tr \Xb \PF\label{nabc-4-PF-red}  \label{nabc-3-PF-red} \\
\DDc\PF&=&-2\PF H, \qquad \DDc\ov{\PF}=-2\ov{\PF} \Hb. \label{DDc-PF-red} \label{DDc-ov-PF-red}
\eea
The reduced Ricci identities in Kerr-Newman are
\bea
 \nabc_3\tr\Xb +\frac{1}{2}(\tr\Xb)^2 &=&0, \qquad \nabc_4\tr X +\frac{1}{2}(\tr X)^2 =0 \label{nabc-3-trXb-red} \\
\nabc_3\tr X +\frac{1}{2}\tr\Xb\tr X &=& \DDc\c\ov{H}+H\c\ov{H}+2P,\\
\nabc_4\tr\Xb +\frac{1}{2}\tr X\tr\Xb  &=& \DDc\c\ov{\Hb}+\Hb\c\ov{\Hb}+2\ov{P}, \label{nabc-4-trXb-red}\\
 \DDc\hot H  +H\hot H&=&0, \qquad  \DDc\hot\Hb  +\Hb\hot\Hb= 0\\
\nabc_3\Hb+\frac{1}{2}\ov{\tr\Xb}(\Hb-H) &=& 0 ,\qquad \nabc_4H+\frac{1}{2}\ov{\tr X}(H-\Hb) =  0 ,\label{nabc-3-Hb-red}\label{nabc-4-H-red}\\
\DDc\ov{\tr X} -(\tr X-\ov{\tr X})H&=&0 , \qquad \DDc\ov{\tr\Xb} -(\tr\Xb-\ov{\tr \Xb})\Hb= 0.\label{Codazzi-1-red}\label{Codazzi-2-red}
\eea
 The reduced Bianchi identities in Kerr-Newman are
\bea
\DDc\ov{P} &=& -\left(3\ov{P} -2\PF\ov{\PF} \right)H ,\qquad \DDc P =-\left(3P -2\PF\ov{\PF} \right)\Hb \label{DDc-ov-P-red}\label{DDc-P-red}\\
\nabc_4P  &=& -\frac{3}{2}\tr X P -\tr X \PF \ov{\PF}, \qquad \nabc_3P  = -\frac{3}{2}\ov{\tr\Xb} P -\ov{\tr \Xb} \PF \ov{\PF} \label{nabc-4-P-red}\label{nabc-3-P-red}
\eea
 \end{proposition}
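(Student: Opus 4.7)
The proof is essentially a systematic substitution: we insert the vanishing identities \eqref{eq:vanishing}, namely $\Xh=\Xbh=\Xi=\Xib=0$, $\BF=\BBF=0$ and $A=B=\Bb=\Ab=0$, into each of the complex Einstein--Maxwell equations obtained in Propositions~\ref{prop-Maxwell}, \ref{prop-nullstr:complex} and \ref{prop:bianchi:complex}, discard those that reduce to the tautology $0=0$, and collect what remains. The only non-trivial content is bookkeeping: checking that no quadratic term hidden inside a product like $\Xh\c\ov{\BBF}$, $\Xib\hot\BF$, $\PF\Xi$ or $\Xbh\c \ov{A}$ survives, and that the conformal derivatives $\nabc$ of vanishing tensors also vanish on Kerr--Newman (since all background data are smooth and $\nabc$ only shifts by $\om$, $\omb$, or $Z$).

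For the Maxwell system, the equation for $\nabc_3\BF$ becomes, on the left, $-\DDc\PF$ and on the right, $2\PF H$, producing \eqref{DDc-PF-red}; symmetrization gives $\DDc\ov{\PF}=-2\ov{\PF}\Hb$. The two equations for $\nabc_4\PF$ and $\nabc_3\PF$ lose the $\ov{\DDc}\c\BF$, $\DDc\c\ov{\BBF}$, $\ov{\Hb}\c\BF$, $H\c\ov{\BBF}$, $\Xi\c\ov{\BBF}$ and $\ov{\Xib}\c\BF$ terms, leaving \eqref{nabc-4-PF-red}. For the Ricci identities of Proposition~\ref{prop-nullstr:complex}, the first two equations collapse to the shearfree/geodesic identities $\nabc_3\tr\Xb+\tfrac12(\tr\Xb)^2=0$ and its conjugate. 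The equations for $\nabc_3\tr X$ and $\nabc_4\tr\Xb$ retain the Gauss-type structure with $\DDc\c\ov{H}+H\c\ov{H}+2P$. The four ``shear transport'' equations become homogeneous identities $\DDc\hot H+H\hot H=0$ and $\DDc\hot\Hb+\Hb\hot\Hb=0$ after noting that $\Ab$, $A$, $\Xib\hot\Xi$, $\Xi\hot\Xib$ and $\BF\hot\BBF$ all vanish; the transport equations for $\Hb$ and $H$ survive as \eqref{nabc-3-Hb-red}; and the two Codazzi equations reduce, after observing that $-i\Im(\tr X)=\tfrac12(\ov{\tr X}-\tr X)$, to the algebraic relations \eqref{Codazzi-1-red}.

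For the Bianchi identities, the two transport equations for $A$ and $\Ab$ become trivial once one checks that every term on the right hand side contains a factor from $\{A,\Ab,B,\Bb,\Xh,\Xbh,\Xi,\Xib,\BF,\BBF\}$ (including the composite pieces $\DDc\hot\BF$, $\nab_4(\BF\hot\BBF)$, etc.), and similarly the transport equations for $B$ and $\Bb$ reduce to $0=0$. The two remaining ``elliptic'' Bianchi identities for $B$ and $\Bb$ lose all curvature and electromagnetic null components and reduce to $\DDc\ov{P}=-(3\ov{P}-2\PF\ov{\PF})H$ and its conjugate, using $\ov{\PF}\DDc\PF=-2\PF\ov{\PF}H$ from \eqref{DDc-PF-red}. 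Finally, the two transport equations for $P$ reduce directly to \eqref{nabc-4-P-red} since every coupling term disappears.

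The only mildly subtle step is the elliptic Bianchi equation: the equation for $\nabc_3 B$ a priori contains $\ov{\PF}\DDc\PF$, $\tfrac12\tr\Xb \ov{\PF}\BF$, $\ov{\tr X}\PF\BBF$, and the terms $\tfrac12(\ov{\DDc}\c\BF)\BBF$, $\PF\Xh\c\BBF$, $\tfrac12\ov{\PF}\Xbh\c\BF$; one must verify that after setting $\BF=\BBF=\Xh=\Xbh=0$ the only surviving contribution is $\ov{\PF}\DDc\PF=-2\PF\ov{\PF}H$, which combines with $3\ov{P}H$ to give the stated right hand side. This is the one place where one has to be careful not to overlook a product that formally looks nonzero but is actually a square of a vanishing quantity; everything else is immediate substitution.
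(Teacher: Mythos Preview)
Your proposal is correct and is exactly the approach the paper takes: the paper states the proposition as a direct consequence of substituting the vanishing conditions \eqref{eq:vanishing} into the general complex Einstein--Maxwell equations of Propositions~\ref{prop-Maxwell}, \ref{prop-nullstr:complex} and \ref{prop:bianchi:complex}, without writing out a proof. Your write-up in fact supplies more detail than the paper does, including the one genuinely non-automatic step (feeding $\ov{\PF}\DDc\PF=-2\PF\ov{\PF}H$ into the reduced $\nabc_3 B$ Bianchi identity) and the Codazzi rewriting $-i\Im(\tr X)=\tfrac12(\ov{\tr X}-\tr X)$.
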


From the above we deduce (see also \cite{GKS}) the following identities for $q=r+ia \cos\th$:
\bea\label{transport-for-q}
\begin{split}
\nab_3 q&= \frac 1 2 \ov{\tr \Xb} \,  q , \qquad \nab_4 q= \frac 1 2 \tr X q \qquad   \DDov q =  q\ov{H} , \qquad   \DD q=  q  \Hb
   \end{split}
\eea

 \subsection{The wave operators in Kerr-Newman spacetime}
 
 In what follows, we will need to express the equations governing electromagnetic-gravitational perturbations of Kerr-Newman in terms of wave operators applied to $k$-horizontal tensor fields. In this section, we collect useful formulas for those operators.

  Consider the wave operator for  $\Psi \in \sk_k(\mathbb{C})$ defined as
 \bea\label{eq:definition-squared}
 \squared_k\Psi_{ab}:= \g^{\mu\nu} \Ddot_\mu\Ddot_ \nu \Psi_{ab}.
\eea
where $\Ddot$ is the horizontal covariant derivative as defined in \eqref{eq:def-horizontal-cov-der}.

\begin{lemma}\label{lemma:expression-wave-operator}
The wave operator for $\Psi\in \sk_k(\mathbb{C})$ is given by
\bea\label{eq:general-wave}
\begin{split}
\squared_k \Psi&=-\frac 1 2 \big(\nab_3\nab_4\Psi+\nab_4 \nab_3 \Psi\big)+\lap_k \Psi +\left(\omb -\frac 1 2 \trchb\right) \nab_4\Psi+\left(\om -\frac 1 2 \trch\right) \nab_3\Psi + (\eta+\etab) \c\nab \Psi,
\end{split}
\eea
where $\lap_k=\de^{ab} \nab_a\nab_b$ is the Laplacian operator for horizontal $k$-tensors.
More precisely, for $F \in \sk_1(\CCC)$ we have
 \beaa
 \squared_1 F&=&- \nab_3\nab_4F+\lap_1 F +\left(2\omb -\frac 1 2 \trchb\right) \nab_4F -\frac 1 2 \trch \nab_3F + 2\eta \c\nab F +i \left(- \rhod+ \eta \wedge \etab  \right) F\\
 &=&- \nab_3\nab_4F+\frac 1 2    \DDb \c ( \DD \hot F) +\left(2\omb -\frac 1 2\ov{\tr\Xb} \right) \nab_4F -\frac 1 2 \ov{\tr X} \nab_3F+ (H+ \ov{H}) \c\nab F \\
 && +  \left(  \frac 14 \trch\trchb+\frac 1 4 \atrch\atrchb+ \rho-\rhoF^2-\dual\rhoF^2+i \left(- \rhod+ \eta \wedge \etab  \right)\right)  F
 \eeaa
and for $U \in \sk_2(\CCC)$ we have
 \beaa
 \squared_2 U&=&- \nab_3\nab_4U+\lap_2 U +\left(2\omb -\frac 1 2 \trchb\right) \nab_4U -\frac 1 2 \trch \nab_3U + 2\eta \c\nab U +i \left(- 2\rhod+2 \eta \wedge \etab  \right) U\\
 &=&- \nab_3\nab_4U+\frac 1 2  \DD \hot (\DDb \c U) +\left(2\omb -\frac 1 2\tr\Xb \right) \nab_4U -\frac 1 2\tr X \nab_3U+ (H+\ov{H}) \c\nab U\\
 &&+ \left(-\frac 1 2  \trch\trchb- \frac 1 2 \atrch\atrchb-2\rho+2\rhoF^2+2\dual\rhoF^2+i \left(- 2\rhod+2 \eta \wedge \etab  \right)\right)  U
 \eeaa
\end{lemma}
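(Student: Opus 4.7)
The proof splits into two stages: establishing the general formula \eqref{eq:general-wave}, and then deriving the two specialized forms for $F\in\sk_1(\CCC)$ and $U\in\sk_2(\CCC)$.

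\textbf{Stage 1.} Decomposing the inverse metric in the null frame gives $\g^{\mu\nu}=-\tfrac{1}{2}(e_3^\mu e_4^\nu+e_4^\mu e_3^\nu)+\de^{ab}e_a^\mu e_b^\nu$, so that $\squared_k\Psi=-\tfrac{1}{2}(\Ddot_3\Ddot_4+\Ddot_4\Ddot_3)\Psi+\de^{ab}\Ddot_a\Ddot_b\Psi$. Applying the tensorial identity $(\Ddot\Ddot\Psi)(X,Y)=\Ddot_X(\Ddot_Y\Psi)-\Ddot_{D_X Y}\Psi$ and substituting the Ricci formulas \eqref{conn-coeff}, namely $D_3e_4=2\omb e_4+2\eta_b e_b$, $D_4e_3=2\om e_3+2\etab_b e_b$, and $D_{e_a}e_b=\nab_a e_b+\tfrac{1}{2}\chi_{ab}e_3+\tfrac{1}{2}\chib_{ab}e_4$, each double derivative is rewritten as an iterated horizontal derivative plus explicit Ricci-coefficient corrections: $\Ddot_3\Ddot_4\Psi=\nab_3\nab_4\Psi-2\omb\nab_4\Psi-2\eta\c\nab\Psi$, $\Ddot_4\Ddot_3\Psi=\nab_4\nab_3\Psi-2\om\nab_3\Psi-2\etab\c\nab\Psi$, and $\de^{ab}\Ddot_a\Ddot_b\Psi=\lap_k\Psi-\tfrac{1}{2}\trch\,\nab_3\Psi-\tfrac{1}{2}\trchb\,\nab_4\Psi$. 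Summing gives \eqref{eq:general-wave}.

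\textbf{Stage 2.} I first pass from the symmetric null derivative to a single $\nab_3\nab_4$ via $-\tfrac{1}{2}(\nab_3\nab_4+\nab_4\nab_3)=-\nab_3\nab_4+\tfrac{1}{2}[\nab_3,\nab_4]$, and evaluate the commutator on anti-self-dual horizontal tensors using the Ricci identities of Proposition \ref{prop-nullstr:complex}. This produces $[\nab_3,\nab_4]F=2\omb\nab_4F-2\om\nab_3F+2(\eta-\etab)\c\nab F+2i(-\rhod+\eta\wedge\etab)F$ together with its 2-tensor analogue for $U$, in which the curvature coefficient doubles since the identity applies once per horizontal index. Added to the general formula this yields the first, real-$\trch,\trchb$ form of the equations. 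Next I trade $\lap_k$ for the composition of the complex operators: unfolding $\DD\hot F=2(\nab\hot f+i\dual(\nab\hot f))$ and $\DDov\c U=2(\div u+i\dual(\div u))$ and applying the non-integrable commutator identity for $[\nab_a,\nab_b]$ (the last proposition of Section \ref{section-equations}), I obtain $\tfrac{1}{2}\DDov\c(\DD\hot F)-\lap_1 F$ as a sum of an $\in^{ab}(\atrch\nab_3+\atrchb\nab_4)$-piece, an $E_{cdab}$-trace piece which by \eqref{definition-E} evaluates to $\tfrac{1}{4}\trch\trchb+\tfrac{1}{4}\atrch\atrchb$, and a genuine Riemann piece $R_{cdab}$; the $\in^{ab}$-piece is precisely what promotes the real $\trch,\trchb$ into the complex $\ov{\tr X},\ov{\tr\Xb}$ in the null-derivative coefficients.

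The Riemann piece is then expanded via \eqref{WeylRiemanngeneral} into Weyl plus Ricci components: the Weyl part contributes $\rho$ and, through the Hodge dual acting on the anti-self-dual tensor, $i\rhod$, while the Ricci part is substituted from the Einstein-Maxwell equation \eqref{Einstein-Maxwell-eq}, which is exactly the source of the $-\rhoF^2-\dual\rhoF^2$ term in the zero-order coefficient. The $(H+\ov H)\c\nab$ form of the first-order coefficient absorbs the $(\eta+\etab)\c\nab$ from the general formula together with the $(\eta-\etab)\c\nab$ contribution from the commutator, using the anti-self-duality $\dual F=-iF$, $\dual U=-iU$ to match real and imaginary parts. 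Reassembling everything yields the stated formulas.

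\textbf{Main obstacle.} The algebra is long but mechanical; the delicate step is the coordinated bookkeeping of the non-integrable commutator identity and the Einstein-Maxwell substitution. In the non-integrable setting the commutators of $\nab$ carry $\atrch,\atrchb$ and $E$-tensor contributions which must be tracked and reassembled so that the zero-order coefficient collapses to precisely $\tfrac{1}{4}\trch\trchb+\tfrac{1}{4}\atrch\atrchb+\rho-\rhoF^2-\dual\rhoF^2+i(-\rhod+\eta\wedge\etab)$ and its 2-tensor analogue; without the Einstein-Maxwell substitution, the formula would still contain Ricci-tensor components in place of the electromagnetic scalars.
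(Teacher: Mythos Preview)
Your approach is essentially the same as the paper's: the paper cites \cite{GKS} for the general formula \eqref{eq:general-wave}, then invokes the commutator $[\nab_3,\nab_4]$ (from \cite{GKS}) to pass to the $-\nab_3\nab_4$ form, and finally applies the Gauss relation (packaged in the paper as Lemma~\ref{Laplacian-1}) to replace $\lap_k$ by the complex operators $\tfrac12\DDov\c(\DD\hot\,\cdot\,)$ and $\tfrac12\DD\hot(\DDov\c\,\cdot\,)$, with the $i(\atrch\nab_3+\atrchb\nab_4)$ correction promoting $\trch,\trchb$ to $\ov{\tr X},\ov{\tr\Xb}$. Your Stage~1 and Stage~2 reproduce exactly this route, only with the external references unpacked.

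One small attribution slip: the $i\rhod$ in the zero-order coefficient does \emph{not} arise from the Riemann piece $R_{cdab}$ in the Gauss relation. For purely horizontal indices $W_{abcd}=-\in_{ab}\in_{cd}\rho$ contributes only $\rho$, and Lemma~\ref{Laplacian-1} (equivalently Proposition~\ref{Gauss-equation-self-dual}) confirms that the potential coming from $\tfrac12\DDov\c(\DD\hot F)-\lap_1 F$ is real, namely $-(\tfrac14\trch\trchb+\tfrac14\atrch\atrchb+\rho-\rhoF^2-\dual\rhoF^2)$. The imaginary term $i(-\rhod+\eta\wedge\etab)$ is already present in the first form and comes entirely from $\tfrac12[\nab_3,\nab_4]F$; it simply carries through unchanged to the second form. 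This does not affect the validity of your argument, only the narrative of where the term originates.
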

\begin{proof}
For the proof of \eqref{eq:general-wave} see Lemma 5.4. in \cite{GKS}. 

For $F \in \sk_1(\CCC)$ and $U \in \sk_2(\CCC)$, using the commutators, see Lemma 5.2 in \cite{GKS}:
 \beaa
  \, [\nab_3, \nab_4] F  &=& -2 \om \nab_3 F +2\omb \nab_4 F+ 2(\eta-\etab )\c \nab F +2i \left(- \rhod+ \eta \wedge \etab  \right) F\\
  \,[\nab_3, \nab_4]U &=& - 2\om \nab_3 U+ 2\omb \nab_4 U  + 2 (\eta-\etab) \c \nab U +4i \left(- \rhod+ \eta \wedge \etab  \right) U
 \eeaa
 we obtain 
 \beaa
  \squared_1 F&=&- \nab_3\nab_4F+\lap_1 F +\left(2\omb -\frac 1 2 \trchb\right) \nab_4F -\frac 1 2 \trch \nab_3F + 2\eta \c\nab F +i \left(- \rhod+ \eta \wedge \etab  \right) F\\
 \squared_2 U&=&- \nab_3\nab_4U+\lap_2 U +\left(2\omb -\frac 1 2 \trchb\right) \nab_4U -\frac 1 2 \trch \nab_3U + 2\eta \c\nab U +i \left(- 2\rhod+2 \eta \wedge \etab  \right) U
\eeaa
Using the following Gauss relations, see Lemma \ref{Laplacian-1}:
\beaa
   \DDb \c ( \DD \hot F)&=& 2\lap_1  F + i (\atrch\nab_3+\atrchb \nab_4) F\\
   && - \left(  \frac 12 \trch\trchb+\frac 1 2 \atrch\atrchb+ 2\rho-2\rhoF^2-2\dual\rhoF^2\right) F\\
\big(\DD\hot( \DDb \c U)\big)&=& 2\lap_2  U - i (\atrch\nab_3+\atrchb \nab_4) U\\
&& +   \left( \trch\trchb+ \atrch\atrchb+4\rho-4\rhoF^2-4\dual\rhoF^2\right) U
\eeaa
we obtain
\beaa
  \squared_1 F&=&- \nab_3\nab_4F+\frac 1 2    \DDb \c ( \DD \hot F) +\left(2\omb -\frac 1 2( \trchb+ i \atrchb) \right) \nab_4F -\frac 1 2 (\trch+i\atrch) \nab_3F\\
  &&+ 2\eta \c\nab F  +  \left(  \frac 14 \trch\trchb+\frac 1 4 \atrch\atrchb+ \rho-\rhoF^2-\dual\rhoF^2+i \left(- \rhod+ \eta \wedge \etab  \right)\right)  F
\eeaa
and 
\beaa
 \squared_2 U&=&- \nab_3\nab_4U+\frac 1 2  \DD \hot (\DDb \c U) +\left(2\omb -\frac 1 2( \trchb-i\atrchb)\right) \nab_4U -\frac 1 2 (\trch-i\atrch) \nab_3U\\
 && 2\eta \c\nab U+ \left(-\frac 1 2  \trch\trchb- \frac 1 2 \atrch\atrchb-2\rho+2\rhoF^2+2\dual\rhoF^2+i \left(- 2\rhod+2 \eta \wedge \etab  \right)\right)  U
\eeaa
By writing $ \trchb+ i \atrchb=\ov{\tr\Xb}$, $\trch+i\atrch=\ov{\tr X}$ and $2\eta=H+\ov{H}$, we obtain the stated expressions. 
 \end{proof}

 \subsection{Linear perturbations of Kerr-Newman}

In this section, we define the linear electromagnetic-gravitational perturbations of the Kerr-Newman spacetime. Recall that as Kerr-Newman is of Petrov Type D, the following coefficients,
 \beaa
 \Xh, \Xbh, \Xi, \Xib, \qquad A, B, \Bb, \Ab, \qquad \BF, \BBF
 \eeaa
 vanish in the background. For this reason, our definition of linear perturbations of Kerr-Newman consists in solutions to the Einstein-Maxwell equations where quadratic expressions in the above terms are neglected.

\begin{definition}\label{definition-linear-oerturbation} A linear electromagnetic-gravitational perturbation of the Kerr-Newman spacetime\footnote{This same definition can also be used to define linear electromagnetic-gravitational perturbations of any Petrov Type D spacetime.} is a solution to the Einstein-Maxwell equations of Section \ref{section-equations}, where quadratic expressions of terms which vanish in the background (i.e. $ \Xh, \Xbh, \Xi, \Xib, A, B, \Bb, \Ab, \BF, \BBF$) are neglected.
 \end{definition}

For example, consider the Maxwell equation:
 \beaa
 \nabc_3 \BF- \DDc\PF&=&- \frac 1 2 \tr\Xb \BF+2\PF H +\frac 1 2 \Xh \c \ov{\BBF}
 \eeaa
 The last term, $\frac 1 2 \Xh \c \ov{\BBF}$, is quadratic in $\Xh$ and $\BBF$, and therefore it is neglected in linear electromagnetic-gravitational perturbations of Kerr-Newman. The linearized version of the above Maxwell equation then reduces to
  \beaa
 \nabc_3 \BF- \DDc\PF&=&- \frac 1 2 \tr\Xb \BF+2\PF H 
 \eeaa
 A similar procedure can be applied to all the Einstein-Maxwell equations of Section \ref{section-equations}. We collect them in the following.

 \begin{proposition} A linear  electromagnetic-gravitational perturbation of the Kerr-Newman spacetime consists in a set of complex horizontal scalars, one-forms, 2-tensors
 \beaa
\tr X , \tr \Xb , P, \PF \in \sk_0(\CCC), \\
H, \Hb, Z, \Xi, \Xib, \BF, \BBF, B, \Bb \in \sk_1(\CCC), \\
\Xh, \Xbh, A, \Ab \in \sk_2(\CCC)
\eeaa
which satisfy the following linearized Einstein-Maxwell equations, comprised of the linearized Maxwell equations:
 \bea
\nabc_3 \BF- \DDc\PF&=&- \frac 1 2 \tr\Xb \BF+2\PF H \label{nabc-3-BF}\\
\nabc_4 \BBF+ \DDc\ov{\PF}&=&- \frac 1 2 \tr X \BBF-2\ov{\PF} \Hb  \label{nabc-4-BBF}\\
\nabc_4 \PF- \frac 1 2 \ov{\DDc} \c \BF&=&- \ov{\tr X} \PF +\frac 1 2 \ov{\Hb} \c \BF \label{nab-4-PF} \\
\nabc_3 \PF+ \frac 1 2 \DDc \c \ov{\BBF}&=&- \tr \Xb \PF -\frac 1 2 H \c \ov{\BBF}  ,
\eea
the linearized Ricci identities:
\bea
\nabc_3\tr\Xb +\frac{1}{2}(\tr\Xb)^2 &=& \DDc\c\ov{\Xib}+\Xib\c\ov{\Hb}+\ov{\Xib}\c H,\\
\nabc_4\tr X +\frac{1}{2}(\tr X)^2 &=& \DDc\c\ov{\Xi}+\Xi\c\ov{H}+\ov{\Xi}\c H,\\
\nabc_3\tr X +\frac{1}{2}\tr\Xb\tr X &=& \DDc\c\ov{H}+H\c\ov{H}+2P, \label{nabc-3-tr-X}\\
\nabc_4\tr\Xb +\frac{1}{2}\tr X\tr\Xb  &=& \DDc\c\ov{\Hb}+\Hb\c\ov{\Hb}+2\ov{P}, \label{nabc-4-tr-Xb}
\eea
\bea
\nabc_3\Xbh+\Re(\tr\Xb) \Xbh&=& \DDc\hot \Xib+   \Xib\hot(H+\Hb)-\Ab,\\
\nabc_4\Xh+\Re(\tr X)\Xh&=& \DDc\hot \Xi+  \Xi\hot(\Hb+H)-A. \label{nabc-4-Xh}\\
\nabc_3\widehat{X} +\frac{1}{2}\tr\Xb\, \widehat{X}&=& \DDc\hot H  +H\hot H -\frac{1}{2}\ov{\tr X} \widehat{\Xb},\\
\nabc_4\widehat{\Xb} +\frac{1}{2}\tr X\, \widehat{\Xb} &=& \DDc\hot\Hb  +\Hb\hot\Hb -\frac{1}{2}\ov{\tr\Xb} \widehat{X}
\eea
\bea
\nabc_3\Hb -\nabc_4\Xib &=&  -\frac{1}{2}\ov{\tr\Xb}(\Hb-H) -\frac{1}{2}\Xbh\c(\ov{\Hb}-\ov{H}) +\Bb+\PF\BBF,\\
\nabc_4H -\nabc_3\Xi &=&  -\frac{1}{2}\ov{\tr X}(H-\Hb) -\frac{1}{2}\Xh\c(\ov{H}-\ov{\Hb}) -B-\ov{\PF} \BF, \label{nabc-4-H}\\
\frac{1}{2}\ov{\DDc}\c\Xh &=& \frac{1}{2}\DDc\ov{\tr X}-i\Im(\tr X)H-i\Im(\tr \Xb)\Xi-B+\ov{\PF}\BF, \label{Codazzi-1}\\
\frac{1}{2}\ov{\DDc}\c\Xbh &=& \frac{1}{2}\DDc\ov{\tr\Xb}-i\Im(\tr \Xb)\Hb-i\Im(\tr X)\Xib+\Bb-\PF \BBF,
\eea
\bea\label{Ricci-om}
\nab_3\om+\nab_4\omb -4\om\omb -(\eta-\etab)\c\ze +\eta\c\etab&=&   \rho+\rhoF^2+\dual\rhoF^2
\eea
and the linearized Bianchi identities:
    \bea
 \nabc_3A+\frac{1}{2}\tr\Xb A  &=&\DDc\hot B  +4 H\hot B -3\ov{P}\Xh-2\ov{\PF}\left( -\frac 1 2 \DDc \hot \BF+\PF\Xh  \right)\label{nabc-3-A-noF}\\
  \nabc_4\Ab+\frac{1}{2}\tr X \Ab  &=&-\DDc\hot \Bb  -4 \Hb\hot \Bb -3P\Xbh+2\PF \left( -\frac 1 2 \DDc \hot \BBF -\ov{\PF}\Xbh \right) \label{nabc-4-Ab-noF}
\eea
\bea
\nabc_4B -\frac{1}{2}\ov{\DDc}\c A &=& -2\ov{\tr X} B  +\frac{1}{2}A\c  \ov{\Hb}+\left( 3\ov{P} -2\ov{\PF}\PF\right)\,\Xi+ \ov{\PF} \nabc_4(\BF),  \label{nabc-4-B}\\
\nabc_3\Bb +\frac{1}{2}\ov{\DDc}\c\Ab &=& -2\ov{\tr\Xb}\,\Bb  -\frac{1}{2}\Ab\c \ov{H}-\left(3P-2\ov{\PF}\PF\right) \,\Xib+ \PF \nabc_3(\BBF)
\eea
\bea
\nabc_3B-\DDc\ov{P} &=& -\tr\Xb B+3\ov{P}H +\ov{\PF}\DDc(\PF) -\frac 1 2 \tr \Xb \ov{\PF} \BF -\ov{\tr X}\PF \BBF, \label{nabc-3-B}\\
\nabc_4\Bb+\DDc P &=& -\tr X\Bb-3P\Hb -\PF\DDc(\ov{\PF}) -\frac 1 2 \tr X \PF\BBF -\ov{\tr \Xb}\ov{\PF} \BF
\eea
\bea
\nabc_4P -\frac{1}{2}\DDc\c \ov{B} &=& -\frac{3}{2}\tr X P -\tr X \PF \ov{\PF}+\Hb\c\ov{B} \nonumber\\
&& + \frac 1 2 \PF \DDc \c \ov{\BF}+   \ov{H} \c  \ov{\PF} \BF \label{nabc-4-P} \\
\nabc_3P +\frac{1}{2}\ov{\DDc}\c\Bb &=& -\frac{3}{2}\ov{\tr\Xb} P -\ov{\tr \Xb} \PF \ov{\PF} -\ov{H}\c\Bb \nonumber\\
&& - \frac 1 2 \PF \ov{\DDc} \c \BF -   \Hb  \c  \ov{\PF}\BBF 
\eea
 \end{proposition}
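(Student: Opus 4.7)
The proof is the mechanical linearization of the full Einstein--Maxwell system from Section~\ref{section-equations} around the Kerr--Newman background of Section~\ref{sec:values-KN}, following exactly the recipe of Definition~\ref{definition-linear-oerturbation}. Every GKS quantity is split as background value plus $O(\epsilon)$ perturbation. The quantities $\Xh, \Xbh, \Xi, \Xib, A, B, \Bb, \Ab, \BF, \BBF$ vanish on Kerr--Newman (by the Petrov Type D principal-frame values recorded in \eqref{eq:vanishing}) and are therefore themselves $O(\epsilon)$; the quantities $\tr X, \tr\Xb, P, \PF, H, \Hb, Z, \om, \omb$ carry nonzero background values. Any product of two factors drawn from the vanishing set is $O(\epsilon^2)$ and is to be discarded.

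The plan is to traverse Propositions~\ref{prop-Maxwell}, \ref{prop-nullstr:complex}, and \ref{prop:bianchi:complex} one equation at a time and excise precisely the quadratic-in-vanishing contributions. For the Maxwell system this removes the $\Xh\c\ov{\BBF}$-type coupling terms (and their $\nabc_4, \nabc_3$ counterparts). For the Ricci identities, this removes bilinears such as $\Xbh\c\ov{\Xbh}$, $\BBF\c\ov{\BBF}$, $\Xib\hot\Xi$, $\BF\hot\BBF$ and the $\Xh\c(\ov{H}-\ov{\Hb})$, $\Xbh\c(\ov{\Hb}-\ov{H})$ corrections in the $\tr X/\tr\Xb$, shear, $H/\Hb$, and Codazzi equations. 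For the Bianchi identities, this removes the derivative-of-product terms $\nab_4(\BF\hot\BBF)$, $\nabc_3(\BF\hot\BF)$, $\DDc(\BF\c\ov{\BF})$ along with the $(\cdots)\hot\BF$, $(\cdots)\hot\BBF$ blocks and the remaining products such as $\ov{\Bb}\c\Xh$, $A\c\ov{\Xib}$, $\Xbh\c\ov{A}$, $\ov{\Xi}\c\Bb$ that appear in the equations for $A, \Ab, B, \Bb, P$. The single equation for $\nab_3\om+\nab_4\omb$ from Proposition~\ref{prop-nullstr:complex} contains no product of vanishing quantities and therefore passes unchanged to the linearized system.

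The only real difficulty is bookkeeping: one must verify that every product of two vanishing quantities is identified and removed, and that no first-order term is inadvertently dropped. No special use is made here of the explicit Kerr--Newman background values from Section~\ref{sec:values-KN}; the proposition simply records the schematic linearized system obtained by applying Definition~\ref{definition-linear-oerturbation} to the equations of Section~\ref{section-equations}. The specific background values (and the reduced relations of Proposition~\ref{prop:reduced-equations}) will enter only later, when the gauge-invariant variables are introduced in Section~\ref{section-gauge} and used to derive the Teukolsky and generalized Regge--Wheeler equations.
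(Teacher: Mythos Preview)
Your approach is correct and matches the paper's: the proposition is simply the result of applying Definition~\ref{definition-linear-oerturbation} to the full equations of Propositions~\ref{prop-Maxwell}, \ref{prop-nullstr:complex}, and \ref{prop:bianchi:complex}, discarding every product of two quantities from the vanishing set~\eqref{eq:vanishing}. The paper does not give a separate proof beyond this.

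However, your bookkeeping description contains two slips. First, the equation for $\nab_3\om+\nab_4\omb$ in Proposition~\ref{prop-nullstr:complex} does \emph{not} pass unchanged: the full equation contains the term $-\xi\c\xib$, which is a product of two vanishing quantities and must be dropped to obtain~\eqref{Ricci-om}. Second, the terms $\Xh\c(\ov{H}-\ov{\Hb})$ and $\Xbh\c(\ov{\Hb}-\ov{H})$ in the $\nabc_4H$ and $\nabc_3\Hb$ equations are \emph{not} removed: $\Xh,\Xbh$ vanish on the background but $H,\Hb$ do not, so these products are $O(\epsilon)$ and are retained in the linearized system (as you can see in~\eqref{nabc-4-H}). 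These are exactly the kind of errors you warn about in your own last paragraph; the method is right, but the execution needs to be redone more carefully term by term.
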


 \begin{remark} Observe that the above Definition of linear perturbations does not rely on a linear expansion of the metric of the form
 \beaa
 \g&=& \g_{m, a, Q}+ \ep \  \g^{(1)}+ \ep^2 \ \g^{(2)}+\dots
 \eeaa
 for a smallness parameter $\ep$. If one performs the above decomposition, applied to the Ricci, curvature and electromagnetic components, and selects the $\ep$-expansion of the Einstein-Maxwell equations, then a choice of gauge would be needed in order to evaluate the values of the background metric $ \g_{m, a, Q}$. The Definition \ref{definition-linear-oerturbation} instead is more general, and has the only effect of discarding the $\ep^2$ terms, without choosing a gauge in doing so.
 \end{remark}

\section{Gauge-invariant quantities in perturbations of Kerr-Newman}\label{section-gauge}

In this section we identify the gauge-invariant quantities in linear gravitational-electromagnetic perturbations of Kerr-Newman spacetime. Those quantities play a fundamental role in the resolution of the stability of Kerr-Newman, as, being affected only quadratically by a change of coordinates, they are good candidate to represent gravitational and electromagnetic radiation.

\subsection{Linear frame transformations}

Recall the rotations of class I, class II and class III which transform the basis vectors $\{e_3, e_4, e_a\}$ into $\{e'_3, e'_4, e'_a\}$ as introduced in Section \ref{conf-inv-section}. Those rotations depend on the one-forms $\mu$ and $\mub$, on the scalar function $\lambda$ and on the orthogonal matrix $O_{ab}$. Observe that the dependence of the equations and the coefficients on $\lambda$ has already been taken into account in the definitions of conformal derivatives, and the dependence on the matrix $O_{ab}$ is accounted for in the use of horizontal tensors as opposed to scalars. In this way, all the equations we have are already invariant to a rotation of class III. We now consider the change in the coefficients caused by rotations of class I and class II.

  In the context of linear perturbations of a spacetime, we consider linear frame transformations, i.e. those where quadratic expressions in the $\mu$ and $\mub$ are neglected. Combining the transformations given by \eqref{SSMe:general.e4} and \eqref{SSMe:general.e3}, and neglecting the quadratic terms $|\mu|^2$ and $|\mub|^2$, we obtain the general linear frame transformations as defined here.

\begin{definition} A linear frame transformation of the basis vectors $\{e_3, e_4, e_a\}$ into $\{e'_3, e'_4, e'_a\}$ is a transformation of the form 
\bea\label{eq:non-conf-transfs}
\begin{split}
e_4'&= e_4 + \mu_a e_a \\
e_3'&= e_3+\mub_a e_a  \\
e_a'&= e_a + \frac 1 2 \mub_a e_4 +\frac 1 2 \mu_a e_3
\end{split}
\eea
 where $\mu$ and $\mub$ are real one-forms.
\end{definition}

When a linear frame transformation is applied to the frame, the Ricci, curvature and electromagnetic components change accordingly. For example, the electromagnetic component $\bF$ is modified in the following way:
\beaa
\bF'_a&=& \F(e_a', e'_4)=\F(e_a + \frac 1 2 \mub_a e_4 +\frac 1 2 \mu_a e_3, e_4 + \mu_b e_b)\\
&=&\F(e_a , e_4 )+\frac 1 2 \mu_a \F( e_3, e_4 )+ \mu_b \F(e_a ,  e_b)+ \frac 1 2 \mub_a \mu_b \F( e_4 ,   e_b)+\frac 1 2 \mu_a\mu_b \F( e_3,   e_b)
\eeaa
By neglecting the quadratic terms in $\mu$ and $\mub$, we then obtain
\beaa
\bF'_a&=&\bF_a+ \mu_a \rhoF -\in_{ab} \mu_b  \dual\rhoF=\bF_a+ \mu_a \rhoF -\dual \mu_a \dual \rhoF
\eeaa
By considering the complexification of the $\bF$, i.e. $\BF$, and by defining $M:=\mu+ i \dual \mu$, we deduce
\beaa
\BF'&=& \bF'+ i \dual \bF'=\bF+\mu \rhoF - \dual \mu \dual \rhoF+ i \dual (\bF+\mu \rhoF - \dual \mu \dual \rhoF)\\
&=&\bF+ i \dual \bF+\mu \rhoF - \dual \mu \dual \rhoF+ i \dual \mu \rhoF + i  \mu \dual \rhoF\\
&=&\bF+ i \dual \bF+(\rhoF+ i \dual \rhoF) (\mu+ i \dual \mu)= \BF+\PF M.
\eeaa

In the same way we can compute how all the Ricci, curvature and electromagnetic components get transformed by a linear frame transformation of the form \eqref{eq:non-conf-transfs}. We collect those transformations in the following.

 \begin{lemma}\label{lemma-gauge-transformations} The linear frame transformation \eqref{eq:non-conf-transfs} modifies the Ricci, curvature and electromagnetic components in the following way:
 \beaa
\tr X'&=& \tr X +\frac 1 2 \DDc \c \ov{M} +\frac 1 2 \ov{H} \c M, \qquad \tr \Xb'= \tr \Xb +\frac 1 2 \DDc \c \ov{\Mb} +\frac 1 2 \ov{\Hb} \c \Mb \\
\Xh'&=& \Xh+\frac 1 2 \DDc \hot M+\frac 1 2 H \hot M, \qquad \Xbh'= \Xbh+\frac 1 2 \DDc \hot \Mb+\frac 1 2 \Hb \hot \Mb\\
H'&=& H+\frac 1 2 \nabc_3 M +\frac 1 4 \ov{\tr X} \Mb, \qquad \Hb'= \Hb+\frac 1 2 \nabc_4 \Mb +\frac 1 4 \ov{\tr \Xb} M\\
\Xi'&=& \Xi +\frac 1 2 \nabc_4 M+ \frac 1 4 \ov{\tr X} M, \qquad \Xib'= \Xib +\frac 1 2 \nabc_3 \Mb+\frac 1 4  \ov{\tr \Xb} \Mb
\eeaa
and
\beaa
\BF'= \BF+\PF M, \qquad \PF'= \PF, \qquad \BBF'=\BBF - \ov{\PF} \Mb \\
A'= A, \qquad B'= B+\frac 3 2 \ov{P} M, \qquad P'= P, \qquad \Bb'=\Bb -\frac 3 2 P \Mb, \qquad \Ab'= \Ab
\eeaa
 where $M=:=\mu+ i \dual \mu$ and $\Mb:=\mub+ i \dual \mub$
 \end{lemma}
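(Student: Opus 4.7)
The plan is to compute, for each of the listed quantities, its value in the primed frame by direct substitution of \eqref{eq:non-conf-transfs} into its definition, multilinearly expanding, and systematically discarding all terms quadratic in $\mu$ and $\mub$. Since the background components $\BF,\BBF, A, B, \Bb, \Ab, \Xh, \Xbh, \Xi, \Xib$ and the one-forms $\mu,\mub$ are themselves $O(\ep)$ in a linear perturbation, any product of two of them may also be dropped when the output is read at linear order; this is already implicit in Definition \ref{definition-linear-oerturbation}. After collecting the real expressions for $\a,\dual\a,\ldots$ we complexify with $M=\mu+i\dual\mu$ and $\Mb=\mub+i\dual\mub$, exactly as done in the example for $\BF$ in the body of the section.

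The curvature and electromagnetic components are the simplest: they do not involve derivatives, and their transformation is obtained by plugging \eqref{eq:non-conf-transfs} into the multilinear definitions of Section \ref{section-ricci-el-curv-comp} and reading off the linear contribution. For instance, for $B$ one writes
\[
\b'(e_a)=\tfrac12\W(e_a',e_4',e_3',e_4')
\]
and uses the identities $\W_{ab34}=2\in_{ab}\dual\rho$, $\W_{a3b4}=-\rho\,\de_{ab}+\dual\rho\,\in_{ab}$ together with the background vanishings \eqref{eq:vanishing} to get $\b'=\b+\tfrac32 (\mu\rho-\dual\mu\dual\rho)$, which complexifies to $B'=B+\tfrac32 \ov{P}M$. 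The identical procedure yields $A'=A$, $P'=P$, $\Bb'=\Bb-\tfrac32 P\Mb$, $\Ab'=\Ab$, as well as $\BBF'=\BBF-\ov{\PF}\Mb$ and $\PF'=\PF$, in each case the antisymmetry of $\F$ or the algebraic structure of $\W$ causing the precise numerical factor.

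The Ricci coefficients require more care because $\chi',\chib',\eta',\etab',\xi',\xib'$ involve the Levi-Civita connection acting on the transformed frame, which pulls out derivatives of $\mu$ and $\mub$. The plan for these is: substitute \eqref{eq:non-conf-transfs} into the definitions
\[
\chi'_{ab}=\g(\D_{e_a'}e_4',e_b'),\qquad \eta'_a=\tfrac12\g(\D_{\Lb'}L',e_a'),\qquad \xi'_a=\tfrac12\g(\D_{L'}L',e_a'),
\]
and analogously for the underlined quantities; then expand $\D_{e_a'}e_4'=\D_{e_a+\frac12\mub_ae_4+\frac12\mu_ae_3}(e_4+\mu_be_b)$ using the Ricci formulae \eqref{ricci}. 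The derivative of $\mu_b$ along $e_a$ produces $\nab_a\mu_b$, the $e_4$- and $e_3$-slot derivatives produce $\nab_4\mu$ and $\nab_3\mub$, and the remaining terms contract the components of $\mu,\mub$ against background Ricci coefficients that do not vanish in Kerr--Newman (i.e.\ $\tr X, \tr\Xb, H,\Hb,Z,\om,\omb$). After neglecting quadratic expressions, symmetrising/antisymmetrising and taking trace/traceless parts, the $\nab_a\mu_b$ piece in $\chi'$ reorganises exactly into $\tfrac12\nab\hot\mu+\tfrac12\de_{ab}\div\mu$, and similarly for $\chib'$ with $\mub$. Complexifying, the symmetric traceless part $\nab\hot\mu$ and the scalar divergence $\div\mu$ combine with their duals to yield the operators $\DDc\hot M$ and $\DDc\c\ov{M}$ featured in the statement; the contractions against $Z$ are absorbed into the $\nabc$'s via the conformal correction.

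The only delicate point, and the step I expect to absorb most of the bookkeeping, is checking that after the complexification every term collapses into exactly one of the complex operators ($\DDc$, $\nabc_3$, $\nabc_4$, or a contraction with $H,\Hb$) with the stated numerical coefficient. This is mostly a matter of keeping track of the self-duality identities $\dual M=-iM$, $\dual \Mb=-i\Mb$ and of the conformal weights: for $H'$ the piece $\frac12\D_3\mu_a$ furnishes $\frac12\nabc_3 M$ once one accounts for the connection action on $e_a$, while the $e_3'=e_3+\mub_be_b$ correction and the reshuffling of the $L'$-component of $e_a'$ produce the $\frac14\ov{\tr X}\Mb$ contribution via the nonvanishing $\tr X$; analogous identifications give $\Xi'$, $\Hb'$, $\Xib'$. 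All other equalities in the statement follow by the symmetric exchange $(e_3,\mub)\leftrightarrow(e_4,\mu)$.
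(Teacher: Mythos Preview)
Your approach is correct and is the standard direct computation. The paper itself does not prove this lemma but merely refers the reader to \cite{stabilitySchwarzschild}; your outline is essentially the derivation one would carry out there. One tiny slip: in your $\b'$ computation the real intermediate formula should read $\b'=\b+\tfrac32(\mu\rho+\dual\mu\,\dual\rho)$ (with a plus sign), which is indeed what complexifies to $B'=B+\tfrac32\ov{P}M$; the method and the final complexified statement are right.
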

 \begin{proof} See \cite{stabilitySchwarzschild}.
 \end{proof}

\subsection{Gauge-invariant quantities and their relations}

The linear frame transformations of the form \eqref{eq:non-conf-transfs} can be used to pick a gauge in the perturbations. In particular, the quantities which are not modified by such a linear frame transformation may have a physical meaning, since they do not depend at the linear level on the choice of coordinates. We call such quantities gauge-invariant, and they are good candidates to represent electromagnetic or gravitational radiation.
\begin{definition} A horizontal tensor $\Psi \in \sk(\CCC)$ is said to be gauge-invariant if it is not modified by a linear frame transformations of the form \eqref{eq:non-conf-transfs}, i.e. if $\Psi'=\Psi$. 
\end{definition}

One of the main steps in analyzing electromagnetic-gravitational perturbations of Kerr-Newman is then to identify the gauge-invariant quantities. We identify in the following lemma four gauge-invariant quantities and their symmetric version.

\begin{lemma} For a linear electromagnetic-gravitational perturbation of the Kerr-Newman spacetime, the following symmetric traceless 2-tensors
\beaa
&&A, \qquad \Ab
\eeaa
and 
\bea\label{definition-F-Bianchi}
\begin{split}
 \mathfrak{F}&=  -\frac 1 2 \DDc \hot \BF-\frac 3 2 H \hot \BF +\PF\Xh \\
  \mathfrak{\underline{F}}&=  -\frac 1 2 \DDc \hot \BBF-\frac 3 2 \Hb \hot \BBF -\ov{\PF}\Xbh
  \end{split}
  \eea
 and the following 1-tensors
\bea
\mathfrak{B}&=& 2\PF B - 3\ov{P} \BF, \qquad  \underline{\mathfrak{B}}= 2\ov{\PF} \Bb - 3 P \BBF \label{definition-mathfrak-B}
\eea
and
\bea
\mathfrak{X}=\nabc_4\BF+\frac 3 2 \ov{\tr X} \BF-2\PF \Xi, \qquad  \underline{\mathfrak{X}}=\nabc_3\BBF+\frac 3 2 \ov{\tr \Xb} \BBF+2\ov{\PF} \Xib\label{definition-mathfrak-X}
\eea
are gauge-invariant.
\end{lemma}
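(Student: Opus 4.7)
The plan is to verify gauge-invariance for each of the listed quantities by directly substituting the transformation laws of Lemma \ref{lemma-gauge-transformations} and checking that all $M$ and $\Mb$ contributions cancel. Throughout, I consistently drop terms that are quadratic in quantities vanishing on the background (i.e.\ products of $M,\Mb,\BF,\BBF,A,B,\Bb,\Ab,\Xh,\Xbh,\Xi,\Xib$), consistently with Definition \ref{definition-linear-oerturbation}. The underlined counterparts follow from the identical computation under the symmetry $e_3\leftrightarrow e_4$, $H\leftrightarrow \Hb$, etc., so I would only carry out the unbarred cases.

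The quantities $A$ and $\Ab$ are invariant already at the level of Lemma \ref{lemma-gauge-transformations}, so nothing is to check. For $\mathfrak{B}=2\PF B-3\ov P\,\BF$, the verification is purely algebraic: using $B'=B+\tfrac{3}{2}\ov P M$, $\BF'=\BF+\PF M$, $P'=P$, $\PF'=\PF$, one gets
\[
\mathfrak{B}'=2\PF\big(B+\tfrac{3}{2}\ov P\,M\big)-3\ov P\,(\BF+\PF M)=\mathfrak{B},
\]
since the $M$-terms appear with opposite signs $+3\PF\ov P$ and $-3\ov P\PF$.

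For $\mathfrak{F}=-\tfrac{1}{2}\DDc\hot\BF-\tfrac{3}{2}H\hot\BF+\PF\Xh$, the key new ingredient is that the derivative $\DDc\hot\BF'=\DDc\hot\BF+\PF\,\DDc\hot M+(\DDc\PF)\hot M$, together with the reduced Maxwell equation $\DDc\PF=-2\PF H$ from Proposition \ref{prop:reduced-equations}. Combining this with $H'\hot\BF'=H\hot\BF+\PF H\hot M$ (to linear order) and $\PF'\Xh'=\PF\Xh+\tfrac{1}{2}\PF\,\DDc\hot M+\tfrac{1}{2}\PF H\hot M$, the $M$-coefficients of the three terms are
\[
\big(-\tfrac{1}{2}+\tfrac{1}{2}\big)\PF\,\DDc\hot M \;+\; \big(1-\tfrac{3}{2}+\tfrac{1}{2}\big)\PF H\hot M \;=\;0,
\]
so $\mathfrak{F}'=\mathfrak{F}$. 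An entirely analogous computation using $\nabc_4\PF=-\ov{\tr X}\PF$ (again from Proposition \ref{prop:reduced-equations}) handles $\mathfrak{X}=\nabc_4\BF+\tfrac{3}{2}\ov{\tr X}\BF-2\PF\Xi$: the three pieces contribute respectively $-\ov{\tr X}\PF M+\PF\nabc_4 M$, $+\tfrac{3}{2}\ov{\tr X}\PF M$, and $-\PF\nabc_4 M-\tfrac{1}{2}\ov{\tr X}\PF M$, which sum to zero.

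The only step requiring any care will be the $\mathfrak{F}$ and $\mathfrak{X}$ verifications, because there the derivative $\DDc$ and $\nabc_4$ act on products $\PF M$ and generate extra non-trivial pieces; these are precisely cancelled by the Maxwell-derived identities $\DDc\PF=-2\PF H$ and $\nabc_4\PF=-\ov{\tr X}\PF$. In fact, one can read the lemma in reverse: the coefficients $-\tfrac{1}{2}$, $-\tfrac{3}{2}$, $1$ in the definition of $\mathfrak{F}$, and $1$, $\tfrac{3}{2}$, $-2$ in that of $\mathfrak{X}$, are forced precisely by the requirement of gauge-invariance combined with the two reduced Maxwell identities, which is essentially how these quantities are discovered. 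The main conceptual point to flag in the write-up is that the transformation rule for $\DDc$ itself under a class I/II rotation produces only terms which, when multiplied against background-vanishing quantities such as $\BF$ or $\Xh$, fall below the linear threshold and are therefore discarded; this is what allows one to apply $\DDc$ to $\BF'=\BF+\PF M$ as if the operator were unchanged by the frame rotation.
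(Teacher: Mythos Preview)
Your proposal is correct and follows essentially the same approach as the paper: direct substitution of the transformation laws from Lemma \ref{lemma-gauge-transformations}, combined with the reduced Maxwell identities $\DDc\PF=-2\PF H$ and $\nabc_4\PF=-\ov{\tr X}\PF$ from Proposition \ref{prop:reduced-equations}, to verify that all $M$-contributions cancel. Your explicit remark that the change in the differential operators $\DDc$ and $\nabc_4$ under the frame rotation contributes only at quadratic order when acting on background-vanishing quantities is exactly the justification the paper invokes (via its footnote on using reduced equations), and the rest of the computation matches term by term.
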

\begin{proof} The invariance of $A$ and $\Ab$ is straightforward from Lemma \ref{lemma-gauge-transformations}. We check the invariance of $\mathfrak{F}$: 
\beaa
\mathfrak{F}'&=&  -\frac 1 2 \DDc' \hot \BF' -\frac 3 2  H' \hot \BF'+\PF'\Xh'\\
&=&  -\frac 1 2 \DDc\hot \left(\BF+\PF M \right) -\frac 3 2  H \hot \left( \BF+\PF M \right)+\PF\left(\Xh+\frac 1 2 \DDc \hot M+\frac 1 2 H \hot M\right)\\
&=& \mathfrak{F} -\frac 1 2 \DDc \PF \hot M -\frac 1 2\PF  \DDc\hot  M  -\frac 3 2 \PF  H \hot  M +\PF\left(\frac 1 2 \DDc \hot M+\frac 1 2 H \hot M\right)\\
&=& \mathfrak{F} -\frac 1 2 (-2 \PF H) \hot M   - \PF  H \hot  M =\mathfrak{F}
\eeaa 
where we used\footnote{Observe that by using the linearized Maxwell equation \eqref{nabc-3-BF}, one obtains
\beaa
(\DDc \PF) \hot M&=&\left(\nabc_3 \BF+ \frac 1 2 \tr\Xb \BF-2\PF H \right) \hot M= - 2 \PF H \hot M \text{+ quadratic terms}
\eeaa  
since the terms in $\BF$ and $M$ are quadratic for linear perturbations of Kerr-Newman, and are therefore neglected. From now on, when multiplied by a quantity which vanishes on the background, we can then use the reduced equations of Proposition \ref{prop:reduced-equations}.} \eqref{DDc-PF-red}. Similarly for $  \mathfrak{\underline{F}}$.
We check the invariance of $\mathfrak{B}$:
\beaa
\mathfrak{B}'&=& 2\PF' B' - 3\ov{P}' \BF'= 2\PF \left(B+\frac 3 2 \ov{P} F \right) - 3\ov{P}\left(\BF+\PF F \right)\\
&=&\mathfrak{B}+ 2\PF \left(\frac 3 2 \ov{P} F \right) - 3\ov{P}\left(\PF F \right)=\mathfrak{B}
\eeaa
and similarly for $\underline{\mathfrak{B}}$. Finally, we check the invariance of $\mathfrak{X}$:
\beaa
\mathfrak{X}'&=&\nabc_4(\BF+\PF F)+\frac 3 2 \ov{\tr X} (\BF+\PF F)-2\PF( \Xi +\frac 1 2 \nabc_4 F+\frac 1 4  \ov{\tr X} F)\\
&=&\nabc_4\BF+\frac 3 2 \ov{\tr X} \BF-2\PF \Xi\\
&&- \ov{\tr X} \PF F+\PF \nabc_4( F)+\frac 3 2 \ov{\tr X} (\PF F)-2\PF( \frac 1 2 \nabc_4 F+ \frac 1 4 \ov{\tr X} F)\\
&=&\nabc_4\BF+\frac 3 2 \ov{\tr X} \BF-2\PF \Xi=\mathfrak{X}
\eeaa
and similarly for $\underline{\mathfrak{X}}$.
\end{proof}

\begin{remark}\label{remark:comp-NP-gauge} The identification of the above gauge-invariant quantities for linear perturbations of Kerr-Newman is a crucial new part of this work. In order to compare them in NP formalism, we collect here their equivalent, where the correspondence has to be understood through the projection to the first component, as explained in Section \ref{section:connection-NP}. 
\begin{table}[h!]
\centering
 \begin{tabular}{||c c ||} 
 \hline
 NP formalism & GKS formalism \\ [0.5ex] 
 \hline\hline
        $\Psi_0$ & $A$  \\  
          $\ff:= - \delta \phi_0+(2 \beta +3\tau) \phi_0 - 2\sigma \phi_1$ & $\mathfrak{F}$  \\ 
            $\bff:= 3 \phi_0 \Psi_2 - 2 \phi_1 \Psi_1$ & $\mathfrak{B}$  \\ 
       $\xf:= D\phi_0-(3 \rho+2\epsilon) \phi_0+2\kappa \phi_1$ & $\mathfrak{X}$  \\ 
 \hline
 \end{tabular}
\end{table}

We point out that in page 240 of \cite{Chandra}, equation (213), in the context of perturbations of Reissner-Nordstr\"om, Chandrasekhar notes ``parenthetically, that while a gauge, in which $\Psi_1$ and $\phi_1$ vanish simultaneously, cannot be chosen, the combination $2 \Psi_1 \phi_1-3\phi_0 \Psi_2$ is invariant to first order for infinitesimal rotations".

The complex scalar $\bff$ identified in \cite{Chandra} corresponds precisely to the projection to the first component of our gauge-invariant quantity $\mathfrak{B}$. 
Nevertheless, such quantity was not used in the subsequent analysis in \cite{Chandra}. Indeed, it was used to show that a gauge where $\Psi_1$ and $\phi_1$ vanish identically cannot be chosen, while a gauge where $\phi_0=\phi_2=0$, the so called phantom gauge, can be chosen.  In \cite{Chandra}, the equations governing the perturbations in the NP formalism were written in the phantom gauge, and all the analysis was performed in such a gauge. In particular, by choosing the phantom gauge, the above quantity was being reduced to a rescaled version of the curvature component $\Psi_1$.  

No previous mention of $\ff$ nor $\xf$ in the context of perturbations of Kerr-Newman  is known to the author. 

\end{remark}

\begin{remark}\label{rem:confornto-RN}In the case of linear electromagnetic-gravitational perturbations of Reissner-Nordstr\"om, the gauge-invariant quantities $A$, $\Ffr$, $\Bfr$ and $\Xfr$ respectively reduce to the quantities $\a$, $\ff$, $\tilde{\b}$ and $\mathfrak{x}$, first appearing in \cite{Giorgi4} and \cite{Giorgi5}.  More precisely, in Reissner-Nordstr\"om the Ricci coefficients $H, \Hb$, $\atrch, \atrchb$, $\dual \rhoF, \rhod$ vanish in the background, and therefore the terms $H\hot \BF$, $\Hb \hot \BBF$, $\dual \rhoF \Xh$, $\dual \rhoF \Xbh$ in the definition of $\Ffr$, $\Bfr$ and $\Xfr$ become quadratic for linear perturbations of Reissner-Nordstr\"om. The real parts of $A$, $\Ffr$, $\Bfr$ and $\Xfr$ reduce to 
\beaa
\Re( \mathfrak{F})&=&  - \nab \hot \bF +\rhoF\chih\text{+ quadratic terms} =\ff\\
\Re(\mathfrak{B})&=& 2\rhoF \b - 3\rho \bF\text{+ quadratic terms} =\tilde{\b}\\
\Re(\mathfrak{X})&=&\nab_4\bF+\frac 3 2 \trch \bF-2\rhoF \xi\text{+ quadratic terms}=\xf
\eeaa
with  $\ff$, $\tilde{\b}$ and $\mathfrak{x}$, as defined in \cite{Giorgi4} and \cite{Giorgi5}.

In the case of gravitational perturbations of Kerr, the only gauge-invariant quantity which has relevance is $\Psi_0, \Psi_4$ or $A, \Ab$, the well-known Teukolsky variables. The quantities $\Ffr$, $\Bfr$ and $\Xfr$, since contain electromagnetic components, only make sense for solutions to the Einstein-Maxwell equations.
\end{remark}

Observe that by adding and subtracting $3\ov{\PF} H \hot \BF$ and $3\PF \Hb \hot \BBF$ to the linearized Bianchi identities \eqref{nabc-3-A-noF} and \eqref{nabc-4-Ab-noF} respectively, using the definition of $ \mathfrak{F}$ and $\underline{ \mathfrak{F}}$ \eqref{definition-F-Bianchi}, those Bianchi identity become
\bea
 \nabc_3A+\frac{1}{2}\tr\Xb A  &=&\DDc\hot B  +H\hot \left(4B-3\ov{\PF} \BF \right) -3\ov{P}\Xh-2\ov{\PF} \mathfrak{F} \label{nabc-3-A}\\
   \nabc_4\Ab+\frac{1}{2}\tr X \Ab  &=&-\DDc\hot \Bb  -\Hb\hot \left(4B-3\PF \BBF \right) -3P\Xbh+2\PF\underline{\mathfrak{F}} 
 \eea

We summarize here some fundamental relations between the above gauge invariant quantities $A$, $\mathfrak{F}$, $\mathfrak{B}$ and $\mathfrak{X}$ obtained as consequence of the linearized Einstein-Maxwell equation. The relations between $\Ab$, $\underline{\mathfrak{F}}$, $\underline{\mathfrak{B}}$ and $\underline{\mathfrak{X}}$ can be obtained by symmetrization.

\begin{proposition}\label{proposition-relation-gauge-inv} In a linear  electromagnetic-gravitational perturbation of the Kerr-Newman spacetime, the following relations among the gauge invariant quantities $A$, $\mathfrak{F}$, $\mathfrak{B}$ and $\mathfrak{X}$ hold true\footnote{In NP formalism, the above relations have the following form: 
\beaa
2\phi_1\left(- \Delta \Psi_{0}+	\left(4\gamma - \mu\right)\Psi_{0} \right)&=&\delta \bff  - 2  ( \beta +3\tau) \bff+ \left( 3  \Psi_2+ 2\phi_1\overline{\phi_1} \right)\ff\\
D \ff	-\left(3 \rho+\overline{\rho}+3\epsilon- \overline{\epsilon}\right)\ff &=&-\delta \xf+\left(3 \beta +3\tau-\overline{\pi}  + \overline{\alpha}\right) \xf  - 2 \phi_1  \Psi_{0} \\
D \bff - 2  \left(\epsilon + 3\rho\right)\bff &=&- 2 \phi_1 \left( \overline{\delta}\Psi_{0}-4\alpha\Psi_{0} + \pi\Psi_{0} \right)+\left(3\Psi_2-2\phi_1\overline{\phi_1}\right) \xf \\
\Delta \xf+\left(\overline{\mu}-3\gamma-\overline{\gamma}\right)\xf	&=&  -\overline{\delta} \ff+ \left(\overline{\tau}+ 3 \alpha- \overline{\beta}\right) \ff+2\bff
\eeaa
where $\ff$, $\bff$, $\xf$ are defined as in Remark \ref{remark:comp-NP-gauge}.
}. 
\begin{itemize}
\item The following relation between the $\nabc_3$ derivative of $A$, the $\DDc$ derivative of $\Bfr$ and $\Ffr$:
\bea
 \PF \left(\nabc_3A+\frac{1}{2} \tr\Xb A \right)&=& \frac 1 2 \DDc \hot \mathfrak{B}+  3H  \hot  \mathfrak{B} -\left(3 \ov{P}+2\PF\ov{\PF}\right)\mathfrak{F}\label{relation-F-A-B}
 \eea
 \item The following relation between the $\nabc_4$ derivative of $\Ffr$, the $\DDc$ derivative of $\Xfr$ and $A$:
 \bea
\nabc_4 \mathfrak{F}+\left(\frac 3 2 \ov{\tr X} +\frac 1 2  \tr X\right)\mathfrak{F}&=&-\frac 1 2 \DDc \hot \mathfrak{X} - \frac 1 2 \left( 3    H+ \Hb \right) \hot \mathfrak{X}-\PF A   \label{relation-F-B-A}
\eea
\item The following relation between the $\nabc_4$ derivative of $\Bfr$, the $\DDbc$ derivative of $A$ and $\Xfr$:
    \bea
\nabc_4\mathfrak{B}+3\ov{\tr X} \mathfrak{B}&=&\PF \left(\ov{\DDc}\c A  +  \ov{ \Hb} \c A \right)- \left( 3\ov{P}-2 \PF \ov{\PF} \right)\mathfrak{X}\label{nabb-4-mathfrak-B}
\eea
\item The following relation between the $\nabc_3$ derivative of $\Xfr$, the $\DDbc$ derivative of $\Ffr$ and $\Bfr$:
\bea
\nabc_3 \mathfrak{X} +\frac 1 2 \ov{\tr \Xb} \ \mathfrak{X}&=&-\ov{\DDc} \c \mathfrak{F}  -\ov{H} \c \mathfrak{F}-2\mathfrak{B} \label{nabc-3-mathfrak-X}
\eea
\end{itemize}
\end{proposition}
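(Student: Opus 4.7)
The plan is to prove each of the four identities in Proposition \ref{proposition-relation-gauge-inv} by substituting the definitions of the gauge-invariant quantities $\mathfrak{F}$, $\mathfrak{B}$, $\mathfrak{X}$ into the appropriate derivative, applying the linearized Einstein--Maxwell equations from Section 4.4, and using the reduced background equations of Proposition \ref{prop:reduced-equations} wherever a background factor (such as $\PF$, $P$, $H$) multiplies a linearized quantity. The overall philosophy is that each identity is a compatibility relation: differentiating one gauge-invariant quantity must reproduce another gauge-invariant quantity plus lower-order corrections, and this identification is forced by the Einstein--Maxwell system.

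For identity \eqref{relation-F-A-B}, I would start from the modified Bianchi identity \eqref{nabc-3-A} for $\nabc_3 A$, multiply both sides by $\PF$, and substitute $2\PF B = \mathfrak{B} + 3\ov{P}\BF$ so as to replace $B$ by $\mathfrak{B}$ and $\BF$. The Leibniz rule on $\DDc\hot B$ produces $\DDc \PF$ and $\DDc \ov{P}$, which by the reduced equations \eqref{DDc-PF-red} and \eqref{DDc-ov-P-red} become $-2\PF H$ and $-(3\ov{P}-2\PF\ov{\PF})H$ respectively. The resulting $\BF$ and $\Xh$ coefficients then reassemble precisely into $\Ffr = -\tfrac12 \DDc\hot\BF - \tfrac32 H\hot\BF + \PF\Xh$. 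For identity \eqref{relation-F-B-A}, the approach is to apply $\nabc_4$ to the definition of $\Ffr$ directly. The term $\nabc_4\Xh$ is supplied by the Ricci identity \eqref{nabc-4-Xh}, producing $-A$ together with $\DDc\hot\Xi$ and $\Xi\hot(\Hb+H)$; the term $\nabc_4\BF$ is rewritten via the definition of $\Xfr$; and the commutator $[\nabc_4, \DDc\hot]$ on $\BF$ is expanded using the GKS commutation formulas, after which the angular derivative reassembles into $\DDc\hot\Xfr$.

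For identity \eqref{nabb-4-mathfrak-B}, I would differentiate $\mathfrak{B} = 2\PF B - 3\ov{P}\BF$ with $\nabc_4$, using the Bianchi equation \eqref{nabc-4-B} for $\nabc_4 B$, the definition of $\Xfr$ for $\nabc_4\BF$, and the reduced equations \eqref{nabc-4-PF-red} and \eqref{nabc-4-P-red} for $\nabc_4\PF$ and $\nabc_4 P$. The $\ov{\DDc}\c A$ term appears directly from \eqref{nabc-4-B}, and the $\Xi$- and $\BF$-dependent remainders combine into $\Xfr$ via its definition. For identity \eqref{nabc-3-mathfrak-X}, the plan is to apply $\nabc_3$ to $\mathfrak{X} = \nabc_4\BF + \tfrac32\ov{\tr X}\BF - 2\PF\Xi$. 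The second-order term $\nabc_3\nabc_4\BF$ is flipped to $\nabc_4\nabc_3\BF$ via the commutator $[\nabc_3, \nabc_4]$; then $\nabc_3\BF$ is replaced using the Maxwell equation \eqref{nabc-3-BF}, and $\nabc_3\Xi$ is supplied by the Ricci identity \eqref{nabc-4-H} solved for $\nabc_3\Xi$. The $B$ and $\BF$ contributions produced this way combine into $-2\mathfrak{B}$, while the $\DDc$-derivatives of $\PF$ and $\BF$, after commuting $\nabc_4$ past $\DDc$, yield exactly $-\ov{\DDc}\c\Ffr - \ov{H}\c\Ffr$.

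The main obstacle will be identities \eqref{relation-F-B-A} and \eqref{nabc-3-mathfrak-X}, because both require commuting $\nabc_4$ or $\nabc_3$ with the angular operator $\DDc\hot$ (or with each other), which generates curvature and connection terms via the Ricci and commutator formulas. In Kerr--Newman these terms involve $P$, $\PF$, $H$, $\Hb$, $Z$, so the delicate point is verifying that they cancel against the linear combinations coming from the Leibniz rule and the reduced equations, leaving only the clean gauge-invariant right-hand side. Using conformal derivatives (which already absorb $\om$, $\omb$, $Z$) should streamline the bookkeeping considerably, and matching the coefficients of $\DDc\hot\Xi$, $H\hot B$, and $\PF\ov{\PF}$-type terms is what will ultimately single out the precise prefactors $\tfrac12$, $3$, $\tfrac32$, $2$ appearing in the four stated relations.
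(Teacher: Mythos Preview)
Your proposal is correct and follows essentially the same route as the paper: each identity is obtained by differentiating the relevant gauge-invariant quantity, substituting the linearized Bianchi/Ricci/Maxwell equations, and using the reduced background equations of Proposition~\ref{prop:reduced-equations} to reassemble the result. The one point you underestimate is identity~\eqref{nabc-3-mathfrak-X}: after expanding $\nabc_3\Xfr$ you will face a term $\DDc(\ov{\DDc}\c\BF)$ (from $\nabc_4\DDc\PF$) on one side and $\ov{\DDc}\c(\DDc\hot\BF)$ (from $\ov{\DDc}\c\Ffr$) on the other, and relating these two second-order angular operators requires the Gauss-type identity (Lemma~\ref{Laplacian-1}), which is where the curvature terms you anticipate actually enter and must cancel.
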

\begin{proof} See Appendix \ref{proof-prop-app}. 
\end{proof}

\section{The system of Teukolsky equations}\label{Teukolsky-equations-section}

In this section, we state the first theorem of the paper, which contains the coupled system of Teukolsky equations for the gauge-invariant quantities $A$, $\mathfrak{F}$ and $\mathfrak{B}$. These equations govern the linear electromagnetic-gravitational perturbations of Kerr-Newman spacetime, and generalize the Teukolsky equation for $A$ in the case of Kerr. The system of Teukolsky equations for $\underline{\Bfr}$, $\underline{\Ffr}$ and $\Ab$ can be obtained by symmetry.

\begin{theorem}\label{Teukolsky-equations-theorem} Consider a linear electromagnetic-gravitational perturbation of Kerr--Newman spacetime $\g_{M, a, Q}$ as in Definition \ref{definition-linear-oerturbation}.  Then its associated complex tensors and gauge-invariant quantities $A$, $\mathfrak{F}$, $\mathfrak{B}$ and $\mathfrak{X}$, satisfy the following coupled system of Teukolsky equations:
\bea\label{final-system-schematic}
\TT_1(\mathfrak{B})&=&\M_1[\mathfrak{F}, \mathfrak{X}] \label{Teukolsky-B}\\
\TT_2(\mathfrak{F})&=&\M_2[A, \mathfrak{X}, \mathfrak{B}] \label{Teukolsky-F}\\
\TT_3(A)&=&\M_3[\Ffr, \Xfr]\label{Teukolsky-A}
\eea
where
\begin{itemize}
\item on the left hand side of the equations, $\TT$ denotes the Teukolsky differential operators, respectively given by
  \bea\label{operator-Teukolsky-B}
  \begin{split}
 \TT_1(\Bfr)&:=  - \nabc_3\nabc_4\Bfr+ \frac 1 2 \ov{\DDc}\c (\DDc \hot \Bfr)-3\ov{\tr X} \nabc_3\Bfr- \left(\frac{3}{2}\tr\Xb +\frac 1 2\ov{\tr\Xb}\right)\nabc_4\Bfr\\
   &+\left( 6H+ \ov{H}+ 3  \ov{ \Hb}  \right)\c  \nabc  \Bfr+\left(-\frac{9}{2}\tr\Xb \ov{\tr X} -4 \PF \ov{\PF}+9 \ov{ \Hb} \c  H\right)\Bfr
   \end{split}
 \eea
  \bea\label{operator-Teukolsky-F}
\begin{split}
\TT_2(\Ffr)&:= -\nabc_3\nabc_4 \Ffr+ \frac 1 2 \DDc \hot (\ov{\DDc} \c \Ffr)-\left(\frac 3 2 \ov{\tr X} +\frac 1 2  \tr X\right)\nabc_3\Ffr\\
&- \frac 1 2 \left(\tr \Xb+\ov{\tr \Xb}  \right)\nabc_4\Ffr  + \left(4   H+ \ov{H}+  \Hb \right)\c \nabc \Ffr\\
&+\left(- \frac 3 4 \tr \Xb  \ov{\tr X}- \frac 1 4\ov{\tr \Xb}   \tr X+3\ov{P} -P +4\PF\ov{\PF} - \frac 3 2\ov{\DDc}\c H\right)\Ffr+\frac 1 2 \Hb \hot ( \ov{H} \c \Ffr)
\end{split}
\eea
\bea\label{operator-Teukolsky-A}
\begin{split}
\TT_3(A)&:= - \nabc_4 \nabc_3A+\frac{1}{2}\DDc \hot (\ov{\DDc}\c A)  - \left(\frac 1 2 \tr X+2\ov{\tr X}\right)\nabc_3A-\frac{1}{2}\tr\Xb \nabc_4A \\
& +\left(  4 H+ \Hb+\ov{\Hb} \right) \c \nabc A+\left(-\ov{\tr X} \tr \Xb+2\ov{P}-2\PF\ov{\PF} \right)A +2 H\hot \left( \ov{\Hb} \c A \right) 
\end{split}
\eea
\item on the right hand side of the equations, $\M$ denotes the coupling terms, where the terms in squared parenthesis indicate the quantities involved in the expressions, respectively given by
\bea
\M_1[\mathfrak{F}, \mathfrak{X}]&:=& 2\PF\ov{\PF}\left(2\ov{\DDc}\c\mathfrak{F}+4\ov{\Hb}\c\mathfrak{F}- \left(2\tr \Xb -  \ov{\tr \Xb}\right) \ \mathfrak{X} \right)  \label{definition-MM1}\\
\M_2[A, \mathfrak{X}, \mathfrak{B}]&:=& -  \PF \left(\nabc_3A +\frac 1 2 \left(3\tr \Xb-\ov{\tr \Xb}  \right) A\right)    + \left(\frac 3 2  \nabc_3 H \right) \hot \mathfrak{X} \nonumber\\
&&+\left(2  H- \Hb \right) \hot \mathfrak{B} \label{definition-MM2}\\
\M_3[\Ffr, \Xfr]&:=&2\ov{\PF}\left(  2 \nabc_4\mathfrak{F} +2 \ov{\tr X} \mathfrak{F}+ \left(\Hb + H\right)\hot\mathfrak{X} \right)
\eea
\end{itemize}
\end{theorem}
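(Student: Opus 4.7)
The plan is to derive each of the three wave equations by composing a $\nabc_3$-transport equation for the gauge-invariant quantity at hand with the $\nabc_4$-transport equation (or vice versa) for the auxiliary quantity that appears on its right-hand side, and then to re-express all the resulting background-dependent coefficients using the reduced Kerr--Newman relations of Proposition \ref{prop:reduced-equations}. The Teukolsky operators \eqref{operator-Teukolsky-B}--\eqref{operator-Teukolsky-A} are organized so that the principal-order angular operators $\tfrac12\DDc\hot(\ov{\DDc}\c U)$ on 2-tensors and $\tfrac12\ov{\DDc}\c(\DDc\hot F)$ on 1-tensors, which emerge from such a composition, cancel exactly against the second-order transport $-\nabc_3\nabc_4$; what remains on the right-hand side are the first-order transport coefficients already recorded in $\TT_1,\TT_2,\TT_3$ together with the genuine coupling contributions collected in $\M_1,\M_2,\M_3$.

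Concretely, for \eqref{Teukolsky-A} I would apply $\nabc_4$ to the modified Bianchi identity \eqref{nabc-3-A} for $\nabc_3 A$. Commuting $\nabc_4$ through $\DDc\hot$ and invoking \eqref{nabc-4-B} converts $\nabc_4(\DDc\hot B)$ into $\tfrac12\DDc\hot(\ov{\DDc}\c A)$ plus a coupling of the form $\ov{\PF}\,\DDc\hot(\nabc_4\BF)$; the latter, via the definition \eqref{definition-mathfrak-X} of $\Xfr$, feeds into $\M_3$. The remaining $\nabc_4$-derivatives on $\Xh,\BF,\Ffr,\ov{\PF}$ are handled respectively by \eqref{nabc-4-Xh}, by the Maxwell equation \eqref{nab-4-PF}, by the relation \eqref{relation-F-B-A}, and by the reduced equation \eqref{nabc-4-PF-red}, while the coefficients of the zero-order Teukolsky potential in \eqref{operator-Teukolsky-A} are produced by invoking \eqref{DDc-PF-red}, \eqref{DDc-ov-P-red} and \eqref{nabc-4-tr-Xb} on the background factors.

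For \eqref{Teukolsky-F} I would apply $\nabc_3$ to the identity \eqref{relation-F-B-A}: commuting $\nabc_3$ through $\DDc\hot$ and using \eqref{nabc-3-mathfrak-X} to replace $\nabc_3\Xfr$ by $-\ov{\DDc}\c\Ffr-\ov{H}\c\Ffr-2\Bfr$ produces the principal angular operator $\tfrac12\DDc\hot(\ov{\DDc}\c\Ffr)$ and contributes the $\Bfr$-coupling in $\M_2$; the term $\nabc_3(\PF A)$ is split using the reduced equation \eqref{nabc-3-P-red} for the background derivative of $\PF$, leaving $\PF\nabc_3 A$ as the leading entry of $\M_2$, and the $(\nabc_3 H)\hot\Xfr$ factor in $\M_2$ comes from $\nabc_3$ hitting the $H\hot\Xfr$ term in \eqref{relation-F-B-A}. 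Finally for \eqref{Teukolsky-B} I would apply $\nabc_3$ to \eqref{nabb-4-mathfrak-B}; using \eqref{nabc-3-A} to rewrite $\nabc_3 A$ and the definition \eqref{definition-mathfrak-B} to express $B$ in terms of $\Bfr$ and $\BF$, the combination $\PF\nabc_3(\ov{\DDc}\c A)$ generates $\tfrac12\ov{\DDc}\c(\DDc\hot\Bfr)$ plus the $\Ffr$-coupling of $\M_1$, while \eqref{relation-F-A-B} disposes of the $\nabc_3\Xfr$ contribution.

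The main obstacle will be the bookkeeping of background terms: every interchange of derivatives creates commutators $[\nabc_3,\DDc\hot]$, $[\nabc_4,\ov{\DDc}\c]$, $[\nabc_3,\nabc_4]$ populated by the Ricci and curvature coefficients $H,\Hb,Z,\tr X,\tr\Xb,P,\PF$, and every $\nabc_3$ or $\nabc_4$ acting on a background coefficient must be reduced through Proposition \ref{prop:reduced-equations} together with the transport identities \eqref{transport-for-q} for $q$. Moreover, since Definition \ref{definition-linear-oerturbation} forces us to discard any term quadratic in the background-vanishing set $\{\Xh,\Xbh,\Xi,\Xib,A,B,\Bb,\Ab,\BF,\BBF\}$, the closing verification is that no such quadratic term survives and that the surviving first-order transport contributions assemble precisely into the coefficients displayed in \eqref{operator-Teukolsky-B}--\eqref{operator-Teukolsky-A}.
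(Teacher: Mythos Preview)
Your proposal is correct and matches the paper's approach essentially line for line: for $A$ one applies $\nabc_4$ to \eqref{nabc-3-A}, for $\Ffr$ one applies $\nabc_3$ to \eqref{relation-F-B-A}, and for $\Bfr$ one applies $\nabc_3$ to \eqref{nabb-4-mathfrak-B}, then commutes, invokes the remaining identities of Proposition \ref{proposition-relation-gauge-inv}, and reduces background coefficients via Proposition \ref{prop:reduced-equations}. Two small labeling slips to fix: in the $\Bfr$-derivation it is \eqref{nabc-3-mathfrak-X} (not \eqref{relation-F-A-B}) that handles $\nabc_3\Xfr$, while \eqref{relation-F-A-B} is what converts $\PF\nabc_3 A$ directly into $\tfrac12\DDc\hot\Bfr$ plus the $\Ffr$-term; and in the $\Ffr$-derivation the background derivative of $\PF$ uses \eqref{nabc-3-PF-red}, not the curvature equation for $P$.
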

\begin{proof} The derivation of the above Teukolsky equations relies on Proposition \ref{proposition-relation-gauge-inv}, and is obtained in Appendix \ref{proof-main-thm}.
\end{proof}

We collect here few remarks about Theorem \ref{Teukolsky-equations-theorem}. 
\begin{enumerate}
\item The Teukolsky operators $\TT_1$, $\TT_2$ and $\TT_3$ are wave-like operators, as it can be seen by comparing the expressions for $\squared_1$ and $\squared_2$ given in Lemma \ref{lemma:expression-wave-operator}. More precisely, their highest order terms are given by a wave operator, with the additional presence of first order terms. 
\item Observe that the system of equations \eqref{Teukolsky-B}--\eqref{Teukolsky-A} for $\Bfr$, $\Ffr$ and $A$ also involve the gauge-invariant quantity $\Xfr$. Nevertheless, $\Xfr$ is considered here an auxiliary quantity which only appears on the right hand side $\M_1$, $\M_2$ and $\M_3$ at the first order. More precisely, the system of Teukolsky equations \eqref{Teukolsky-B}--\eqref{Teukolsky-A}, combined with the transport equation \eqref{nabc-3-mathfrak-X} for $\Xfr$, gives a complete system of equations. 
\item  In the case of linear gravitational perturbations of Kerr spacetimes (which corresponds to Kerr-Newman for $\PF=0$), equation \eqref{Teukolsky-A} for $A$  reduces to $\TT_3(A)=0$, i.e. the Teukolsky equation of spin $+2$ in Kerr, as obtained in \cite{GKS}. 

\item In the case of linear electromagnetic-gravitational perturbations of Reissner-Nordstr\"om (which corresponds to Kerr-Newman for $H=\Hb=\atrch=\atrchb=0$), the real parts of the Teukolsky system \eqref{Teukolsky-B}--\eqref{Teukolsky-A} reduces to the following Teukolsky equations for $\tilde{\b}$, $\ff$ and $\alpha$
\beaa
\TT_1(\tilde{\beta})&=&2\rhoF^2\left(4\div \ff -\trchb \ \mathfrak{x}\right)\\
\TT_2(\ff)&=& -\rhoF\left(\nabc_3 \a+ \trchb  \a \right)\\
\TT_3(\alpha)&=& 4\rhoF\left(\nabc_4\ff+ \trch \ff\right) 
\eeaa
where
\beaa
\TT_1(\tilde{\beta})&:=&-\nabc_3\nabc_4\tilde{\b}-2\div \DDs_2\tilde{\b}- 3\trch \nabc_3\tilde{\b}-2 \trchb\nabc_4\tilde{\b}+ \left(-\frac {9}{2} \trch \trchb-4\rhoF^2\right) \tilde{\b}\\
\TT_2(\ff)&:=& - \nabc_{3}\nabc_{4} \ff -2\DDs_2 \div\ff- \trchb \nabc_4\ff-2 \trch\nabc_3\ff+(- \trch\trchb+2\rho+4\rhoF^2)\ff\\
\TT_3(\a)&:=&-\nabc_4\nabc_3\a-2\DDs_2\div\a- \frac 1 2 \trchb \,\nabc_4\a  -\frac 5 2 \trch \nabc_3 \a + \left(- \trch\trchb+2\rho-2\rhoF^2 \right)\,\a 
\eeaa
The quantities $\tilde{\b}$, $\ff$ and $\alpha$  (as recalled in Remark \ref{rem:confornto-RN}) and the above quantities were obtained in \cite{Giorgi4} and \cite{Giorgi5}. 
\end{enumerate}

Finally, we relate the above system of Teukolsky equations for the horizontal tensors $\Bfr$, $\Ffr$ and $A$ to the equations verified by their projection to the first component, as one would have obtained using the NP formalism. 

Using that for $F \in \sk_1(\CCC)$ and $U \in \sk_2(\CCC)$ with $f=F_{1}=F(e_1)$ and $u=u_{11}=u(e_1, e_1)$ their scalar projections, we have, see Appendix D of \cite{GKS},
\beaa
(\squared_2 F)_{11} &=& \square_\g f +i \frac{2}{|q|^2}\frac{\cos\th}{\sin^2\th}  \partial_\vphi f +\left(  -2  \frac{(r^2+a^2)^2}{|q|^6}\cot^2\th+ \frac{2a^2\cos^2\th \Delta }{|q|^6}\right) f\\
(\squared_2 U)_{11} &=& \square_\g u +i \frac{4}{|q|^2}\frac{\cos\th}{\sin^2\th}  \partial_\vphi u +\left(  -4  \frac{(r^2+a^2)^2}{|q|^6}\cot^2\th+ \frac{4a^2\cos^2\th \Delta }{|q|^6}\right) u
\eeaa
we can deduce the scalar Teukolsky equations satisfied by the projections of $\Bfr$, $\Ffr$ and $A$. 

An interesting aspect of the system of Teukolsky equations in Kerr-Newman is that we have to differentiate between the spin and the conformal type of the quantities $\Bfr$, $\Ffr$ and $A$. As $\Bfr$ is a horizontal 1-tensor and $\Ffr$ and $A$ are horizontal 2-tensors, their projections $\Bfr_{1}$ and $\Ffr_{11}$, $A_{11}$ will respectively be scalars of spin $1$ and spin $2$. On the other hand, $\Bfr$ and $\Ffr$ are of conformal type $1$, while $A$ is of conformal type $2$. We define the relevant rescaling of those projections up to some functions of $q$ and $\ov{q}$ so that we can relate them to the standard Teukolsky equation in the literature \cite{Teukolsky}. 

We define the following rescaled projected quantities:
\beaa
\bff= \frac{\ov{q}^{7/2}}{q^{ 1/ 2}} \Bfr_{1}, \qquad  \ff=\ov{q} \Ffr_{11}, \qquad \a=\frac{\ov{q}}{q}A_{11}
\eeaa
and we collect in the following table their respective spin and conformal type:
\begin{table}[h!]
\centering
\begin{tabular}{l | c | c  }
\hline
  & spin type $s$ & conformal type $c$  \\
\hline \hline
$\bff$ & 1 & 1 \\
\hline
$\ff$ & 2 & 1 \\
\hline
$\a$ & 2 & 2  \\ 
\hline
\end{tabular}
\end{table}

We define the following Teukolsky operator of spin type $s$ and conformal type $c$ in Kerr-Newman, applied to a scalar $\psi$ of spin type $s$ and conformal type $c$ to be given by
\bea\label{eq:scalar-Teuk-operators}
\begin{split}
\mathcal{T}^{[s,c]}(\psi)&:=\square_{\g_{M, a,Q}} \psi + \frac{2c}{|q|^2}(r-M) \partial_r \psi +\frac{2}{|q|^2} \left( c\frac{a(r-M)}{\Delta} + s i \frac{\cos\th}{\sin^2\th} \right) \partial_\vphi \psi \\
&+\frac{2}{|q|^2} \left(c \big(\frac{M(r^2-a^2)-Q^2 r}{\Delta} - r \big)- s i a \cos\th \right) \partial_t \psi + \frac{1}{|q|^2} (s-s^2 \cot^2 \th) \psi
\end{split}
\eea
In particular notice that the conformal type $c$ is relevant in the real parts of the coefficients of the first derivative, while the spin type $s$ is relevant in the imaginary parts. 
Observe that the above Teukolsky operator reduces to the standard one \cite{Teukolsky} in Kerr for spin $s$ applied to $\Psi_0$, $\Psi_4$, $\phi_0$, $\phi_2$ by using $c=s$, since these quantities have the same spin and conformal type.

One can then show that the projections to the first components of the Teukolsky differential operators $\TT_1$, $\TT_2$, $\TT_3$ given by \eqref{operator-Teukolsky-B}--\eqref{operator-Teukolsky-B} can be written in terms of the scalar Teukolsky operator \eqref{eq:scalar-Teuk-operators}. More precisely:
\beaa
\frac{\ov{q}^{7/2}}{q^{ 1/ 2}} ( \TT_1(\Bfr))_{1}= \TT^{[1,1]}(\bff), \qquad \ov{q} (\TT_2(\Ffr))_{11}=\TT^{[2,1]}(\ff), \qquad \frac{\ov{q}}{q}(\TT_3(A))_{11}=\TT^{[2,2]}(\a). 
\eeaa

Just like in Schwarzschild, Kerr and Reissner-Nordstr\"om, boundedness and decay for solutions to the Teukolsky equations cannot be obtained directly. Since in Kerr-Newman is crucial to avoid the decomposition in modes as recalled in the introduction, we proceed in deriving a system of generalized Regge-Wheeler equations from the Teukolsky ones.

\section{The system of generalized Regge-Wheeler equations}\label{Chandra-section}

In this section, we derive the Regge-Wheeler system of equations governing the electromagnetic-gravitational perturbations of Kerr-Newman spacetime, therefore proving the main result of the paper.

\subsection{The invariant quantities $\mathfrak{P}$, $\mathfrak{Q}$ and $\pf$, $\qf$}

We introduce here the crucial invariant quantities satisfying the Regge-Wheeler equations. Those quantities are derived from the gauge-invariant quantities $\Bfr$ and $\Ffr$, through the following conformal operator.
\begin{definition} Let $\Psi \in \sk_k(\CCC)$ be a gauge-invariant quantity of conformal type $s$. We define the Chandrasekhar operator $\mathcal{P}_{C}: \sk_k(\CCC) \to \sk_k (\CCC)$ to be
\bea\label{definition-P-C}
\mathcal{P}_{C}(\Psi)&:=& \nabc_3 \Psi + C  \Psi \in \sk_k(\CCC)
\eea
for a scalar function $C$ of conformal type $-1$. 
\end{definition}
We immediately observe that $\mathcal{P}_{C}(\Psi) \in \sk_k(\CCC)$ is gauge-invariant of conformal type $s-1$.

We define the invariant quantities $\Pfr$ and $\Qfr$ as the Chandrasekhar-transformed of the gauge-invariant quantities $\Bfr$ and $\Ffr$ respectively. In addition we allow for a rescaling of those quantities. 

\begin{definition} We define the  invariant quantities $\Pfr \in \sk_1(\CCC)$ and $\Qfr \in \sk_2(\CCC)$ as
\bea
\mathfrak{P}&:=&\mathcal{P}_{C_1}(\mathfrak{B})= \nabc_3 \Bfr + C_1  \Bfr \in \sk_1(\CCC), \label{definition-P-Q}\\
 \mathfrak{Q}&:=&\mathcal{P}_{C_2}(\mathfrak{F})= \nabc_3 \Ffr + C_2  \Ffr \in \sk_2(\CCC)\label{definition-P-Q2}
\eea
for scalar functions $C_1$ and $C_2$ to be determined. 
We also define their rescaled version $\pf \in \sk_1(\CCC)$ and $\qf^\F \in \sk_2(\CCC)$ as 
\bea
\mathfrak{p}&:=& f_1(q, \ov{q}) \mathfrak{P}= f_1(q, \ov{q})\big(\nabc_3 \Bfr + C_1  \Bfr \big) \in \sk_1(\CCC), \label{definition-pf}\\
 \qf^\F&:=&  f_2(q, \ov{q})\mathfrak{Q} = f_2(q, \ov{q})\big( \nabc_3 \Ffr + C_2  \Ffr \big) \in \sk_2(\CCC) \label{definition-qf-F}
\eea
where $f_1$ and $f_2$ are functions of $q=r+i a \cos\th$ and $\ov{q}=r-i a \cos\th$ to be determined.
\end{definition}
The quantities $\pf$ and $\qf$ can be seen as first order differential operators applied to the gauge-invariant quantities $\Bfr$ and $\Ffr$, which satisfy the Teukolsky system of equations. Observe that $\Pfr$, $\Qfr$ and $\pf$, $\qf^\F$ are all of conformal type $0$.

\subsection{Statement of the main theorem and remarks}

We now state the main result regarding the wave equations satisfied by $\pf$ and $\qf^\F$.

\begin{theorem}\label{main-theorem-RW} Consider a linear electromagnetic-gravitational perturbation of Kerr-Newman spacetime $\g_{M, a, Q}$ as in Definition \ref{definition-linear-oerturbation}, with associated gauge-invariant quantities $\mathfrak{B}$ and $\mathfrak{F}$.

Then there exist choices of complex scalar functions $C_1$, $C_2$ $f_1$, $f_2$, in the definitions of $\pf$ and $\qf^\F$, explicitly:
\beaa
\mathfrak{p}&=& q^{\frac 1 2 } \ov{q}^{\frac 9 2 } \left(\nabc_3 \Bfr + \big(2\trchb -\frac 5 2  i  \atrchb \big)  \Bfr \right) \in \sk_1(\CCC), \\
 \qf^\F&=& q \ov{q}^2\left( \nabc_3 \Ffr + \big(\trchb -3 i \atrchb \big) \Ffr \right) \in \sk_2(\CCC),
\eeaa
such that the invariant 1-tensor $\pf \in \sk_1(\CCC)$ and the symmetric traceless 2-tensor $\qf^\F \in \sk_2(\CCC)$ satisfy
 the following coupled system of wave equations:
\bea
 \squared_1\pf-i  \frac{2a\cos\th}{|q|^2}\nab_t \pf  -V_1  \pf &=&4Q^2 \frac{\ov{q}^3 }{|q|^5} \left(  \ov{\DD} \c  \qf^\F  \right) + L_\pf[\Bfr, \Ffr] \label{final-eq-1}\\
\squared_2\qf^\F-i  \frac{4a\cos\th}{|q|^2}\nab_t \qf^\F -V_2  \qf^\F &=&-   \frac 1 2\frac{q^3}{|q|^5} \left(  \DD \hot  \pf  -\frac 3 2 \left( H - \Hb\right)  \hot \pf \right) + L_{\qf^\F}[\Bfr, \Ffr]\label{final-eq-2}
 \eea
where
\begin{itemize}
\item $\squared_1$ and $\squared_2$ denote the wave operators for horizontal 1-tensors and 2-tensors respectively, as defined in \eqref{eq:definition-squared}, 
\item the potentials $V_1$ and $V_2$ are \textbf{real} positive scalar functions (whose precise expression is given by \eqref{potentials-explicity}), which for $a=0$ coincide with the potentials of the Regge-Wheeler system of equations in Reissner-Nordstr\"om \cite{Giorgi7}, i.e.
\beaa
V_1&=&- \frac {1}{ 4} \trch\trchb +5 \rhoF^2+ O\big(\frac{|a|}{r^4}\big), \qquad V_2=- \trch \trchb  +2\rhoF^2+ O\big(\frac{|a|}{r^4}\big),
\eeaa
\item $L_\pf[\Bfr, \Ffr] $ and $L_{\qf^\F}[\Bfr, \Ffr]$ are linear first order operators in $\Bfr$ and $\Ffr$, respectively given by
 \beaa
L_\pf[\Bfr, \Ffr] &=& q^{1/2} \ov{q}^{9/2} \Big[-Z^{\Bfr}_{a} \c \nabc \Bfr+ 2 \PF\ov{\PF} \ Y^{\Ffr}_{a} (\ov{\DDc}\c\mathfrak{F})\\
&&+ ( 2 \PF\ov{\PF}\ Y^{\Bfr}_0 - Z^{\Bfr}_0) \Bfr + 2 \PF\ov{\PF}\left( Y^{\Ffr}_0 \c \mathfrak{F}+Y^{\Xfr}_0 \ \mathfrak{X}\right)\Big] 
\eeaa
and
\beaa
L_{\qf^\F}[\Bfr, \Ffr] &=& q\ov{q}^2 \Big[ W_4^{\Ffr} \nabc_4 \mathfrak{F}+\left( W_a^{\Ffr}  -Z^{\Ffr}_{a} \right) \c \nabc \mathfrak{F}+W_a^{\Xfr} \DDc \hot \mathfrak{X}  + W_{a}^{\Bfr} \DDc\hot \Bfr\\
&&+ ( W_0^{\Ffr}-Z^{\Ffr}_0)  \mathfrak{F} +W_0^{\Bfr} \hot  \mathfrak{B}+ W_0^{\Xfr} \hot \mathfrak{X}  \Big]
\eeaa
where
\begin{itemize}
\item $W_4^{\Ffr}$ and $W_a^{\Xfr}$ are \textbf{real} functions,
\item  $Z^{\Bfr}_{a}$ and $(W_a^{\Ffr}  -Z^{\Ffr}_{a})$ are \textbf{real} one-forms, 
\item $W_{a}^{\Bfr}$ and $Y^{\Ffr}_{a}$ are \textbf{imaginary} functions given by $W_{a}^{\Bfr}= \frac 3 4  i  \atrchb $ and $Y^{\Ffr}_{a}= -3  i \atrchb $,
\end{itemize}
and $Y^{\Bfr}_0$, $Z^{\Bfr}_0$, $Y^{\Xfr}_0$, $( W_0^{\Ffr}-Z^{\Ffr}_0) $ are complex functions, and $Y^{\Ffr}_0$, $W_0^{\Bfr}$, $W_0^{\Xfr} $ are complex one-forms, all of which vanish for zero angular momentum.
\end{itemize}

We call the system of equations \eqref{final-eq-1}-\eqref{final-eq-2} a system of  \textbf{generalized Regge-Wheeler equations}.
\end{theorem}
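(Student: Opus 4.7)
The strategy is to derive the Regge-Wheeler system by applying a physical-space Chandrasekhar transformation to the coupled Teukolsky system of Theorem \ref{Teukolsky-equations-theorem}, in the spirit of the approach developed in Schwarzschild \cite{DHR} and extended to Kerr in \cite{TeukolskyDHR}, \cite{GKS}. Since $\pf = f_1\bigl(\nabc_3 \Bfr + C_1 \Bfr\bigr)$ and $\qf^\F = f_2\bigl(\nabc_3 \Ffr + C_2 \Ffr\bigr)$, the wave equations for these quantities will emerge from applying $\nabc_3$ to the Teukolsky equations \eqref{Teukolsky-B} and \eqref{Teukolsky-F}, commuting it through the second-order operators $\TT_1$ and $\TT_2$, and then fixing the four complex functions $C_1, C_2, f_1, f_2$ so that the left-hand side reduces to a wave operator plus an imaginary $i \nab_t$ term plus a real, positive potential, with no residual $\nabc_3$-derivative of $\Bfr$ or $\Ffr$.

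My concrete plan has three steps. First, apply $\nabc_3$ to each Teukolsky equation and organize every term using the commutation relations between $\nabc_3$ and $\nabc_4, \DDc, \ov{\DDc}$ together with the background reduced equations of Proposition \ref{prop:reduced-equations} and the $q$-transport identities \eqref{transport-for-q}; this reduces every background coefficient to an explicit rational function of $q, \ov{q}$. Second, re-express $\nabc_3 \Bfr$ and $\nabc_3 \Ffr$ wherever they appear in terms of $\Pfr$ and $\Qfr$: the cancellation of the remaining $\nabc_3$-first-order terms gives a transport equation along $e_3$ for $C_1, C_2$ whose only conformally $(-1)$-weighted solutions are $C_1 = 2\trchb - \frac{5}{2}\, i\, \atrchb$ and $C_2 = \trchb - 3\, i\, \atrchb$. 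Third, absorb the scalar rescalings $f_i$: requiring that the $\nabc_4 \pf$ coefficient reduce to $i\, \frac{2a\cos\th}{|q|^2}\nab_t$ and that the zeroth-order multiplier of $\pf$ be real produces first-order PDEs in $q, \ov{q}$ for $f_1, f_2$ whose unique solutions of the correct conformal type are $f_1 = q^{1/2}\ov{q}^{9/2}$ and $f_2 = q\, \ov{q}^{2}$; an analogous calculation fixes the factor $i\,\frac{4a\cos\th}{|q|^2}$ for $\qf^\F$. With these choices the coupling terms inherited from $\M_1, \M_2$ acquire precisely the coefficients $4Q^2\, \ov{q}^3/|q|^5$ and $-\frac{1}{2}\, q^3/|q|^5$ appearing in \eqref{final-eq-1}--\eqref{final-eq-2}.

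The main obstacle is the disposal of the auxiliary gauge-invariants $A$ and $\Xfr$ from the principal part of the final system. The source $\M_2[A, \Xfr, \Bfr]$ carries a $\nabc_3 A$ term and a $(\nabc_3 H)\hot \Xfr$ term, and naive $\nabc_3$-differentiation would propagate $\nabc_3^2 A$ and a second derivative of $\Xfr$ into the equation for $\qf^\F$, destroying the Regge-Wheeler structure. To eliminate these I plan to use the algebraic identity \eqref{relation-F-A-B} to trade $\PF \bigl(\nabc_3 A + \frac{1}{2}\tr\Xb\, A\bigr)$ for $\frac{1}{2}\DDc \hot \Bfr + 3 H \hot \Bfr - (3\ov{P} + 2\PF\ov{\PF})\Ffr$, and the transport equation \eqref{nabc-3-mathfrak-X} for $\Xfr$ to absorb its ingoing derivative into $\ov{\DDc}\c\Ffr$ and $\Bfr$. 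The delicate bookkeeping is in verifying that after all substitutions every imaginary contribution to the zeroth-order coefficient cancels, so that $V_1$ and $V_2$ are genuinely real and positive, and that the remaining non-principal terms land in the admissible first-order operators $L_\pf, L_{\qf^\F}$ with the real/imaginary/one-form structure asserted in the statement; this is a lengthy computation and I would defer it to the appendix, as is done in Appendix \ref{RWeqapp}.
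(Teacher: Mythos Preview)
Your overall strategy---apply $\PP_C = \nabc_3 + C$ to the Teukolsky system, commute through, rescale, and eliminate $A, \Xfr$ via \eqref{relation-F-A-B} and \eqref{nabc-3-mathfrak-X}---is exactly the paper's. But you have the logical dependencies among the four free functions inverted, and this matters for the computation.

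The commutator $[\PP_{C_j}, \TT_j]$ produces a residual $\nabc_4\Bfr$ (resp.\ $\nabc_4\Ffr$) term, not a $\nabc_3$-term as you write; it is the vanishing of \emph{that} coefficient which gives the transport equations \eqref{transport-nabc-3-C1}--\eqref{transport-nabc-3-C2}. Those equations determine only the \emph{real} parts $\Re(C_1)=2\trchb$, $\Re(C_2)=\trchb$: for any real $p_j$ the function $C_j=\Re(C_j)+ip_j\atrchb$ satisfies the same transport equation, so your claim that this step yields the full $C_j$ uniquely is incorrect. The rescalings $f_1=q^{1/2}\ov{q}^{9/2}$, $f_2=q\ov{q}^2$ are fixed next, purely by the requirement that the first-order piece collapse to $i\,c\,\nab_t$ (Proposition \ref{prop:rescaling-f}); the reality of the potential plays no role here. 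Only \emph{after} $f_1,f_2$ are fixed does one compute $V_1, V_2$ and find a surviving imaginary part proportional to $(2p_1+5)(\rhod+\curl\etab)$ and $(2p_2+6)(\rhod+\curl\etab)$; it is this condition that forces $p_1=-\tfrac{5}{2}$, $p_2=-3$ (Proposition \ref{prop:potentials-real}). Finally, the reality of $W_4^{\Ffr}, W_a^{\Xfr}$ and of the one-forms $Z_a^{\Bfr}, W_a^{\Ffr}-Z_a^{\Ffr}$, and the specific imaginary values of $W_a^{\Bfr}, Y_a^{\Ffr}$, are not imposed but are \emph{verified a posteriori} once $p_1,p_2$ have been fixed (Lemma \ref{lemma:lot-terms}); the paper emphasizes that there are not enough free parameters to impose all four structural conditions independently, so this last verification is a genuine miracle, not a consequence of the earlier choices.
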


We now remark what are the crucial structures of the system of generalized Regge-Wheeler equations \eqref{final-eq-1}--\eqref{final-eq-2} which make them analyzable in physical space. 
\begin{enumerate}
\item The only first order terms present in both equations is of the form $i  \nab_t$, as in the generalized Regge-Wheeler equation obtained in Kerr \cite{ma2}, \cite{TeukolskyDHR}, \cite{GKS}. Such first order term has good divergence properties in the derivation of the energy estimates. Schematically, when multiplying equation \eqref{final-eq-1} by $\nab_t \ov{\pf}$ and taking the real part, one obtains a cancellation from the first order term:
\beaa
i \nab_t \pf \c \nab_t \ov{\pf}+ \overline{i \nab_t \pf} \c \nab_t \pf= i \nab_t \pf \c \nab_t \ov{\pf}- i \nab_t\ov{\pf} \c \nab_t \pf=0
\eeaa
This allows to derive the energy estimates without loss of derivatives. Similarly for equation \eqref{final-eq-2}. 
\item The reality of the potentials $V_1$ and $V_2$ is also crucial in the derivation of the estimates. When deriving the energy estimates and multiplying equation \eqref{final-eq-1} by $\nab_t \ov{\pf}$ and taking the real part, one obtains
\beaa
V_1 \pf \c \nab_t \ov{\pf}+ \overline{V_1 \pf} \c \nab_t \pf&=&\Re(V_1)(  \pf \c \nab_t \ov{\pf}+ \overline{\pf} \c \nab_t \pf)+ i \Im(V_1)( \pf \c \nab_t \ov{\pf}- i \overline{\pf} \c \nab_t \pf)\\
&=& \frac 1 2 \Re(V_1)\nab_t(| \pf|^2) + i \Im(V_1)( \pf \c \nab_t \ov{\pf}- i \overline{\pf} \c \nab_t \pf)
\eeaa
If the imaginary part  of the potential is not zero, then the last term cannot be written as a boundary term, and the energy estimates cannot be closed. In addition, the positivity of the real part of the potentials give positive contribution to the energy in the boundary terms. Similarly for equation \eqref{final-eq-2}. 

\item The highest order coupling terms on the right hand side of the equations are of the form $\frac{\ov{q}^3}{|q|^5}(\DDov \c \qf^\F)$ and $-\frac{q^3}{|q|^5} (\DD\hot \pf)$, up to a multiplication by the positive constant $8Q^2$. In particular observe that the functions multiplying the operators $\DDov \c$ and $\DD\hot$ are complex conjugate. Such structure is crucial in the cancellation of those coupling terms once the estimates for the two equations are summed, since $\DDov \c$ and $\DD\hot$ are adjoint operators up to lower order terms, as obtained in Lemma \ref{lemma:adjoint-operators}.

Such lower order terms, together with the derivatives falling to the functions $\frac{\ov{q}^3 }{|q|^5} $ and $\frac{q^3 }{|q|^5} $, are crucial in treating the coupling terms $\frac{\ov{q}^3 }{|q|^5} \left(  \ov{\DD} \c  \qf^\F  \right)$ and $-\frac{q^3}{|q|^5} \left(  \DD \hot  \pf  -\frac 3 2 \left( H - \Hb\right)  \hot \pf \right) $, precisely  to cancel the term $-\frac 3 2 \left( H - \Hb\right)  \hot \pf $ in the estimates.

\item The first order operators $L_\pf[\Bfr, \Ffr] $ and $L_{\qf^\F}[\Bfr, \Ffr]$ contain terms which are lower order in differentiability with respect to $\pf$ and $\qf^\F$. 
In particular, in the derivation of the energy estimates it is crucial that the highest order terms relative to the corresponding equations have real coefficients. More precisely, in the equation for $\pf$, the operator $L_\pf[\Bfr, \Ffr] $ should have real coefficients for the quantities which are lower order with respect to $\pf$, i.e. $\nabc \Bfr$, while in the equation for $\qf^\F$, the operator $L_\qf^\F[\Bfr, \Ffr] $ should have real coefficients for the quantities which are lower order with respect to $\qf^\F$, i.e. $\nabc_4 \Ffr$ and $\nabc \Ffr$. 

On the other hand, the lower order terms which are coupled should cancel, similarly to the coupling terms above. It is therefore crucial to obtain a cancellation in the terms $W_{a}^{\Bfr}= \frac 3 4  i  \atrchb $ and $Y^{\Ffr}_{a}= -3  i \atrchb $.

\end{enumerate}

It is remarkable that a choice of complex functions $C_1$, $C_2$, $f_1$ and $f_2$ which realizes all the above exists and can be found. In particular, the freedom in the choice of these functions is not enough to impose each one of the above conditions. We instead will prove the Theorem by imposing condition 1 (i.e. the only first order term is of the form $i\nab_t$) and condition 2 (i.e. the potentials are real), and this will uniquely determine the functions $C_1$, $C_2$, $f_1$ and $f_2$. We then show that with those choices, conditions 3 and 4 are also satisfied. 

\medskip
We summarize here the main steps of the proof.
\begin{enumerate}
\item[(a)] We compute the commutator between the Chandrasekhar operator $\PP_C$ and the Teukolsky operators $\TT_1$ and $\TT_2$. In order to cancel the lower order terms in the commutator, we impose conditions on the real part of the functions $C_1$ and $C_2$, and obtain
\beaa
\Re(C_1)=2\trchb, \qquad \Re(C_2)=\trchb.
\eeaa
This is done in Section \ref{comm-section}. With such choice we can compute the wave equations for the Chandrasekhar-transformed $\Pfr$ and $\Qfr$ quantities, in Section \ref{sec:wave-Pfr-Qfr}.
\item[(b)] We compute the effect on the wave equations of the rescaling of $\Pfr$ and $\Qfr$ through functions $f_1$ and $f_2$. In order to get only first order terms of the form $i \nab_t$ (condition 1), we impose conditions on functions $f_1$ and $f_2$, and obtain
\beaa
f_1=(q)^{1/2} (\ov{q})^{9/2}, \qquad f_2=q \ov{q}^2.
\eeaa
This is done in Section \ref{sec:rescaling-f}. 

\item[(c)] We compute the right hand side of the respective equations, and show that the above choice of $f_1$ and $f_2$ implies the structure of the higher coupling terms (condition 3). This is done in Section \ref{rhs-section}. 

\item[(d)] We compute the potentials of the two equations, and impose the vanishing of their imaginary parts. This uniquely determines the imaginary parts of the functions $C_1$ and $C_2$, giving
\beaa
\Im(C_1)=-\frac 5 2 \atrchb, \qquad \Im(C_2)=-3i \atrchb. 
\eeaa
This is done in Section \ref{sec:potentials-equations}. 

\item[(e)] Finally, we compute the lower order terms in $L_\pf[\Bfr, \Ffr] $ and $L_{\qf^\F}[\Bfr, \Ffr]$ and show that the above choices for $C_1$ and $C_2$ imply the reality or the cancellation of the relevant coefficients. This is done in Section \ref{sec:lower-order-terms}. 
\end{enumerate}

\subsection{Proof of Theorem \ref{main-theorem-RW}}

In this section, we derive the proof of Theorem  \ref{main-theorem-RW} while relying on the computations in the Appendix.
Recall the Teukolsky equations \eqref{Teukolsky-B} and \eqref{Teukolsky-F} in Theorem \ref{Teukolsky-equations-theorem}, i.e.
\beaa
\TT_1(\mathfrak{B})&=&\M_1[\mathfrak{F}, \mathfrak{X}] \\
\TT_2(\mathfrak{F})&=&\M_2[A, \mathfrak{X}, \mathfrak{B}]. 
\eeaa
We apply the Chandrasekhar operators $\PP_{C_1}$ and $\PP_{C_2}$, for $C_1$ and $C_2$ to be determined, to the above equations respectively. Recalling that $\mathfrak{P}:=\mathcal{P}_{C_1}(\mathfrak{B})$ and $\mathfrak{Q}:=\mathcal{P}_{C_2}(\mathfrak{F})$, we obtain
\bea
\TT_1(\mathfrak{P})+[\mathcal{P}_{C_1}, \TT_1](\mathfrak{B})&=&\mathcal{P}_{C_1}\Big(\M_1[\mathfrak{F}, \mathfrak{X}]\Big) \label{intermediate-RW-B} \\
\TT_2(\mathfrak{Q})+[\mathcal{P}_{C_2}, \TT_2](\mathfrak{F})&=&\mathcal{P}_{C_2}\Big(\M_2[A, \mathfrak{X}, \mathfrak{B}] \Big)\label{intermediate-RW-F-A}
\eea

\subsubsection{The commutators $[\PP_C, \TT]$}\label{comm-section}

We compute the commutators between the Teukolsky operators $\TT_1$ and $\TT_2$ and the first order differential operator $\PP_C$ as defined in \eqref{definition-P-C} for any scalar function $C$. In order to eliminate the highest order terms which cannot be expressed in terms of $\Pfr$ or $\Qfr$ (i.e. $\nabc_4 \Bfr$ and $\nabc_4 \Ffr$), we need to impose conditions on the real part of the functions $C_1$ and $C_2$. We obtain the following.

\begin{proposition}\label{proposition-commutator}\label{proposition-commutator-F-A} 
Let $\mathfrak{P}=\mathcal{P}_{C_1}(\mathfrak{B})= \nabc_3 \Bfr + C_1  \Bfr$ and $\mathfrak{Q}=\mathcal{P}_{C_2}(\mathfrak{F})= \nabc_3 \Ffr + C_2  \Ffr$, such that $C_1$ and $C_2$ satisfy respectively
\bea
 \nabc_3C_1+\frac {1}{ 2}\left(\tr \Xb +\ov{\tr\Xb}\right)C_1 -  \tr\Xb\ov{\tr\Xb}=0 \label{transport-nabc-3-C1}\\
  \nabc_3 C_2+\frac {1}{ 2}\left(\tr \Xb +\ov{\tr\Xb}\right) C_2-   \frac 1 2 \tr \Xb \ov{\tr\Xb} =0 \label{transport-nabc-3-C2}
\eea
Then the commutators between the Chandrasekhar operators $\mathcal{P}_{C_1}$ and $\PP_{C_2}$ and the Teukolsky operators $\TT_1$ and $\TT_2$ are respectively given by
\beaa
\, [\mathcal{P}_{C_1}, \TT_1](\mathfrak{B})&=&2 \etab \c \nab\mathfrak{P} - \frac 1 2 \left(\tr \Xb +\ov{\tr\Xb}\right)  \nab_4\mathfrak{P}  +\hat{V}_1   \mathfrak{P}- \frac 1 2 \left(\tr \Xb +\ov{\tr\Xb}\right)\M_1[\mathfrak{F}, \mathfrak{X}]-L_{\Pfr}[\Bfr, \Ffr], \\
\, [\mathcal{P}_{C_2}, \TT_2](\mathfrak{F})&=&2 \etab \c \nab\mathfrak{Q} - \frac 1 2 \left(\tr \Xb +\ov{\tr\Xb}\right)  \nab_4\mathfrak{Q}  + \hat{V}_2  \mathfrak{Q}- \frac 1 2 \left(\tr \Xb +\ov{\tr\Xb}\right) \M_2[A, \mathfrak{X}, \mathfrak{B}] -L_{\Qfr}[\Bfr, \Ffr] ,
\eeaa
where
\begin{itemize}
\item the potentials $\hat{V}_1$ and $\hat{V}_2$ are given by
\bea
\hat{V}_1&=& I^{\Bfr}_{3} +J^{\Bfr}_3 +K^{\Bfr}_3+M^{\Bfr}_3=  - \frac 5 2 \trch \trchb -4\rho -2 \rhoF^2+ O\left(\frac{|a|}{r^3} \right) \label{V1hat}\\
\hat{V}_2&=& I^{\Ffr}_{3} +J^{\Ffr}_3 +K^{\Ffr}_3+M^{\Ffr}_3=  - \frac 3 2 \trch \trchb -4\rho -2 \rhoF^2+ O\left(\frac{|a|}{r^3} \right) \label{V2hat}
\eea
where the precise coefficients are given in Appendix \ref{proof-7.1-section}, 
\item $L_{\Pfr}[\Bfr, \Ffr]$ and $L_{\Qfr}[\Bfr, \Ffr]$ are linear first order operators in $\Bfr$ and $\Ffr$, given by
\bea
L_{\Pfr}[\Bfr, \Ffr]&:=&-Z^{\Bfr}_{a} \c \nabc \Bfr-\left( 4\trchb \rhoF^2 +Z^{\Bfr}_0\right) \Bfr, \label{eq:L-Pfr-lot}\\
L_{\Qfr}[\Bfr, \Ffr]&:=&-Z^{\Ffr}_{a} \c \nabc \Ffr+\left(4\trchb \rhoF^2-Z^{\Ffr}_0\right)  \Ffr \label{eq:L-Qfr-lot}
\eea
where $Z^{\Bfr}_{a}$ and $Z^{\Ffr}_{a}$ are complex one-forms and $Z^{\Bfr}_0$ and $Z^{\Bfr}_0$ are complex functions of $(r, \th)$, all of which vanish for zero angular momentum, having the following fall-off in $r$:
\beaa
Z^{\Bfr}_{a}, Z^{\Ffr}_{a}=O\left(\frac{|a|}{r^3} \right), \qquad Z^{\Bfr}_{0}, Z^{\Ffr}_{0}=O\left(\frac{|a|}{r^4} \right)
\eeaa
\end{itemize}
\end{proposition}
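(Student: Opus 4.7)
The plan is to compute the commutators $[\PP_{C_j},\TT_j]\Psi$ directly by applying $\PP_{C_j}=\nabc_3+C_j$ to the explicit seven-term formulas \eqref{operator-Teukolsky-B}--\eqref{operator-Teukolsky-F} for $\TT_1,\TT_2$, and then to use the Teukolsky equations \eqref{Teukolsky-B}--\eqref{Teukolsky-F} themselves to re-absorb any second-order contribution that cannot be expressed through $\Pfr$ or $\Qfr$. The organizing principle is that among the second derivatives of $\Psi\in\{\Bfr,\Ffr\}$ produced by the commutation, the only ones that can be absorbed into $\Pfr$ or $\Qfr$ are $\nabc_3^2\Psi$ and $\nabc_3$ applied to an angular operator on $\Psi$; every bare $\nabc_4\Psi$ must be killed by a constraint on $C_j$, and any surviving $\nabc_3\nabc_4\Psi$ must be traded via the Teukolsky equation.

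I would first compute $[\nabc_3,-\nabc_3\nabc_4]\Psi=-\nabc_3[\nabc_3,\nabc_4]\Psi$ and $[\nabc_3,\tfrac{1}{2}\ov{\DDc}\c\DDc\hot]\Psi$ using the commutation identities recalled in the proof of Lemma \ref{lemma:expression-wave-operator}, and then commute $\nabc_3$ with each scalar coefficient of $\TT_j$ through the reduced transport identities \eqref{nabc-3-trXb-red}--\eqref{nabc-4-trXb-red}. Collecting every $\nabc_4\Psi$ contribution produced by these three classes yields a net coefficient proportional to $\nabc_3 C_j+\tfrac12(\tr\Xb+\ov{\tr\Xb})C_j-\lambda_j\,\tr\Xb\,\ov{\tr\Xb}$, with $\lambda_1=1$ and $\lambda_2=\tfrac12$; the vanishing of this coefficient is exactly the hypothesis \eqref{transport-nabc-3-C1}--\eqref{transport-nabc-3-C2}. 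Since this ODE has real coefficients and a real right-hand side, it determines $\Re(C_j)$ uniquely while leaving $\Im(C_j)$ free, to be fixed at a later stage by the reality of the final potential.

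With $C_j$ so chosen, the only surviving second-order obstruction is a single $\nabc_3\nabc_4\Psi$ term with coefficient $-\tfrac12(\tr\Xb+\ov{\tr\Xb})$; using the Teukolsky equation $\TT_j(\Psi)=\M_j$ to solve for $\nabc_3\nabc_4\Psi$ and substituting produces both the advertised coupling $-\tfrac12(\tr\Xb+\ov{\tr\Xb})\M_j$ and, after reassembly of the remaining principal pieces, the first-order operator $2\etab\c\nab-\tfrac12(\tr\Xb+\ov{\tr\Xb})\nab_4$ acting on $\Pfr$ (resp.\ $\Qfr$). What remains is algebraic bookkeeping using Propositions \ref{prop-Maxwell}--\ref{prop:bianchi:complex} and the explicit Kerr-Newman values of Section \ref{sec:values-KN}. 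The zeroth-order scalars combine into $\hat V_j$, whose $a=0$ principal part $-\tfrac{k}{2}\trch\trchb-4\rho-2\rhoF^2$ (with $k=5$ for $\Pfr$, $k=3$ for $\Qfr$) reproduces the Reissner-Nordstr\"om potentials of \cite{Giorgi4,Giorgi5}; each correction is polynomial in the coefficients $H,\Hb,\atrch,\atrchb,\dual\rhoF$ that are $O(|a|/r^2)$ in Kerr-Newman, giving the $O(|a|/r^3)$ error. The first-order angular pieces assemble into $L_\Pfr,L_\Qfr$ of the form \eqref{eq:L-Pfr-lot}--\eqref{eq:L-Qfr-lot}. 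I expect the main obstacle to be this final certification that the remainders have \emph{only} the stated structure (and in particular that $Z^{\Bfr}_a,Z^{\Ffr}_a$ depend linearly on Ricci coefficients that vanish at $a=0$), which requires repeatedly invoking the reduced Maxwell, Ricci, and Bianchi identities of Proposition \ref{prop:reduced-equations} to cancel terms not manifestly of the advertised type.
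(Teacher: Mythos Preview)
Your proposal follows the same route as the paper's proof in Appendix \ref{sec:proof-commutators}: decompose $[\PP_{C_j},\TT_j]$ term by term via elementary commutation lemmas, trade the residual second-order obstruction using the Teukolsky equation itself, and impose the transport constraint on $C_j$ to kill the bare $\nabc_4\Psi$ terms. One point in your sequencing is off, however. The second-order obstruction that actually survives the direct commutation with the angular part is not $\nabc_3\nabc_4\Psi$ but the \emph{angular} term $-\tfrac12(\tr\Xb+\ov{\tr\Xb})\cdot\tfrac12\ov{\DDc}\c\DDc\hot\Psi$ (resp.\ $\DDc\hot\ov{\DDc}\c$). It is the Teukolsky substitution that converts this into $-\tfrac12(\tr\Xb+\ov{\tr\Xb})\big(\nabc_4\nabc_3\Psi+\cdots+\M_j\big)$, and writing $\nabc_4\nabc_3\Psi=\nabc_4\Pfr-C_j\nabc_4\Psi-(\nabc_4C_j)\Psi$ is precisely what produces \emph{both} the $-\tfrac12(\tr\Xb+\ov{\tr\Xb})\nab_4\Pfr$ principal term and the $+\tfrac12(\tr\Xb+\ov{\tr\Xb})C_j$ contribution to the $\nabc_4\Psi$ coefficient. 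In particular, the $C_j$-term in the constraint \eqref{transport-nabc-3-C1}--\eqref{transport-nabc-3-C2} does not come from the three commutator classes you list; it appears only after the Teukolsky substitution, so the constraint cannot be read off before that step. Once you correct this ordering your plan coincides with the paper's, including the bookkeeping for $\hat V_j$ and the $O(|a|)$ remainders.
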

\begin{proof} See Appendix \ref{sec:proof-commutators}. 
\end{proof}

Observe that the transport equations \eqref{transport-nabc-3-C1} and \eqref{transport-nabc-3-C1} only impose conditions on the real parts of $C_1$ and $C_2$. Indeed, for any real constants $p_1, p_2$, the scalar functions $C_1$ and $C_2$ given by
\bea
C_1=2\trchb + i p_1 \atrchb, \qquad C_2=\trchb + i p_2 \atrchb \label{definition-C1-C22}\label{definition-C1-C2}
\eea
are of conformal type $-1$ and satisfy \eqref{transport-nabc-3-C1} and \eqref{transport-nabc-3-C1} respectively.

\subsubsection{The wave equations for $\Pfr$ and $\Qfr$}\label{sec:wave-Pfr-Qfr}

Recall the Teukolsky operators $\TT_1$ and $\TT_2$, see \eqref{operator-Teukolsky-B} and \eqref{operator-Teukolsky-F}. We then obtain for $\Pfr$ and $\Qfr$ respectively,
  \beaa
 \TT_1(\Pfr)&=&   -\nab_3\nab_4\Pfr+ \frac 1 2 \ov{\DD}\c (\DD \hot \Pfr)-3\ov{\tr X} \nab_3\Pfr- \left(\frac{3}{2}\tr\Xb +\frac 1 2\ov{\tr\Xb}-2\omb\right)\nab_4\Pfr\\
   &&+\left( 6H+ \ov{H}+ 3  \ov{ \Hb}  \right)\c  \nab  \Pfr+\left(-\frac{9}{2}\tr\Xb \ov{\tr X} -4 \PF \ov{\PF}+9 \ov{ \Hb} \c  H\right)\Pfr\\
\TT_2(\Qfr)&=& -\nab_3\nab_4 \Qfr+ \frac 1 2 \DD \hot (\ov{\DD} \c \Qfr)-\left(\frac 3 2 \ov{\tr X} +\frac 1 2  \tr X\right)\nab_3\Qfr- \frac 1 2 \left(\tr \Xb+\ov{\tr \Xb} -4\omb \right)\nab_4\Qfr  \\
&&+ \left(4   H+ \ov{H}+  \Hb \right)\c \nab \Qfr\\
&&+\left(- \frac 3 4 \tr \Xb  \ov{\tr X}- \frac 1 4\ov{\tr \Xb}   \tr X+3\ov{P} -P +4\PF\ov{\PF} - \frac 3 2\ov{\DDc}\c H+\eta \c \etab +i \eta \wedge \etab\right)\Qfr
\eeaa
where recall that $\Pfr$ and $\Qfr$ are both of conformal type $0$.

From the commuted equations \eqref{intermediate-RW-B} and \eqref{intermediate-RW-F-A} and using the formulas for the commutators given by Proposition \ref{proposition-commutator}, we obtain respectively, by writing $2\etab=\Hb+\ov{\Hb}$:
\beaa
&&  -\nab_3\nab_4\Pfr+ \frac 1 2 \ov{\DD}\c (\DD \hot \Pfr)-3\ov{\tr X} \nab_3\Pfr- \left(2\tr\Xb +\ov{\tr\Xb}-2\omb\right)\nab_4\Pfr\\
   &&+\left( 6H+ \ov{H}+\Hb+ 4  \ov{ \Hb}  \right)\c  \nab  \Pfr+\left(-\frac{9}{2}\tr\Xb \ov{\tr X} -4 \PF \ov{\PF}+9 \ov{ \Hb} \c  H+\hat{V}_1\right)\Pfr\\
   &=&\mathcal{P}_{C_1}\Big(\M_1[\mathfrak{F}, \mathfrak{X}]\Big) + \frac 1 2 \left(\tr \Xb +\ov{\tr\Xb}\right)\M_1[\mathfrak{F}, \mathfrak{X}]+L_{\Pfr}[\Bfr, \Ffr]
\eeaa
and
\beaa
&& -\nab_3\nab_4 \Qfr+ \frac 1 2 \DD \hot (\ov{\DD} \c \Qfr)-\left(\frac 3 2 \ov{\tr X} +\frac 1 2  \tr X\right)\nab_3\Qfr- \left(\tr \Xb+\ov{\tr \Xb} -2\omb \right)\nab_4\Qfr  \\
&&+ \left(4   H+ \ov{H}+ 2 \Hb +\ov{\Hb}\right)\c \nab \Qfr\\
&&+\left(- \frac 3 4 \tr \Xb  \ov{\tr X}- \frac 1 4\ov{\tr \Xb}   \tr X+3\ov{P} -P +4\PF\ov{\PF} - \frac 3 2\ov{\DDc}\c H+\eta \c \etab +i \eta \wedge \etab+ \hat{V}_2\right)\Qfr\\
&=&\mathcal{P}_{C_2}\Big(\M_2[A, \mathfrak{X}, \mathfrak{B}] \Big)   + \frac 1 2 \left(\tr \Xb +\ov{\tr\Xb}\right) \M_2[A, \mathfrak{X}, \mathfrak{B}] +L_{\Qfr}[\Bfr, \Ffr]
\eeaa
Using the formulas for the wave operator according to Lemma \ref{lemma:expression-wave-operator}, 
 \beaa
 \squared_1 \Pfr&=&- \nab_3\nab_4\Pfr+\frac 1 2    \DDb \c ( \DD \hot \Pfr) +\left(2\omb -\frac 1 2\ov{\tr\Xb} \right) \nab_4\Pfr -\frac 1 2 \ov{\tr X} \nab_3\Pfr+ (H+ \ov{H}) \c\nab \Pfr \\
 && +  \left(  \frac 14 \trch\trchb+\frac 1 4 \atrch\atrchb+ \rho-\rhoF^2-\dual\rhoF^2+i \left(- \rhod+ \eta \wedge \etab  \right)\right)  \Pfr\\
 \squared_2 \Qfr &=&- \nab_3\nab_4 \Qfr +\frac 1 2  \DD \hot (\DDb \c  \Qfr ) +\left(2\omb -\frac 1 2\tr\Xb \right) \nab_4 \Qfr  -\frac 1 2\tr X \nab_3 \Qfr + (H+\ov{H}) \c\nab  \Qfr \\
 &&+ \left(-\frac 1 2  \trch\trchb- \frac 1 2 \atrch\atrchb-2\rho+2\rhoF^2+2\dual\rhoF^2+i \left(- 2\rhod+2 \eta \wedge \etab  \right)\right)   \Qfr 
 \eeaa
we can rewrite the above as 
\bea\label{intermediate-square-mathfrak-P}
\begin{split}
  \squared_1 \Pfr   &= \frac 5 2 \ov{\tr X} \nab_3\Pfr+ \left(2\tr\Xb +\frac 1 2 \ov{\tr\Xb}\right)\nab_4\Pfr-\left( 5H+\Hb+ 4  \ov{ \Hb}  \right)\c  \nab  \Pfr+\tilde{V}_1\Pfr\\
 &\mathcal{P}_{C_1}\Big(\M_1[\mathfrak{F}, \mathfrak{X}]\Big) + \frac 1 2 \left(\tr \Xb +\ov{\tr\Xb}\right)\M_1[\mathfrak{F}, \mathfrak{X}]+L_{\Pfr}[\Bfr, \Ffr]
 \end{split}
\eea
where
\bea\label{tilde-V1}
\begin{split}
\tilde{V}_1&= \frac{9}{2}\tr\Xb \ov{\tr X} +4 \PF \ov{\PF}-9 \ov{ \Hb} \c  H-\hat{V}_1\\
&+ \frac 14 \trch\trchb+\frac 1 4 \atrch\atrchb+ \rho-\rhoF^2-\dual\rhoF^2+i \left(- \rhod+ \eta \wedge \etab  \right)
\end{split}
\eea
and 
\bea\label{intermediate=square2Q}
\begin{split}
  \squared_2 \Qfr&= \frac 3 2 \ov{\tr X} \nab_3\Qfr+ \left(\frac 1 2 \tr \Xb+\ov{\tr \Xb}  \right)\nab_4\Qfr - \left(3   H+ 2 \Hb +\ov{\Hb}\right)\c \nab \Qfr+\tilde{V}_2  \Qfr \\
&+\mathcal{P}_{C_2}\Big(\M_2[A, \mathfrak{X}, \mathfrak{B}] \Big)   + \frac 1 2 \left(\tr \Xb +\ov{\tr\Xb}\right) \M_2[A, \mathfrak{X}, \mathfrak{B}] +L_{\Qfr}[\Bfr, \Ffr]
\end{split}
\eea
where
\bea\label{tilde-V2}
\begin{split}
\tilde{V}_2&= \frac 3 4 \tr \Xb  \ov{\tr X}+ \frac 1 4\ov{\tr \Xb}   \tr X-3\ov{P} +P -4\PF\ov{\PF} + \frac 3 2\ov{\DDc}\c H-\eta \c \etab -i \eta \wedge \etab- \hat{V}_2\\
&-\frac 1 2  \trch\trchb- \frac 1 2 \atrch\atrchb-2\rho+2\rhoF^2+2\dual\rhoF^2+i \left(- 2\rhod+2 \eta \wedge \etab  \right)
\end{split}
\eea

\subsubsection{The rescaling from $\Pfr$ to $\pf$ and from $\Qfr$ to $\qf^\F$}\label{sec:rescaling-f}

Observe that the wave equations \eqref{intermediate-square-mathfrak-P} and \eqref{intermediate=square2Q} satisfied by $\Pfr$ and $\Qfr$ present first order derivatives $\nab_3$, $\nab_4$ and $\nab$ on their right hand side. In order to have only a first order term of the form $i\nab_t$, we need to define rescaled versions of $\Pfr$ and $\Qfr$. The rescaling is obtained through functions of $q=r+i a \cos\th$ and $\ov{q}=r-ia \cos\th$, i.e.
\beaa
\pf&=& f_1(q, \ov{q}) \Pfr \in \sk_1(\CCC), \qquad \qf^\F= f_2(q, \ov{q}) \Qfr \in \sk_2(\CCC)
\eeaa

\begin{proposition}\label{prop:rescaling-f} Let $f_1$ and $f_2$ be of the respective forms 
\beaa
f_1&=&(q)^{n_1} (\ov{q})^{5-n_1}, \qquad \text{for any real $n_1$} \\
f_2&=& (q)^{n_2} (\ov{q})^{3-n_2}, \qquad \text{for any real $n_2$},
\eeaa
Then
 \beaa
\squared_1\pf&=&i f_1 \Big[ \frac{2a\cos\th}{|q|^2}\nab_t \Pfr +(1 - 2n_1 )\Big(  \frac{2a\Delta\cos\th}{|q|^4}\nab_{r}\Pfr+   \frac{2a\sin\th r}{|q|^4}   \nab_\th \Pfr\Big) \Big]\\
&&+\left( \tilde{V}_1+ f_1^{-1}\square(f_1)\right) \pf + f_1 \Big[\mathcal{P}_{C_1}\Big(\M_1[\mathfrak{F}, \mathfrak{X}]\Big) + \frac 1 2 \left(\tr \Xb +\ov{\tr\Xb}\right)\M_1[\mathfrak{F}, \mathfrak{X}]+L_{\Pfr}[\Bfr, \Ffr] \Big]
\eeaa
and
   \beaa
\squared_2\qf^\F&=&i f_2 \Big[ \frac{4a\cos\th}{|q|^2}\nab_t \Qfr +(1 - n_2 )\Big(  \frac{4a\Delta\cos\th}{|q|^4}\nab_{r}\Qfr+   \frac{4a\sin\th r}{|q|^4}   \nab_\th \Qfr\Big) \Big]\\
&&+\left( \tilde{V}_2+ f_2^{-1}\square(f_2)\right) \qf^\F + f_2 \Big[\mathcal{P}_{C_2}\Big(\M_2[A, \mathfrak{X}, \mathfrak{B}] \Big)   + \frac 1 2 \left(\tr \Xb +\ov{\tr\Xb}\right) \M_2[A, \mathfrak{X}, \mathfrak{B}] +L_{\Qfr}[\Bfr, \Ffr]  \Big]
 \eeaa
 In particular, observe that for $n_1=\frac 1 2$ and $n_2=1$, i.e. 
 \bea\label{eq:definition-f1-f2}
 f_1=(q)^{1/2} (\ov{q})^{9/2}, \qquad f_2=q \ov{q}^2,
 \eea
  the dependence on the $\nab_r$ and $\nab_\th$ derivatives cancels out and we obtain
 \bea\label{intermediate-1-pf}
 \begin{split}
 \squared_1\pf&=i  \frac{2a\cos\th}{|q|^2}\nab_t \pf  +\left( \tilde{V}_1+ f_1^{-1}\square(f_1)\right) \pf \\
 &+ f_1 \Big[\mathcal{P}_{C_1}\Big(\M_1[\mathfrak{F}, \mathfrak{X}]\Big) + \frac 1 2 \left(\tr \Xb +\ov{\tr\Xb}\right)\M_1[\mathfrak{F}, \mathfrak{X}]+L_{\Pfr}[\Bfr, \Ffr] \Big]
 \end{split}
 \eea
 and 
    \bea\label{intermediate-1-qf}
    \begin{split}
\squared_2\qf^\F&=i  \frac{4a\cos\th}{|q|^2}\nab_t \qf^\F +\left( \tilde{V}_2+ f_2^{-1}\square(f_2)\right) \qf^\F \\
&+ f_2 \Big[\mathcal{P}_{C_2}\Big(\M_2[A, \mathfrak{X}, \mathfrak{B}] \Big)   + \frac 1 2 \left(\tr \Xb +\ov{\tr\Xb}\right) \M_2[A, \mathfrak{X}, \mathfrak{B}] +L_{\Qfr}[\Bfr, \Ffr]  \Big]
\end{split}
 \eea
\end{proposition}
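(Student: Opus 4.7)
The proof is a direct computation using the Leibniz rule for the wave operator applied to $\pf = f_1 \Pfr$. Expanding $2g^{\mu\nu}(\partial_\mu f_1)(\nab_\nu \Pfr)$ in the null frame gives
\begin{align*}
\squared_1 \pf = f_1 \squared_1 \Pfr + (\square f_1)\Pfr - (\nab_3 f_1)(\nab_4 \Pfr) - (\nab_4 f_1)(\nab_3 \Pfr) + 2(\nab f_1)\cdot(\nab \Pfr).
\end{align*}
The transport equations \eqref{transport-for-q} for $q$ and $\ov{q}$ yield the derivatives of $f_1 = q^{n_1}\ov{q}^{5-n_1}$ in closed form, e.g.\ $\nab_3 f_1 = \tfrac{1}{2}f_1[n_1 \ov{\tr\Xb} + (5-n_1)\tr\Xb]$, and analogously for $\nab_4 f_1$, $\DD f_1$, $\DDov f_1$. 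Substituting \eqref{intermediate-square-mathfrak-P} for $\squared_1 \Pfr$ and collecting the coefficient of $\nab_3 \Pfr$ in $\squared_1 \pf$ produces $\tfrac{1}{2}f_1[n_1\ov{\tr X} - n_1\tr X] = i n_1 f_1 \atrch$, with the coefficient of $\nab_4 \Pfr$ reducing analogously to $i(1-n_1) f_1 \atrchb$. The cancellation of the real parts in both coefficients is automatic precisely because the total exponent $5 = n_1 + (5-n_1)$ matches the sum $\tfrac{5}{2}+\tfrac{5}{2}$ of the corresponding Teukolsky coefficients in \eqref{intermediate-square-mathfrak-P}; this is the algebraic constraint that forces the stated form of $f_1$. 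A parallel reduction of the horizontal coefficient yields $f_1[n_1(\ov{H}-H) + (1-n_1)(\ov{\Hb}-\Hb)]$ by the same mechanism.

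Next I convert the frame derivatives to Boyer--Lindquist ones using the expressions for $e_3, e_4, e_a$ in Section \ref{sec:values-KN}, together with $\atrch = \frac{2a\cos\theta}{|q|^2}$, $\atrchb = \frac{2a\Delta\cos\theta}{|q|^4}$ and the explicit values of $H,\Hb$. The $\nab_r$ contributions from $\nab_3, \nab_4 \Pfr$ combine to $if_1 \frac{2a\Delta\cos\theta}{|q|^4}(1-2n_1)$, and the horizontal coefficient feeds the $\nab_\theta$ term through $e_1 = \frac{1}{\sqrt{|q|^2}}\partial_\theta$ with the same $(1-2n_1)$ prefactor. The $\nab_\varphi$ contributions from the frame and horizontal pieces cancel identically. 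Finally, the $\nab_t$ coefficient is assembled from $if_1 \frac{2a(r^2+a^2)\cos\theta}{|q|^4}$ (from $\nab_3, \nab_4 \Pfr$) and $-if_1 \frac{2a^3\sin^2\theta\cos\theta}{|q|^4}$ (from the $\partial_t$-part of $\nab_2 \Pfr$ via $e_2 = \frac{a\sin\theta}{\sqrt{|q|^2}}\partial_t + \frac{1}{\sqrt{|q|^2}\sin\theta}\partial_\varphi$), combining through the geometric identity $(r^2+a^2) - a^2\sin^2\theta = |q|^2$ to produce $if_1 \frac{2a\cos\theta}{|q|^2}$, independent of $n_1$. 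Choosing $n_1 = \tfrac{1}{2}$ eliminates the spatial derivatives and yields \eqref{intermediate-1-pf}.

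The computation for $\qf^\F = f_2 \Qfr$ is completely parallel, starting from \eqref{intermediate=square2Q}. The sum of $\nab_3, \nab_4$ Teukolsky coefficients there is $\tfrac{3}{2}+\tfrac{3}{2} = 3$, forcing the total exponent of $f_2$ to be $3$. The same cancellation mechanism yields coefficients $in_2 f_2 \atrch$ and $i(2-n_2) f_2 \atrchb$ for $\nab_3, \nab_4 \Qfr$ (the $(2-n_2)$ rather than $(1-n_1)$ reflects the coefficient $\ov{\tr\Xb}$ in place of $\tfrac{1}{2}\ov{\tr\Xb}$ in the Teukolsky equation), and horizontal coefficient $f_2[n_2(\ov{H}-H) + (2-n_2)(\ov{\Hb}-\Hb)]$. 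Consequently the spatial derivatives carry a factor $2(1-n_2)$, producing the doubled prefactors $\frac{4a\Delta\cos\theta}{|q|^4}(1-n_2)$ and $\frac{4a\sin\theta\, r}{|q|^4}(1-n_2)$, while the $\nab_t$ coefficient, via the same identity, becomes $if_2 \frac{4a\cos\theta}{|q|^2}$, independent of $n_2$; the choice $n_2 = 1$ kills the spatial terms and gives \eqref{intermediate-1-qf}. The main obstacle is the algebraic bookkeeping: verifying the real-part cancellations (which rely on the exponent-coefficient matching), the vanishing of the $\nab_\varphi$ contributions between frame and horizontal pieces, and the simplification of the $\nab_t$ coefficient via $r^2 + a^2\cos^2\theta = |q|^2$.
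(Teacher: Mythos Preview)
Your proof is correct and follows essentially the same approach as the paper's: apply the Leibniz rule $\squared_k(f\Psi) = f\squared_k\Psi + \square(f)\Psi - \nab_3 f\,\nab_4\Psi - \nab_4 f\,\nab_3\Psi + 2\nab f\cdot\nab\Psi$, use the explicit derivatives of $q^n\ov{q}^m$ from \eqref{transport-for-q}, substitute \eqref{intermediate-square-mathfrak-P} and \eqref{intermediate=square2Q}, observe that the real parts of all first-order coefficients cancel precisely when the exponents sum to $5$ (resp.\ $3$), and then convert the remaining purely imaginary frame coefficients to Boyer--Lindquist via the explicit expressions for $e_3,e_4,e_1,e_2$ and $\atrch,\atrchb,\dual\eta,\dual\etab$. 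Your presentation makes the $\nab_\varphi$ cancellation and the $|q|^2$ identity for the $\nab_t$ coefficient more explicit than the paper does, but the argument is the same.
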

\begin{proof} See Appendix \ref{section-proof-rescaling}.  \end{proof}

\subsubsection{The right hand side of the equations}\label{rhs-section}

\begin{proposition}\label{right-hand-side-lemma}  Let $\M_1[\mathfrak{F}, \mathfrak{X}]$ and $\M_2[A, \mathfrak{X}, \mathfrak{B}]$ be the right hand sides of the Teukolsky equations, as defined in \eqref{definition-MM1} and \eqref{definition-MM2}.
Let $\PP_{C_1}$ and $\PP_{C_2}$ be the operators defined in \eqref{definition-P-C} with $C_1$ and $C_2$ given by \eqref{definition-C1-C2} and \eqref{definition-C1-C22}.
Then the following relations hold true:
\beaa
\mathcal{P}_{C_1}\Big(\M_1[\mathfrak{F}, \mathfrak{X}]\Big)+ \frac 1 2 \left(\tr \Xb +\ov{\tr\Xb}\right)\M_1[\mathfrak{F}, \mathfrak{X}]&=& 4 \PF\ov{\PF} \left( \ov{\DD}\c \mathfrak{Q}+\left( 2 \ov{\Hb}+\ov{H} \right) \c \mathfrak{Q}\right)\\
&&+L_{\M_1}[\Bfr, \Ffr, \Xfr]\\
\mathcal{P}_{C_2}\Big(\M_2[A, \mathfrak{X}, \mathfrak{B}] \Big)+\frac 1 2 \left(\tr \Xb +\ov{\tr\Xb}\right) \M_2[A, \mathfrak{X}, \mathfrak{B}]&=&\left(3 \ov{P}+2\PF\ov{\PF}\right)\mathfrak{Q}\\
&&-   \frac 1 2 \left( \DD \hot \mathfrak{P}  + (3H+2\Hb)  \hot  \mathfrak{P}\right)\\
&& +L_{\M_2}[\Bfr, \Ffr, \Xfr]
\eeaa
where $L_{\M_1}[\Ffr, \Xfr]$ and $L_{\M_2}[\Bfr, \Ffr, \Xfr]$ are linear first order operator in $\Bfr$, $\Ffr$ and $\Xfr$, given by
\bea\label{eq:L=M1-lot}
L_{\M_1}[\Bfr, \Ffr, \Xfr]&:=&( 4 \trchb \rhoF^2 )\ \Bfr+ 2 \PF\ov{\PF}\left( Y^{\Ffr}_{a} \ov{\DDc}\c\mathfrak{F}+Y^{\Ffr}_0 \c \mathfrak{F}+Y^{\Bfr}_0 \Bfr+Y^{\Xfr}_0 \ \mathfrak{X}\right)
\eea
and
\bea\label{eq:L=M2-lot}
\begin{split}
L_{\M_2}[\Bfr, \Ffr, \Xfr]&:=  - ( 4 \trchb \rhoF^2 ) \ \mathfrak{F}  +W_4^{\Ffr} \nabc_4 \mathfrak{F}+W_a^{\Ffr}  \c \nabc \mathfrak{F} + W_0^{\Ffr} \mathfrak{F}\\
&+ W_{a}^{\Bfr} \DDc\hot \Bfr +W_0^{\Bfr} \hot  \mathfrak{B}+W_a^{\Xfr} \DDc \hot \mathfrak{X} + W_0^{\Xfr} \hot \mathfrak{X}  
\end{split}
\eea
where $Y^{\Ffr}_{a}$, $Y^{\Bfr}_0$ and $Y^{\Xfr}_0 $ are complex functions of $(r, \th)$ and $Y^{\Ffr}_0$ is a complex one-form, all of which vanish for zero angular momentum, having the following fall-off in $r$:
\beaa
Y^{\Ffr}_{a}, Y^{\Bfr}_0=O\left(\frac{|a|}{r^2} \right), \qquad Y^{\Ffr}_0, Y^{\Xfr}_0=O\left(\frac{|a|}{r^3} \right)
\eeaa
and $W_4^{\Ffr}$, $W_0^{\Ffr}$, $W_{a}^{\Bfr} $ and $W_a^{\Xfr}  $ are complex functions of $(r, \th)$ and $W_a^{\Ffr} $, $W_0^{\Bfr}$ and $W_0^{\Xfr}$ are complex one-forms, all of which vanish for zero angular momentum, having the following fall-off in $r$:
\beaa
W_{a}^{\Bfr}=O\left(\frac{|a|}{r^2} \right), \qquad W_4^{\Ffr}, W_a^{\Ffr},W_0^{\Bfr}, W_a^{\Xfr}=O\left(\frac{|a|}{r^3} \right), \qquad  W_0^{\Ffr}, W_0^{\Xfr}=O\left(\frac{|a|}{r^4} \right)
\eeaa
\end{proposition}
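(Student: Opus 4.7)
The plan is to expand $\mathcal{P}_{C_1}(\M_1)$ and $\mathcal{P}_{C_2}(\M_2)$ term-by-term using the explicit formulas \eqref{definition-MM1}--\eqref{definition-MM2}, and then reorganize the resulting expressions so that the principal pieces collapse into the announced $\Qfr$- and $\Pfr$-dependent terms. The central observation is that $\nabc_3 \Ffr = \Qfr - C_2 \Ffr$ and $\nabc_3 \Bfr = \Pfr - C_1 \Bfr$ by definition, so whenever the Leibniz rule produces a $\nabc_3$-derivative of $\Ffr$ or $\Bfr$, the $\Qfr$ and $\Pfr$ pieces on the right-hand side emerge for free, with the remainder $-C_2\Ffr$ or $-C_1\Bfr$ absorbed into $L_{\M_1}$ and $L_{\M_2}$. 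Every other $\nabc_3$-derivative landing on a background quantity ($\PF$, $\ov{\PF}$, $\tr\Xb$, $\ov{\tr\Xb}$, $H$, $P$) is rewritten using the reduced Maxwell, Ricci and Bianchi identities of Proposition \ref{prop:reduced-equations}.

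For the $\M_1$-identity, every term of $\M_1$ carries the factor $\PF\ov{\PF}$, and \eqref{nabc-3-PF-red} together with its complex conjugate gives $\nabc_3(\PF\ov{\PF}) = -(\tr\Xb + \ov{\tr\Xb})\PF\ov{\PF}$. Pulling $\PF\ov{\PF}$ outside the derivative therefore leaves the operator $\nabc_3 + \bigl(C_1 - \tfrac{1}{2}(\tr\Xb + \ov{\tr\Xb})\bigr)$ acting on $2\ov{\DDc}\c\Ffr + 4\ov{\Hb}\c\Ffr - (2\tr\Xb - \ov{\tr\Xb})\Xfr$, the $\tfrac{1}{2}(\tr\Xb + \ov{\tr\Xb})\M_1$ correction having already been absorbed. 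I would next commute $\nabc_3$ past $\ov{\DDc}\c$ and $\ov{\Hb}\c$, using the commutator identities for horizontal derivatives from Section \ref{section-equations} and the reduced Ricci identity \eqref{nabc-3-Hb-red}, and then substitute $\nabc_3 \Ffr = \Qfr - C_2 \Ffr$. With the choice $C_1 = 2\trchb + ip_1\atrchb$ and $C_2 = \trchb + ip_2\atrchb$, the residual coefficients of $\ov{\DDc}\c\Ffr$ and $\ov{\Hb}\c\Ffr$ combine to give exactly the announced principal piece $4\PF\ov{\PF}\bigl(\ov{\DDc}\c\Qfr + (2\ov{\Hb} + \ov{H})\c\Qfr\bigr)$, while the $\Xfr$-contribution of $\M_1$ (whose coefficient $2\tr\Xb - \ov{\tr\Xb}$ vanishes for $a = 0$) feeds only into $L_{\M_1}$.

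For the $\M_2$-identity, the key ingredient is the Bianchi-type relation \eqref{relation-F-A-B}, which rewrites $\PF \nabc_3 A$ as $-\tfrac{1}{2}\PF\tr\Xb A + \tfrac{1}{2}\DDc\hot\Bfr + 3H\hot\Bfr - (3\ov{P} + 2\PF\ov{\PF})\Ffr$. Substituting this into $\M_2$ eliminates the second-order derivative of $A$ and leaves a linear expression in $\DDc\hot\Bfr$, $H\hot\Bfr$, $\Ffr$, a residual $A$-term with coefficient $-\tfrac{1}{2}\PF(2\tr\Xb - \ov{\tr\Xb})$, and the original $\Xfr$- and $\Bfr$-pieces. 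Applying $\mathcal{P}_{C_2} + \tfrac{1}{2}(\tr\Xb + \ov{\tr\Xb})$, I re-use \eqref{relation-F-A-B} a second time to eliminate the new $\PF\nabc_3 A$ arising from the Leibniz derivative of the residual $A$-term, so that $A$ drops out of the final answer. The $\tfrac{1}{2}\DDc\hot\Bfr$ piece becomes $\tfrac{1}{2}\DDc\hot\Pfr$ after commuting $\nabc_3$ past $\DDc\hot$ and substituting $\nabc_3\Bfr = \Pfr - C_1\Bfr$; together with $3H\hot\Bfr$ this produces the full $\tfrac{1}{2}(3H + 2\Hb)\hot\Pfr$ angular coupling, and the $(3\ov{P} + 2\PF\ov{\PF})\Ffr$ contribution promotes to $(3\ov{P} + 2\PF\ov{\PF})\Qfr$ via $\nabc_3\Ffr = \Qfr - C_2\Ffr$ and the reduced Bianchi identity \eqref{nabc-3-P-red}.

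The main obstacle is the cancellation of the $A$-contribution in the $\M_2$-identity: the two successive uses of \eqref{relation-F-A-B}, the Leibniz derivatives of $\PF$ and $\tr\Xb$ via \eqref{nabc-3-PF-red} and \eqref{nabc-3-trXb-red}, and the $\tfrac{1}{2}(\tr\Xb + \ov{\tr\Xb})\M_2$ correction must combine to kill $A$ identically, and critically this cancellation has to be insensitive to the imaginary parts $p_1, p_2$ of $C_1, C_2$ (which are pinned down only later, by the reality of $V_1, V_2$ in Theorem \ref{main-theorem-RW}). The secondary difficulty is pure bookkeeping: each of the complex coefficients $Y^{\Ffr}_{a}, Y^{\Ffr}_0, Y^{\Bfr}_0, Y^{\Xfr}_0, W^{\Ffr}_4, W^{\Ffr}_a, W^{\Ffr}_0, W^{\Bfr}_a, W^{\Bfr}_0, W^{\Xfr}_a, W^{\Xfr}_0$ is a sum of many Ricci-coefficient products, all of which vanish for $a = 0$, and the advertised $|a|/r^k$ fall-offs are read off directly from the explicit Kerr-Newman values listed in Section \ref{sec:values-KN}. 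Both computations are routine but very long, and are naturally deferred to the Appendix.
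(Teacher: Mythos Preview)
Your overall strategy for both identities is the right one and matches the paper's approach closely, but there is a concrete gap in the $\M_2$ computation, and a related misconception in the $\M_1$ part.

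For $\M_1$: the coefficient $2\tr\Xb - \ov{\tr\Xb}$ of the $\Xfr$-piece does \emph{not} vanish for $a=0$ (it equals $\trchb$). When you apply $\PP_{C_1}$ to this term you must use the transport equation \eqref{nabc-3-mathfrak-X} for $\nabc_3\Xfr$, which produces not only $\Xfr$- and $\Ffr$-terms but also a $\Bfr$-term $-2\Bfr$. Multiplied by $-(2\tr\Xb-\ov{\tr\Xb})$ and then by $2\PF\ov{\PF}$, this is exactly the source of the $4\trchb\rhoF^2\,\Bfr$ piece in $L_{\M_1}$, which is present even for $a=0$. The $\ov{\DDc}\c\Ffr$ contribution coming out of \eqref{nabc-3-mathfrak-X} is likewise essential for the cancellation that makes $Y^{\Ffr}_a=2C_1-2C_2+\tr\Xb-3\ov{\tr\Xb}$ vanish at $a=0$.

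For $\M_2$: two uses of \eqref{relation-F-A-B} do \emph{not} make $A$ drop out. After your second substitution the residual $A$-coefficient is precisely $-\PF\,W_4^{\Ffr}$ with $W_4^{\Ffr}=-\tfrac32\tr\Xb^2+\ov{\tr\Xb}\tr\Xb+\tfrac12(2\tr\Xb-\ov{\tr\Xb})C_2$, which vanishes for $a=0$ but not otherwise. The paper eliminates this residual $A$-term by invoking the \emph{other} relation \eqref{relation-F-B-A}, which rewrites $-\PF A$ as $\nabc_4\Ffr+(\tfrac32\ov{\tr X}+\tfrac12\tr X)\Ffr+\tfrac12\DDc\hot\Xfr+\tfrac12(3H+\Hb)\hot\Xfr$. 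This is the only way the terms $W_4^{\Ffr}\nabc_4\Ffr$ and $W_a^{\Xfr}\DDc\hot\Xfr$ in $L_{\M_2}$ can arise; your mechanism would never generate a $\nabc_4\Ffr$ or $\DDc\hot\Xfr$ contribution. So the step you flag as ``the main obstacle'' is exactly where the argument breaks, and the fix is to replace the second use of \eqref{relation-F-A-B} by a single use of \eqref{relation-F-B-A} on the leftover $\PF A$.
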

\begin{proof} See Appendix \ref{section-proof-lemma-rhs-1}.
\end{proof}

Using \eqref{intermediate-1-pf} and \eqref{intermediate-1-qf} and the above Proposition, we deduce
 \bea
 \begin{split}
 \squared_1\pf&=i  \frac{2a\cos\th}{|q|^2}\nab_t \pf  +\left( \tilde{V}_1+ f_1^{-1}\square(f_1)\right) \pf \\
 &+ f_1 \Big[4 \PF\ov{\PF} \left( \ov{\DD}\c \mathfrak{Q}+\left( 2 \ov{\Hb}+\ov{H} \right) \c \mathfrak{Q}\right)+L_{\M_1}[\Bfr, \Ffr, \Xfr]+L_{\Pfr}[\Bfr, \Ffr] \Big]
 \end{split}
 \eea
 and 
    \bea
    \begin{split}
\squared_2\qf^\F&=i  \frac{4a\cos\th}{|q|^2}\nab_t \qf^\F +\left( \tilde{V}_2+ f_2^{-1}\square(f_2)+3 \ov{P}+2\PF\ov{\PF}\right) \qf^\F \\
&+ f_2 \Big[-   \frac 1 2 \left( \DD \hot \mathfrak{P}  + (3H+2\Hb)  \hot  \mathfrak{P}\right) +L_{\M_2}[\Bfr, \Ffr, \Xfr]+L_{\Qfr}[\Bfr, \Ffr]  \Big]
\end{split}
 \eea

We are now left to express the right hand side in terms of $\pf=f_1 \Pfr$ and $\qf^\F=f_2\Qfr$. We write
\beaa
 \ov{\DDc} \c \Qfr&=&f_2^{-1}( \ov{\DDc} \c  \qf^\F) + \ov{\DDc}(f_2^{-1}) \c  \qf^\F=f_2^{-1}( \ov{\DDc} \c  \qf^\F) -f_2^{-2} \ov{\DDc}(f_2) \c  \qf^\F\\
 \DDc \hot \Pfr&=&f_1^{-1}(  \DDc \hot  \pf) +\DDc(f_1^{-1}) \hot  \pf=f_1^{-1}(  \DDc \hot  \pf) -f_1^{-2}\DDc(f_1) \hot  \pf
\eeaa
This implies
\bea\label{almost-final-squared-pf}
 \begin{split}
 \squared_1\pf&=i  \frac{2a\cos\th}{|q|^2}\nab_t \pf  +V_1 \pf + 4 \PF\ov{\PF}( f_1 f_2^{-1} )\left(  \ov{\DD} \c  \qf^\F +\left( 2 \ov{\Hb}+\ov{H} - f_2^{-1}\ov{\DDc}(f_2) \right) \c  \qf^\F \right) + L_\pf[\Bfr, \Ffr] 
 \end{split}
 \eea
 and 
    \bea\label{almost-final-squared-qf}
    \begin{split}
\squared_2\qf^\F&=i  \frac{4a\cos\th}{|q|^2}\nab_t \qf^\F +V_2 \qf^\F -   \frac 1 2( f_2f_1^{-1}) \left(  \DD \hot  \pf + \left(3H+2\Hb-f_1^{-1} \DDc(f_1)\right)  \hot \pf \right) + L_{\qf^\F}[\Bfr, \Ffr] 
\end{split}
 \eea
 where we define
      \bea
        V_1&:=&  \tilde{V}_1+ f_1^{-1}\square(f_1) \label{eq:definition-V1}\\
        V_2&:=&  \tilde{V}_2+ f_2^{-1}\square(f_2)+3 \ov{P}+2\PF\ov{\PF}\label{eq:definition-V2}
        \eea
        and
 \bea
L_\pf[\Bfr, \Ffr] &:=& f_1 \Big[L_{\M_1}[\Bfr, \Ffr, \Xfr]+L_{\Pfr}[\Bfr, \Ffr] \Big]  \label{eq:def-Lpf}\\
L_{\qf^\F}[\Bfr, \Ffr] &:=& f_2 \Big[ L_{\M_2}[\Bfr, \Ffr, \Xfr]+L_{\Qfr}[\Bfr, \Ffr]  \Big]\label{eq:def-Lqf}
\eea
We now simplify the coupling terms on the right hand sides of the above. From \eqref{transport-for-q}, we deduce
\bea\label{DD-q-n-ov-q-m}
\begin{split}
\DD( q^n \ov{q}^m)&= n q^{n-1} (\DD q ) \ov{q}^m+m q^n  \ov{q}^{m-1} (\DD\ov{q})=(  n \Hb +m H) q^n \ov{q}^m\\
\ov{\DD}(q^n \ov{q}^m )&=(  m \ov{\Hb} +n \ov{H}) q^n\ov{q}^m 
\end{split}
\eea
We therefore have, for $f_1=(q)^{1/2} (\ov{q})^{9/2}$, and  $f_2=q \ov{q}^2$:
\beaa
2 \ov{\Hb}+\ov{H}-f_2^{-1} \ov{\DDc}(f_2)&=& 2 \ov{\Hb}+\ov{H}- (  2 \ov{\Hb} + \ov{H}) =0, \\
3H+2\Hb-f_1^{-1} \DDc(f_1)&=& 3H+2\Hb- (  \frac 1 2  \Hb +\frac 9 2  H)= -\frac 3 2 H +\frac 3 2 \Hb
\eeaa
We also write
\beaa
4 \PF\ov{\PF}( f_1 f_2^{-1} )&=& 4 \frac{Q}{\ov{q}^2}  \frac{Q}{q^2}(q)^{1/2} (\ov{q})^{9/2} (q)^{-1} (\ov{q})^{-2}= 4Q^2 \frac{\ov{q}^{1/2}}{q^{5/2}}=\frac{ 4Q^2}{|q|^5} \ov{q}^3 \\
( f_2f_1^{-1})&=& q \ov{q}^2(q)^{-1/2} (\ov{q})^{-9/2}=\frac{q^{1/2}}{\ov{q}^{5/2}}=\frac{q^3}{|q|^5}
\eeaa
We therefore finally obtain
\bea
 \squared_1\pf-i  \frac{2a\cos\th}{|q|^2}\nab_t \pf  -V_1  \pf &=&4Q^2 \frac{\ov{q}^3 }{|q|^5} \left(  \ov{\DD} \c  \qf^\F  \right) + L_\pf[\Bfr, \Ffr]  \label{final-squared-pf}\label{final-squared-pf-lot}\\
\squared_2\qf^\F-i  \frac{4a\cos\th}{|q|^2}\nab_t \qf^\F -V_2  \qf^\F &=&-   \frac 1 2\frac{q^3}{|q|^5} \left(  \DD \hot  \pf  -\frac 3 2 \left( H - \Hb\right)  \hot \pf \right) + L_{\qf^\F}[\Bfr, \Ffr]. \label{final-squared-qf}\label{final-squared-qf-lot}
 \eea

        \subsubsection{The potentials of the equations}\label{sec:potentials-equations}
        
   In this section we compute the potentials $V_1$ and $V_2$ as obtained in \eqref{eq:definition-V1} and \eqref{eq:definition-V2}. We determine the imaginary parts of the complex functions $C_1=2\trchb + i p_1 \atrchb$ and $C_2=\trchb + i p_2 \atrchb$ given in \eqref{definition-C1-C22} such that the imaginary part of the potentials vanish. 
   
   \begin{proposition}\label{prop:potentials-real} Choosing $p_1=-\frac 5 2$ and $p_2=-3$ in the definition of $C_1$ and $C_2$ \eqref{definition-C1-C22}, i.e. for
    \bea
C_1&=&2\trchb -\frac 5 2  i  \atrchb, \label{definition-C1-C2-final}\\
 C_2&=&\trchb -3 i \atrchb \label{definition-C1-C22-final}
\eea
the potentials $V_1$ and $V_2$ in equations \eqref{final-squared-pf} and \eqref{final-squared-qf} are real, i.e. $\Im(V_1)=\Im(V_2)=0$, and are given by
\beaa
V_1&=& -\frac {1}{ 4} \trch\trchb  + \div \etab +\frac 1 2  \eta \c \etab+\frac 1 2 |\etab|^2+5\rhoF^2+5\dual\rhoF^2  \\
V_2&=&- \trch\trchb+4\div \etab+2\eta \c \etab+2|\etab|^2+2\rhoF^2+2\dual\rhoF^2
\eeaa 
In particular,  modulo $O(|a|)$ terms, we have
\beaa
  V_1  &=& - \frac{1}{4}\trch\trchb +5 \rhoF^2   +O\left(\frac{|a|}{r^3} \right)\\
V_2&=& -\trch\trchb  +2\rhoF^2 +O\left(\frac{|a|}{r^3} \right)
\eeaa
   \end{proposition}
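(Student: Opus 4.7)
The plan is to unpack the potentials $V_1$ and $V_2$ defined in \eqref{eq:definition-V1}--\eqref{eq:definition-V2} into explicit combinations of Kerr--Newman quantities, extract the imaginary parts as affine functions of the free parameters $p_1,p_2$ from \eqref{definition-C1-C22}, and impose that these imaginary parts vanish. Since the real parts of $C_1,C_2$ are already pinned down by the transport equations \eqref{transport-nabc-3-C1}--\eqref{transport-nabc-3-C2}, the only dependence of $V_1$ on $p_1$ (resp.\ $V_2$ on $p_2$) enters through the subpotentials $\hat V_1$, $\hat V_2$ in \eqref{V1hat}--\eqref{V2hat} coming from the commutator computation of Proposition \ref{proposition-commutator}. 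Thus setting $\Im V_1=\Im V_2=0$ gives two one-variable linear conditions that should uniquely fix $p_1$ and $p_2$.

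Concretely, I would first expand $\tilde V_1,\tilde V_2$ using the formulas \eqref{tilde-V1}--\eqref{tilde-V2}. Every ingredient can be evaluated in closed form from Section \ref{sec:values-KN}: the products $\tr\Xb\,\ov{\tr X}$, $\ov{\Hb}\cdot H$, $H\hot\Hb$, $P$, $\PF\ov\PF$, and $\ov{\DDc}\cdot H$ all admit clean rational expressions in $q=r+ia\cos\theta$ and $\ov q$. In parallel I would compute the wave-operator quantities $f_1^{-1}\square f_1$ and $f_2^{-1}\square f_2$ for $f_1=q^{1/2}\ov q^{9/2}$ and $f_2=q\ov q^2$, using the transport rules \eqref{transport-for-q} and \eqref{DD-q-n-ov-q-m} iteratively; since $\nabc_3 q,\nabc_4 q,\DD q$ are all explicit scalar multiples of $q$, iterating produces $\square(q^n\ov q^m)$ as a rational combination of $\tr X,\tr\Xb,H,\Hb$ applied to $q^n\ov q^m$, whose contribution to $V_1,V_2$ can then be read off against the formula for $\squared$ in Lemma \ref{lemma:expression-wave-operator}.

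The main step is then the imaginary-part extraction. The terms in $\tilde V_1$ and $\tilde V_2$ that are not manifestly real arise from $i(-\rhod+\eta\wedge\etab)$, from $P$ (whose imaginary part is $\dual\rho$), and from the products involving $\atrch,\atrchb$. The imaginary parts from $\hat V_1,\hat V_2$ enter linearly in $p_1,p_2$ through $\Im C_j=p_j\atrchb$ and its $\nab_3$ derivative. Collecting terms, $\Im V_1$ becomes an affine function of $p_1$ with a coefficient proportional to $\atrch\atrchb$, whose unique zero is $p_1=-5/2$; similarly $\Im V_2$ vanishes iff $p_2=-3$. Substituting back, the surviving real parts regroup, after applying the Codazzi-like identities \eqref{Codazzi-1-red}--\eqref{Codazzi-2-red} and the expression $\div\etab=-\div\ze$, into the closed forms announced in the statement.

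The principal obstacle is bookkeeping: $\tilde V_1$ already involves roughly a dozen distinct scalar quantities and $V_1$ adds the $\square f_1$ contribution, so the imaginary parts must be tracked through many cancellations. The cleanest strategy is to carry out the entire computation in the complex variables $\tr X,\tr\Xb,P,\PF,H,\Hb,\Hb$ without ever splitting into real and imaginary parts until the end, then use the explicit Kerr--Newman identities $\tr X=2/q$, $\tr\Xb=-2\Delta/(q\ov q^2)$, $P=-2M/q^3+2Q^2/(q^3\ov q)$, $\PF=Q/\ov q^2$, and $H_a,\Hb_a\propto 1/(|q|^3)$ to collapse everything at the very end. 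The asymptotic expansions $V_1=-\frac14\trch\trchb+5\rhoF^2+O(|a|/r^3)$ and $V_2=-\trch\trchb+2\rhoF^2+O(|a|/r^3)$ then follow immediately by noting that $\atrch,\atrchb,\eta,\etab,\dual\rho,\dual\rhoF$ all carry a factor of $a\cos\theta$, consistent with the Reissner--Nordstr\"om limit of \cite{Giorgi7}.
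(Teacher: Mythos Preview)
Your proposal is correct and follows essentially the same route as the paper: decompose $V_j=\tilde V_j+f_j^{-1}\square f_j$ (plus the extra $3\ov P+2\PF\ov\PF$ for $j=2$), compute each piece, and extract the imaginary parts as affine functions of $p_j$. The paper deviates from your ``cleanest strategy'' only in that it splits into real and imaginary parts \emph{early}, working throughout in the real Ricci scalars $\trch,\trchb,\atrch,\atrchb,\rho,\rhod,\rhoF,\dual\rhoF,\div\etab,\curl\etab,\eta\cdot\etab,|\etab|^2,\eta\wedge\etab$ rather than collapsing to explicit $q,\ov q$ at the end; this has the practical advantage of directly yielding the abstract formulas for $V_1,V_2$ stated in the proposition, and the Codazzi identities \eqref{Codazzi-1-red}--\eqref{Codazzi-2-red} are not actually needed.

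One small imprecision to flag: the $p_j$-dependence enters $\hat V_j$ only through the term $\nabc_4 C_j$ inside $I^{\Bfr}_3$ (resp.\ $I^{\Ffr}_3$), and since $\nabc_4(\atrchb)=2\curl\etab+2\dual\rho$ in Kerr--Newman (the combination $\trch\atrchb+\trchb\atrch$ vanishes), the coefficient of $p_j$ in $\Im V_j$ is proportional to $\dual\rho+\curl\etab$, not to $\atrch\atrchb$. The paper finds $\Im V_1=-(2p_1+5)(\dual\rho+\curl\etab)$ and $\Im V_2=-(2p_2+6)(\dual\rho+\curl\etab)$, which immediately gives $p_1=-\tfrac52$, $p_2=-3$.
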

       \begin{proof} See Appendix \ref{sec:proof-potentials}. 
       \end{proof}
      
Using the values in Kerr-Newman given in Section \ref{sec:values-KN}, we obtain
\beaa
 -\frac {1}{ 4} \trch\trchb  + \div \etab +\frac 1 2  \eta \c \etab+\frac 1 2 |\etab|^2&=&  \frac{r^4-2Mr^3+(2-3\cos^2\th)a^2r^2+Q^2r^2-2a^4\cos^2\th}{|q|^6} 
\eeaa
and therefore, explicitly,
\bea\label{potentials-explicity}
\begin{split}
V_1&=\frac{r^4-2Mr^3+(2-3\cos^2\th)a^2r^2+Q^2r^2-2a^4\cos^2\th}{|q|^6} +\frac{5Q^2}{|q|^4}  \\
V_2&=4\frac{r^4-2Mr^3+(2-3\cos^2\th)a^2r^2+Q^2r^2-2a^4\cos^2\th}{|q|^6} +\frac{2Q^2}{|q|^4}.
\end{split}
\eea

\subsubsection{The lower order terms}\label{sec:lower-order-terms}

We finally simplify the lower order terms $L_\pf[\Bfr, \Ffr] $ and $L_{\qf^\F}[\Bfr, \Ffr] $ as defined in \eqref{eq:def-Lpf} and \eqref{eq:def-Lqf}. 

Using  \eqref{eq:L-Pfr-lot} and \eqref{eq:L=M1-lot}, we obtain:
\beaa
L_\pf[\Bfr, \Ffr] &=& f_1 \Big[L_{\M_1}[\Bfr, \Ffr, \Xfr]+L_{\Pfr}[\Bfr, \Ffr] \Big] \\
&=& q^{1/2} \ov{q}^{9/2} \Big[( 4 \trchb \rhoF^2 )\ \Bfr+ 2 \PF\ov{\PF}\left( Y^{\Ffr}_{a} \ov{\DDc}\c\mathfrak{F}+Y^{\Ffr}_0 \c \mathfrak{F}+Y^{\Bfr}_0 \Bfr+Y^{\Xfr}_0 \ \mathfrak{X}\right)\\
&&-Z^{\Bfr}_{a} \c \nabc \Bfr-\left( 4\trchb \rhoF^2 +Z^{\Bfr}_0\right) \Bfr\Big] \\
&=& q^{1/2} \ov{q}^{9/2} \Big[-Z^{\Bfr}_{a} \c \nabc \Bfr+ 2 \PF\ov{\PF} \ Y^{\Ffr}_{a} (\ov{\DDc}\c\mathfrak{F})\\
&&+ ( 2 \PF\ov{\PF}\ Y^{\Bfr}_0 - Z^{\Bfr}_0) \Bfr + 2 \PF\ov{\PF}\left( Y^{\Ffr}_0 \c \mathfrak{F}+Y^{\Xfr}_0 \ \mathfrak{X}\right)\Big] 
\eeaa
Using  \eqref{eq:L-Qfr-lot} and \eqref{eq:L=M2-lot}, we obtain:
\beaa
L_{\qf^\F}[\Bfr, \Ffr] &=& f_2 \Big[ L_{\M_2}[\Bfr, \Ffr, \Xfr]+L_{\Qfr}[\Bfr, \Ffr]  \Big]\\
&=& q\ov{q}^2 \Big[  - ( 4 \trchb \rhoF^2 ) \ \mathfrak{F}  +W_4^{\Ffr} \nabc_4 \mathfrak{F}+W_a^{\Ffr}  \c \nabc \mathfrak{F} + W_0^{\Ffr} \mathfrak{F}\\
&&+ W_{a}^{\Bfr} \DDc\hot \Bfr +W_0^{\Bfr} \hot  \mathfrak{B}+W_a^{\Xfr} \DDc \hot \mathfrak{X} + W_0^{\Xfr} \hot \mathfrak{X} -Z^{\Ffr}_{a} \c \nabc \Ffr+\left(4\trchb \rhoF^2-Z^{\Ffr}_0\right)  \Ffr  \Big]\\
&=& q\ov{q}^2 \Big[ W_4^{\Ffr} \nabc_4 \mathfrak{F}+\left( W_a^{\Ffr}  -Z^{\Ffr}_{a} \right) \c \nabc \mathfrak{F}+W_a^{\Xfr} \DDc \hot \mathfrak{X}  + W_{a}^{\Bfr} \DDc\hot \Bfr\\
&&+ ( W_0^{\Ffr}-Z^{\Ffr}_0)  \mathfrak{F} +W_0^{\Bfr} \hot  \mathfrak{B}+ W_0^{\Xfr} \hot \mathfrak{X}  \Big]
\eeaa

We now summarize in the following the structure of the above terms.
\begin{lemma}\label{lemma:lot-terms} With the above choices of $C_1$ and $C_2$, we have that the highest order terms in $L_\pf[\Bfr, \Ffr]$ and $L_{\qf^\F}[\Bfr, \Ffr]$ satisfies the following:
\begin{itemize}
\item $W_4^{\Ffr}$ and $W_a^{\Xfr}$ are real functions,
\item $Z^{\Bfr}_{a}$ and $W_a^{\Ffr}  -Z^{\Ffr}_{a}$ are real one-forms, 
\item $W_{a}^{\Bfr}= \frac 3 4  i  \atrchb $ and $Y^{\Ffr}_{a}= -3  i \atrchb $.
\end{itemize}
\end{lemma}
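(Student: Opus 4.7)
The plan is to unpack the definitions of $L_\pf[\Bfr,\Ffr]$ and $L_{\qf^\F}[\Bfr,\Ffr]$ by combining the two sources from which they arise, namely the commutators computed in Proposition 7.1 (giving $L_\Pfr$ with coefficients $Z^{\Bfr}_a,Z^{\Bfr}_0$ and $L_\Qfr$ with coefficients $Z^{\Ffr}_a,Z^{\Ffr}_0$) and the Chandrasekhar-transformed right-hand sides computed in Proposition 7.3 (giving $L_{\M_1}$ with coefficients $Y^{\Ffr}_a,Y^{\Ffr}_0,Y^{\Bfr}_0,Y^{\Xfr}_0$ and $L_{\M_2}$ with coefficients $W^{\Ffr}_4,W^{\Ffr}_a,W^{\Ffr}_0,W^{\Bfr}_a,W^{\Bfr}_0,W^{\Xfr}_a,W^{\Xfr}_0$). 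Each of these sets of coefficients is a known (if lengthy) polynomial expression in the background quantities and in $C_1,C_2$, produced in the appendix. The task is therefore to substitute the specific values $C_1=2\trchb-\tfrac{5}{2}i\atrchb$ and $C_2=\trchb-3i\atrchb$ fixed in Proposition 7.5, and extract the real/imaginary parts of the surviving coefficients.

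First, I would verify the reality of the four coefficients $W^{\Ffr}_4$, $W^{\Xfr}_a$, $Z^{\Bfr}_a$, and $W^{\Ffr}_a-Z^{\Ffr}_a$. These coefficients multiply the terms which are ``lower order with respect to the equation they appear in'' (namely $\nabc_4\Ffr$ and $\nabc\Ffr$ in the $\qf^\F$ equation, and $\nabc\Bfr$ in the $\pf$ equation). Their real parts are produced by the real parts of $C_1$ and $C_2$, i.e.\ by $\Re(C_1)=2\trchb$ and $\Re(C_2)=\trchb$, which are precisely the ones enforced by the transport equations \eqref{transport-nabc-3-C1}--\eqref{transport-nabc-3-C2}. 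Their imaginary parts arise from $\Im(C_1)=-\tfrac{5}{2}\atrchb$ and $\Im(C_2)=-3\atrchb$, and one checks by direct substitution using the explicit formulas in Appendix \ref{sec:proof-commutators} and \ref{section-proof-lemma-rhs-1} that these imaginary contributions cancel in each of the four coefficients. The cancellation is not a coincidence: it is governed by the same mechanism that forces $\Im(V_1)=\Im(V_2)=0$ in Proposition 7.5, because the transport of the imaginary part along $e_3$ has the same structure as the transport of the real part modulo the substitutions $\trchb \leftrightarrow \atrchb$ and the extra factor $i$.

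Second, I would identify the specific values $W^{\Bfr}_a=\tfrac{3}{4}i\atrchb$ and $Y^{\Ffr}_a=-3i\atrchb$. The coefficient $Y^{\Ffr}_a$ arises from $\PP_{C_1}(\M_1[\Ffr,\Xfr])$ through the term $C_1\cdot\M_1$ minus the main $\ov{\DDc}\c\Qfr$ part obtained via $\nabc_3(\ov{\DDc}\c\Ffr)$. Since the main part absorbs $C_2\ov{\DDc}\c\Ffr=\trchb\ov{\DDc}\c\Ffr-3i\atrchb\ov{\DDc}\c\Ffr$, and $C_1$ contributes to the coefficient of $\ov{\DDc}\c\Ffr$ via the overall factor $2\PF\ov{\PF}$, the remaining obstruction is exactly the imaginary piece that was not absorbed, which after bookkeeping equals $-3i\atrchb$. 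A parallel calculation for $W^{\Bfr}_a$, coming from $\PP_{C_2}(\M_2[A,\Xfr,\Bfr])$ through the coefficient of $\DDc\hot\Bfr$ inside $(2H-\Hb)\hot\Bfr$ after commuting $\nabc_3$ with $\DDc\hot$, gives the value $\tfrac{3}{4}i\atrchb$. These two values are precisely the imaginary parts needed to make them adjoint-compatible in the energy estimates.

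The main obstacle is purely computational: one must carry out the commutation of $\nabc_3$ with the various angular operators $\DDc$, $\ov{\DDc}$, $\DDc\hot$, $\ov{\DDc}\c$ on both $\Bfr$ and $\Ffr$, use the Kerr-Newman identities of Proposition \ref{prop:reduced-equations} together with the transport relations \eqref{transport-for-q} to eliminate any leftover background quantities, and then collect real and imaginary parts. The calculation is long but mechanical; the only place where a genuine check is needed is the consistency of the imaginary parts $-\tfrac{5}{2}\atrchb$ and $-3\atrchb$ of $C_1,C_2$ (fixed by reality of the potentials in Proposition 7.5) with the stated formulas for $W^{\Bfr}_a$ and $Y^{\Ffr}_a$. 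I defer the complete calculation to the appendix.
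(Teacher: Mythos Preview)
Your overall plan is correct and matches the paper's approach: the coefficients $W_4^{\Ffr}$, $W_a^{\Xfr}$, $W_a^{\Bfr}$, $Y_a^{\Ffr}$, $Z_a^{\Bfr}$, $Z_a^{\Ffr}$, $W_a^{\Ffr}$ all have explicit formulas derived earlier in the appendix (equations \eqref{eq:W4Ffr}, \eqref{eq:WaXfr}, \eqref{eq:WaBfr}, \eqref{eq:Y-Ffr-a}, \eqref{eq:Z-Bfr-a}, \eqref{eq:Z-Ffr-a}, \eqref{eq:W-a-Ffr}), and the lemma is proved by direct substitution of $C_1=2\trchb-\tfrac52 i\atrchb$, $C_2=\trchb-3i\atrchb$.

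Two points of imprecision in your write-up are worth correcting. First, the origin of $W_a^{\Bfr}$ is not ``the coefficient of $\DDc\hot\Bfr$ inside $(2H-\Hb)\hot\Bfr$''; that term is zeroth order in $\Bfr$. The $\DDc\hot\Bfr$ term arises when one applies $\nabc_3$ to the $A$-terms in $\M_2$, uses relation \eqref{relation-F-A-B} to re-express $\PF(\nabc_3 A+\tfrac12\tr\Xb A)$ in terms of $\DDc\hot\Bfr$, and then writes $\nabc_3\Bfr=\Pfr-C_1\Bfr$. The resulting formula is simply $W_a^{\Bfr}=\tfrac12(C_1-C_2-\tr\Xb)$, and substitution gives $\tfrac34 i\atrchb$ immediately. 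Similarly $Y_a^{\Ffr}=2C_1-2C_2+\tr\Xb-3\ov{\tr\Xb}$ gives $-3i\atrchb$ by direct substitution, with no need to talk about ``obstructions''. Second, your claim that the cancellation in $\Im(Z_a^{\Bfr})$ and $\Im(W_a^{\Ffr}-Z_a^{\Ffr})$ is ``governed by the same mechanism'' as $\Im(V_1)=\Im(V_2)=0$ is a heuristic, not an argument: in the paper these are separate direct computations, and it is explicitly noted as \emph{remarkable} (not automatic) that the same values of $p_1,p_2$ work for both. The actual verification for $Z_a^{\Bfr}$ and $W_a^{\Ffr}-Z_a^{\Ffr}$ is done by summing the imaginary parts of $I_a,J_a,L_a,M_a$ from \eqref{eq:I-Bfr-a}--\eqref{eq:M-a-Ffr} and then evaluating component-by-component in the outgoing frame using the explicit Kerr--Newman values of $\eta,\etab,\dual\eta,\dual\etab$ and relations like $\nabc_3\dual\eta_2=-2\atrchb\etab_2$; the imaginary parts collapse to multiples of $(2p_1+5)$ and $(2p_2+6)$ respectively.
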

\begin{proof} See Appendix \ref{sec:proof-lemma-lot-terms}. 
\end{proof}

\subsection{Sketch of boundedness of the energy}\label{section-bdn-energy}

We here sketch how to prove boundedness of the energy for the system of equations \eqref{final-eq-1} and \eqref{final-eq-2} as obtained in Theorem \ref{main-theorem-RW}:
\beaa
 \squared_1\pf-i  \frac{2a\cos\th}{|q|^2}\nab_t \pf  -V_1  \pf &=&4Q^2 \frac{\ov{q}^3 }{|q|^5} \left(  \ov{\DD} \c  \qf^\F  \right) + L_\pf[\Bfr, \Ffr] \\
\squared_2\qf^\F-i  \frac{4a\cos\th}{|q|^2}\nab_t \qf^\F -V_2  \qf^\F &=&- \frac 1 2   \frac{q^3}{|q|^5} \left(  \DD \hot  \pf -\frac 3 2 \left(  H - \Hb\right)  \hot \pf \right)+ L_{\qf^\F} [\Bfr, \Ffr] 
 \eeaa
 To fully close energy estimates, we need to combine them with spacetime local integrated Morawetz estimates, which will be done in a future work \cite{Giorgi9} by making use of the hidden symmetry in Kerr-Newman to avoid decomposition in modes. Nevertheless, in this section we show that all the crucial structures obtained in Theorem \ref{main-theorem-RW} are precisely what one needs to perform energy estimates, once the Morawetz estimate, which is less sensitive to the structure of the lower order terms, are achieved. 
 
 As a general rule\footnote{In the case of Kerr-Newman, in the ergoregion we need to multiply by the timelike $\partial_t + \frac{a}{r^2+a^2} \partial_\vphi$. The analysis is identical since the term involving the $\partial_\vphi$ can be absorbed for small $a$ by the non-degenerate Morawetz estimates away from the trapping region. }, in order to obtain energy estimates for the wave equation $\square \psi=0$, we multiply the equation by $\nab_t\psi$, and integrate by parts. Since we are dealing with complex tensors, we then multiply the equation $\squared_1 \pf$ by $\nab_t \overline{\pf}$ and the equation $\squared_2\qf^\F$ by $\nab_t\overline{\qf^\F}$ respectively, and then add the conjugate of each one to take the real part.
 
 Doing so, we obtain from each one of the above equations the following:
 \bea\label{eq:second-pf}
 \begin{split}
  \squared_1\pf \c \nab_t \overline{\pf}+\squared_1\overline{\pf} \c \nab_t \pf&= i  \frac{2a\cos\th}{|q|^2}\nab_t \pf \c \nab_t \overline{\pf} +\overline{ i  \frac{2a\cos\th}{|q|^2}}\nab_t \pf \c \nab_t \overline{\pf} +V_1  \pf \c \nab_t \overline{\pf}+\overline{V_1}  \overline{\pf} \c \nab_t \pf\\
  &+4Q^2 \frac{\ov{q}^3 }{|q|^5} \left(  \ov{\DD} \c  \qf^\F  \right) \c \nab_t \overline{\pf}+ 4Q^2\frac{ q^3}{|q|^5} \big(  \DD \c  \overline{\qf^\F}  \big) \c \nab_t \pf\\
  &+ L_\pf[\Bfr, \Ffr]  \c \nab_t \overline{\pf}+\overline{L_\pf[\Bfr, \Ffr]}  \c \nab_t \pf
  \end{split}
 \eea
 and 
 \bea\label{eq:second-qfF}
 \begin{split}
 \squared_2\qf^\F \c \nab_t \ov{\qf^\F}+  \squared_2\overline{\qf^\F} \c \nab_t \qf^\F&=i  \frac{4a\cos\th}{|q|^2}\nab_t \qf^\F\c \nab_t \ov{\qf^\F}+\overline{i  \frac{4a\cos\th}{|q|^2}}\nab_t \qf^\F\c \nab_t \ov{\qf^\F} +V_2  \qf^\F\c \nab_t \ov{\qf^\F} +\overline{V_2 }\overline{ \qf^\F}\c \nab_t \qf^\F \\
 &- \frac 1 2  \frac{q^3}{|q|^5} \left(  \DD \hot  \pf  \right)\c \nab_t \ov{\qf^\F}- \frac 1 2   \frac{\ov{q}^3}{|q|^5} \big(  \ov{\DD \hot  \pf}  \big)\c \nab_t \qf^\F\\
  &+  \frac 3 4 \frac{ q^3}{|q|^5}\left(  \left(  H - \Hb\right)  \hot \pf \right)\c \nab_t \ov{\qf^\F} +  \frac 3 4  \frac{ \ov{q}^3}{|q|^5}\left(  \left(  \ov{H} - \ov{\Hb}\right)  \hot \ov{\pf} \right)\c \nab_t \qf^\F \\
&+L_{\qf^\F} [\Bfr, \Ffr] \c \nab_t \ov{\qf^\F}+ \ov{L_{\qf^\F} [\Bfr, \Ffr] }\c \nab_t \qf^\F
\end{split}
 \eea
 
 We now analyze each term on the left hand side.
 
 \begin{enumerate}
 
\item  The structure of the first order terms $\nab_t$ in the equations of the form $i f(r, \th)\nab_t$, for a real function $f(r, \th)$ is crucial for the cancellation of these terms. Indeed,
 \beaa
  i  \frac{2a\cos\th}{|q|^2}\nab_t \pf \c \nab_t \overline{\pf} +\overline{ i  \frac{2a\cos\th}{|q|^2}}\nab_t \pf \c \nab_t \overline{\pf} &=&  i  \frac{2a\cos\th}{|q|^2}\nab_t \pf \c \nab_t \overline{\pf}- i  \frac{2a\cos\th}{|q|^2}\nab_t \pf \c \nab_t \overline{\pf} =0, \\
  i  \frac{4a\cos\th}{|q|^2}\nab_t \qf^\F\c \nab_t \ov{\qf^\F}+\overline{i  \frac{4a\cos\th}{|q|^2}}\nab_t \qf^\F\c \nab_t \ov{\qf^\F} &=& i  \frac{4a\cos\th}{|q|^2}\nab_t \qf^\F\c \nab_t \ov{\qf^\F}-i  \frac{4a\cos\th}{|q|^2}\nab_t \qf^\F\c \nab_t \ov{\qf^\F} =0
 \eeaa
 
\item  The reality of the potentials $V_1$ and $V_2$ allows to write the terms involving the potential as boundary terms in the usual way:
 \beaa
 V_1  \pf \c \nab_t \overline{\pf}+\overline{V_1}  \overline{\pf} \c \nab_t \pf&=& V_1 \big( \pf \c \nab_t \overline{\pf}+  \overline{\pf} \c \nab_t \pf \big)= V_1 \pr_t( |\pf|^2) =\pr_t( V_1 |\pf|^2) \\
 V_2  \qf^\F\c \nab_t \ov{\qf^\F} +\overline{V_2 }\overline{ \qf^\F}\c \nab_t \qf^\F&=& V_2 \big(  \qf^\F\c \nab_t \ov{\qf^\F} +\overline{ \qf^\F}\c \nab_t \qf^\F \big)= V_2\pr_t (|\qf^\F|^2) = \pr_t  (V_2 |\qf^\F|^2)
 \eeaa
 Being $V_1$ and $V_2$ positive for $|a|/M \ll 1$, they give a coercive contribution to the energies.

\item In order to obtain cancellation for the terms involving coupling, we need to sum the estimates for the two equations. Observe that the complex functions which multiply the coupling terms, i.e. $\frac{\ov{q}^3}{|q|^5}$ and $\frac{q^3}{|q|^5}$, are conjugate complex functions, and such structure is crucial for the cancellation.
Since the coupling terms differ by a constant factor $8Q^2$,
we multiply the second identity \eqref{eq:second-qfF} by $8Q^2$ and sum to \eqref{eq:second-pf} and obtain
\beaa
&&  \squared_1\pf \c \nab_t \overline{\pf}+\squared_1\overline{\pf} \c \nab_t \pf- \pr_t( V_1 |\pf|^2)+8Q^2 \big( \squared_2\qf^\F \c \nab_t \ov{\qf^\F}+  \squared_2\overline{\qf^\F} \c \nab_t \qf^\F- \pr_t  (V_2 |\qf^\F|^2)\big)\\
&=&4Q^2 \frac{\ov{q}^3 }{|q|^5} \left(  \ov{\DD} \c  \qf^\F  \right) \c \nab_t \overline{\pf}+ 4Q^2\frac{ q^3}{|q|^5} \big(  \DD \c  \overline{\qf^\F}  \big) \c \nab_t \pf-  4Q^2 \frac{q^3}{|q|^5} \left(  \DD \hot  \pf  \right)\c \nab_t \ov{\qf^\F}-  4Q^2 \frac{\ov{q}^3}{|q|^5} \big(  \ov{\DD \hot  \pf}  \big)\c \nab_t \qf^\F\\
  &&+ 6Q^2\frac{q^3}{|q|^5} \left(  \left(  H - \Hb\right)  \hot \pf \right)\c \nab_t \ov{\qf^\F} +  6Q^2 \frac{\ov{q}^3}{|q|^5} \left(  \left(  \ov{H} - \ov{\Hb}\right)  \hot \ov{\pf} \right)\c \nab_t \qf^\F \\
    &&+ L_\pf[\Bfr, \Ffr]  \c \nab_t \overline{\pf}+\overline{L_\pf[\Bfr, \Ffr]}  \c \nab_t \pf+8Q^2(L_{\qf^\F} [\Bfr, \Ffr] \c \nab_t \ov{\qf^\F}+ \ov{L_{\qf^\F} [\Bfr, \Ffr] }\c \nab_t \qf^\F)
 \eeaa
 We now consider the first two lines on the right hand side of the above. We put together the terms which are multiplied by the function $\frac{\ov{q}^3 }{|q|^5}$ and those multiplied by $\frac{q }{|q|^5}$.
 We first integrate by parts in $t$ in the first term, and obtain:
  \beaa
  &&2Q^2 \big(  2\frac{\ov{q}^3 }{|q|^5} \left(  \ov{\DD} \c   \qf^\F  \right) \c  \nab_t\overline{\pf}-  2 \frac{\ov{q}^3}{|q|^5} \big(  \ov{\DD \hot  \pf}  \big)\c \nab_t \qf^\F +   3\frac{\ov{q}^3}{|q|^5}\left(  \left(  \ov{H} - \ov{\Hb}\right)  \hot \ov{\pf} \right)\c \nab_t \qf^\F \big) \\
 &&+2Q^2 \big( 2 \frac{ q^3}{|q|^5} \big(  \DD \c  \overline{\qf^\F}  \big) \c \nab_t \pf- 2  \frac{q^3}{|q|^5} \left(  \DD \hot  \pf  \right)\c \nab_t \ov{\qf^\F}+3 \frac{q^3}{|q|^5} \left(  \left(  H - \Hb\right)  \hot \pf \right)\c \nab_t \ov{\qf^\F}\big) \\
 &=&2Q^2 \big(  -2\frac{\ov{q}^3 }{|q|^5} \left(  \ov{\DD} \c  \nab_t \qf^\F  \right) \c  \overline{\pf}-  2 \frac{\ov{q}^3}{|q|^5} \big(  \ov{\DD \hot  \pf}  \big)\c \nab_t \qf^\F +   3\frac{\ov{q}^3}{|q|^5}\left(  \left(  \ov{H} - \ov{\Hb}\right)  \hot \ov{\pf} \right)\c \nab_t \qf^\F \big) \\
 &&+2Q^2 \big( -2 \frac{ q^3}{|q|^5} \big(  \DD \c  \nab_t\overline{\qf^\F}  \big) \c  \pf- 2  \frac{q^3}{|q|^5} \left(  \DD \hot  \pf  \right)\c \nab_t \ov{\qf^\F}+3 \frac{q^3}{|q|^5} \left(  \left(  H - \Hb\right)  \hot \pf \right)\c \nab_t \ov{\qf^\F}\big) 
 \eeaa
Recall Lemma \ref{lemma:adjoint-operators} that relates the operator $\DD\hot$ and $\DD \c$. Applying it to $F=\pf$, $U=\nab_t\qf^\F$, we obtain
   \beaa
 ( \DD \hot   \pf) \c   \nab_t\ov{\qf^\F}  &=&  -\pf \c (\DD \c \nab_t\ov{\qf^\F}) -( (H+\Hb ) \hot \pf)\c \nab_t\ov{\qf^\F} +\D_\a (\pf \c \nab_t\ov{\qf^\F})
 \eeaa
Using the above we write, modulo spacetime divergence terms:
 \beaa
  -2 \frac{ q^3}{|q|^5} \big(  \DD \c  \nab_t\overline{\qf^\F}  \big) \c  \pf&=&  2  \DD(\frac{ q^3}{|q|^5})  \nab_t\overline{\qf^\F}   \c  \pf +2 \frac{ q^3}{|q|^5}   \nab_t\overline{\qf^\F}   \c ( \DD\hot \pf) +2 \frac{ q^3}{|q|^5}((H+\Hb) \hot \pf )\c   \nab_t\overline{\qf^\F} \\
  &=&  2  \DD( q^{1/2}(\ov{q})^{-5/2})  \nab_t\overline{\qf^\F}   \c  \pf +2 \frac{ q^3}{|q|^5}   \nab_t\overline{\qf^\F}   \c ( \DD\hot \pf)+2 \frac{ q^3}{|q|^5}((H+\Hb) \hot \pf )\c   \nab_t\overline{\qf^\F}\\
  &=&   (   \Hb - 5   H)  \frac{ q^3}{|q|^5}  \nab_t\overline{\qf^\F}   \c  \pf +2 \frac{ q^3}{|q|^5}   \nab_t\overline{\qf^\F}   \c ( \DD\hot \pf)+2 \frac{ q^3}{|q|^5}((H+\Hb) \hot \pf )\c   \nab_t\overline{\qf^\F}\\
   &=&   (  3 \Hb - 3   H)  \frac{ q^3}{|q|^5}  \nab_t\overline{\qf^\F}   \c  \pf +2 \frac{ q^3}{|q|^5}   \nab_t\overline{\qf^\F}   \c ( \DD\hot \pf)
 \eeaa
 since from \eqref{DD-q-n-ov-q-m} we have $\DD( q^{1/2} \ov{q}^{-5/2})=(  \frac 1 2  \Hb -\frac 5 2  H)  \frac{ q^3}{|q|^5} $. Similarly,
  \beaa
  -2 \frac{ \ov{q}^3}{|q|^5} \left(  \ov{\DD} \c  \nab_t \qf^\F  \right) \c  \ov{\pf}  &=&   (  3 \ov{\Hb} - 3  \ov{ H})  \frac{ \ov{q}^3}{|q|^5}  \nab_t\qf^\F   \c  \ov{\pf} +2 \frac{ \ov{q}^3}{|q|^5}   \nab_t\qf^\F   \c ( \ov{\DD\hot \pf})
 \eeaa
We finally obtain
   \beaa
 &&  2Q^2 \big(  -2\frac{\ov{q}^3 }{|q|^5} \left(  \ov{\DD} \c  \nab_t \qf^\F  \right) \c  \overline{\pf}-  2 \frac{\ov{q}^3}{|q|^5} \big(  \ov{\DD \hot  \pf}  \big)\c \nab_t \qf^\F +   3\frac{\ov{q}^3}{|q|^5}\left(  \left(  \ov{H} - \ov{\Hb}\right)  \hot \ov{\pf} \right)\c \nab_t \qf^\F \big) \\
 &&+2Q^2 \big( -2 \frac{ q^3}{|q|^5} \big(  \DD \c  \nab_t\overline{\qf^\F}  \big) \c  \pf- 2  \frac{q^3}{|q|^5} \left(  \DD \hot  \pf  \right)\c \nab_t \ov{\qf^\F}+3 \frac{q^3}{|q|^5} \left(  \left(  H - \Hb\right)  \hot \pf \right)\c \nab_t \ov{\qf^\F}\big) \\
 &=&2Q^2 \frac{ \ov{q}^3}{|q|^5}  \big( (  3 \ov{\Hb} - 3  \ov{ H})  \nab_t\qf^\F   \c  \ov{\pf} +2  \nab_t\qf^\F   \c ( \ov{\DD\hot \pf})-  2 \left(  \ov{\DD \hot  \pf}  \right)\c \nab_t \qf^\F +   3\left(  \left(  \ov{H} - \ov{\Hb}\right)  \hot \ov{\pf} \right)\c \nab_t \qf^\F \big) \\
 &&+2Q^2  \frac{ q^3}{|q|^5} \big( (  3 \Hb - 3   H)  \nab_t\overline{\qf^\F}   \c  \pf +2    \nab_t\overline{\qf^\F}   \c ( \DD\hot \pf)- 2  \left(  \DD \hot  \pf  \right)\c \nab_t \ov{\qf^\F}+3  \left(  \left(  H - \Hb\right)  \hot \pf \right)\c \nab_t \ov{\qf^\F}\big) \\
  &=&0
 \eeaa

  Observe that upon a spacetime integration, the coupling terms cancel out, and therefore they only give contributions to boundary terms. Even though those terms do not have a definite sign, the modified energy terms are positive in the case of Reissner-Nordstr\"om for $|Q|<M$, as proved in \cite{Giorgi7}. In particular, for small angular momentum $|a| \ll M$ they remain positive in Kerr-Newman.
 
 By putting the above together we have
 \beaa
&&  \squared_1\pf \c \nab_t \overline{\pf}+\squared_1\overline{\pf} \c \nab_t \pf- \pr_t( V_1 |\pf|^2)+8Q^2 \big( \squared_2\qf^\F \c \nab_t \ov{\qf^\F}+  \squared_2\overline{\qf^\F} \c \nab_t \qf^\F- \pr_t  (V_2 |\qf^\F|^2)\big)\\
&&- \partial_t \big( \frac{ 4Q^2}{|q|^5} \ov{q}^3\left(  \ov{\DD} \c  \qf^\F  \right) \c  \overline{\pf}+ \frac{ 4Q^2}{|q|^5} q^3\left(  \DD \c  \overline{\qf^\F}  \right) \c  \pf \big)\\
&=&  L_\pf[\Bfr, \Ffr]  \c \nab_t \overline{\pf}+\overline{L_\pf[\Bfr, \Ffr]}  \c \nab_t \pf+8Q^2(L_{\qf^\F} [\Bfr, \Ffr] \c \nab_t \ov{\qf^\F}+ \ov{L_{\qf^\F} [\Bfr, \Ffr] }\c \nab_t \qf^\F)
 \eeaa
 \item In order to absorb the lower order terms on the right of the above estimates, one needs to combine the above energy estimates with boundedness of trapped spacetime energies, as given by Morawetz estimates. Moreover, through transport estimates one can show to bound all first derivatives of $\Bfr$, $\Ffr$ and $\Xfr$ by a degenerate Morawetz bulk for $\pf$ and $\qf^\F$.

 Assuming such estimates, we briefly explain how to absorb the lower order terms above. Recall that
 \beaa
L_\pf[\Bfr, \Ffr] &=& q^{1/2} \ov{q}^{9/2} \Big[-Z^{\Bfr}_{a} \c \nabc \Bfr+ 2 \PF\ov{\PF} \ Y^{\Ffr}_{a} (\ov{\DDc}\c\mathfrak{F})\\
&&+ ( 2 \PF\ov{\PF}\ Y^{\Bfr}_0 - Z^{\Bfr}_0) \Bfr + 2 \PF\ov{\PF}\left( Y^{\Ffr}_0 \c \mathfrak{F}+Y^{\Xfr}_0 \ \mathfrak{X}\right)\Big] 
\eeaa
and
\beaa
L_{\qf^\F}[\Bfr, \Ffr] &=& q\ov{q}^2 \Big[ W_4^{\Ffr} \nabc_4 \mathfrak{F}+\left( W_a^{\Ffr}  -Z^{\Ffr}_{a} \right) \c \nabc \mathfrak{F}+W_a^{\Xfr} \DDc \hot \mathfrak{X}  + W_{a}^{\Bfr} \DDc\hot \Bfr\\
&&+ ( W_0^{\Ffr}-Z^{\Ffr}_0)  \mathfrak{F} +W_0^{\Bfr} \hot  \mathfrak{B}+ W_0^{\Xfr} \hot \mathfrak{X}  \Big]
\eeaa
The terms on the second line of the above expressions (i.e. the lowest order terms) can be absorbed for small $|a|\ll M$, by integration by parts in $t$ and then bounding by Cauchy-Schwarz. For example,
\beaa
q^{1/2} \ov{q}^{9/2}( 2 \PF\ov{\PF}\ Y^{\Bfr}_0 - Z^{\Bfr}_0) \Bfr  \c \nab_t \overline{\pf}&=& - q^{1/2} \ov{q}^{9/2}( 2 \PF\ov{\PF}\ Y^{\Bfr}_0 - Z^{\Bfr}_0) \nab_t \Bfr  \c \overline{\pf}\\
&\leq& O( a r)\big( | \nab_t \Bfr|^2+ | \pf|^2\big)
\eeaa
Both terms on the right hand side appear without degeneracy at the trapping region in the Morawetz bulks, and therefore they can be absorbed by that for small $|a|\ll M$. The same will be true for the other terms of lower order, which contains only one derivative of $\Bfr$, $\Ffr$ or $\Xfr$. 
 
 In what follows, we therefore only look at the terms which highest number of derivatives, since the lower order terms can be treated as above.
 We now consider 
 \beaa
q\ov{q}^2  W_4^{\Ffr}\nabc_4 \Ffr \c \nab_t \ov{\qf^\F}+ \ov{q\ov{q}^2  W_4^{\Ffr} \nabc_4 \Ffr}\c \nab_t \qf^\F
 \eeaa
 Since $W_4^{\Ffr}$ is real, we have
 \beaa
&=& W_4^{\Ffr}  \big(q\ov{q}^2  \nabc_4 \Ffr \c \nab_t \ov{\qf^\F}+ q^2\ov{q}  \nabc_4 \ov{\Ffr}\c \nab_t \qf^\F\big) \\
&=& W_4^{\Ffr}  \big(q\ov{q}^2 \nab_t \Ffr \c    \nab_4 \ov{\qf^\F}+ q^2\ov{q}  \nab_t  \ov{\Ffr}\c \nab_4 \qf^\F\big) \\
&=& W_4^{\Ffr}  \big(q\ov{q}^2 \nab_3 \Ffr \c    \nab_4 \ov{\qf^\F}+ q^2\ov{q}  \nab_3  \ov{\Ffr}\c \nab_4 \qf^\F\big) + \dots
 \eeaa
 Writing $q\ov{q}^2 \nab_3\Ffr=\qf^\F+ \lot$, we obtain
  \beaa
&=& W_4^{\Ffr}  \big(\qf^\F \c    \nab_4 \ov{\qf^\F}+ \ov{\qf^\F}\c \nab_4 \qf^\F\big) + \dots= W_4^{\Ffr}\nab_4 ( |\qf^\F|^2) 
 \eeaa
 which gives a boundary term. The same happens for the terms $W_a^{\Xfr} \DDc \hot \mathfrak{X} $, $Z^{\Bfr}_{a} \c \nabc \Bfr$ and $\left( W_a^{\Ffr}  -Z^{\Ffr}_{a} \right) \c \nabc \mathfrak{F}$, which because of the reality of the coefficients, can be written as boundary terms.

 We now look at the coupling terms in the lower order terms, i.e.
 \beaa
&&(q^{1/2} \ov{q}^{9/2}2 \PF\ov{\PF} \ Y^{\Ffr}_{a} (\ov{\DDc}\c\mathfrak{F})) \c \nab_t \overline{\pf}+\overline{(q^{1/2} \ov{q}^{9/2}2 \PF\ov{\PF} \ Y^{\Ffr}_{a} (\ov{\DDc}\c\mathfrak{F}))}  \c \nab_t \pf\\
&&+8Q^2(q\ov{q}^2 W_{a}^{\Bfr} \DDc\hot \Bfr \c \nab_t \ov{\qf^\F}+ \ov{q\ov{q}^2 W_{a}^{\Bfr} \DDc\hot \Bfr }\c \nab_t \qf^\F)
 \eeaa
 Writing that $\PF \ov{\PF}=\frac{Q^2}{|q|^4}$ and $W_{a}^{\Bfr}= \frac 3 4  i  \atrchb $ and $Y^{\Ffr}_{a}= -3  i \atrchb $, we have
  \beaa
&=&-6Q^2 \frac{q^{1/2} \ov{q}^{9/2}}{|q|^4} \    i \atrchb (\ov{\DDc}\c\mathfrak{F}) \c \nab_t \overline{\pf}+6Q^2 \frac{q^{9/2} \ov{q}^{1/2}}{|q|^4} \    i \atrchb (\DDc\c\ov{\mathfrak{F}}) \c \nab_t \pf\\
&&+6Q^2 q\ov{q}^2  i  \atrchb \DDc\hot \Bfr \c \nab_t \ov{\qf^\F}-6Q^2 q^2\ov{q}  i  \atrchb \ov{\DDc\hot \Bfr} \c \nab_t \qf^\F\\
&=&-6Q^2  i \atrchb \big[ \frac{q^{1/2} \ov{q}^{9/2}}{|q|^4} \   (\ov{\DDc}\c\mathfrak{F}) \c \nab_t \overline{\pf}+q^2\ov{q}   \ov{\DDc\hot \Bfr} \c \nab_t \qf^\F\big] \\
&&+6Q^2   i \atrchb  \big[ \frac{q^{9/2} \ov{q}^{1/2}}{|q|^4} \ (\DDc\c\ov{\mathfrak{F}}) \c \nab_t \pf +q\ov{q}^2  \DDc\hot \Bfr \c \nab_t \ov{\qf^\F}\big]
 \eeaa
 Now by recalling that $q\ov{q}^2 \nab_3\Ffr=\qf^\F+ \lot$ and $q^{1/2}\ov{q}^{9/2} \nab_3\Bfr=\pf+ \lot$, we obtain, only looking at the highest order terms:
 \beaa
&&  \frac{q^{1/2} \ov{q}^{9/2}}{|q|^4} \   (\ov{\DDc}\c\mathfrak{F}) \c \nab_t \overline{\pf}+q^2\ov{q}   \ov{\DDc\hot \Bfr} \c \nab_t \qf^\F\\
&=&-  \frac{q^{1/2} \ov{q}^{9/2}}{|q|^4} \   (\ov{\DDc}\c \nab_t\mathfrak{F}) \c  \overline{\pf}-q^2\ov{q}   \ov{\DDc\hot\nab_t \Bfr} \c  \qf^\F\\
&=&-  \frac{q^{1/2} \ov{q}^{9/2}}{|q|^4} \   (\ov{\DDc}\c \nab_3\mathfrak{F}) \c  \overline{\pf}-q^2\ov{q}   \ov{\DDc\hot\nab_3 \Bfr} \c  \qf^\F\\
&=&-  \frac{q^{1/2} \ov{q}^{9/2}}{|q|^4} \  \frac{1}{q\ov{q}^2}  (\ov{\DDc}\c \qf^\F) \c  \overline{\pf}-q^2\ov{q} \frac{1}{\ov{q}^{1/2}q^{9/2}}  \ov{\DDc\hot\pf} \c  \qf^\F\\
&=&- \frac{\ov{q}^{1/2}}{q^{5/2}}\Big[ (\ov{\DDc}\c \qf^\F) \c  \overline{\pf}+ \ov{\DDc\hot\pf} \c  \qf^\F \Big]=0
 \eeaa
 The remaining terms are therefore only of lower order, and can be absorbed as shown before for small $|a|\ll M$ by Cauchy-Schwarz.

\end{enumerate}

       \appendix

\small
\section{Explicit computations}\label{appendix-section-a}

We collect here some explicit computations needed in Section 2--5. 

\subsection{Derivation of the Bianchi identities}\label{proof-compl=Bianchi}

\begin{lemma}\label{lemma-formulas-J} We have the following for the decomposition in frames of $J_{\b\gamma\delta}$ in \eqref{J-def}, 
     \bea
   J_{434}    &=& -\nab_4(\rhoF^2 +\dual\rhoF^2)+2\left(\etab-2\eta\right)\c \left(\dual \rhoF\dual \bF+ \rhoF \bF\right) \nonumber\\
    &&+2 \xi \c \left( \dual \rhoF\dual \bbF -  \rhoF\bbF\right) +\nab_3(\bF\c  \bF)-4\omb( \bF\cdot  \bF) \label{first-J434}\\
  J_{ab4} &=& \nab_b(\dual \rhoF \dual \bF_a + \rhoF\bF_a) +(\ze_b+\etab_b) (\dual \rhoF \dual \bF_a + \rhoF\bF_a)\nonumber\\
  &&+\etab_a  (\dual \rhoF \dual \bF_b + \rhoF\bF_b)-\frac 1 2 \nab_4 (\rhoF^2 +\dual\rhoF^2)\de_{ab}-\chi_{ba}  (\rhoF^2 +\dual\rhoF^2)\nonumber\\
 &&+\nab_4(\bF \hot \bbF)_{ab}+\frac 1 2 \chi_{bc}(\bF \hot \bbF)_{ca}- \frac 1 2 \chib_{ba} \bF\cdot  \bF\nonumber\\
 &&+\xi_a(\dual \rhoF \dual \bbF_b -\rhoF\bbF_b)+\xi_b(\dual \rhoF \dual \bbF_a - \rhoF\bbF_a)\label{Jab4}\\
J_{4a4}&=&  -\nabc_4(\dual \rhoF \dual \bF_a + \rhoF\bF_a)- \trch(\dual \rhoF \dual \bF_a + \rhoF\bF_a)\nonumber\\
&&+  \atrch(\dual \rhoF \bF_a - \rhoF \dual \bF_a)+2\left( \rhoF^2+\dual \rhoF^2 \right) \xi_a \nonumber\\
    &&+\nab_a(\bF\c  \bF) +(2\ze_a+\etab_a) \bF \c \bF - 2 \xi_b (\bbF \hot \bbF)_{ab} \label{J4a4}\\
J_{3a4}    &=&\nab_a(\rhoF^2+\dual \rhoF^2)-\frac 1 2 \trch (\dual \rhoF \dual \bbF_a - \rhoF\bbF_a)\nonumber\\
&&-\frac 1 2  \trchb (\dual \rhoF \dual \bF_a + \rhoF\bF_a) -\frac 12\atrch\dual (\dual \rhoF \dual \bbF_a - \rhoF\bbF_a)\nonumber\\
&&- \frac 12 \atrchb\dual (\dual \rhoF \dual \bF_a + \rhoF\bF_a)-\nabc_4(\dual \rhoF \dual \bbF_a - \rhoF\bbF_a)\nonumber\\
&&+2 \left( \rhoF^2+\dual \rhoF^2 \right)\etab_a+\xi_a \bbF \c \bbF-\chih_{ab} (\dual \rhoF \dual \bbF_b - \rhoF\bbF_b)\nonumber\\
&&-\chibh (\dual \rhoF \dual \bF_b + \rhoF\bF_b)-2\etab_b (\bbF \hot \bbF)_{ab}\label{J3a4}\\
\dual J_{434}&=& 2  \curl (\dual \rhoF \dual \bF + \rhoF\bF)+2\ze \c \dual (\dual \rhoF \dual \bF + \rhoF\bF) \nonumber\\
 &&-2\atrch (\rhoF^2 +\dual\rhoF^2) + \atrchb \bF\cdot  \bF \label{dualJ434}
\eea
 The other quantities are obtained by symmetrization, where in interchanging the $3$ with the $4$, one interchanges $\bF \leftrightarrow \bbF$, $\rhoF \leftrightarrow -\rhoF$, $\dual \rhoF\leftrightarrow\dual \rhoF$, $\ze \leftrightarrow -\ze$, $\eta\leftrightarrow\etab$, $\chih\leftrightarrow\chib$ and $\trch\leftrightarrow\trchb$, $\om\leftrightarrow\omb$. 
\end{lemma}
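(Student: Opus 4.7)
The strategy is a direct decomposition of $J_{\beta\gamma\delta}=\tfrac12(D_\gamma R_{\beta\delta}-D_\delta R_{\beta\gamma})$ in the null frame $(e_3,e_4,e_1,e_2)$, using the Einstein equation \eqref{Einstein-Maxwell-eq} to express $R_{\mu\nu}$ in terms of the electromagnetic quantities $\bF,\bbF,\rhoF,\dual\rhoF$, and the Maxwell equations of Proposition~\ref{prop-Maxwell} to rewrite unwanted $\nabla_3,\nabla_4$ derivatives of these quantities. The Ricci components $R_{33},R_{44},R_{34},R_{3a},R_{4a},R_{ab}$ have already been computed in Section~3.2, so I would simply substitute those expressions and proceed one identity at a time.

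\emph{Step 1 (Frame expansion of covariant derivatives).} For each of the five combinations $(\beta,\gamma,\delta)=(4,3,4),(a,b,4),(4,a,4),(3,a,4)$ I would write out $D_\gamma R_{\beta\delta}$ by hand. The spacetime covariant derivative of a $2$-tensor $R$ on frame vectors produces a horizontal derivative $\nabla_\gamma R_{\beta\delta}$ plus correction terms coming from $D_\gamma e_\beta$ and $D_\gamma e_\delta$, whose expansion in frame is given by the Ricci formulae \eqref{ricci}. For instance,
\begin{eqnarray*}
D_4 R_{34} &=& \nabla_4 R_{34}-R(D_4 e_3,e_4)-R(e_3,D_4 e_4)\\
&=&\nabla_4 R_{34}-R(2\om e_3+2\etab_b e_b,e_4)-R(e_3,-2\om e_4+2\xi_b e_b),
\end{eqnarray*}
and similar expansions for the other cases. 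Each produces a clean leading horizontal term plus an algebraic combination of connection coefficients times Ricci components.

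\emph{Step 2 (Substitute Ricci, then Maxwell).} Into the horizontal derivatives I substitute
$R_{44}=2\bF\cdot\bF$, $R_{34}=2(\rhoF^2+\dual\rhoF^2)$, $R_{4a}=2(\rhoF\bF_a+\dual\rhoF\dual\bF_a)$, $R_{ab}=-2(\bF\hot\bbF)_{ab}+(\rhoF^2+\dual\rhoF^2)\delta_{ab}$, etc. This produces terms like $\nabla_3(\rhoF^2+\dual\rhoF^2)$, $\nabla_3\bF_a$, $\nabla_4\bbF_a$ that must be traded for transverse $\nabla_4$ derivatives (or for horizontal ones) using the Maxwell equations of Proposition~\ref{prop-Maxwell}. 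The asymmetric choice of keeping $\nabla_4$ rather than $\nabla_3$ on the right hand sides of \eqref{first-J434}--\eqref{dualJ434} is the guide: wherever a $\nabla_3$ of an $F$ quantity appears, I would use the first two Maxwell identities to swap it for a $\nabla_4$ together with algebraic terms. A small bookkeeping point is that products $\nabla_4(\bF\hot\bbF)$ and $\nabla_3(\bF\cdot\bF)$ need to be kept grouped so the final expressions look symmetric and match the stated form; here the Leibniz rule plus one Maxwell substitution per factor handles it.

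\emph{Step 3 (Decomposing $\chi,\chib$ and taking duals).} Whenever $\chi_{ab}$ or $\chib_{ab}$ appears contracted against $\bF$ or $\bbF$ I would use the decomposition $\chi_{ab}=\hat\chi_{ab}+\tfrac12\delta_{ab}\,\trch+\tfrac12\in_{ab}\atrch$ (and similarly for $\chib$), which produces the traceless, trace, and antitrace pieces visible in \eqref{Jab4}--\eqref{J3a4}. For $\dual J_{434}$ I would contract $J_{434}$ with the volume form, turning $\nabla_a(\dual\rhoF\dual\bF+\rhoF\bF)\in^{ab}$ into a $\curl$ and $\xi_a\,\bF\in^{ab}$ into $\xi\cdot\dual(\cdots)$, and the $\nabla_4(\bF\cdot\bF)$ and $\nabla_4(\rhoF^2+\dual\rhoF^2)$ terms disappear after using Maxwell because $(e_4)^{ab}=0$; what survives are the $\atrch$, $\atrchb$ pieces as stated.

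\emph{Main obstacle.} The computation is not conceptually hard but is extremely prone to sign and index errors, particularly in handling the nonintegrability terms proportional to $\atrch,\atrchb$ and $\hat\chi,\hat\chib$, and in keeping the distinction between horizontal $\nabla$ and spacetime $D$. The symmetrization remark at the end of the lemma (the $3\leftrightarrow4$ swap) is a useful internal check: after writing out the formula for $J_{434}$ I would immediately verify that applying the involution $\bF\leftrightarrow\bbF$, $\rhoF\leftrightarrow-\rhoF$, $\dual\rhoF\leftrightarrow\dual\rhoF$, $\zeta\leftrightarrow-\zeta$, $\eta\leftrightarrow\etab$, $\hat\chi\leftrightarrow\hat\chib$, $\trch\leftrightarrow\trchb$, $\om\leftrightarrow\omb$ reproduces the expected $J_{343}$, and likewise cross-check each formula against its $3$-counterpart. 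Once a single $J$ is derived and passes the symmetry test, the others follow by the same scheme.
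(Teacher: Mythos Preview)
Your core strategy---expand $D_\gamma R_{\beta\delta}$ in frame using the Ricci formulae \eqref{ricci} and substitute the Ricci components already computed in Section~3.2---is exactly what the paper does. Two corrections are worth making, though.

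First, Step~2 overcomplicates matters: no Maxwell substitutions are needed in this lemma. The derivatives appearing on the right-hand sides of \eqref{first-J434}--\eqref{J3a4} are precisely those that arise directly from $D_\gamma R_{\beta\delta}-D_\delta R_{\beta\gamma}$. For instance, $2J_{434}=D_3 R_{44}-D_4 R_{43}$, so the $\nabla_3(\bF\cdot\bF)$ comes straight from $D_3 R_{44}$ and the $-\nabla_4(\rhoF^2+\dual\rhoF^2)$ straight from $-D_4 R_{43}$; likewise $2J_{3a4}=D_a R_{43}-D_4 R_{3a}$ produces only $\nabla_a$ and $\nabla_4$ derivatives. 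If you trade $\nabla_3$ for $\nabla_4$ via Maxwell you will not reproduce the formulas as stated---that simplification is the content of the \emph{next} lemma (Lemma~\ref{lemma:simplify-J-Maxwell}), not this one.

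Second, your description of $\dual J_{434}$ is confused: $J_{434}$ is a scalar and cannot be ``contracted with the volume form.'' The dual here is on the antisymmetric pair $\gamma\delta$ of $J_{\beta\gamma\delta}$, and in the paper $\dual J_{434}$ is obtained as $-2\in^{ab}J_{ab4}$, by contracting the already-computed expression \eqref{Jab4} with $\in^{ab}$. Once you have $J_{ab4}$ in hand this is a one-line contraction; no Maxwell manipulation is involved there either.
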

\begin{proof} 
We compute $J_{434}$:
 \beaa
  2 J_{434}&=& \D_3 \R_{44}-\D_{4}\R_{43}\\
  &=&\nab_3(\R_{44})-2\R(\D_3 e_4, e_4)-\nab_4(\R_{34})+\R(\D_4 e_4, e_3)+\R(e_4, \D_4 e_3)\\
 &=& 2\nab_3(\bF\c  \bF)-\nab_4(2\rhoF^2 +2\dual\rhoF^2)-2\R(2\omb e_4+2\eta_a e_a, e_4)\\
 &&+\R(-2\om e_4+2\xi_a e_a, e_3) +\R(e_4, 2\om e_3+ 2\etab_a e_a) \\
  &=& -2\nab_4(\rhoF^2 +\dual\rhoF^2)-4\omb\R_{44}+2\left(\etab_a-2\eta_a\right)\R_{a4}+2\xi_a\R_{a3} +2\nab_3(\bF\c  \bF)\\
    &=& -2\nab_4(\rhoF^2 +\dual\rhoF^2)+4\dual \rhoF\left(\etab-2\eta\right)\c \dual \bF+4 \rhoF\left(\etab-2\eta\right)\c \bF\\
    &&+4\dual \rhoF \xi \c  \dual \bbF - 4\rhoF \xi \c \bbF +2\nab_3(\bF\c  \bF)-8\omb( \bF\cdot  \bF)
    \eeaa
    This proves \eqref{first-J434}.
    We compute $J_{ab4}$:
 \beaa
  2J_{ab4}&=&\D_b \R_{4a}-\D_{4}\R_{ab}\\
  &=&\nab_b(\R_{4a}) -\R(\D_b e_4, e_a)-\R(e_4, \D_b e_a)-\nab_4(\R_{ab})+\R(\D_4 e_a, e_b)+\R(e_a, \D_{4} e_b)\\
 &=& \nab_b(2\dual \rhoF \dual \bF_a + 2\rhoF\bF_a)-\nab_4(-2(\bF \hot \bbF)_{ab}+  (\rhoF^2 +\dual\rhoF^2)\de_{ab})+\\
 && -\R(-\ze_b e_4+\chi_{bc} e_c, e_a)-\R(e_4, \frac 1 2 \chib_{ba} e_4 +\frac 1 2 \chi_{ba} e_3)+\R(\etab_a  e_4+\xi_a e_3, e_b)+\R(e_a, \etab_b  e_4+\xi_b e_3)\\
 &=& \nab_b(2\dual \rhoF \dual \bF_a + 2\rhoF\bF_a)-\nab_4(-2(\bF \hot \bbF)_{ab}+  (\rhoF^2 +\dual\rhoF^2)\de_{ab})+\\
 && +\ze_b \R_{a4}-\chi_{bc}\R_{ca}-\frac 1 2 \chib_{ba}\R_{44}-\frac 1 2 \chi_{ba}\R_{43}+\etab_a\R_{4b}+\xi_a\R_{3b}+\etab_b\R_{a4}+\xi_b\R_{a3}
 \eeaa
Using the Ricci decomposition, this proves \eqref{Jab4}.
We compute $J_{4a4}$:
  \beaa
    2J_{4a4}&=&\D_a \R_{44}-\D_{4}\R_{4a}\\
    &=&\nab_a(\R_{44}) -2\R(\D_a e_4, e_4)-\nab_4(\R_{4a})+\R(\D_4 e_4, e_a)+\R(e_4, \D_{4} e_a)\\
 &=& 2\nab_a(\bF\c  \bF) -2\nab_4(\dual \rhoF \dual \bF_a + \rhoF\bF_a)-2\R(-\ze_a e_4+\chi_{ab} e_b, e_4)\\
 &&+\R(-2\om e_4+2\xi_b e_b, e_a) +\R(e_4, \etab_a e_4+\xi_a e_3)\\
 &=& -2\nab_4(\dual \rhoF \dual \bF_a + \rhoF\bF_a)-2\chi_{ab}(2\dual \rhoF \dual \bF_b + 2\rhoF\bF_b)\\
 &&-2\om (2\dual \rhoF \dual \bF_a + 2\rhoF\bF_a)+4\left( \rhoF^2+\dual \rhoF^2 \right) \xi_a\\
  &&+2\nab_a(\bF\c  \bF) +(4\ze_a+2\etab_a) \bF \c \bF - 4 \xi_b (\bbF \hot \bbF)_{ab}
 \eeaa
 which proves \eqref{J4a4}. 
We compute $J_{3a4}$:
\beaa
2J_{3a4}&=& \D_a \R_{43}-\D_{4}\R_{3a}\\
  &=&\nab_a(\R_{34}) -\R(\D_a e_4, e_3)-\R(e_4, \D_a e_3)-\nab_4(\R_{3a})+\R(\D_4 e_3, e_a)+\R(e_3, \D_{4} e_a)\\
  &=&\nab_a(2\rhoF^2+2\dual \rhoF^2) -\R(\chi_{ab} e_b-\ze_a e_4, e_3)-\R(e_4, \chib_{ab} e_b+\ze_a e_3)\\
  &&-\nab_4(2\dual \rhoF \dual \bbF_a - 2\rhoF\bbF_a)+\R(2\om e_3+2\etab_b e_b, e_a)+\R(e_3, \etab_a e_4+\xi_a e_3)\\
    &=&2\nab_a(\rhoF^2+\dual \rhoF^2) -\chi_{ab}(2\dual \rhoF \dual \bbF_b - 2\rhoF\bbF_b)- \chib_{ab}(2\dual \rhoF \dual \bF_b + 2\rhoF\bF_b)\\
  &&-2\nab_4(\dual \rhoF \dual \bbF_a - \rhoF\bbF_a)+2\om(2\dual \rhoF \dual \bbF_a - 2\rhoF\bbF_a)\\
  &&+2\etab_b( - 2(\bbF \hot \bbF)_{ab}+\left( \rhoF^2+\dual \rhoF^2 \right) \de_{ab} )+\etab_a (2\rhoF^2+2\dual\rhoF^2)+2\xi_a \bbF \c \bbF
\eeaa
which proves \eqref{J3a4}. We compute $\dual J_{434}$ using \eqref{Jab4}:  
\beaa
\dual J_{434}&=& -2 J_{ab4} \in_{ab}\\
 &=& 2  \curl (\dual \rhoF \dual \bF + \rhoF\bF)+2\ze \c \dual (\dual \rhoF \dual \bF + \rhoF\bF)\\
 &&-2\atrch (\rhoF^2 +\dual\rhoF^2)+ \atrchb \bF\cdot  \bF
\eeaa
which proves \eqref{dualJ434}. 
\end{proof}

\begin{lemma}\label{lemma:simplify-J-Maxwell} 
Using Maxwell equations as in Proposition \ref{prop-Maxwell}, we obtain
\bea\label{J434}
\begin{split}
      J_{434}       &= -2\div (\dual \rhoF \dual \bF + \rhoF\bF)- 2\left( \ze+4\eta\right) \c \left(\rhoF \bF +\dual \rhoF \dual \bF\right)\\
      &+2\trch (\rhoF^2+\dual \rhoF^2) + 2\left(2\nabc_3 \bF + \frac 1 2  \trchb \bF -\chih \c \bbF\right)\c  \bF  
      \end{split}
      \eea
\end{lemma}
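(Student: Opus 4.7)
The proof will proceed by direct substitution of the Maxwell equations from Proposition \ref{prop-Maxwell} into the expression for $J_{434}$ established in Lemma \ref{lemma-formulas-J}, eq.~\eqref{first-J434}. The plan is to rewrite the two ``bad'' terms $-\nab_4(\rhoF^2 + \dual\rhoF^2)$ and $\nab_3(\bF\c\bF)$ using the transport equations for $\rhoF$, $\dual\rhoF$, and $\bF$ respectively, and then to verify that the resulting Ricci coefficient combinations collapse into those on the right-hand side of \eqref{J434}.

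More concretely, first I would handle the $\nab_4$ term: write $\nab_4(\rhoF^2+\dual\rhoF^2) = 2\rhoF \nab_4\rhoF + 2\dual\rhoF \nab_4\dual\rhoF$ and substitute the Maxwell equations
\beaa
\nab_4 \rhoF &=& \div \bF - \trch\,\rhoF + \atrch\,\dual\rhoF + (\ze+\etab)\c\bF - \xi\c\bbF,\\
\nab_4 \dual\rhoF &=& \curl\bF - \trch\,\dual\rhoF - \atrch\,\rhoF + (\etab+\ze)\c\dual\bF + \xi\c\dual\bbF.
\eeaa
Adding them with the weights $\rhoF$ and $\dual\rhoF$ makes the $\atr\chi$ contributions cancel and produces the combination $2(\rhoF\div\bF+\dual\rhoF\curl\bF)$. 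Using $\div(\dual f)=\curl f$ and integrating by parts on the sphere, this combination can be rewritten as $2\div(\rhoF\bF+\dual\rhoF\,\dual\bF) - 2(\nab\rhoF\c\bF+\nab\dual\rhoF\c\dual\bF)$, which is exactly the structure appearing in \eqref{J434}.

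Next I would handle the $\nab_3(\bF\c\bF) = 2\bF\c\nab_3\bF$ term by substituting the Maxwell equation
\beaa
\nab_3\bF &=& \nab\rhoF - \dual\nab\dual\rhoF - \tfrac{1}{2}\trchb\,\bF - \tfrac{1}{2}\atrchb\,\dual\bF + 2\omb\bF + 2(\eta\rhoF - \dual\eta\,\dual\rhoF) + \chih\c\bbF.
\eeaa
Dotting with $\bF$ and using the identities $\dual\bF\c\bF=0$ and $\dual\nab\dual\rhoF\c\bF = -\nab\dual\rhoF\c\dual\bF$ (which follows from Hodge-dual antisymmetry), this will precisely provide the ``conformal'' packaging $2(2\nabc_3\bF+\tfrac12\trchb\bF-\chih\c\bbF)\c\bF$ once one absorbs the $-4\omb(\bF\c\bF)$ factor into the definition $\nabc_3\bF = \nab_3\bF - 2\omb\bF$. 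Crucially, the same terms $\nab\rhoF\c\bF$ and $\nab\dual\rhoF\c\dual\bF$ produced at this stage will exactly cancel against those produced in the $\nab_4$ step above.

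Finally, I would collect the remaining Ricci-coefficient-linear terms: the $\xi\c\bbF$ contributions from $\nab_4\rhoF$ cancel in pairs against the term $2\xi\c(\dual\rhoF\,\dual\bbF-\rhoF\bbF)$ already present in \eqref{first-J434}, and the $(\ze+\etab)$ contributions combine additively with the pre-existing $(\etab-2\eta)$ term to yield exactly the coefficient $-2(\ze+4\eta)$ demanded by \eqref{J434}. The $\trch(\rhoF^2+\dual\rhoF^2)$ factor of $+2$ comes out of the $\nab_4\rhoF,\nab_4\dual\rhoF$ substitution. The main bookkeeping obstacle will be in keeping careful track of the signs coming from the Hodge dual identity $\dual A\c B = A\c\dual B$ together with $\dual\dual=-\operatorname{id}$, which is what makes the $\nab\rhoF\c\bF$ terms from the two substitutions cancel rather than double; a sign error at that step would spoil the whole simplification.
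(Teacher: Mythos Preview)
Your plan contains a sign/bookkeeping error at the crucial step. The gradient terms $\nab\rhoF\c\bF+\nab\dual\rhoF\c\dual\bF$ produced by the Leibniz rule on $-\nab_4(\rhoF^2+\dual\rhoF^2)$ and those produced by expanding $\nab_3(\bF\c\bF)=2\bF\c\nab_3\bF$ via Maxwell both carry the \emph{same} sign $+2$; they add to $+4$, they do not cancel. Relatedly, your claim that expanding $\nab_3(\bF\c\bF)-4\omb\,\bF\c\bF$ via Maxwell already yields the packaging $2(2\nabc_3\bF+\tfrac12\trchb\bF-\chih\c\bbF)\c\bF$ is impossible: that quantity equals $2\nabc_3\bF\c\bF$, not $4\nabc_3\bF\c\bF$. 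The Ricci-coefficient count then also fails: combining $-2(\ze+\etab)$ from the $\nab_4$ step with $2(\etab-2\eta)$ from \eqref{first-J434} gives only $-2(\ze+2\eta)$, and the $+4\eta$ coming from your expansion of $\nab_3(\bF\c\bF)$ pushes this to $-2\ze$, not to the target $-2(\ze+4\eta)$. You would be forced to take the leftover $4(\nab\rhoF-\dual\nab\dual\rhoF)\c\bF$ and convert it \emph{back} through Maxwell, which is circular.

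The paper's route avoids this: it leaves $\nab_3(\bF\c\bF)-4\omb\,\bF\c\bF=2\nabc_3\bF\c\bF$ untouched and instead applies the Maxwell equation for $\nab\rhoF-\dual\nab\dual\rhoF$ (i.e.\ the $\nab_3\bF$ equation) to the term $(\nab\rhoF-\dual\nab\dual\rhoF)\c\bF$ arising from the Leibniz expansion of $\div(\rhoF\bF+\dual\rhoF\dual\bF)$. This single substitution produces both the missing copy $2(\nabc_3\bF+\tfrac12\trchb\bF-\chih\c\bbF)\c\bF$ and the missing $-4\eta\c(\rhoF\bF+\dual\rhoF\dual\bF)$, after which everything collapses to \eqref{J434}.
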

\begin{proof} We compute
 \beaa
  \nab_4 (\rhoF^2 +\dual\rhoF^2)&=& 2\rhoF \nab_4 \rhoF +2\dual\rhoF\nab_4\dual\rhoF\\
  &=& 2\rhoF\left(\div \bF-  \left(\trch \rhoF- \atrch \dual \rhoF\right)+\left( \ze+\etab\right) \c \bF-\xi \c \bbF \right) \\
  &&+2\dual\rhoF\left(\curl \bF   - \left(\trch \dual \rhoF+\atrch \rhoF\right)  +\left(\etab+\ze \right) \c  \dual \bF  +\xi \c \dual \bbF \right)\\
   &=&2\rhoF\div \bF+2\dual\rhoF\curl \bF-2\trch (\rhoF^2+\dual \rhoF^2)\\
   &&+ 2\left( \ze+\etab\right) \c \left(\rhoF \bF +\dual \rhoF \dual \bF\right) +2\xi \c \left(\dual\rhoF  \dual \bbF- \rhoF \bbF\right) 
 \eeaa
 We now compute $\div (\dual \rhoF \dual \bF + \rhoF\bF)$. It is given by
 \beaa
 \div (\dual \rhoF \dual \bF + \rhoF\bF)&=&\nab \dual \rhoF\c  \dual \bF +\dual \rhoF \div\dual \bF + \nab \rhoF \c \bF+ \rhoF \div \bF \\
 &=& \rhoF \div \bF+\dual \rhoF \curl\bF + \left(\nab \rhoF -\dual \nab \dual \rhoF \right)\c  \bF 
 \eeaa
Using the Maxwell equation
\beaa
\nab(\rhoF)-\dual \nab \dual\rhoF&=&\nab_3 \bF + \frac 1 2  \left(\trchb \bF+ \atrchb \dual \bF\right)-2\omb \bF-2 \left(\eta \rhoF- \dual \eta \dual \rhoF\right) -\chih \c \bbF
\eeaa
we obtain
\bea\label{divv-pfbf}
\begin{split}
 \div (\dual \rhoF \dual \bF + \rhoF\bF)&= \rhoF \div \bF+\dual \rhoF \curl\bF -2 \eta \c \left(\rhoF \bF +  \dual \rhoF \dual \bF\right) \\
 &+ \left(\nabc_3 \bF + \frac 1 2  \trchb \bF -\chih \c \bbF\right)\c  \bF 
 \end{split}
\eea
This therefore gives
 \beaa
  \nab_4 (\rhoF^2 +\dual\rhoF^2)   &=&2 \div (\dual \rhoF \dual \bF + \rhoF\bF)-2\trch (\rhoF^2+\dual \rhoF^2)\\
   &&+ 2\left( \ze+\etab+2\eta\right) \c \left(\rhoF \bF +\dual \rhoF \dual \bF\right)\\
   && +2\xi \c \left(\dual\rhoF  \dual \bbF- \rhoF \bbF\right)- 2\left(\nabc_3 \bF + \frac 1 2  \trchb \bF -\chih \c \bbF\right)\c  \bF  
 \eeaa
Using \eqref{first-J434} and the above, we deduce \eqref{J434}.
\end{proof}

The complexified Bianchi identity for $A$ is given by 
\bea\label{compl-Bianchi-a}
\nab_3A -\DD\hot B &=& -\frac{1}{2}\tr\Xb A+4\omb A +(Z+4H)\hot B -3\ov{P}\Xh+ \mathfrak{a} + i \dual \mathfrak{a}
\eea
 We compute $\mathfrak{a}$, using \eqref{first-J434} and \eqref{Jab4}:
\bea\label{mathfrak-a}
\begin{split}
 \mathfrak{a} &= 2 \nab \hot (\dual \rhoF \dual \bF + \rhoF\bF) +2(\ze+2\etab)\hot (\dual \rhoF \dual \bF + \rhoF\bF_b)-2(\rhoF^2 +\dual\rhoF^2)\chih \\
 &+2\nab_4(\bF \hot \bbF)+\nabc_3(\bF \hot \bF)+ \chi \c (\bF \hot \bbF)+2\xi \hot (\dual \rhoF \dual \bbF -2\rhoF\bbF)
 \end{split}
\eea
Using \eqref{mathfrak-a}, we have
 \beaa
 \mathfrak{a} + i \dual \mathfrak{a} &=& \DD \hot (\left(\dual \rhoF \dual \bF + \rhoF\bF\right) + i \dual \left(\dual \rhoF \dual \bF + \rhoF\bF\right))\\
 && +((\ze+2\etab) + i \dual (\ze+2\etab))\hot ((\dual \rhoF \dual \bF + \rhoF\bF)+ i \dual (\dual \rhoF \dual \bF + \rhoF\bF)) \\
 &&-2(\rhoF^2 +\dual\rhoF^2)(\chih+ i \dual \chih) \\
 &&+\nab_4(\BF \hot \BBF)+\frac 1 2 \nabc_3(\BF \hot \BF)+\frac 1 2  \Xh \c (\BF \hot \BBF)+\PF (\Xi \hot  \BF)
 \eeaa
 Observe that 
 \beaa
 \ov{\PF} \BF&=& \left(\dual \rhoF \dual \bF + \rhoF\bF\right) + i \dual \left(\dual \rhoF \dual \bF + \rhoF\bF\right)
 \eeaa
 and $\PF \ov{\PF}=  \rhoF^2 +\dual\rhoF^2$.
We can therefore write
 \beaa
 \mathfrak{a} + i \dual \mathfrak{a}&=& \DD \hot (\ov{\PF} \BF) +(Z+2\Hb)\hot (\ov{\PF} \BF)-2\ov{\PF}\PF\Xh\\
 &&+\nab_4(\BF \hot \BBF)+\frac 1 2 \nabc_3(\BF \hot \BF)+\frac 1 2  \Xh \c (\BF \hot \BBF)+\PF (\Xi \hot  \BF) \\
 &=&\ov{\PF} \DD \hot ( \BF)+ \DD \ov{\PF} \hot \BF +(Z+2\Hb)\hot (\ov{\PF} \BF)-2\ov{\PF}\PF\Xh\\
 &&+\nab_4(\BF \hot \BBF)+\frac 1 2 \nabc_3(\BF \hot \BF)+ \frac 1 2  \Xh \c (\BF \hot \BBF)+\PF (\Xi \hot  \BF)
 \eeaa
 Using the Maxwell equation for $\DDc \ov{\PF}$ we write
 \beaa
 \mathfrak{a} + i \dual \mathfrak{a}&=&\ov{\PF} \DD \hot ( \BF)+ (-2\ov{\PF} \Hb )\hot \BF +(Z+2\Hb)\hot (\ov{\PF} \BF)-2\ov{\PF}\PF\Xh+\frac 1 2\nab_4(\BF \hot \BBF)\\
 &&+\frac 1 2 \nabc_3(\BF \hot \BF)+ (- \frac 1 2 \tr X \BBF +\frac 1 2 \Xbh \c \ov{\BF}+\frac 1 2  \Xh \c \BBF+\PF \Xi)\hot \BF \\
  &=&-2\ov{\PF} \left( -\frac 1 2 (\DD+Z) \hot \BF +\PF\Xh \right)+\frac 1 2\nab_4(\BF \hot \BBF)+\frac 1 2 \nabc_3(\BF \hot \BF)\\
  &&+ (- \frac 1 2 \tr X \BBF +\frac 1 2 \Xbh \c \ov{\BF}+\frac 1 2  \Xh \c \BBF+\PF \Xi)\hot \BF
 \eeaa
From \eqref{compl-Bianchi-a}, we obtain
\beaa
\nab_3A -\DD\hot B &=& -\frac{1}{2}\tr\Xb A+4\omb A +(Z+4H)\hot B -3\ov{P}\Xh-2\ov{\PF} \left( -\frac 1 2 (\DD+Z) \hot \BF +\PF\Xh \right)\\
&&+\frac 1 2\nab_4(\BF \hot \BBF)+\frac 1 2 \nabc_3(\BF \hot \BF)\\
&&+ (- \frac 1 2 \tr X \BBF +\frac 1 2 \Xbh \c \ov{\BF}+\frac 1 2  \Xh \c \BBF+\PF \Xi)\hot \BF
\eeaa
which proves the first equation. 

 The complexified Bianchi identity for $B$ is given by
 \beaa
 \nab_4B -\frac{1}{2}\ov{\DD}\c A &=& -2\ov{\tr X} B -2\om B +\frac{1}{2}A\c  (\ov{2Z +\Hb})+3\ov{P} \,\Xi-\left(J_{4a4}+i \dual J_{4a4} \right)
 \eeaa
 Using \eqref{J4a4}, we have 
 \beaa
  \dual  J_{4a4} &=& -\nab_4\dual (\dual \rhoF \dual \bF_a + \rhoF\bF_a)- \trch(-\dual \rhoF  \bF_a + \rhoF \dual \bF_a)\\
  &&+  \atrch(\dual \rhoF \dual \bF_a +\rhoF \bF_a)-2\om \dual (\dual \rhoF \dual \bF_a + \rhoF\bF_a)\\
  &&+2\left( \rhoF^2+\dual \rhoF^2 \right) \dual\xi_a+\dual \nab_a(\bF\c  \bF) +\dual (2\ze_a+\etab_a) \bF \c \bF - 2 \dual \xi_b (\bbF \hot \bbF)_{ab}
 \eeaa
and therefore
 \beaa
J_{4a4}+i \dual J_{4a4}&=& -\nab_4(\ov{\PF} \BF_a)- \tr X\ov{\PF} \BF_a-2\om \ov{\PF} \BF_a+2\ov{\PF}\PF \Xi_a\\
&&+\frac 1 4  \DD(\BF\c  \ov{\BF}) +\frac 1 2 (2Z+\Hb) \BF \c \ov{\BF} -\frac 1 2  \ov{ \Xib} \c (\BBF \hot \BBF)
 \eeaa
 Using the Maxwell equation for $\nabc_4 \PF$  we obtain
 \beaa
 J_{4a4}+i \dual J_{4a4}&=& -\ov{\PF} \nab_4(\BF_a)-2\om \ov{\PF} \BF_a+2\ov{\PF}\PF \Xi_a+\frac 1 2  \DD(\BF\c  \ov{\BF}) +Z \BF \c \ov{\BF} 
 \eeaa
which proves the second equation. 
The other complexified Bianchi identity for $\b$ is given by 
\beaa
\nab_3B-\DD\ov{P} &=& -\tr\Xb B+2\omb B+\ov{\Bb}\c \Xh+3\ov{P}H +\frac{1}{2}A\c\ov{\Xib}+\left(J_{3a4}+i \dual J_{3a4} \right)
\eeaa
Using \eqref{J3a4}, we have
\beaa
J_{3a4}+i \dual J_{3a4}&=& \DD(\PF \ov{\PF})+\frac 1 2 \tr X \PF \BBF-2\om\PF \BBF-\frac 1 2 \tr \Xb \ov{\PF} \BF +\nab_4(\PF \BBF)\\
&&+2 \PF \ov{\PF}\Hb +\frac 1 2 \Xi (\BBF \c \ov{\BBF})-\PF \Xh \c \BBF-\ov{\PF} \Xbh \c \BF- \ov{\Hb} \c(\BBF \hot \BBF)
\eeaa
Using the Maxwell equations, we obtain
\beaa
J_{3a4}+i \dual J_{3a4}&=& \DD(\PF) \ov{\PF}+ \PF \DD(\ov{\PF})+\frac 1 2 \tr X \PF \BBF-2\om\PF \BBF-\frac 1 2 \tr \Xb \ov{\PF} \BF \\
  &&-\ov{\tr X}\PF \BBF+\PF( -\DD\ov{\PF}- \frac 1 2 \tr X \BBF+2\om \BBF-2\ov{\PF} \Hb )+2 \PF \ov{\PF}\Hb \\
  &&+\frac 1 2( \ov{\DDc }\c \BF)\BBF-\PF \Xh \c \BBF-\frac 1 2 \ov{\PF} \Xbh \c \BF\\
  &=& \ov{\PF}\DD(\PF) -\frac 1 2 \tr \Xb \ov{\PF} \BF -\ov{\tr X}\PF \BBF\\
   &&+\frac 1 2( \ov{\DDc }\c \BF)\BBF-\PF \Xh \c \BBF-\frac 1 2 \ov{\PF} \Xbh \c \BF
\eeaa
which gives the desired formula. 
The complexified Bianchi identity for $P$ is given by
\beaa
\nab_4P -\frac{1}{2}\DD\c \ov{B} &=& -\frac{3}{2}\tr X P +\frac{1}{2}(2\Hb+Z)\c\ov{B} -\ov{\Xi}\c\Bb -\frac{1}{4}\Xbh\c \ov{A}-\frac 1 2 \left(J_{434} + i \dual J_{434} \right)
\eeaa
We compute using \eqref{J434} and \eqref{dualJ434}:
\beaa
J_{434} + i \dual J_{434}&=& -2 \div (\dual \rhoF \dual \bF + \rhoF\bF)+  2i  \curl (\dual \rhoF \dual \bF + \rhoF\bF) \\
&&- 2\left( \ze+ i \dual \ze +4\eta\right) \c \left(\rhoF \bF +\dual \rhoF \dual \bF\right)+2(\trch- i \atrch)  (\rhoF^2+\dual \rhoF^2) \\
&&+ 2\left(2\nabc_3 \bF + \frac 1 2  \trchb \bF -\chih \c \bbF\right)\c  \bF  +i  \atrchb \bF\cdot  \bF
\eeaa
 Observe that 
 \beaa
 \PF \ov{\BF}&=& \left(\dual \rhoF \dual \bF + \rhoF\bF\right) - i \dual \left(\dual \rhoF \dual \bF + \rhoF\bF\right)
 \eeaa
and therefore
\beaa
J_{434} + i \dual J_{434}&=& - \DD \c ( \PF \ov{\BF})-  Z \c  \PF \ov{\BF} +2\tr X \PF \ov{\PF}- 2\left(  H+\ov{H} \right) \c \left(\PF \ov{\BF} + \ov{\PF} \BF\right)\\
&&+2 \nabc_3(\BF \hot \BF)+ (- \frac 1 2 \tr \Xb \BF - \Xh \c \BBF)\hot \BF
\eeaa
Using the Maxwell equations for $\DDc\PF$ we obtain
\beaa
J_{434} + i \dual J_{434}&=&2\tr X \PF \ov{\PF}-  \PF \DD \c \ov{\BF}-    Z  \c  \PF\ov{\BF} - 2 \ov{H}  \c  \ov{\PF} \BF\\
&&+ \nabc_3(\BF \hot \BF)+ (-  \tr \Xb \BF - \frac 1 2 \Xh \c \BBF)\hot \BF
\eeaa
which gives the desired formula.

\subsection{Proof of Proposition \ref{proposition-relation-gauge-inv}}\label{proof-prop-app}

Here we derive \eqref{relation-F-A-B}, \eqref{relation-F-B-A}, \eqref{nabb-4-mathfrak-B}, \eqref{nabc-3-mathfrak-X}. 

\subsubsection{Derivation of \eqref{relation-F-A-B}}
Multiply the Bianchi identity  \eqref{nabc-3-A} by $\PF$:
\beaa
 \PF\nabc_3A+\frac{1}{2}\tr\Xb \PF A  &=&\PF\DDc\hot B  +H\hot \left(4\PF B-3\ov{\PF}\PF \BF \right) -3\ov{P}\PF\Xh-2\ov{\PF}\PF \mathfrak{F}
\eeaa
Multiply the definition of $\mathfrak{F}$ \eqref{definition-F-Bianchi} by $3\ov{P}$:
\beaa
3 \ov{P}\mathfrak{F}&=&  -\frac 3 2  \ov{P}\DDc \hot \BF-\frac 9 2  H \hot  \ov{P}\BF +3 \ov{P}\PF\Xh
\eeaa
Summing the above we obtain the cancellation of $3 \ov{P}\PF\Xh$:
\beaa
&&\left(3 \ov{P}+2\PF\ov{\PF}\right)\mathfrak{F}+ \PF\nabc_3A+\frac{1}{2}\PF\tr\Xb A \\
&=& \frac 1 2 \left( 2\PF\DDc\hot B-3  \ov{P}\DDc \hot \BF\right)  +H\hot \left(4\PF B-3\ov{\PF}\PF \BF-\frac 9 2   \ov{P}\BF \right)
\eeaa
On the other hand
\beaa
\DDc \hot \mathfrak{B}&=& \DDc \hot \left( 2\PF B - 3\ov{P} \BF \right)\\
&=& \left(2\PF \DDc \hot B - 3\ov{P}\DDc \hot \BF\right) +  2\DDc\PF \hot  B - 3\DDc \ov{P}\hot  \BF\\
&=& \left(2\PF \DDc \hot B - 3\ov{P}\DDc \hot \BF\right) -  4H  \hot \PF B + \left(9  \ov{P } -6\PF \ov{\PF} \right)H  \hot  \BF
\eeaa
Therefore
\beaa
&&\left(3 \ov{P}+2\PF\ov{\PF}\right)\mathfrak{F}+ \PF\nabc_3A+\frac{1}{2}\PF\tr\Xb A \\
&=& \frac 1 2 \left(\DDc \hot \mathfrak{B}+  4H  \hot \PF B - \left(9  \ov{P } -6\PF \ov{\PF} \right)H  \hot  \BF\right) \\
&& +H\hot \left(4\PF B-3\ov{\PF}\PF \BF-\frac 9 2   \ov{P}\BF \right)= \frac 1 2 \DDc \hot \mathfrak{B}+  3H  \hot  \mathfrak{B}
\eeaa
which proves \eqref{relation-F-A-B}.

\subsubsection{Derivation of \eqref{relation-F-B-A}}
 Using  the definition of $\mathfrak{F}$ \eqref{definition-F-Bianchi} we compute
 \beaa
\nabc_4 \mathfrak{F}&=& \nabc_4 \left( -\frac 1 2 \DDc \hot \BF-\frac 3 2  H \hot \BF +\PF\Xh \right)\\
&=& -\frac 1 2  \nabc_4\DDc \hot \BF-\frac 3 2  \nabc_4H \hot \BF-\frac 3 2  H \hot \nabc_4\BF +\nabc_4\PF\Xh +\PF\nabc_4\Xh 
\eeaa
Recall the following commutator formula, for $F=f+i \dual f \in \sk_1(\CCC)$ of conformal type $s$, see Lemma 5.3 in \cite{GKS}:
\bea
 \, [\nabc_4 , \DDc \hot ]F &=&- \frac 1 2 \tr X( \DDc\hot F + (1-s)\Hb\hot F)+ \underline{H} \hot \nabc_4 F \label{commutator-nabc-4-F-formula}
 \eea
Applying \eqref{commutator-nabc-4-F-formula} to $F=\BF$ and $s=1$, using \eqref{nabc-4-H-red}, \eqref{nabc-4-PF-red} and \eqref{nabc-4-Xh} we obtain
\beaa
\nabc_4 \mathfrak{F}&=& -\frac 1 2 \DDc \hot  \nabc_4\BF -\frac 1 2 \left( -\frac 1 2 \tr X \DDc \hot \BF+\Hb \hot \nabc_4 \BF\right)-\frac 3 2  \left(  -\frac{1}{2}\ov{\tr X}(H-\Hb)\right) \hot \BF\\
&&-\frac 3 2  H \hot \nabc_4\BF -\ov{\tr X}\PF\Xh +\PF\left(-\frac 1 2 (\tr X +\ov{\tr X})\Xh +\DDc \hot \Xi +(H+\Hb) \hot \Xi -A\right)\\
&=& -\frac 1 2 \DDc \hot  \nabc_4\BF +\frac 1 4  \tr X \DDc \hot \BF-\frac 1 2\left(3H+ \Hb \right)\hot \nabc_4 \BF\\
&&+\frac 3 4  \ov{\tr X}(H-\Hb)\hot \BF -\frac 3 2 \ov{\tr X}\PF\Xh-\frac 1 2 \tr X \PF \Xh +\PF\left( \DDc \hot \Xi +(H+\Hb) \hot \Xi -A\right)
\eeaa
On the other hand, using the definition of $\Xfr$ \eqref{definition-mathfrak-X} we compute using \eqref{Codazzi-1-red} and \eqref{DDc-PF-red}:
\beaa
\DDc \hot \mathfrak{X}&=& \DDc \hot \left(\nabc_4\BF+\frac 3 2 \ov{\tr X} \BF-2\PF \Xi \right)\\
&=& \DDc \hot \nabc_4\BF+\frac 3 2 \DDc\ov{\tr X} \hot \BF+\frac 3 2 \ov{\tr X} \DDc \hot \BF-2\DDc \PF\hot  \Xi -2\PF \DDc \hot\Xi \\
&=& \DDc \hot \nabc_4\BF+\frac 3 2 (\tr X-\ov{\tr X})H \hot \BF+\frac 3 2 \ov{\tr X} \DDc \hot \BF+4\PF H\hot  \Xi -2\PF \DDc \hot\Xi 
\eeaa
This implies
\beaa
-\frac 1 2 \DDc \hot \nabc_4\BF&=&-\frac 1 2 \DDc \hot \mathfrak{X} +\frac 3 4 (\tr X-\ov{\tr X})H \hot \BF+\frac 3 4 \ov{\tr X} \DDc \hot \BF\\
&&+2\PF H\hot  \Xi -\PF \DDc \hot\Xi 
\eeaa
By plugging in to the above expression for $\nabc_4 \Ffr$, we obtain
\beaa
\nabc_4 \mathfrak{F}&=&-\frac 1 2 \DDc \hot \mathfrak{X} +\frac 3 4 (\tr X-\ov{\tr X})H \hot \BF+\frac 3 4 \ov{\tr X} \DDc \hot \BF+2\PF H\hot  \Xi -\PF \DDc \hot\Xi \\
&& +\frac 1 4  \tr X \DDc \hot \BF-\frac 1 2\left(3H+ \Hb \right)\hot \nabc_4 \BF\\
&&+\frac 3 4  \ov{\tr X}(H-\Hb)\hot \BF -\frac 3 2 \ov{\tr X}\PF\Xh-\frac 1 2 \tr X \PF \Xh +\PF\left( \DDc \hot \Xi +(H+\Hb) \hot \Xi -A\right)\\
&=& -\frac 1 2 \DDc \hot \mathfrak{X} +\left(\frac 3 4 \ov{\tr X} +\frac 1 4  \tr X\right)\DDc \hot \BF-\left(\frac 3 2 \ov{\tr X}+\frac 1 2 \tr X \right)\PF\Xh\\
&&+\frac 3 4 \tr XH \hot \BF-\frac 3 4  \ov{\tr X}\Hb\hot \BF -\frac 1 2\left(3H+ \Hb \right)\hot \nabc_4 \BF+\PF( (3H+\Hb) \hot \Xi -A)
\eeaa
Using again the definition of $\Ffr$ \eqref{definition-F-Bianchi} to write $ -\frac 1 2 \DDc \hot \BF +\PF\Xh=\mathfrak{F}+\frac 3 2  H \hot \BF$, we finally obtain
\beaa
\nabc_4 \mathfrak{F}&=&-\left(\frac 3 2 \ov{\tr X} +\frac 1 2  \tr X\right)\mathfrak{F} -\frac 1 2 \DDc \hot \mathfrak{X} -\left(\frac 3 2 \ov{\tr X} +\frac 1 2  \tr X\right)\frac 3 2  H \hot \BF\\
&&+\frac 3 4 \tr XH \hot \BF-\frac 3 4  \ov{\tr X}\Hb\hot \BF  -\frac 1 2\left(3H+ \Hb \right)\hot \nabc_4 \BF+\PF( (3H+\Hb) \hot \Xi -A)\\
&=&-\left(\frac 3 2 \ov{\tr X} +\frac 1 2  \tr X\right)\mathfrak{F} -\frac 1 2 \DDc \hot \mathfrak{X}  -\frac 1 2\left(3H+ \Hb \right) \hot \left(\nabc_4 \BF+\frac 3 2 \ov{\tr X}\BF-2\PF \Xi \right) -\PF A
\eeaa
Using again the definition of $\Xfr$ \eqref{definition-mathfrak-X}, this proves \eqref{relation-F-B-A}. 

\subsubsection{Derivation of \eqref{nabb-4-mathfrak-B}}
 Using the definition of $\Bfr$ \eqref{definition-mathfrak-B}, we compute
\beaa
\nabc_4\mathfrak{B}+3\ov{\tr X} \mathfrak{B}&=& 2\nabc_4\PF B+2\PF \nabc_4B - 3\nabc_4\ov{P} \BF - 3\ov{P} \nabc_4\BF\\
&&+3\ov{\tr X} \left(2\PF B - 3\ov{P} \BF \right)
\eeaa
Using \eqref{nabc-4-B}, \eqref{nabc-4-PF-red}, \eqref{nabc-4-P-red} we obtain
\beaa
\nabc_4\mathfrak{B}+3\ov{\tr X} \mathfrak{B}&=& -2\ov{\tr X}\PF B\\
&&+2\PF \left(\frac{1}{2}\ov{\DDc}\c A -2\ov{\tr X} B  +\frac{1}{2}A\c  \ov{ \Hb}+ 3\ov{P}\,\Xi+ \ov{\PF}\left( \nabc_4\BF-2 \PF \Xi \right) \right)\\
&& - 3\left(-\frac 3 2 \ov{\tr X} \ \ov{P}-\ov{\tr X} \PF \ov{\PF} \right)\BF - 3\ov{P} \nabc_4\BF+3\ov{\tr X} \left(2\PF B - 3\ov{P} \BF \right)
\eeaa
which gives
\beaa
\nabc_4\mathfrak{B}+3\ov{\tr X} \mathfrak{B}&=&\PF \left(\ov{\DDc}\c A  +  \ov{ \Hb} \c A \right) - \left( 3\ov{P}-2 \PF \ov{\PF} \right)\left(\nabc_4\BF+ \frac 3 2 \ov{\tr X}\BF-2\PF \Xi  \right)
\eeaa
which, using the definition of $\Xfr$ \eqref{definition-mathfrak-X}, proves \eqref{nabb-4-mathfrak-B}. 

\subsubsection{Derivation of \eqref{nabc-3-mathfrak-X}}
 Using the definition of $\Xfr$ \eqref{definition-mathfrak-X}, we compute
\beaa
\nabc_3 \mathfrak{X}&=& \nabc_3 \left(\nabc_4\BF+\frac 3 2 \ov{\tr X} \BF-2\PF \Xi  \right)\\
&=& \nabc_4\nabc_3\BF+[\nabc_3, \nabc_4]\BF+\frac 3 2 \ov{\tr X} \nabc_3\BF+\frac 3 2 \nabc_3\ov{\tr X} \BF\\
&&-2\nabc_3\PF \Xi -2\PF \nabc_3\Xi 
\eeaa
We compute each term. Using \eqref{nabc-3-BF} and \eqref{nabc-4-trXb-red}, we obtain
\beaa
\nabc_4\nabc_3\BF&=&  \nabc_4\left(- \frac 1 2 \tr\Xb \BF+\DDc\PF+2\PF H \right)\\
&=&  - \frac 1 2 \tr\Xb \nabc_4\BF- \frac 1 2 \nabc_4\tr\Xb \BF+\nabc_4\DDc\PF+2\PF \nabc_4H+2\nabc_4\PF H \\
&=&  - \frac 1 2 \tr\Xb \nabc_4\BF+\left( \frac{1}{4}\tr X\tr\Xb-\frac 1 2 \DDc\c\ov{\Hb}-\frac 1 2 \Hb\c\ov{\Hb}-\ov{P} \right) \BF+\DDc \nabc_4\PF\\
&&+[\nabc_4, \DDc]\PF+2\PF \nabc_4H+2\nabc_4\PF H \\
&=& L_1+L_2+L_3+L_4+L_5
\eeaa
We simplify $L_1$ by making use of the definition of $\mathfrak{X}$ and writing $\nabc_4 \BF=\mathfrak{X}-\frac 3 2 \ov{\tr X} \BF+2\PF \Xi $. We obtain
\beaa
L_1&=&  - \frac 1 2 \tr\Xb \nabc_4\BF+\left( \frac{1}{4}\tr X\tr\Xb-\frac 1 2 \DDc\c\ov{\Hb}-\frac 1 2 \Hb\c\ov{\Hb}-\ov{P} \right) \BF\\
&=&  - \frac 1 2 \tr\Xb \left(\mathfrak{X}-\frac 3 2 \ov{\tr X} \BF+2\PF \Xi \right)+\left( \frac{1}{4}\tr X\tr\Xb-\frac 1 2 \DDc\c\ov{\Hb}-\frac 1 2 \Hb\c\ov{\Hb}-\ov{P} \right) \BF
\eeaa
which gives
\bea\label{L1first}
L_1&=&  - \frac 1 2 \tr\Xb \  \mathfrak{X}  +\left( \frac{1}{4}\tr X\tr\Xb+\frac 3 4 \ov{\tr X} \tr \Xb -\frac 1 2 \DDc\c\ov{\Hb}-\frac 1 2 \Hb\c\ov{\Hb}-\ov{P} \right) \BF+ \PF \left( -\tr\Xb\Xi \right)
\eea
We compute $L_2$ using \eqref{nab-4-PF}:
\beaa
L_2&=& \DDc \nabc_4\PF= \DDc \left(- \ov{\tr X} \PF +\frac 1 2 \ov{\DDc} \c \BF+\frac 1 2 \ov{\Hb} \c \BF  \right)\\
&=& -  \DDc\ov{\tr X} \PF- \ov{\tr X}  \DDc\PF +\frac 1 2 \DDc( \ov{\DDc} \c \BF)+\frac 1 2  \DDc(\ov{\Hb} \c \BF)
\eeaa
Using \eqref{Codazzi-1} to write 
\beaa
\DDc\ov{\tr X}= \ov{\DDc}\c\Xh +(\tr X - \ov{\tr X})H+ (\tr \Xb - \ov{\tr \Xb})\Xi+2B-2\ov{\PF}\BF
\eeaa
 and using the Leibniz rules to write $\DDc (\ov{\Hb} \c \BF)= (\DDc \c \ov{\Hb} ) \BF +\ov{\Hb} \c \DDc \BF$, we obtain
\beaa
L_2&=&- \ov{\tr X}  \DDc\PF -(\tr X - \ov{\tr X})H \PF +\frac 1 2 \DDc( \ov{\DDc} \c \BF) +\frac 1 2 \ov{\Hb} \c \DDc \BF -2\PF B\\
&&+\left(2\PF \ov{\PF} +\frac 1 2 \DDc \c \ov{\Hb} \right)\BF +  \PF\left( -\ov{\DDc}\c\Xh- (\tr \Xb - \ov{\tr \Xb})\Xi \right) 
\eeaa
Using the definition of $\Bfr$ to write $2\PF B=\mathfrak{B}+3\ov{P} \BF $, we finally obtain
\bea\label{L2first}
\begin{split}
L_2&=-\mathfrak{B}- \ov{\tr X}  \DDc\PF -(\tr X - \ov{\tr X})H \PF +\frac 1 2 \DDc( \ov{\DDc} \c \BF) +\frac 1 2 \ov{\Hb} \c \DDc \BF \\
&+\left(-3\ov{P}+2\PF \ov{\PF} +\frac 1 2 \DDc \c \ov{\Hb} \right)\BF +  \PF\left( -\ov{\DDc}\c\Xh- (\tr \Xb - \ov{\tr \Xb})\Xi \right) 
\end{split}
\eea
To compute $L_3$ we first have the following.
\begin{lemma}
Let $G=g_1 + i g_2 \in \sk_0(\CCC)$ be a $0$-conformal invariant scalar function. Then 
 \bea
  \,[\nabc_4, \DDc] G &=&-\frac 1 2 \tr X \DDc G +\Hb \nabc_4 G+\Xi \nabc_3 G-\frac 1 2 \Xh \c \DDbc G \label{commutator-1}
\eea
\end{lemma}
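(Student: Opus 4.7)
The plan is to reduce the claim to a direct computation of ordinary commutators on scalars. Since $G$ is of conformal type $0$, we have $\nabc_4 G = \nab_4 G$ and $\DDc G = \DD G$. However $\nabc_4 G$ carries conformal type $+1$, so the conformal correction $+sZ$ in $\DDc$ gives $\DDc(\nabc_4 G) = \DD(\nab_4 G) + Z\,\nab_4 G$, whereas $\nabc_4(\DDc G) = \nab_4(\DD G)$ since $\DDc G$ is still of type $0$. Thus the lemma reduces to the identity
\[
[\nabc_4, \DDc] G \;=\; [\nab_4, \nab] G + i\,[\nab_4, \dual\nab] G - Z\,\nab_4 G.
\]

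Next I would compute the scalar commutator $[\nab_4, \nab_a] G$ directly from the Ricci identities \eqref{ricci}. Expanding $(\nab_4 \nab G)_a = e_4(e_a G) - \nab G(\nab_4 e_a)$ and $(\nab \nab_4 G)_a = e_a(e_4 G)$, and then using the frame commutator $[e_4, e_a] = \D_4 e_a - \D_a e_4$ with $\D_4 e_a = \nab_4 e_a + \etab_a e_4 + \xi_a e_3$ and $\D_a e_4 = \chi_{ab} e_b - \ze_a e_4$, the horizontal connection coefficient $g(\nab_4 e_a, e_b)$ cancels and one obtains
\[
[\nab_4, \nab_a] G \;=\; -\chi_{ab}\nab_b G + \xi_a\nab_3 G + (\etab_a + \ze_a)\nab_4 G.
\]
The dual analog $[\nab_4, \dual\nab_a] G$ follows from the same computation carried out in a fixed horizontal frame so that $\in_{ab}$ has constant components.

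I would then combine the real and dual commutators via the anti-self-dual combination $(1 + i\dual)$, subtract the conformal correction $Z\,\nab_4 G = (\ze + i\dual\ze)\nab_4 G$, and decompose $\chi = \chih + \tfrac{1}{2}\de\,\trch + \tfrac{1}{2}\in\,\atrch$. Each batch then reassembles into the complex Ricci coefficients of Section \ref{complex-notations-section}: the pair $(\trch, \atrch)$ complexifies into $-\tfrac{1}{2}\tr X\,\DDc G$; the pair $(\xi, \etab)$ into $\Xi\,\nabc_3 G + \Hb\,\nabc_4 G$, with the stray $\ze$-contributions killed precisely by $-Z\,\nab_4 G$; and the symmetric traceless part $\chih$ collects into $-\tfrac{1}{2}\Xh \c \DDbc G$ by the algebraic identity $(\Xh \c \DDbc G)_a = 2\,\Xh_{ab}\nab_b G$, valid for anti-self-dual $\Xh$ and scalar $G$.

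The main subtle point is ensuring that the $\chih$-piece lands on the conjugate operator $\DDbc = \ov{\DDc}$ rather than on $\DDc$. This rests on the sign identities $\chih_{ab}\in_{bc} = -(\dual\chih)_{ac}$ and $\dual\chih_{ab}\dual F_b = \chih_{ab} F_b$, valid for symmetric traceless $\chih$ and any $F \in \sk_1$. With these, the diagonal terms $\chih_{ab}\nab_b G$ and $\dual\chih_{ab}\dual\nab_b G$ double up, while the cross-terms $\chih_{ab}\dual\nab_b G$ and $\dual\chih_{ab}\nab_b G$ differ by a sign that forces them to add rather than cancel, producing exactly $2\,\Xh_{ab}\nab_b G$. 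Verifying this sign bookkeeping, together with the $\ze$-cancellation via the conformal correction in Step 1, is the only non-routine part of the argument; all remaining manipulations are dictated by the definitions of $\tr X$, $\Xh$, $\Hb$, $\Xi$, $Z$ and the reduction to type $0$.
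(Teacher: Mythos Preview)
Your proposal is correct and follows essentially the same approach as the paper: both reduce the conformal commutator to the non-conformal one via the correction $-Z\,\nab_4 G$, compute $[\nab_4,\nab_a]G$ from the Ricci formulae (the paper cites this from \cite{GKS} Lemma~2.39 rather than deriving it), and then complexify, using $\nab=\tfrac12(\DD+\DDb)$ to see that the $\chih$-piece contracts into $\DDbc G$. Your explicit verification of the sign identities showing $(\Xh\cdot\DDbc G)_a=2\,\Xh_{ab}\nab_b G$ is more detailed than the paper's one-line remark, but the argument is the same.
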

\begin{proof}  From the commutators, see Lemma 2.39 in \cite{GKS}, 
\beaa
   \,[\nab_4, \nab_a] G &=&-\frac 1 2 \left(\trch \nab_a G+\atrch \dual \nab G\right)+(\etab_a+\ze_a) \nab_4 G-\chih_{ab}\nab_b G +\xi_a \nab_3 G\\ 
 \,[\nab_4, \dual \nab_a] G &=&-\frac 1 2 \left(\trch \dual \nab_a G-\atrch  \nab G\right)+\dual (\etab_a+\ze_a) \nab_4 G-\dual \chih_{ab}\nab_b G +\dual \xi_a \nab_3 G
\eeaa
we obtain
\beaa
  \,[\nab_4, \DD] G &=&-\frac 1 2 \left(\trch \nab_a G+\atrch \dual \nab G\right)+(\Hb+Z) \nab_4 G-\chih_{ab}\nab_b G +\Xi \nab_3 G\\
  &&-\frac 1 2 i\left(\trch \dual \nab_a G-\atrch  \nab G\right)-i\dual \chih_{ab}\nab_b G 
\eeaa
Writing $\nab=\frac 1 2 \DD +\frac 1 2 \DDb$, we obtain the desired formula with non conformal derivatives. Using conformal derivatives we have 
\beaa
  \,[\nabc_4, \DDc] G &=& \nabc_4 \DDc G - \DDc \nabc_4 G= \nab_4 \DD G - \DD \nab_4 G - Z \nab_4 G
\eeaa
which gives the desired formula. 
\end{proof}

We compute $L_3$ by applying \eqref{commutator-1} to $\PF$ (which is of conformal type $0$) and using \eqref{nab-4-PF}, \eqref{nabc-3-PF-red}, and \eqref{DDc-ov-PF-red}, we obtain
\beaa
L_3&=& \,[\nabc_4, \DDc] \PF \\
&=&-\frac 1 2 \tr X \DDc \PF +\Hb \nabc_4 \PF+\Xi \nabc_3 \PF-\frac 1 2 \Xh \c \DDbc \PF\\
 &=&-\frac 1 2 \tr X \DDc \PF +\Hb \left(-\ov{tr X} \PF +\frac 1 2 \ov{\DDc} \c \BF +\frac 1 2 \ov{\Hb} \c \BF \right)-\tr \Xb \PF \Xi-\frac 1 2 \Xh \c(-2\ov{\Hb} \PF)
\eeaa
which gives, 
\bea\label{L3first}
\begin{split}
L_3 &=-\frac 1 2 \tr X \DDc \PF- \ov{\tr X} \PF \Hb  +\frac 1 2  \Hb  \c \ov{\DDc} \BF+\frac 1 2  (\Hb \c \ov{\Hb}) \BF + \PF \left( \Xh \c \ov{\Hb} - \tr \Xb\Xi \right)
  \end{split}
\eea
We compute $L_4$ using \eqref{nabc-4-H}
\beaa
L_4&=& 2\PF \nabc_4H= 2\PF \left(  -\frac{1}{2}\ov{\tr X}(H-\Hb)+\nabc_3\Xi -\frac{1}{2}\Xh\c(\ov{H}-\ov{\Hb}) -B-\ov{\PF} \BF \right)
\eeaa
which can be written as 
\bea\label{L4first}
L_4&=&-\mathfrak{B} - \PF\ov{\tr X}(H-\Hb)+\left(-3\ov{P}-2\ov{\PF}\PF \right) \BF+  \PF \left(  2\nabc_3\Xi -\Xh\c(\ov{H}-\ov{\Hb})  \right)
\eea
We compute $L_5$ using \eqref{nab-4-PF}, 
\beaa
L_5&=& 2\nabc_4\PF H= 2 \left(- \ov{\tr X} \PF +\frac 1 2 \ov{\DDc} \c \BF+\frac 1 2 \ov{\Hb} \c \BF  \right) H
\eeaa
which can be written as 
\bea\label{L5first}
L_5&=&-2\ov{\tr X} \PF H +H \c \ov{\DDc} \BF+ \left(H \c \ov{\Hb}\right) \BF
\eea
Putting together \eqref{L1first}, \eqref{L2first}, \eqref{L3first}, \eqref{L4first} and \eqref{L5first} we obtain
\beaa
\nabc_4\nabc_3\BF&=&  - \frac 1 2 \tr\Xb \  \mathfrak{X} -2 \mathfrak{B} -\left(\frac 1 2 \tr X+ \ov{\tr X} \right)\left( \DDc\PF +2H \PF\right) \\
&&+\frac 1 2 \DDc( \ov{\DDc} \c \BF) +\frac 1 2 \ov{\Hb} \c \DDc \BF +\left(H+\frac 1 2  \Hb \right)  \c \ov{\DDc} \BF\\
&&+\left( \frac{1}{4}\tr X\tr\Xb+\frac 3 4 \ov{\tr X} \tr \Xb -7\ov{P}+H \c \ov{\Hb} \right) \BF\\
&&+  \PF\left( -\ov{\DDc}\c\Xh+\Xh \c(-\ov{H}+2\ov{\Hb} )+ 2\nabc_3\Xi - (3\tr \Xb - \ov{\tr \Xb})\Xi   \right) 
\eeaa
Using \eqref{nabc-3-BF} to substitute $\DDc \PF+2 \PF H=\nabc_3 \BF+\frac 1 2 \tr\Xb \BF$ we obtain
\bea\label{term1}
\begin{split}
\nabc_4\nabc_3\BF&=  - \frac 1 2 \tr\Xb \  \mathfrak{X} -2 \mathfrak{B} -\left(\frac 1 2 \tr X+ \ov{\tr X} \right) \nabc_3 \BF +\frac 1 2 \DDc( \ov{\DDc} \c \BF) \\
&+\frac 1 2 \ov{\Hb} \c \DDc \BF +\left(H+\frac 1 2  \Hb \right)  \c \ov{\DDc} \BF+\left(\frac 1 4 \ov{\tr X} \tr \Xb -7\ov{P}+H \c \ov{\Hb} \right) \BF\\
&+  \PF\left( -\ov{\DDc}\c\Xh+\Xh \c(-\ov{H}+2\ov{\Hb} )+ 2\nabc_3\Xi - (3\tr \Xb - \ov{\tr \Xb})\Xi   \right)
\end{split} 
\eea

In order to compute $[\nabc_3, \nabc_4] \BF$, we first have the following.

\begin{lemma}
Let $F=f+i\dual f \in \sk_1(\CCC)$ be of conformal type $s$. Then
 \bea\label{commutator-3-4-F}
\begin{split}
  \,[\nabc_3, \nabc_4] F   &=\frac 1 2 (H-\Hb) \c \ov{\DDc} F +\frac 1 2 (\ov{H}-\ov{\Hb} ) \c \DDc F  \\
  &+\left((s-1)P+(s+1)\ov{P}+2s \PF \ov{\PF}-\frac{ s+1}{ 2}(H \c \ov{\Hb}  )  +\frac {1-s}{ 2} (\ov{H} \c \Hb )\right)  F
  \end{split}
\eea
\end{lemma}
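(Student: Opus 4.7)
The plan is to derive this complex commutator directly from the non-conformal analogue recalled in the proof of Lemma \ref{lemma:expression-wave-operator}:
\beaa
[\nab_3,\nab_4]F &=& -2\om \nab_3 F + 2\omb \nab_4 F + 2(\eta-\etab)\c\nab F + 2i(-\rhod+\eta\wedge\etab)F.
\eeaa
First I would convert to the conformal setting. Since $\nabc_4 F$ has conformal type $s+1$ and $\nabc_3 F$ has conformal type $s-1$, expanding $\nabc_3\nabc_4 F = \nab_3(\nab_4 F + 2s\om F) - 2(s+1)\omb(\nab_4 F + 2s\om F)$ and the analogous expression for $\nabc_4\nabc_3 F$, then subtracting, yields
\beaa
[\nabc_3,\nabc_4]F = [\nab_3,\nab_4]F + 2s(\nab_3\om+\nab_4\omb)F + 2\om\nab_3 F - 2\omb\nab_4 F - 8s\om\omb F.
\eeaa
Substituting the non-conformal commutator cancels the $\om\nab_3 F$ and $\omb\nab_4 F$ contributions, after which the Ricci identity \eqref{Ricci-om} for $\nab_3\om + \nab_4\omb - 4\om\omb$ replaces the remaining $\om\omb$ scalar with $(\eta-\etab)\c\ze - \eta\c\etab + \rho + \rhoF^2 + \dual\rhoF^2$. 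Finally $\nab_a F = \nabc_a F - s\ze_a F$ absorbs the $(\eta-\etab)\c\ze$ piece into the conformal horizontal derivative, leaving
\beaa
[\nabc_3,\nabc_4]F = 2(\eta-\etab)\c\nabc F + 2i(-\rhod+\eta\wedge\etab)F + 2s(\rho+\rhoF^2+\dual\rhoF^2-\eta\c\etab)F.
\eeaa

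The remaining step is to rewrite this real-valued formula in the complex $(H,\Hb,P,\PF)$ notation. For the scalar coefficient I would expand $P = \rho + i\rhod$, $\PF\ov{\PF} = \rhoF^2+\dual\rhoF^2$, together with the pointwise identities $H\c\ov\Hb = 2\eta\c\etab - 2i(\eta\wedge\etab)$ and $\ov H\c\Hb = 2\eta\c\etab + 2i(\eta\wedge\etab)$ (which follow from $H = \eta+i\dual\eta$, $\Hb = \etab+i\dual\etab$, $\dual\eta\c\dual\etab = \eta\c\etab$ and $\eta\wedge\etab = \eta\c\dual\etab$), and then match real and imaginary parts to verify that
\beaa
(s-1)P + (s+1)\ov{P} + 2s\PF\ov\PF - \tfrac{s+1}{2}(H\c\ov\Hb) + \tfrac{1-s}{2}(\ov H\c\Hb) = 2s(\rho+\PF\ov\PF-\eta\c\etab) + 2i(-\rhod+\eta\wedge\etab),
\eeaa
which is precisely the scalar coefficient just derived.

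For the derivative term, I would exploit the anti-self-duality of $H-\Hb$. Writing $X = H - \Hb = (\eta-\etab) + i\dual(\eta-\etab)$ one has $\dual X = -iX$, hence $X + i\dual X = 2X$; combined with the identity $\xi\c\ov{\DDc}F = (\xi+i\dual\xi)\c\nabc F$ for a real 1-form $\xi$ (obtained by unwinding $(\ov{\DDc}F)_{ab} = \nabc_a F_b - i\in_{ac}\nabc_c F_b$ and using $\xi^a\in_{ac} = -(\dual\xi)_c$), this yields $\tfrac 1 2 (H-\Hb)\c\ov{\DDc}F = (H-\Hb)\c\nabc F$ and symmetrically $\tfrac 1 2 (\ov H-\ov\Hb)\c\DDc F = (\ov H-\ov\Hb)\c\nabc F$. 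Summing and using $(H-\Hb)+(\ov H-\ov\Hb) = 2(\eta-\etab)$ recovers the derivative term exactly. The main obstacle will be the bookkeeping in Step 3: the apparently asymmetric $s$-dependent coefficients $(s-1)P$ vs.\ $(s+1)\ov P$ and $-\tfrac{s+1}{2}(H\c\ov\Hb)$ vs.\ $\tfrac{1-s}{2}(\ov H\c\Hb)$ reflect a choice of how to split the real scalar coefficient into natural complex contractions rather than any structural identity, so the verification must proceed term by term through $P\pm\ov P$ and $H\c\ov\Hb \pm \ov H\c\Hb$.
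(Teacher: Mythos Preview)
Your proposal is correct and follows essentially the same route as the paper's proof: start from the non-conformal commutator $[\nab_3,\nab_4]F$, pass to conformal derivatives picking up the $2s(\nab_3\om+\nab_4\omb-4\om\omb)$ term, apply the Ricci identity \eqref{Ricci-om}, and then rewrite in complex notation. The only cosmetic difference is that the paper re-derives the complex commutator $[\nab_3,\nab_4]F$ from the real version in \cite{GKS} (Lemma 2.39) rather than quoting it from Lemma \ref{lemma:expression-wave-operator}, and the paper is terser in the final step (it simply states ``Finally observe that $2(\eta-\etab)\c\nabc F = \frac{1}{2}(H-\Hb)\c\ov{\DDc}F + \frac{1}{2}(\ov H-\ov\Hb)\c\DDc F$''), whereas you spell out the anti-self-duality argument explicitly.
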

\begin{proof} 
 From, see Lemma 2.39 in \cite{GKS}
\beaa
 \, [\nab_4, \nab_3] f_a&=2 \om \nab_3 f_a -2\omb \nab_4 f_a+ 2(\etab_b-\eta_b ) \nab_b f_a +2(\etab \c f ) \eta_{a} -2 (\eta \c f )\etab_{a} -2 \dual \rho \dual f_a\\
 \, [\nab_4, \nab_3] \dual f_a&=2 \om \nab_3 \dual f_a -2\omb \nab_4 \dual f_a+ 2(\etab_b-\eta_b ) \nab_b \dual f_a +2(\etab \c \dual f ) \eta_{a} -2 (\eta \c \dual f )\etab_{a} +2 \dual \rho  f_a
 \eeaa 
 we  derive for $F=f+i\dual f$,
 \beaa
  \, [\nab_4, \nab_3] F_a&=& 2 \om \nab_3 F_a -2\omb \nab_4 F_a+ 2(\etab_b-\eta_b ) \nab_b F_a +(\ov{\Hb} \c F ) \eta_{a} - (\ov{H} \c F )\etab_{a} +(P-\ov{P}) F_a\\
  &=& 2 \om \nab_3 F_a -2\omb \nab_4 F_a+ 2(\etab_b-\eta_b ) \nab_b F_a +\frac 1 2 (\ov{\Hb} \c H ) F_{a} -\frac 1 2  (\ov{H} \c \Hb )F_{a} +(P-\ov{P}) F_a
 \eeaa
 We have 
 \beaa
 [\nabc_3, \nabc_4] F &=&[\nab_3, \nab_4] F-2 \omb \nab_4 F+2\om \nab_3 F  +2s(\nab_3 \om+ \nab_4\omb -4\om\omb)F
 \eeaa
 and using \eqref{Ricci-om}, we obtain  
 \beaa
  [\nabc_3, \nabc_4] F &=&-2 \om \nab_3 F +2\omb \nab_4 F- 2(\etab-\eta ) \c \nab F  -\frac 1 2 (\ov{\Hb} \c H ) F_{a} +\frac 1 2  (\ov{H} \c \Hb )F_{a}  -(P-\ov{P}) F\\
  &&-2 \omb \nab_4 F+2\om \nab_3 F  +2s(  \frac 1 2 (P+\ov{P}) +\PF \ov{\PF} +(\eta-\etab)\c\ze -\eta\c\etab)F\\
&=&2(\eta-\etab ) \c \nabc F  +\left((s-1)P+(s+1)\ov{P}+2s \PF \ov{\PF}\right) F -\frac 1 2 (\ov{\Hb} \c H ) F_{a} +\frac 1 2  (\ov{H} \c \Hb )F_{a} \\
&&   -\frac s 2 (H \c \ov{\Hb} + \ov{H} \c \Hb)F
 \eeaa
Finally observe that $2(\eta-\etab) \c \nabc F=\frac 1 2 (H - \Hb) \c \DDbc F + \frac 1 2 (\ov{H} - \ov{\Hb} ) \c \DDc F$. 
 \end{proof}

Specializing \eqref{commutator-3-4-F} to $F=\BF$ and $s=1$, we compute
\bea\label{term2}
\begin{split}
  \,[\nabc_3, \nabc_4] \BF  &=\frac 1 2 (H-\Hb) \c \ov{\DDc} \BF +\frac 1 2 (\ov{H}-\ov{\Hb} ) \c \DDc \BF\\
  & +\left(2\ov{P}+2 \PF \ov{\PF}-H \c \ov{\Hb}  \right)  \BF
  \end{split}
\eea
We also compute using \eqref{nabc-3-tr-X}
\bea\label{term3}
\frac 3 2 \nabc_3\ov{\tr X} \BF&=&  \left(-\frac 3 4 \ov{\tr\Xb} \ \ov{\tr X}+ \frac 3 2\ov{\DDc}\c H+\frac 3 2H\c\ov{H}+3\ov{P} \right)\BF
\eea
and using \eqref{nabc-3-PF-red}
\bea\label{term4}
 -2\nabc_3\PF \Xi &=& 2\tr \Xb\PF \Xi 
\eea
Using \eqref{term1}, \eqref{term2}, \eqref{term3}, \eqref{term4} we obtain
\beaa
&&\nabc_3 \mathfrak{X}\\
&=& \nabc_4\nabc_3\BF+[\nabc_3, \nabc_4]\BF+\frac 3 2 \ov{\tr X} \nabc_3\BF+\frac 3 2 \nabc_3\ov{\tr X} \BF\\
&&-2\nabc_3\PF \Xi -2\PF \nabc_3\Xi \\
&=& - \frac 1 2 \tr\Xb \  \mathfrak{X} -2 \mathfrak{B} -\left(\frac 1 2 \tr X+ \ov{\tr X} \right) \nabc_3 \BF +\frac 1 2 \DDc( \ov{\DDc} \c \BF)\\
&& +\frac 1 2 \ov{\Hb} \c \DDc \BF +\left(H+\frac 1 2  \Hb \right)  \c \ov{\DDc} \BF+\left(\frac 1 4 \ov{\tr X} \tr \Xb -7\ov{P}+H \c \ov{\Hb} \right) \BF\\
&&+  \PF\left( -\ov{\DDc}\c\Xh+\Xh \c( -\ov{H}+2\ov{\Hb} )+ 2\nabc_3\Xi - (3\tr \Xb - \ov{\tr \Xb})\Xi   \right)\\
&&+\frac 1 2 (H-\Hb) \c \ov{\DDc} \BF +\frac 1 2 (\ov{H}-\ov{\Hb} ) \c \DDc \BF +\left(2\ov{P}+2 \PF \ov{\PF}-H \c \ov{\Hb}    \right)  \BF\\
&&+\frac 3 2 \ov{\tr X} \nabc_3\BF+ \left(-\frac 3 4 \ov{\tr\Xb} \ \ov{\tr X}+ \frac 3 2\ov{\DDc}\c H+\frac 3 2H\c\ov{H}+3\ov{P} \right)\BF+ 2\tr \Xb\PF \Xi  -2\PF \nabc_3\Xi 
\eeaa
which gives
\bea\label{intermediate-nabc-3-X}
\begin{split}
&\nabc_3 \mathfrak{X} + \frac 1 2 \tr\Xb \  \mathfrak{X} +2 \mathfrak{B}  \\
&=\frac 1 2 \DDc( \ov{\DDc} \c \BF) +\frac 1 2 \ov{H} \c \DDc \BF +\frac 3 2 H \c \ov{\DDc} \BF-\frac 1 2\left( \tr X-  \ov{\tr X} \right) \nabc_3 \BF\\
&+\left(\frac 1 4 \ov{\tr X} \tr \Xb-\frac 3 4\ov{\tr X} \ \ov{\tr\Xb} -2\ov{P}+2 \PF \ov{\PF} + \frac 3 2\ov{\DDc}\c H+\frac 3 2H\c\ov{H} \right) \BF\\
&+  \PF\left( -\ov{\DDc}\c\Xh+\Xh \c(-\ov{H}+2\ov{\Hb} )- (\tr \Xb - \ov{\tr \Xb})\Xi   \right)
\end{split}
\eea

We now write the right hand side of the equation in terms of $\mathfrak{F}$. We have the following lemma.  

    \begin{lemma}\label{divergence-F} The following formula for the divergence of $\mathfrak{F}$ holds:
                   \beaa
   \ov{\DDc} \c \mathfrak{F}  +\ov{H} \c \mathfrak{F}   &=&-\frac 1 2(\ov{\tr \Xb} - \tr \Xb) \mathfrak{X}\\
   && -\frac 1 2  \DDc (\ov{\DDc} \c \BF) -\frac 3 2  H \c \ov{\DDc}  \BF -\frac 1 2\ov{H} \c \DDc \BF -\frac 1 2(\ov{\tr X} - \tr X)\nabc_3 \BF\\
  &&+ \Big[  \frac 1 4 \tr X \ov{\tr \Xb}+\frac 3 4\ov{\tr \Xb}  \ov{\tr X}- \frac 1 2 \tr \Xb  \ov{\tr X} -\omb(\ov{\tr X} - \tr X) +\frac 1 2 \DD\c \ov{Z}-\frac 1 2  \ov{\DD}\c Z\\
  &&+ P + \ov{P}-2\PF \ov{\PF}-\frac 3 2 \ov{\DDc} \c H-\frac 3 2 \ov{H} \c H\Big] \BF \\
  &&+\PF \left( \ov{\DDc} \c \Xh+\left( \ov{H}-2\ov{\Hb} \right)\c \Xh-(\ov{\tr \Xb} - \tr \Xb)\PF \Xi \right)
    \eeaa
     \end{lemma}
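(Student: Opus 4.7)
The plan is to apply the divergence operator $\ov{\DDc}\c$ directly to the definition
$$\mathfrak{F} = -\tfrac{1}{2}\DDc\hot \BF - \tfrac{3}{2} H\hot \BF + \PF\, \Xh$$
given in \eqref{definition-F-Bianchi}, expand by Leibniz, and then systematically trade each piece for one of the gauge-invariant quantities $\mathfrak{X}$, $\mathfrak{B}$ or a lower-order combination of $\BF$ and $\Xh$. Concretely, writing
$$\ov{\DDc}\c \mathfrak{F} = -\tfrac{1}{2}\,\ov{\DDc}\c(\DDc\hot \BF) - \tfrac{3}{2}\,\ov{\DDc}\c(H\hot \BF) + \ov{\DDc}\c(\PF\,\Xh),$$
I would treat each of the three groups in turn.

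First I would handle the second-order term $\ov{\DDc}\c(\DDc\hot \BF)$ by invoking the Gauss-type commutation identity between $\ov{\DDc}\c$ and $\DDc\hot$ on a 1-form (the analogue, for this ordering, of the commutator used in Lemma \ref{lemma:expression-wave-operator} and of the adjoint relation in Lemma \ref{lemma:adjoint-operators}). The result will be
$$\ov{\DDc}\c(\DDc\hot \BF) \;=\; \DDc(\ov{\DDc}\c \BF) \;+\; \{\text{commutator}\},$$
where the commutator produces a first-order term proportional to $(\atrch\nabc_3 + \atrchb\nabc_4)\BF$ together with curvature and connection lower-order pieces (involving $P$, $\ov{P}$, $\PF\ov{\PF}$, $\tr X\,\ov{\tr\Xb}$, $\tr X\,\tr\Xb$ and divergences of $Z$ and $H$). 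This is the main technical obstacle: one must carefully track symmetric-traceless versus antisymmetric projections when commuting $\ov{\DDc}\c$ past $\DDc\hot$, so that the emerging $\nabc_3\BF$ and $\nabc_4\BF$ contributions precisely match the coefficients of $\mathfrak{X}$ and $\mathfrak{B}$ on the right-hand side.

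Second, I would expand the mixed-order term $\ov{\DDc}\c(H\hot \BF)$ by Leibniz, producing $(\ov{\DDc}\c H)\,\BF$, $(H\c \ov{\DDc})\BF$ together with appropriate symmetric-traceless contractions; the piece $\ov{\DDc}\c H$ is then handled through the reduced Codazzi identity and the first reduced Ricci identity of Proposition \ref{prop:reduced-equations}. Similarly $\ov{\DDc}\c(\PF\,\Xh)$ splits as $(\ov{\DDc}\PF)\c \Xh + \PF\,\ov{\DDc}\c \Xh$; the first factor is simplified using the reduced Maxwell equation (so $\ov{\DDc}\PF = -2\PF\,\ov{\Hb}$), and $\ov{\DDc}\c \Xh$ is eliminated using the linearized Codazzi equation \eqref{Codazzi-1} to introduce $\DDc\ov{\tr X}$, $B$ and $\ov{\PF}\BF$ contributions. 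This step converts pieces that are not manifestly gauge-invariant into combinations that either vanish against each other or can be absorbed into the target expression.

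Finally, I would use the linearized Maxwell equations \eqref{nabc-3-BF} and \eqref{nab-4-PF} to rewrite $\nabc_3\BF$ as $\DDc\PF + (\text{lower order})$ and $\tfrac{1}{2}\ov{\DDc}\c\BF$ as $\nabc_4\PF - \tfrac{1}{2}\ov{\Hb}\c\BF + \ov{\tr X}\PF$, allowing the combination $\nabc_4\BF + \tfrac{3}{2}\ov{\tr X}\BF - 2\PF\,\Xi$ to be recognized as $\mathfrak{X}$ and the combination $2\PF B - 3\ov{P}\BF$ as $\mathfrak{B}$, and likewise for $\DDc\hot H$ and $\DDc\tr X$. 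The residual scalar coefficient in front of $\BF$ — the long bracketed expression in the statement — then emerges as a bookkeeping consequence of collecting all the $P$, $\ov P$, $\PF\ov\PF$, divergences of $Z$ and $H$, and products of the traces and anti-traces of $\chi$, $\chib$. The genuinely subtle part of the argument is therefore the first-step commutator together with the bookkeeping of imaginary parts $\ov{\tr X}-\tr X$ and $\ov{\tr\Xb}-\tr\Xb$, which appear precisely because the horizontal structure in Kerr--Newman is non-integrable.
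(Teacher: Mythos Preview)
Your overall skeleton is the same as the paper's: expand $\ov{\DDc}\c\mathfrak{F}+\ov{H}\c\mathfrak{F}$ from the definition of $\mathfrak{F}$, Leibniz the three pieces, and use the Gauss-type identity (the paper's Lemma~\ref{Laplacian-1}, specifically \eqref{relation-two-laplacians}) to trade $\ov{\DDc}\c(\DDc\hot\BF)$ for $\DDc(\ov{\DDc}\c\BF)$ plus a commutator producing the $(\ov{\tr X}-\tr X)\nabc_3\BF$, $(\ov{\tr\Xb}-\tr\Xb)\nabc_4\BF$ and curvature contributions. That part of your plan is correct and is exactly what the paper does.

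The gap is that you have misread the \emph{target} of the lemma. You propose to (i) eliminate $\ov{\DDc}\c\Xh$ through the Codazzi equation \eqref{Codazzi-1}, (ii) replace $\nabc_3\BF$ via the Maxwell equation \eqref{nabc-3-BF}, and (iii) assemble a $\mathfrak{B}$ term. But look at the right-hand side of the formula you are trying to prove: the term $\PF\,\ov{\DDc}\c\Xh$ is \emph{kept}, the term $\nabc_3\BF$ is \emph{kept}, and $\mathfrak{B}$ \emph{does not appear} at all. The only gauge-invariant quantity that needs to be recognized is $\mathfrak{X}$, and that comes simply from substituting $\nabc_4\BF=\mathfrak{X}-\tfrac{3}{2}\ov{\tr X}\BF+2\PF\,\Xi$ into the $(\ov{\tr\Xb}-\tr\Xb)\nabc_4\BF$ piece of the commutator. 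If you carry out your steps (i)--(iii) you introduce $B$ (via Codazzi) with nothing to cancel it against, so you will not reach the stated identity; you would have to undo those substitutions at the end. The paper's proof is therefore strictly shorter: after the Leibniz expansion and the single Gauss relation \eqref{relation-two-laplacians}, it only uses $\ov{\DDc}\PF=-2\PF\ov{\Hb}$ and the definition of $\mathfrak{X}$, with no Codazzi, no $\nabc_3\BF$ Maxwell equation, and no $\mathfrak{B}$.
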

    \begin{proof} Using the definition of $\Ffr$ \eqref{definition-F-Bianchi} we compute
    \beaa
    \ov{\DDc} \c \mathfrak{F}  +\ov{H} \c \mathfrak{F}&=&\ov{\DDc} \c \left( -\frac 1 2 \DDc \hot \BF-\frac 3 2 H \hot \BF +\PF\Xh \right)\\
    &&+\ov{H} \c \left( -\frac 1 2 \DDc \hot \BF-\frac 3 2 H \hot \BF +\PF\Xh \right)\\
    &=& -\frac 1 2 \ov{\DDc} \c(\DDc \hot \BF)-\frac 3 2 \ov{\DDc} \c(H \hot \BF) +\PF\ov{\DDc} \c \Xh+\ov{\DDc}\PF \c \Xh\\
    && -\frac 1 2\ov{H} \c \DDc \hot \BF -\frac 3 2\ov{H} \c \left( H \hot \BF  \right)+ \PF\Xh \c \ov{H}
    \eeaa
    Using Leibniz rules, the above simplifies to
       \beaa
    \ov{\DDc} \c \mathfrak{F}  +\ov{H} \c \mathfrak{F}    &=& -\frac 1 2 \ov{\DDc} \c(\DDc \hot \BF) -\frac 3 2  H \c \ov{\DDc}  \BF -\frac 1 2\ov{H} \c \DDc \BF \\
    &&+\left(-\frac 3 2 \ov{\DDc} \c H-\frac 3 2 \ov{H} \c H \right)\BF+\PF \left( \ov{\DDc} \c \Xh+\left( \ov{H}-2\ov{\Hb} \right)\c \Xh \right)
    \eeaa
    Since $\BF$ is of conformal type $1$, we have
    \beaa
    \ov{\DDc} \c(\DDc \hot \BF)&=&\ov{\DD}\c(\DD \hot \BF)+ Z \c \ov{\DD} \BF+ \ov{Z}\c\DD  \BF+( \ov{\DD}\c Z+\ov{Z}\c Z) \BF \\
    \DDc (\ov{\DDc} \c \BF)&=&   \DD (\ov{\DD} \c \BF)+ Z \c \ov{\DD} \BF+ \ov{Z}\c\DD  \BF+(\DD\c \ov{Z}+\ov{Z}\c Z) \BF
    \eeaa
     Using the relation \eqref{relation-two-laplacians} of Lemma \ref{Laplacian-1}, which we prove below, applied to $\BF$:
\beaa
  \DDb \c ( \DD \hot \BF)&= & \DD (\ov{\DD} \c \BF) +(\ov{\tr X} - \tr X)\nab_3 \BF+(\ov{\tr \Xb} - \tr \Xb) \nab_4 \BF\\
  &&- \left(  \frac 1 2 \tr X \ov{\tr \Xb}+ \frac 1 2 \tr \Xb  \ov{\tr X} + 2P +2 \ov{P}-4\PF \ov{\PF}\right) \BF
\eeaa
we obtain
\beaa
  \ov{\DDc} \c(\DDc \hot \BF)&=& \DD (\ov{\DD} \c \BF)+ Z \c \ov{\DD} \BF+ \ov{Z}\c\DD  \BF+( \ov{\DD}\c Z+\ov{Z}\c Z) \BF\\
  && +(\ov{\tr X} - \tr X)\nab_3 \BF+(\ov{\tr \Xb} - \tr \Xb) \nab_4 \BF\\
  &&- \left(  \frac 1 2 \tr X \ov{\tr \Xb}+ \frac 1 2 \tr \Xb  \ov{\tr X} + 2P +2 \ov{P}-4\PF \ov{\PF}\right)\\
  &=& \DDc (\ov{\DDc} \c \BF) +(\ov{\tr X} - \tr X)\left(\nabc_3 \BF+2\omb \BF\right)+(\ov{\tr \Xb} - \tr \Xb) \nab_4 \BF\\
  &&- \left(  \frac 1 2 \tr X \ov{\tr \Xb}+ \frac 1 2 \tr \Xb  \ov{\tr X} + 2P +2 \ov{P}-4\PF \ov{\PF}+\DD\c \ov{Z}- \ov{\DD}\c Z\right)
\eeaa
    This finally gives
           \beaa
    \ov{\DDc} \c \mathfrak{F}  +\ov{H} \c \mathfrak{F}    &=& -\frac 1 2  \DDc (\ov{\DDc} \c \BF) -\frac 3 2  H \c \ov{\DDc}  \BF -\frac 1 2\ov{H} \c \DDc \BF\\
    && -\frac 1 2(\ov{\tr X} - \tr X)\nabc_3 \BF-\frac 1 2(\ov{\tr \Xb} - \tr \Xb) \nabc_4 \BF\\
  &&+ \Big[  \frac 1 4 \tr X \ov{\tr \Xb}+ \frac 1 4 \tr \Xb  \ov{\tr X} -\omb(\ov{\tr X} - \tr X) + P + \ov{P}-2\PF \ov{\PF}\\
  &&-\frac 3 2 \ov{\DDc} \c H-\frac 3 2 \ov{H} \c H+\frac 1 2 \DD\c \ov{Z}-\frac 1 2  \ov{\DD}\c Z\Big] \BF \\
    &&+\PF \left( \ov{\DDc} \c \Xh+\left( \ov{H}-2\ov{\Hb} \right)\c \Xh \right)
    \eeaa
    Using the definition of $\Xfr$ to write $\nabc_4 \BF=\mathfrak{X}-\frac 3 2 \ov{\tr X} \BF+2\PF \Xi $ we obtain the final expression. 
    
    \end{proof}
    
    We can therefore write the right hand side of \eqref{intermediate-nabc-3-X} as
    \beaa
&&    \nabc_3 \mathfrak{X} + \frac 1 2 \tr\Xb \  \mathfrak{X} +2 \mathfrak{B}  \\
&=& - \ov{\DDc} \c \mathfrak{F}  -\ov{H} \c \mathfrak{F} -\frac 1 2(\ov{\tr \Xb} - \tr \Xb) \mathfrak{X} \\
&& + \left(  \frac 1 4 \tr X \ov{\tr \Xb}- \frac 1 4 \tr \Xb  \ov{\tr X} -\omb(\ov{\tr X} - \tr X) +\frac 1 2 \DD\c \ov{Z}-\frac 1 2  \ov{\DD}\c Z+ P - \ov{P}\right) \BF
    \eeaa
    Observe that the coefficient of $\BF$ vanishes. We therefore obtain \eqref{nabc-3-mathfrak-X}.

    We are now left to prove Lemma \ref{Laplacian-1}. We first recall the following Gauss equation.
    \begin{proposition}\label{Gauss-equation-self-dual} We have
\begin{enumerate}
\item  For $\psi \in\sk_1(\CCC)$:
\bea\label{Gauss-eq-1}
\begin{split}
 \big( \nab_1 \nab_2- \nab_2 \nab_1\big)  \psi&=\frac 1 2 (\atrch\nab_3+\atrchb \nab_4) \psi \\
 &+ i \left(  \frac 1 4 \trch \trchb+ \frac 1 4\atrch \atrchb +\rho-\rhoF^2-\dual \rhoF^2\right) \psi
 \end{split}
 \eea
 or also:
 \bea\label{Gauss-eq-1-complex}
 \begin{split}
 \big( \nab_1 \nab_2- \nab_2 \nab_1\big)  \psi&=\frac 1 2 (\atrch\nab_3+\atrchb \nab_4) \psi \\
 &+ \frac 1 2 i \left(  \frac 1 4 \tr X \ov{\tr \Xb}+ \frac 1 4 \tr \Xb  \ov{\tr X} + P + \ov{P}-2\PF \ov{\PF}\right) \psi
 \end{split}
 \eea

 \item  For $\Psi\in \sk_2(\CCC)$:
\bea
\begin{split}
 \big( \nab_1 \nab_2- \nab_2 \nab_1\big)  \Psi &=\frac 1 2(\atrch\nab_3+\atrchb \nab_4) \Psi \\
 & + 2 i  \left( \frac 1 4\trch\trchb+ \frac 1 4 \atrch\atrchb+\rho-\rhoF^2-\dual \rhoF^2\right) \Psi
    \end{split}
    \eea
    or also 
    \bea\label{Gauss-eq-2-complex}
\begin{split}
 \big( \nab_1 \nab_2- \nab_2 \nab_1\big)  \Psi &=\frac 1 2(\atrch\nab_3+\atrchb \nab_4) \Psi \\
 & +  i  \left(  \frac 1 4 \tr X \ov{\tr \Xb}+ \frac 1 4 \tr \Xb  \ov{\tr X} + P + \ov{P}-2\PF \ov{\PF}\right) \Psi
    \end{split}
    \eea
 \end{enumerate}
 \end{proposition}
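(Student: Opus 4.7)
The plan is to specialize the general non-integrable Gauss formula already recorded just before the statement (Proposition 2.34 in \cite{GKS}) to the component pair $(a,b)=(1,2)$, use the Einstein--Maxwell decomposition of the Riemann tensor from Section \ref{section-equations}, and then collapse the resulting real expression using self-duality. I will do the $1$-tensor case first; the $2$-tensor case is identical in structure but produces a factor $2$ from the two contracted indices, which is how the coefficient in \eqref{Gauss-eq-2-complex} doubles the one in \eqref{Gauss-eq-1-complex}.

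First, setting $(a,b)=(1,2)$ in the general commutator and using $\in_{12}=1$ gives
\begin{eqnarray*}
(\nab_1\nab_2-\nab_2\nab_1)\psi_c
&=& \tfrac 12(\atrch\,\nab_3+\atrchb\,\nab_4)\psi_c - \tfrac 12 E_{cd12}\,\psi^d + \R_{cd12}\,\psi^d,
\end{eqnarray*}
so the only thing to identify is the scalar acting on $\psi$. I will substitute the decomposition $\chi_{ab}=\chih_{ab}+\tfrac12\delta_{ab}\trch+\tfrac12\in_{ab}\atrch$ (and similarly for $\chib$) into the expression \eqref{definition-E} of $E_{cd12}$. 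A direct expansion shows that the symmetric shear terms drop out from $-\tfrac12 E_{cd12}\psi^d$ after contraction against the self-dual $\psi$ (because $\hat\chi$ is symmetric traceless and $\psi$ anti-self dual, the pairing is purely imaginary and combines only with the $\hat\chibh$ trace-free part which will later be absorbed into the $\Xh\c\ov{\Xbh}$ combination inside $P$ via the Gauss identity), and what survives is a scalar of the form $\tfrac14\big(\trch\trchb+\atrch\atrchb\big)\psi_c$, i.e. $\tfrac12\,\Re(\tr X\,\ov{\tr\Xb})\,\psi_c$ in complex notation.

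Next, for the Riemann piece, I substitute $\R_{cdab}=\W_{cdab}+\tfrac12(g_{db}\R_{ca}+g_{ca}\R_{db}-g_{cb}\R_{da}-g_{da}\R_{cb})$. The Weyl contribution $\W_{cd12}=-\in_{cd}\in_{12}\rho=-\in_{cd}\rho$ contracted with $\psi^d$ and using $\dual\psi=-i\psi$ produces $-i\rho\,\psi_c$. From the Ricci piece, using $\R_{ab}=-2(\bF\hot\bbF)_{ab}+(\rhoF^2+\rhod_F^{\,2})\delta_{ab}$ and again $\dual\psi=-i\psi$, the shear-like term $\bF\hot\bbF$ drops out against the self-dual $\psi$ (it would only contribute to an unneeded $\Xi$-type term that vanishes identically on a self-dual tensor), while the pure-trace piece gives $-i(\rhoF^2+\rhod_F^{\,2})\psi_c$. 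Combining with the magnetic dual contribution from $\dual\rho$ (which enters because of self-duality of $\psi$, $\rho$ and $\rhod$ repackage into $P=\rho+i\rhod$) yields the total curvature coefficient
\begin{eqnarray*}
-i\rho+i(\rhoF^2+\rhod_F^{\,2}) + i\rhod \;=\; \tfrac{i}{2}\big(P+\ov P-2\PF\ov\PF\big),
\end{eqnarray*}
which combines with the previously isolated $\tfrac12\,\Re(\tr X\,\ov{\tr\Xb})$ to give exactly the bracketed expression in \eqref{Gauss-eq-1-complex}. The alternative real form \eqref{Gauss-eq-1} is then obtained by unpacking $P,\ov P,\tr X,\ov{\tr\Xb}$ back into their real/imaginary parts.

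The hard part, and the step I expect to spend the most care on, is the bookkeeping in the $E$-term: $\chi$ and $\chib$ in Kerr--Newman-like spacetimes are \emph{not} symmetric, and both their antitrace parts $\atrch,\atrchb$ (encoding non-integrability of the horizontal structure) and their shear parts $\hat\chi,\hat{\chib}$ contribute to $E_{cd12}\psi^d$. I will use the anti-self-duality $\psi_2=-i\psi_1$ together with the identities $\in_{12}=1$, $\delta^{ab}\psi_b=\psi_a$ and the pointwise relations between $\hat\chi\cdot\psi$ and $\hat\chi\cdot\dual\psi$ to show that only the $\trch\trchb+\atrch\atrchb$ combination survives; the shear-shear contributions $\hat\chi\cdot\hat{\chib}$ get absorbed into the $-2\PF\ov\PF$ term via the Ricci identity $\nabc_3\Xh+\tfrac12\tr\Xb\,\Xh=\DDc\hot H+H\hot H-\tfrac12\ov{\tr X}\Xbh+\tfrac12\Xib\hot\Xi-\tfrac12\BF\hot\BBF$ of Proposition \ref{prop-nullstr:complex}, which under the Einstein--Maxwell constraint forces exactly the matter contribution $\PF\ov\PF$ to appear. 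Once this cancellation is carried out, the $2$-tensor version is immediate from the Proposition 2.34 statement, since each of the two contracted slots $s,t$ of $\Psi_{st}$ contributes an identical curvature scalar, doubling the coefficient as in \eqref{Gauss-eq-2-complex}.
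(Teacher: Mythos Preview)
Your overall strategy---specialize the general Gauss identity $(\nab_a\nab_b-\nab_b\nab_a)f_c=\tfrac12\in_{ab}(\atrch\nab_3+\atrchb\nab_4)f_c-\tfrac12 E_{cdab}f^d+\R_{cdab}f^d$ to $(a,b)=(1,2)$, then evaluate the $E$ and $\R$ pieces using anti-self-duality $\psi_2=-i\psi_1$---is the right one, and is essentially what the reference to \cite{GKS} Proposition~5.5 amounts to. The execution, however, contains genuine errors.

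First, the algebra is off. The Weyl piece is $\W_{cd12}\psi^d=-\rho\,\in_{cd}\psi^d=-\rho\,\dual\psi_c=+i\rho\,\psi_c$, not $-i\rho\,\psi_c$; and your displayed identity $-i\rho+i(\rhoF^2+\dual\rhoF^2)+i\rhod=\tfrac{i}{2}(P+\ov P-2\PF\ov\PF)$ is false, since the right-hand side equals $i(\rho-\rhoF^2-\dual\rhoF^2)$. There is no separate ``magnetic dual contribution from $\dual\rho$'' here: only $\rho$ appears, via $\W_{abcd}=-\in_{ab}\in_{cd}\rho$, and the final answer involves $P+\ov P=2\rho$, which is purely real.

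Second, and more seriously, your mechanism for disposing of the shear-shear and $\bF\hot\bbF$ terms is wrong. You claim the $\hat\chi\cdot\hat{\chib}$ contribution to $E_{cd12}\psi^d$ gets ``absorbed into the $-2\PF\ov\PF$ term via the Ricci identity $\nabc_3\Xh+\tfrac12\tr\Xb\,\Xh=\cdots$''. That Ricci identity is a transport equation and plays no role whatsoever in this pointwise algebraic computation. Likewise, $(\bF\hot\bbF)_{cd}\psi^d$ does \emph{not} vanish against an anti-self-dual $\psi$ in general. The correct reason these contributions are absent is simply that the formula is stated (and used) in the Kerr--Newman / linear-perturbation setting, where $\chih=\chibh=\bF=\bbF=0$; equivalently, any such term is quadratic in quantities that vanish on the background and is therefore dropped. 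Once you acknowledge this, the computation is a short direct check: the trace/anti-trace part of $-\tfrac12 E_{1d12}\psi^d$ yields $+i\,\tfrac14(\trch\trchb+\atrch\atrchb)\psi_1$, the Weyl part yields $+i\rho\,\psi_1$, and the Ricci part (with $\R_{ab}=(\rhoF^2+\dual\rhoF^2)\de_{ab}$) yields $-i(\rhoF^2+\dual\rhoF^2)\psi_1$, giving exactly \eqref{Gauss-eq-1}. The $2$-tensor case then follows with the factor of $2$ as you say.
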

\begin{proof} See Proposition 5.5  in \cite{GKS}. 
\end{proof}

We are now left to prove the following.

   \begin{lemma}\label{Laplacian-1}  Let $\psi \in \sk_1(\mathbb{C})$ and $\Psi \in \sk_2(\mathbb{C})$. The following relations hold true. 
   \bea
   \DD (\ov{\DD} \c \psi)&=& 2\lap_1 \psi - i (\atrch\nab_3+\atrchb \nab_4) \psi \nonumber\\
   &&+ \left(  \frac 1 4 \tr X \ov{\tr \Xb}+ \frac 1 4 \tr \Xb  \ov{\tr X} + P + \ov{P}-2\PF \ov{\PF}\right) \psi  \label{relation-DD-DDb-lap}\\
   \DDb \c ( \DD \hot \psi)&=& 2\lap_1  \psi + i (\atrch\nab_3+\atrchb \nab_4) \psi \nonumber\\
   && - \left(  \frac 1 4 \tr X \ov{\tr \Xb}+ \frac 1 4 \tr \Xb  \ov{\tr X} + P + \ov{P}-2\PF \ov{\PF}\right) \psi \label{relation-DDb-DD-hot-lap}\\
   \DD \hot (\DDb \c \Psi)&=& 2\lap_2  \Psi - i (\atrch\nab_3+\atrchb \nab_4) \Psi \nonumber\\
   && +  \left(  \frac 1 2 \tr X \ov{\tr \Xb}+ \frac 1 2 \tr \Xb  \ov{\tr X} + 2P + 2\ov{P}-4\PF \ov{\PF}\right) \Psi  \label{relation-DD-hot-DDb-lap}
   \eea
   In particular, 
 \bea\label{relation-two-laplacians}
 \begin{split}
  \DDb \c ( \DD \hot \psi)&=  \DD (\ov{\DD} \c \psi) +(\ov{\tr X} - \tr X)\nab_3 \psi+(\ov{\tr \Xb} - \tr \Xb) \nab_4 \psi\\
  &- \left(  \frac 1 2 \tr X \ov{\tr \Xb}+ \frac 1 2 \tr \Xb  \ov{\tr X} + 2P +2 \ov{P}-4\PF \ov{\PF}\right) \psi 
  \end{split}
 \eea
   \end{lemma}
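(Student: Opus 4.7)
The plan is to derive all three identities by expanding the complex operators $\DD$ and $\ov{\DD}$ into their real parts $\nab$ and their dual parts $\dual\nab$, and reducing the resulting commutators of horizontal covariant derivatives to the complex Gauss equations \eqref{Gauss-eq-1-complex} and \eqref{Gauss-eq-2-complex} of Proposition \ref{Gauss-equation-self-dual}.

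For \eqref{relation-DD-DDb-lap}, I would write $\psi = f + i\dual f$ so that $\ov{\DD}\c\psi = 2(\div f + i\curl f)$ by Definition \ref{definition-complex-operators}, and compute
$$\DD(\ov{\DD}\c\psi) = 2\bigl(\nab\div f - \dual\nab\curl f\bigr) + 2i\bigl(\nab\curl f + \dual\nab\div f\bigr).$$
Using the two-dimensional identity $\in_{ac}\in^{bd} = \de_a^b\de_c^d - \de_a^d\de_c^b$ in the horizontal frame $(e_1,e_2)$, the real part $(\nab_a\div f - \dual\nab_a\curl f)$ collapses to $\lap_1 f_a + [\nab_a,\nab^b]f_b$, and similarly for the imaginary part after dualizing. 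Since the commutator on horizontal 1-forms is antisymmetric in its two differentiation indices, $[\nab_a,\nab^b]f_b = \in_a{}^{b}(\nab_1\nab_2 - \nab_2\nab_1)f_b$ in the orthonormal frame, and after dualizing and repackaging one obtains
$$\DD(\ov{\DD}\c\psi) = 2\lap_1\psi + 2(\text{antisymmetrized commutator on }\psi),$$
to which \eqref{Gauss-eq-1-complex} applies directly, contributing the null derivatives $-i(\atrch\nab_3+\atrchb\nab_4)\psi$ (once the $i$'s from $\dual\nab$ and the $\frac 12 i$ prefactor in \eqref{Gauss-eq-1-complex} are combined) and the curvature coefficient in \eqref{relation-DD-DDb-lap}.

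For \eqref{relation-DDb-DD-hot-lap}, I would perform the dual computation starting from $\DD\hot\psi = 2(\nab\hot f + i\dual(\nab\hot f))$ and applying $\ov{\DD}\c$. The explicit expression $(\nab\hot f)_{ab} = \frac 12(\nab_a f_b + \nab_b f_a - \de_{ab}\div f)$ yields $(\div(\nab\hot f))_b = \frac 12 \lap_1 f_b + \frac 12[\nab^a,\nab_b]f_a$, and including the dual contribution produces again $2\lap_1\psi$ together with an antisymmetrized commutator on $\psi$; crucially, this commutator enters with the opposite sign relative to the previous step, which is the mechanism producing the opposite sign of the null-derivative and curvature terms in \eqref{relation-DD-DDb-lap} versus \eqref{relation-DDb-DD-hot-lap}. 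Once \eqref{Gauss-eq-1-complex} is applied, \eqref{relation-DDb-DD-hot-lap} follows, and subtracting it from \eqref{relation-DD-DDb-lap} immediately yields \eqref{relation-two-laplacians}.

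The third identity \eqref{relation-DD-hot-DDb-lap} on $\Psi = u + i\dual u \in \sk_2(\CCC)$ follows the same pattern: expand $\ov{\DD}\Psi = 2(\div u + i\dual\div u)$, apply $\DD\hot$, and use the symmetric-traceless structure of $u$ to reduce the computation to an antisymmetrized commutator $[\nab_a,\nab_b]u_{cd}$ acting on the two tensorial indices of $u$. The Gauss equation \eqref{Gauss-eq-2-complex} for 2-tensors carries exactly twice the curvature coefficient of the 1-tensor Gauss equation \eqref{Gauss-eq-1-complex}, which is precisely what produces the doubling from $P+\ov P - 2\PF\ov\PF$ to $2P + 2\ov P - 4\PF\ov\PF$ in \eqref{relation-DD-hot-DDb-lap}. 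The main obstacle throughout will be careful sign tracking: each time a $\dual\nab$ is commuted past a $\nab$ one incurs a Hodge-duality contribution that must be absorbed using the anti-self duality $\dual\psi = -i\psi$ (respectively $\dual\Psi = -i\Psi$), and the $\frac 12 i$ appearing in the complex Gauss equations must combine correctly with the $i$'s multiplying the dual operators so as to produce real coefficients on the $\atrch\nab_3, \atrchb\nab_4$ terms and the expected sign on the scalar-curvature contribution.
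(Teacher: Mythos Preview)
Your proposal is correct and follows essentially the same approach as the paper: both reduce the operator compositions to $2\lap_k\psi$ plus an antisymmetrized commutator $(\nab_1\nab_2-\nab_2\nab_1)\psi$, and then invoke the complex Gauss equations \eqref{Gauss-eq-1-complex}, \eqref{Gauss-eq-2-complex}. The only cosmetic difference is bookkeeping: the paper evaluates the components $Z_1,Z_2$ (resp.\ $Y_1,Y_2$) directly in the orthonormal frame using the anti-self-duality $\psi_2=-i\psi_1$, whereas you pass through the real decomposition $\psi=f+i\dual f$ and the identity $\in_{ac}\in^{bd}=\de_a{}^b\de_c{}^d-\de_a{}^d\de_c{}^b$; for the 2-tensor identity \eqref{relation-DD-hot-DDb-lap} the paper simply cites Proposition~5.6 of \cite{GKS} rather than repeating the computation you outline.
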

\begin{proof} 
Define $Z_a:=( \DD (\ov{\DD} \c \psi))_a=\DD_a \ov{\DD}^b  \psi_b$ and evaluate it in the frame. We have
\beaa
Z_1&=& \DD_1 \ov{\DD}_1  \psi_1+ \DD_1 \ov{\DD}_2  \psi_2\\
&=& \big(\nab_1+i\dual\nab_1\big)\big(\nab_1-i\dual\nab_1\big)  \psi_1+  \big(\nab_1+i\dual\nab_1\big) \big(\nab_2-i\dual\nab_2\big)  \psi_2\\
&=& \big(\nab_1+i\nab_2\big)\big(\nab_1-i\nab_2\big)  \psi_1+  \big(\nab_1+i\nab_2\big) \big(\nab_2+i\nab_1\big)  \psi_2\\
&=&\left(\nab_1\nab_1+\nab_2\nab_2- i (\nab_1\nab_2-\nab_2\nab_1)  \right) \psi_1+  \left(\nab_1\nab_2-\nab_2\nab_1+ i(\nab_1\nab_1+\nab_2\nab_2)  \right)\psi_2
\eeaa
Using that $\psi_{2}=-i \psi_{1}$, we obtain $Z_1=2\lap_1 \psi_1-2 i (\nab_1\nab_2-\nab_2\nab_1)  \psi_1$.
Also,
\beaa
Z_2&=& \DD_2 \ov{\DD}_1  \psi_1+ \DD_2 \ov{\DD}_2  \psi_2\\
&=& \big(\nab_2+i\dual\nab_2\big)\big(\nab_1-i\dual\nab_1\big)  \psi_1+  \big(\nab_2+i\dual\nab_2\big) \big(\nab_2-i\dual\nab_2\big)  \psi_2\\
&=& \big(\nab_2-i\nab_1\big)\big(\nab_1-i\nab_2\big)  \psi_1+  \big(\nab_2-i\nab_1\big) \big(\nab_2+i\nab_1\big)  \psi_2\\
&=&\left(\nab_2\nab_1-\nab_1\nab_2- i (\nab_1\nab_1+\nab_2\nab_2)  \right) \psi_1+  \left(\nab_1\nab_1+\nab_2\nab_2+ i(\nab_2\nab_1-\nab_1\nab_2)  \right)\psi_2\\
&=&  \left(2\nab_1\nab_1+2\nab_2\nab_2+2 i(\nab_2\nab_1-\nab_1\nab_2)  \right)\psi_2
\eeaa
which gives
\beaa
( \DD (\ov{\DD} \c \psi))_a&=& 2\lap_1 \psi_a-2 i (\nab_1\nab_2-\nab_2\nab_1)  \psi_a
\eeaa
Using the Gauss equation \eqref{Gauss-eq-1-complex} we obtain \eqref{relation-DD-DDb-lap}. Define $Y_{a}:=2\big(\ov{\DD} \c ( \DD \hot \psi)\big)_{a}$, and evaluate it in coordinates, i.e. $Y_a=  \ov{\DD}^b \DD_a\psi_b + \ov{\DD}^b \DD_b \psi_a -\de_{ab}  \ov{\DD}^b\DD^d \psi_d$. 
We have
\beaa
Y_1&=&  \ov{\DD}^b \DD_1\psi_b + \ov{\DD}^b \DD_b \psi_1 -\de_{1b}  \ov{\DD}^b\DD^d \psi_d\\
&=& \left( \ov{\DD}_1 \DD_1+ \ov{\DD}_2 \DD_2\right) \psi_1+ \left(\ov{\DD}_2 \DD_1-  \ov{\DD}_1\DD_2\right) \psi_2\\
&=& \left( \big(\nab_1-i\dual\nab_1\big)  \big(\nab_1+i\dual\nab_1\big)+\big(\nab_2-i\dual\nab_2\big)\big(\nab_2+i\dual\nab_2\big)\right) \psi_1\\
&&+ \left(\big(\nab_2-i\dual\nab_2\big)\big(\nab_1+i\dual\nab_1\big)-   \big(\nab_1-i\dual\nab_1\big) \big(\nab_2+i\dual\nab_2\big)\right) \psi_2\\
&=& \left( \big(\nab_1-i\nab_2\big)  \big(\nab_1+i\nab_2\big)+\big(\nab_2+i\nab_1\big)\big(\nab_2-i\nab_1\big)\right) \psi_1\\
&&+ \left(\big(\nab_2+i\nab_1\big)\big(\nab_1+i\nab_2\big)-   \big(\nab_1-i\nab_2\big) \big(\nab_2-i\nab_1\big)\right) \psi_2\\
&=& \left(2\nab_1\nab_1+2\nab_2\nab_2+2i (\nab_1\nab_2-\nab_2\nab_1)\right) \psi_1+ \left(2\nab_2\nab_1-2\nab_1\nab_2+2 i(\nab_1\nab_1+\nab_2\nab_2)\right) \psi_2
\eeaa
Using that $\psi_{2}=-i \psi_{1}$, we obtain $Y_1= 4\lap_1 \psi_1+4i (\nab_1\nab_2-\nab_2\nab_1)\psi_1$. 
Also, 
\beaa
Y_2&=&  \ov{\DD}^b \DD_2\psi_b + \ov{\DD}^b \DD_b \psi_2 -\de_{2b}  \ov{\DD}^b\DD^d \psi_d= \left( \ov{\DD}_1 \DD_2 -  \ov{\DD}_2\DD_1 \right)\psi_1+\left(  \ov{\DD}_1 \DD_1+\ov{\DD}_2 \DD_2 \right)\psi_2 
\eeaa
which gives
\beaa
2\big(\ov{\DD} \c ( \DD \hot \psi)\big)_{a}&=& 4\lap_1  \psi_a +4 i (\nab_1\nab_2-\nab_2\nab_1) \psi_a
\eeaa
Using the Gauss equation \eqref{Gauss-eq-1-complex} we obtain \eqref{relation-DDb-DD-hot-lap}. Finally, \eqref{relation-DD-hot-DDb-lap} is proved in Proposition 5.6 in \cite{GKS}. 
\end{proof}

    \section{Derivation of the Teukolsky equations}\label{proof-main-thm}

In this section, we derive the system of Teukolsky equations for $\Bfr$,  $\Ffr$, $A$.

\subsection{The Teukolsky equation for $\mathfrak{B}$}

 Recall the relation \eqref{nabb-4-mathfrak-B}:
\beaa
 \nabc_4\mathfrak{B}+3\ov{\tr X} \mathfrak{B}&=&\PF \left(\ov{\DDc}\c A  +  \ov{ \Hb} \c A \right)+\left(2 \PF \ov{\PF} - 3\ov{P}\right)\mathfrak{X}
 \eeaa
 We apply $\nabc_3$ to the above, and using \eqref{nabc-3-tr-X} we obtain
 \beaa
  \nabc_3\nabc_4\mathfrak{B}&=&-3\ov{\tr X} \nabc_3\mathfrak{B}+\left(\frac{3}{2}\ov{\tr\Xb} \ \ov{\tr X}-3 \ov{\DDc}\c H-3H\c\ov{H}-6\ov{P}\right)\mathfrak{B}+I_1+I_2+I_3
 \eeaa
where
\beaa
I_1&=& \nabc_3\left( \PF \left(\ov{\DDc}\c A  +  \ov{ \Hb} \c A \right)\right), \qquad I_2= \nabc_3\left(2 \PF \ov{\PF} - 3\ov{P}\right)\mathfrak{X} \\
 I_3&=& \left(2 \PF \ov{\PF} - 3\ov{P}\right)\nabc_3\mathfrak{X}
\eeaa
We compute $I_1$. Using \eqref{nabc-3-PF-red} we obtain
\beaa
I_1&=& \nabc_3 \PF \left(\ov{\DDc}\c A  +  \ov{ \Hb} \c A \right)+\PF \nabc_3\left(\ov{\DDc}\c A  +  \ov{ \Hb} \c A \right)\\
&=& -\tr\Xb \PF \left(\ov{\DDc}\c A  +  \ov{ \Hb} \c A \right)+\PF \left(\ov{\DDc}\c \nabc_3A +[\nabc_3,\ov{\DDc}\c] A +  \ov{ \Hb} \c \nabc_3A +  \nabc_3\ov{ \Hb} \c A\right)
\eeaa
Recall the following commutator formula, for $U=u + i \dual u \in \sk_2(\CCC)$, see Lemma 5.3 in \cite{GKS}:
 \bea
\, [\nabc_3, \ov{\DDc}\c] U&=- \frac 1 2\ov{\tr\Xb}\, ( \ov{\DDc} \c U +(s-  2) \ov{H} \c U)+\ov{H} \c \nab_3 U \label{commutator-nabc-3-ov-DDc-U}
\eea
Applying \eqref{commutator-nabc-3-ov-DDc-U} to $A$ with $s=2$ we obtain
\beaa
I_1&=& -\tr\Xb \PF \left(\ov{\DDc}\c A  +  \ov{ \Hb} \c A \right) - \frac 1 2\ov{\tr\Xb}\,\PF   \ov{\DDc} \c A+ \PF \nabc_3\ov{ \Hb} \c A \\
&&+\PF \left(\ov{\DDc}\c \nabc_3A+\left(\ov{H} +\ov{\Hb} \right)\c \nabc_3 A  \right)
\eeaa
We write the last term of the above using \eqref{DDc-ov-PF-red} as
\beaa
&&\PF \left(\ov{\DDc}\c \nabc_3A  +\left(\ov{H}+  \ov{ \Hb}  \right)\c \nabc_3 A  \right)\\
&=& \ov{\DDc}\c(\PF \nabc_3A )- \ov{\DDc}\PF \c \nabc_3A  +\left(\ov{H}+  \ov{ \Hb}  \right)\c  \PF \nabc_3 A \\
&=& \ov{\DDc}\c(\PF \nabc_3A ) +\left(\ov{H}+ 3 \ov{ \Hb}  \right)\c  \PF \nabc_3 A  
\eeaa
Using \eqref{relation-F-A-B} to write
\beaa
 \PF\nabc_3A &=&-\frac{1}{2}\PF\tr\Xb A+ \frac 1 2 \DDc \hot \mathfrak{B}+  3H  \hot  \mathfrak{B}-\left(3 \ov{P}+2\PF\ov{\PF}\right)\mathfrak{F}
\eeaa
we compute
\beaa
 \ov{\DDc}\c( \PF\nabc_3A) &=&-\frac{1}{2}\PF\tr\Xb  \ov{\DDc}\c A-\frac{1}{2} \tr\Xb\ov{\DDc}\PF \c  A-\frac{1}{2}\PF \ov{\DDc}\tr\Xb \c A\\
 &&+ \frac 1 2 \ov{\DDc}\c \DDc \hot \mathfrak{B}+  3H   \ov{\DDc}\c \mathfrak{B}+  3\ov{\DDc} \c H   \mathfrak{B}\\
 &&-\left(3 \ov{P}+2\PF\ov{\PF}\right)\ov{\DDc}\c\mathfrak{F}-\ov{\DDc}\left(3 \ov{P}+2\PF\ov{\PF}\right)\c\mathfrak{F}
\eeaa
Using  \eqref{DDc-ov-PF-red}, \eqref{Codazzi-2-red}, \eqref{DDc-P-red}, the above becomes
\beaa
 \ov{\DDc}\c( \PF\nabc_3A) &=&-\frac{1}{2}\PF\tr\Xb  \ov{\DDc}\c A+\PF\left(\frac{3}{2} \tr\Xb-\frac{1}{2}\ov{\tr \Xb} \right)\ov{\Hb} \c A+ \frac 1 2 \ov{\DDc}\c \DDc \hot \mathfrak{B}\\
 &&+  3H   \ov{\DDc}\c \mathfrak{B}+  3(\ov{\DDc} \c H )  \mathfrak{B}-\left(3 \ov{P}+2\PF\ov{\PF}\right)\ov{\DDc}\c\mathfrak{F}\\
 &&+\left(\left(9\ov{P} -2\PF\ov{\PF} \right)\ov{\Hb}+4\PF\ov{\PF} \ov{H} \right)\c\mathfrak{F}
\eeaa
We therefore obtain
\beaa
I_1&=& -\left(\frac 3 2 \tr\Xb +\frac 1 2\ov{\tr\Xb}\right) \PF \left(\ov{\DDc}\c A   \right)+\PF \left(\frac 1 2 \tr\Xb \ov{H}-\frac{1}{2}\ov{\tr \Xb}\ov{\Hb}\right) \c A \\
&&+ \frac 1 2 \ov{\DDc}\c \DDc \hot \mathfrak{B}+  3H   \ov{\DDc}\c \mathfrak{B}+  3(\ov{\DDc} \c H )  \mathfrak{B}\\
 &&-\left(3 \ov{P}+2\PF\ov{\PF}\right)\ov{\DDc}\c\mathfrak{F}+\left(\left(9\ov{P} -2\PF\ov{\PF} \right)\ov{\Hb}+4\PF\ov{\PF} \ov{H} \right)\c\mathfrak{F}\\
 &&+\left(\ov{H}+ 3 \ov{ \Hb}  \right)\c \left( -\frac{1}{2}\PF\tr\Xb A+ \frac 1 2 \DDc \hot \mathfrak{B}+  3H  \hot  \mathfrak{B}-\left(3 \ov{P}+2\PF\ov{\PF}\right)\mathfrak{F}\right)
\eeaa
which finally gives
\beaa
I_1&=& -\left(\frac 3 2 \tr\Xb +\frac 1 2\ov{\tr\Xb}\right) \PF \left(\ov{\DDc}\c A  + \ov{\Hb} \c A \right) \\
 &&+ \frac 1 2 \ov{\DDc}\c \DDc \hot \mathfrak{B}+  3H   \ov{\DDc}\c \mathfrak{B}+\left(\ov{H}+ 3 \ov{ \Hb}  \right)\c \left(  \frac 1 2 \DDc \hot \mathfrak{B}\right)+  3\left(\ov{\DDc} \c H+\ov{H} \c H \right)  \mathfrak{B}\\
 &&+9 \ov{ \Hb} \c \left(  H  \hot  \mathfrak{B}\right)-\left(3 \ov{P}+2\PF\ov{\PF}\right)\ov{\DDc}\c\mathfrak{F}+\left(\left( -8\PF\ov{\PF} \right)\ov{\Hb}+\left(-3\ov{P}+2\PF\ov{\PF} \right)\ov{H} \right)\c\mathfrak{F}
\eeaa
We compute $I_2$. Using \eqref{nabc-3-PF-red} and \eqref{nabc-3-P-red} we have
\beaa
I_2&=&\left(2  \nabc_3\PF \ov{\PF} +2 \PF  \nabc_3\ov{\PF} - 3 \nabc_3\ov{P}\right)\mathfrak{X} =\left(\frac{9}{2}\tr\Xb \ov{P}+ \left(\tr \Xb -2  \ov{\tr \Xb}\right) \PF \ov{\PF} \right)\mathfrak{X} \\
&=& \left(\tr \Xb -2  \ov{\tr \Xb}\right) \PF \ov{\PF} \  \mathfrak{X} +\left(\frac{3}{2}\tr\Xb  \right)\left(3\ov{P}\mathfrak{X} \right)
\eeaa
We use \eqref{nabb-4-mathfrak-B} again to write
\beaa
3\ov{P}\mathfrak{X}&=&- \nabc_4\mathfrak{B}-3\ov{\tr X} \mathfrak{B}+\PF \left(\ov{\DDc}\c A  +  \ov{ \Hb} \c A \right)+ 2 \PF \ov{\PF} \mathfrak{X}
 \eeaa
 and substituting in the above we obtain
\beaa
I_2&=&- \frac{3}{2}\tr\Xb \nabc_4\mathfrak{B}-\frac{9}{2}\tr\Xb \ov{\tr X} \mathfrak{B}+\frac{3}{2}\tr\Xb \PF \left(\ov{\DDc}\c A  +  \ov{ \Hb} \c A \right)+ \left(4\tr \Xb -2  \ov{\tr \Xb}\right) \PF \ov{\PF} \  \mathfrak{X} 
\eeaa
We compute $I_3$. Using \eqref{nabc-3-mathfrak-X} we obtain
\beaa
I_3&=& \left(2 \PF \ov{\PF} - 3\ov{P}\right)\nabc_3\mathfrak{X}\\
&=& -\frac 1 2 \ov{\tr \Xb}\left(2 \PF \ov{\PF} - 3\ov{P}\right) \mathfrak{X}  -\left(2 \PF \ov{\PF} - 3\ov{P}\right) \ov{\DDc} \c \mathfrak{F}\\
&&  - \left(2 \PF \ov{\PF} - 3\ov{P}\right) \ov{H} \c \mathfrak{F} + \left(-4 \PF \ov{\PF} +6\ov{P}\right)\mathfrak{B} 
\eeaa
Putting the above together we obtain
  \beaa
  \nabc_3\nabc_4\mathfrak{B}&=&-3\ov{\tr X} \nabc_3\mathfrak{B}- \frac{3}{2}\tr\Xb \nabc_4\mathfrak{B}+\left(-\frac{9}{2}\tr\Xb \ov{\tr X} +\frac{3}{2}\ov{\tr\Xb} \ \ov{\tr X}-4 \PF \ov{\PF}\right)\mathfrak{B}\\
   &&+ \frac 1 2 \ov{\DDc}\c \DDc \hot \mathfrak{B}+  3H   \ov{\DDc}\c \mathfrak{B}+\left(\ov{H}+ 3 \ov{ \Hb}  \right)\c \left(  \frac 1 2 \DDc \hot \mathfrak{B}\right)+9 \ov{ \Hb} \c \left(  H  \hot  \mathfrak{B}\right)\\
  && - \frac 1 2\ov{\tr\Xb} \PF \left(\ov{\DDc}\c A  + \ov{\Hb} \c A \right) -\left(4\PF\ov{\PF}\right)\ov{\DDc}\c\mathfrak{F}+\left(\left( -8\PF\ov{\PF} \right)\ov{\Hb}\right)\c\mathfrak{F}\\
&&+ \left(4\tr \Xb -  3\ov{\tr \Xb}\right) \PF \ov{\PF} \  \mathfrak{X}  +\frac 3 2 \ov{\tr \Xb} \ \ov{P}\mathfrak{X} 
 \eeaa
Finally using \eqref{nabb-4-mathfrak-B} to write
\beaa
\PF \left(\ov{\DDc}\c A  +  \ov{ \Hb} \c A \right)&=& \nabc_4\mathfrak{B}+3\ov{\tr X} \mathfrak{B}-\left(2 \PF \ov{\PF} - 3\ov{P}\right)\mathfrak{X}
 \eeaa
 we obtain
  \beaa
  \nabc_3\nabc_4\mathfrak{B}&=&-3\ov{\tr X} \nabc_3\mathfrak{B}- \left(\frac{3}{2}\tr\Xb +\frac 1 2\ov{\tr\Xb}\right)\nabc_4\mathfrak{B}+\left(-\frac{9}{2}\tr\Xb \ov{\tr X} -4 \PF \ov{\PF}\right)\mathfrak{B}\\
   &&+ \frac 1 2 \ov{\DDc}\c \DDc \hot \mathfrak{B}+  3H   \ov{\DDc}\c \mathfrak{B}+\left(\ov{H}+ 3 \ov{ \Hb}  \right)\c \left(  \frac 1 2 \DDc \hot \mathfrak{B}\right)+9 \ov{ \Hb} \c \left(  H  \hot  \mathfrak{B}\right)\\
 &&-\left(4\PF\ov{\PF}\right)\ov{\DDc}\c\mathfrak{F}+\left(\left( -8\PF\ov{\PF} \right)\ov{\Hb}\right)\c\mathfrak{F}+ \left(4\tr \Xb -  2\ov{\tr \Xb}\right) \PF \ov{\PF} \  \mathfrak{X}  
 \eeaa
 which gives the operator $\TT_1$ as in \eqref{operator-Teukolsky-B}. This completes the derivation of the Teukolsky equation for $\mathfrak{B}$.

\subsection{The Teukolsky equation for $\mathfrak{F}$}

 Recall the relation \eqref{relation-F-B-A}:
\beaa
\nabc_4 \mathfrak{F}+\left(\frac 3 2 \ov{\tr X} +\frac 1 2  \tr X\right)\mathfrak{F}&=& -\frac 1 2 \DDc \hot \mathfrak{X} - \left(\frac 3 2   H+ \frac 1 2 \Hb \right) \hot \mathfrak{X} -\PF A
\eeaa
We apply $\nabc_3$ to the above, and using \eqref{nabc-3-tr-X} we obtain
\beaa
\nabc_3\nabc_4 \mathfrak{F}&=&-\left(\frac 3 2 \ov{\tr X} +\frac 1 2  \tr X\right)\nabc_3\mathfrak{F}\\
&&+\left(\frac 1 4\tr\Xb\tr X +\frac 3 4\ov{\tr\Xb} \ \ov{\tr X}-3\ov{P} -P  - \frac 3 2\ov{\DDc}\c H- \frac 1 2\DDc\c\ov{H}-2H\c\ov{H}\right)\mathfrak{F}\\
&&+K_1 +K_2 +K_3
\eeaa
where 
\beaa
K_1&=&  -\frac 1 2 \nabc_3(\DDc \hot \mathfrak{X}), \qquad K_2= -\nabc_3\left( \left(\frac 3 2   H+ \frac 1 2 \Hb \right) \hot \mathfrak{X}\right), \qquad K_3= -\nabc_3(\PF A)
\eeaa

We compute $K_1$. 
Recall the following commutator formula, for $F=f+i \dual f \in \sk_1(\CCC)$, see Lemma 5.3 in \cite{GKS}:
 \bea
 \, [\nabc_3, \DDc \hot ]F &=&- \frac 1 2 \tr \Xb \left( \DDc \hot F + (1+s)H \hot F \right)  + H \hot \nabc_3 F  \label{commutator-nabc-3-F-formula}
 \eea

Using  \eqref{nabc-3-mathfrak-X} and \eqref{commutator-nabc-3-F-formula} for $s=2$, we have
\beaa
K_1&=&  -\frac 1 2 \DDc \hot (\nabc_3\mathfrak{X}) -\frac 1 2 [\nabc_3, \DDc \hot] \mathfrak{X}\\
&=&  -\frac 1 2 \DDc \hot (\nabc_3\mathfrak{X}) + \frac 1 4 \tr \Xb \left( \DDc \hot  \mathfrak{X}+ 3H \hot  \mathfrak{X} \right)  -\frac 1 2  H \hot \nabc_3  \mathfrak{X} \\
&=&  -\frac 1 2 \DDc \hot (-\frac 1 2 \ov{\tr \Xb} \ \mathfrak{X}-\ov{\DDc} \c \mathfrak{F}-\ov{H} \c \mathfrak{F}-2\mathfrak{B} ) + \frac 1 4 \tr \Xb \left( \DDc \hot  \mathfrak{X}+ 3H \hot  \mathfrak{X} \right)\\
&&  -\frac 1 2  H \hot (-\frac 1 2 \ov{\tr \Xb} \ \mathfrak{X}-\ov{\DDc} \c \mathfrak{F} -\ov{H} \c \mathfrak{F}-2\mathfrak{B} )
\eeaa
which gives using \eqref{Codazzi-2-red}
\beaa
K_1&=&  \frac 1 2 \DDc \hot (\ov{\DDc} \c \mathfrak{F} +\ov{H} \c \mathfrak{F} )  +\frac 1 2  H \hot (\ov{\DDc} \c \mathfrak{F} +\ov{H} \c \mathfrak{F}) \\
&& + \DDc \hot \mathfrak{B} +  H \hot \mathfrak{B}  + \frac 1 4 \left(\tr \Xb+\ov{\tr \Xb}  \right) \DDc \hot  \mathfrak{X}  +\left( \frac 1 4  (\tr\Xb-\ov{\tr \Xb})\Hb+ \frac 3 4 \tr \Xb  H+\frac 1 4  \ov{\tr \Xb}  H \right)\hot  \mathfrak{X}    
\eeaa
Using  \eqref{relation-F-B-A}  to write
\beaa
 \DDc \hot \mathfrak{X}&=&-2\nabc_4 \mathfrak{F}-\left( 3  \ov{\tr X} +  \tr X\right)\mathfrak{F}- \left(3   H+  \Hb \right) \hot \mathfrak{X}-2\PF A    
  \eeaa
we obtain
\beaa
K_1&=&  \frac 1 2 \DDc \hot (\ov{\DDc} \c \mathfrak{F} +\ov{H} \c \mathfrak{F} )  +\frac 1 2  H \hot (\ov{\DDc} \c \mathfrak{F} +\ov{H} \c \mathfrak{F})  + \DDc \hot \mathfrak{B} +  H \hot \mathfrak{B} \\
&& + \frac 1 4 \left(\tr \Xb+\ov{\tr \Xb}  \right) \big(-2\nabc_4 \mathfrak{F}-\left( 3  \ov{\tr X} +  \tr X\right)\mathfrak{F}- \left(3   H+  \Hb \right) \hot \mathfrak{X}-2\PF A  \big) \\
&& +\left( \frac 1 4  (\tr\Xb-\ov{\tr \Xb})\Hb+ \frac 3 4 \tr \Xb  H+\frac 1 4  \ov{\tr \Xb}  H \right)\hot  \mathfrak{X}    \\
&=&- \frac 1 2 \left(\tr \Xb+\ov{\tr \Xb}  \right)\nabc_4 \mathfrak{F}- \frac 1 4\left( \tr \Xb    \tr X+3 \tr \Xb  \ov{\tr X}+3  \ov{\tr X}\ov{\tr \Xb}+\ov{\tr \Xb}   \tr X\right)\mathfrak{F}\\
&& + \frac 1 2 \DDc \hot (\ov{\DDc} \c \mathfrak{F}+\ov{H} \c \mathfrak{F} )  +\frac 1 2  H \hot (\ov{\DDc} \c \mathfrak{F} +\ov{H} \c \mathfrak{F})  + \DDc \hot \mathfrak{B} +  H \hot \mathfrak{B} \\
&& - \frac 1 2 \left(\tr \Xb+\ov{\tr \Xb}  \right)\PF A   -\frac 1 2  \ov{\tr \Xb}\left( \Hb+  H \right)\hot  \mathfrak{X}  
\eeaa
Using  \eqref{relation-F-A-B} to write
\beaa
\DDc \hot \mathfrak{B}&=& 2 \PF\nabc_3A+\PF\tr\Xb A -  6H  \hot  \mathfrak{B} +2\left(3 \ov{P}+2\PF\ov{\PF}\right)\mathfrak{F}
  \eeaa
we obtain
\beaa
K_1&=&- \frac 1 2 \left(\tr \Xb+\ov{\tr \Xb}  \right)\nabc_4 \mathfrak{F}- \frac 1 4\left( \tr \Xb    \tr X+3 \tr \Xb  \ov{\tr X}+3  \ov{\tr X}\ov{\tr \Xb}+\ov{\tr \Xb}   \tr X\right)\mathfrak{F}+2\left(3 \ov{P}+2\PF\ov{\PF}\right)\mathfrak{F}\\
&&  \frac 1 2 \DDc \hot (\ov{\DDc} \c \mathfrak{F} +\ov{H} \c \mathfrak{F} )  +\frac 1 2  H \hot (\ov{\DDc} \c \mathfrak{F} +\ov{H} \c \mathfrak{F})  + 2 \PF\nabc_3A \\
&&+\frac 1 2 \left(\tr \Xb-\ov{\tr \Xb}  \right)\PF A  -  5H  \hot  \mathfrak{B}  -\frac 1 2  \ov{\tr \Xb}\left( \Hb+  H \right)\hot  \mathfrak{X}    
\eeaa
We compute $K_2$. Using \eqref{nabc-3-Hb-red} and \eqref{nabc-3-mathfrak-X}
\beaa
K_2&=& -\nabc_3 \left(\frac 3 2   H+ \frac 1 2 \Hb \right) \hot \mathfrak{X} -\left(\frac 3 2   H+ \frac 1 2 \Hb \right) \hot \nabc_3\mathfrak{X}\\
&=& \left(-\frac 3 2  \nabc_3 H+\frac{1}{4}\ov{\tr\Xb}(\Hb-H) \right) \hot \mathfrak{X} -\left(\frac 3 2   H+ \frac 1 2 \Hb \right) \hot \left(-\frac 1 2 \ov{\tr \Xb} \ \mathfrak{X}-\ov{\DDc} \c \mathfrak{F} -\ov{H} \c \mathfrak{F}-2\mathfrak{B}  \right)\\
&=&\left(\frac 3 2   H+ \frac 1 2 \Hb \right) \hot \ov{\DDc} \c \mathfrak{F}  +\left(\frac 3 2   H+ \frac 1 2 \Hb \right) \hot \left(\ov{H} \c \mathfrak{F} \right)\\
&&+ \left(-\frac 3 2  \nabc_3 H+\frac{1}{2}\ov{\tr\Xb}(\Hb+H) \right) \hot \mathfrak{X} +\left(3   H+ \Hb \right) \hot \mathfrak{B}  
\eeaa
We compute $K_3$. Using \eqref{nabc-3-PF-red} we obtain
\beaa
K_3&=& -\nabc_3(\PF) A -\PF \nabc_3A=  -\PF \nabc_3A+\tr\Xb \PF A
\eeaa
We therefore obtain
\beaa
\nabc_3\nabc_4 \mathfrak{F}&=&-\left(\frac 3 2 \ov{\tr X} +\frac 1 2  \tr X\right)\nabc_3\mathfrak{F}- \frac 1 2 \left(\tr \Xb+\ov{\tr \Xb}  \right)\nabc_4 \mathfrak{F}\\
&&+\left(- \frac 3 4 \tr \Xb  \ov{\tr X}- \frac 1 4\ov{\tr \Xb}   \tr X+3\ov{P} -P +4\PF\ov{\PF} - \frac 3 2\ov{\DDc}\c H- \frac 1 2\DDc\c\ov{H}-2H\c\ov{H}\right)\mathfrak{F}\\
&&+  \frac 1 2 \DDc \hot (\ov{\DDc} \c \mathfrak{F} +\ov{H} \c \mathfrak{F} )  +\left(2   H+ \frac 1 2 \Hb \right) \hot (\ov{\DDc} \c \mathfrak{F} +\ov{H} \c \mathfrak{F} ) \\
&& +  \PF\nabc_3A +\frac 1 2 \left(3\tr \Xb-\ov{\tr \Xb}  \right)\PF A    + \left(-\frac 3 2  \nabc_3 H \right) \hot \mathfrak{X} +\left(-2  H+ \Hb \right) \hot \mathfrak{B}  
\eeaa
 which gives the operator $\TT_2$ as in \eqref{operator-Teukolsky-F} and this ends the derivation of the Teukolsky equation for $\mathfrak{F}$.

\subsection{The Teukolsky equation for $A$}\label{sec:teuyk-A}
This derivation is similar to the one obtained in \cite{GKS}. 
Recall \eqref{nabc-3-A}:
\beaa
 \nabc_3A+\frac{1}{2}\tr\Xb A  &=&\DDc\hot B  +H\hot \left(4B-3\ov{\PF} \BF \right) -3\ov{P}\Xh-2\ov{\PF} \mathfrak{F}
 \eeaa
 We apply $\nabc_4$ to the above, and using \eqref{nabc-4-tr-Xb} we obtain
 \beaa
 \nabc_4 \nabc_3 A&=& -\frac{1}{2}\tr\Xb \nabc_4A +\left(\frac 1 4 \tr X\tr\Xb-\frac 1 2  \DDc\c\ov{\Hb}-\frac 1 2 \Hb\c\ov{\Hb}-\ov{P} \right) A  \\
 &&+J_1  +J_2 +J_3+J_4
 \eeaa
 where 
 \beaa
 J_1&=& \nabc_4\DDc\hot B, \qquad  J_2= \nabc_4\left(H\hot \left(4B-3\ov{\PF} \BF \right)\right)\\
 J_3&=&  -3\nabc_4(\ov{P}\Xh), \qquad J_4= -2\nabc_4(\ov{\PF} \mathfrak{F})
 \eeaa
We compute $J_1$. Using \eqref{commutator-nabc-4-F-formula} for $s=1$ we obtain
\beaa
 J_1&=& \DDc\hot (\nabc_4B)+[\nabc_4, \DDc \hot] B= \DDc\hot (\nabc_4B)- \frac 1 2 \tr X( \DDc\hot B )+ \underline{H} \hot \nabc_4 B
\eeaa
Observe that the Bianchi identity \eqref{nabc-4-B} can be written in terms of $\mathfrak{X}$ as
\bea\label{nabc-4-B-X}
 \nabc_4B+2\ov{\tr X} B +\frac 3 2 \ov{\tr X} \ov{\PF}\BF-3\ov{P} \,\Xi  &=&\frac{1}{2}\ov{\DDc}\c A  +\frac{1}{2}A\c  \ov{\Hb}+ \ov{\PF}\mathfrak{X}
 \eea
This gives
\beaa
 \DDc\hot (\nabc_4B)&=&  -2\ov{\tr X} \DDc\hot B-2\DDc\ov{\tr X} \hot  B -\frac 3 2 \DDc\ov{\tr X} \hot \ov{\PF}\BF -\frac 3 2 \ov{\tr X} \DDc \ov{\PF} \hot \BF\\
 && -\frac 3 2 \ov{\tr X} \ov{\PF} \DDc \hot \BF+3\ov{P} \, \DDc \hot \Xi +3\DDc\ov{P} \hot \Xi + \ov{\PF}\DDc \hot \mathfrak{X} + \DDc \ov{\PF}\hot\mathfrak{X} \\
 &&+\frac{1}{2}\DDc \hot \ov{\DDc}\c A  +\frac{1}{2}\DDc \hot (A\c  \ov{\Hb})
\eeaa
We therefore obtain using again \eqref{nabc-4-B-X}:
\beaa
 J_1 &=& - \left(\frac 1 2 \tr X+2\ov{\tr X}\right) \DDc\hot B +\left(-2\DDc\ov{\tr X}-2\ov{\tr X} \Hb \right)\hot  B \\
 && -\frac 3 2 \ov{\tr X} \ov{\PF} \DDc \hot \BF -\frac 3 2 \left(\ov{\PF}\DDc\ov{\tr X} +\ov{\tr X} \DDc \ov{\PF}+ \ov{\tr X} \ov{\PF} \Hb \right)\hot \BF\\
 &&+3\ov{P} \, \DDc \hot \Xi +3\left(\DDc\ov{P} + \Hb \ov{P}\right)\hot \Xi + \ov{\PF}\DDc \hot \mathfrak{X} + \left(\DDc \ov{\PF}+ \Hb \ov{\PF}\right)\hot\mathfrak{X} \\
 &&+\frac{1}{2}\DDc \hot \ov{\DDc}\c A  +\frac{1}{2}\DDc \hot (A\c  \ov{\Hb})+ \frac{1}{2}\underline{H} \hot \left(\ov{\DDc}\c A   \right)+ \frac{1}{2}\underline{H} \hot \left(A\c  \ov{\Hb} \right)
\eeaa
We compute $J_2$. Using \eqref{nabc-4-H-red},  \eqref{nabc-4-B-X} and  \eqref{nabc-4-PF-red} we obtain
\beaa
 J_2&=& \nabc_4 H\hot \left(4B-3\ov{\PF} \BF \right)+ H\hot \left(4 \nabc_4B-3\ov{\PF} \nabc_4\BF-3\nabc_4\ov{\PF} \BF \right)\\
 &=&  -\frac{1}{2}\ov{\tr X}(H-\Hb) \hot \left(4B-3\ov{\PF} \BF \right)+ H\hot \Big[4 (-2\ov{\tr X} B -\frac 3 2 \ov{\tr X} \ov{\PF}\BF+3\ov{P} \,\Xi)\\
 && +4(\frac{1}{2}\ov{\DDc}\c A  +\frac{1}{2}A\c  \ov{\Hb}+ \ov{\PF}\mathfrak{X})-3\ov{\PF} \nabc_4\BF-3(-\tr X \ov{\PF} )\BF \Big]
\eeaa
Writing $\nabc_4 \BF=\mathfrak{X}-\frac 3 2 \ov{\tr X} \BF+2\PF \Xi $, the above becomes
\beaa
 J_2 &=&\left(  -2\ov{\tr X}(5H-\Hb)  \right)\hot B  +\frac{3}{2} \left(-\ov{\tr X} \ \ov{\PF}\Hb+ 2 \tr X \ov{\PF}H \right) \hot  \BF \\
 &&+ \left(12 \ov{P}-6\PF \ov{\PF}\right) H \hot \Xi + \ov{\PF} H \hot \mathfrak{X}+ 2H\hot \left( \ov{\DDc}\c A  \right)+2 H\hot \left(A\c  \ov{\Hb} \right)
\eeaa

We compute $J_3$. Using \eqref{nabc-4-P-red} and \eqref{nabc-4-Xh}, we obtain
\beaa
 J_3&=&  -3\nabc_4(\ov{P})\Xh -3\ov{P}\nabc_4(\Xh)\\
 &=&  -3\left( -\frac{3}{2}\ov{\tr X} \ \ov{P} -\ov{\tr X} \PF \ov{\PF} \right)\Xh -3\ov{P}\left(-\frac 1 2 (\tr X + \ov{\tr X}) \Xh+ \DDc\hot \Xi+  \Xi\hot(\Hb+H)-A \right)\\
  &=&3 \ov{P} A +  \left( \left(\frac 3 2 \tr X+  6\ov{\tr X} \right)\ov{P} +3\ov{\tr X} \PF \ov{\PF} \right)\Xh -3\ov{P}\left( \DDc\hot \Xi+  \Xi\hot(\Hb+H) \right)
\eeaa

We compute $J_4$. Using \eqref{nabc-4-PF-red} we obtain
\beaa
J_4&=& -2\nabc_4(\ov{\PF}) \mathfrak{F} -2\ov{\PF} \nabc_4\mathfrak{F}=2\ov{\PF}\left(  - \nabc_4\mathfrak{F}+ \tr X  \mathfrak{F}\right)
\eeaa

Summing the expressions obtained for $J_1$, $J_2$, $J_3$ and $J_4$ we obtain
 \beaa
&& \nabc_4 \nabc_3 A\\
&=& -\frac{1}{2}\tr\Xb \nabc_4A +\left(\frac 1 4 \tr X\tr\Xb-\frac 1 2  \DDc\c\ov{\Hb}-\frac 1 2 \Hb\c\ov{\Hb}+2\ov{P} \right) A  \\
 &&+\frac{1}{2}\DDc \hot \ov{\DDc}\c A  +\frac{1}{2}\DDc \hot (A\c  \ov{\Hb})+ \frac{1}{2}\underline{H} \hot \left(\ov{\DDc}\c A   \right)+ \frac{1}{2}\underline{H} \hot \left(A\c  \ov{\Hb} \right)+ 2H\hot \left( \ov{\DDc}\c A  \right)\\
 &&+2 H\hot \left(A\c  \ov{\Hb} \right) - \left(\frac 1 2 \tr X+2\ov{\tr X}\right) \DDc\hot B +\left(-2\DDc\ov{\tr X}  -10\ov{\tr X}H \right)\hot  B \\
 && -\frac 3 2 \ov{\tr X} \ov{\PF} \DDc \hot \BF -\frac 3 2 \left(\ov{\PF}\DDc\ov{\tr X} +\ov{\tr X} \DDc \ov{\PF}+2 \ov{\tr X} \ov{\PF} \Hb -2 \tr X \ov{\PF}H\right)\hot \BF\\
 && +3\left(\DDc\ov{P} +\left(3 \ov{P}-2\PF \ov{\PF}\right) H\right)\hot \Xi + \ov{\PF}\DDc \hot \mathfrak{X} + \left(\DDc \ov{\PF}+ \Hb \ov{\PF}+ \ov{\PF} H\right)\hot\mathfrak{X} \\
 && +  \left( \left(\frac 3 2 \tr X+  6\ov{\tr X} \right)\ov{P} +3\ov{\tr X} \PF \ov{\PF} \right)\Xh +2\ov{\PF}\left(  - \nabc_4\mathfrak{F}+ \tr X  \mathfrak{F}\right)
 \eeaa
 Using \eqref{nabc-3-A} to write
\beaa
\DDc\hot B  &=& \nabc_3A+\frac{1}{2}\tr\Xb A -H\hot \left(4B-3\ov{\PF} \BF \right) +3\ov{P}\Xh+2\ov{\PF} \mathfrak{F} 
 \eeaa
the above becomes
 \beaa
 &&\nabc_4 \nabc_3 A\\
 &=& - \left(\frac 1 2 \tr X+2\ov{\tr X}\right)\nabc_3A-\frac{1}{2}\tr\Xb \nabc_4A +\left(-\ov{\tr X} \tr \Xb -\frac 1 2  \DDc\c\ov{\Hb}-\frac 1 2 \Hb\c\ov{\Hb}+2\ov{P} \right) A  \\
 &&+\frac{1}{2}\DDc \hot \ov{\DDc}\c A  +\frac{1}{2}\DDc \hot (A\c  \ov{\Hb})+ \frac{1}{2}\underline{H} \hot \left(\ov{\DDc}\c A   \right)+ \frac{1}{2}\underline{H} \hot \left(A\c  \ov{\Hb} \right)+ 2H\hot \left( \ov{\DDc}\c A  \right)\\
 &&+2 H\hot \left(A\c  \ov{\Hb} \right) +\left(-2\DDc\ov{\tr X} +2 \tr X H -2\ov{\tr X}H \right)\hot  B  -\frac 3 2 \ov{\tr X} \ov{\PF} \DDc \hot \BF\\
 && -\frac 3 2 \left(\ov{\PF}\DDc\ov{\tr X} +\ov{\tr X} \DDc \ov{\PF}+2 \ov{\tr X} \ov{\PF} \Hb +4\ov{\tr X} H - \tr X \ov{\PF}H\right)\hot \BF\\
 && +3\left(\DDc\ov{P} +\left(3 \ov{P}-2\PF \ov{\PF}\right) H\right)\hot \Xi+ \ov{\PF}\DDc \hot \mathfrak{X} + \left(\DDc \ov{\PF}+ \Hb \ov{\PF}+ \ov{\PF} H\right)\hot\mathfrak{X} \\
 && +  \left( 3\ov{\tr X} \PF \ov{\PF} \right)\Xh +2\ov{\PF}\left(  - \nabc_4\mathfrak{F}+\left(\frac 1 2  \tr X -2\ov{\tr X} \right)\mathfrak{F}\right)
 \eeaa
 Observe that using \eqref{Codazzi-1-red} and \eqref{DDc-ov-P-red}, the coefficients of $B$ and $\Xi$ vanish. Finally writing $ \DDc \hot \BF =-2\mathfrak{F}  -3  H \hot \BF +2\PF\Xh$,  we obtain
  \beaa
&& \nabc_4 \nabc_3 A\\
&=& - \left(\frac 1 2 \tr X+2\ov{\tr X}\right)\nabc_3A-\frac{1}{2}\tr\Xb \nabc_4A +\left(-\ov{\tr X} \tr \Xb -\frac 1 2  \DDc\c\ov{\Hb}-\frac 1 2 \Hb\c\ov{\Hb}+2\ov{P} \right) A  \\
 &&+\frac{1}{2}\DDc \hot \ov{\DDc}\c A  +\frac{1}{2}\DDc \hot (A\c  \ov{\Hb})+ \frac{1}{2}\underline{H} \hot \left(\ov{\DDc}\c A   \right)+ \frac{1}{2}\underline{H} \hot \left(A\c  \ov{\Hb} \right)+ 2H\hot \left( \ov{\DDc}\c A  \right)+2 H\hot \left(A\c  \ov{\Hb} \right)\\
 && -\frac 3 2 \left(\ov{\PF}\DDc\ov{\tr X} +\ov{\tr X} \DDc \ov{\PF}+2 \ov{\tr X} \ov{\PF} \Hb +\ov{\tr X}\ov{\PF} H - \tr X \ov{\PF}H\right)\hot \BF\\
 && + \ov{\PF}\DDc \hot \mathfrak{X} + \left(\DDc \ov{\PF}+ \Hb \ov{\PF}+ \ov{\PF} H\right)\hot\mathfrak{X}   +2\ov{\PF}\left(  - \nabc_4\mathfrak{F}+\left(\frac 1 2  \tr X -\frac 1 2 \ov{\tr X} \right)\mathfrak{F}\right)
 \eeaa
Observe that using  \eqref{Codazzi-1-red} and \eqref{DDc-ov-PF-red}, the coefficient of $\BF$ vanishes. Finally we can make use of \eqref{relation-F-B-A} to write
\beaa
 \DDc \hot \mathfrak{X}&=&-2\nabc_4 \mathfrak{F}-\left( 3  \ov{\tr X} +  \tr X\right)\mathfrak{F}- \left(3   H+  \Hb \right) \hot \mathfrak{X}-2\PF A    
\eeaa
and obtain
  \beaa
 \nabc_4 \nabc_3 A&=& - \left(\frac 1 2 \tr X+2\ov{\tr X}\right)\nabc_3A-\frac{1}{2}\tr\Xb \nabc_4A\\
&& +\left(-\ov{\tr X} \tr \Xb -\frac 1 2  \DDc\c\ov{\Hb}-\frac 1 2 \Hb\c\ov{\Hb}+2\ov{P}-2\PF\ov{\PF} \right) A  \\
 &&+\frac{1}{2}\DDc \hot \ov{\DDc}\c A  +\frac{1}{2}\DDc \hot (A\c  \ov{\Hb})+ \frac{1}{2}\underline{H} \hot \left(\ov{\DDc}\c A   \right)+ \frac{1}{2}\underline{H} \hot \left(A\c  \ov{\Hb} \right)\\
 &&+ 2H\hot \left( \ov{\DDc}\c A  \right)+2 H\hot \left(A\c  \ov{\Hb} \right)  -2\ov{\PF}\left(  2 \nabc_4\mathfrak{F} +2 \ov{\tr X} \mathfrak{F}+ \left(\Hb + H\right)\hot\mathfrak{X} \right)
 \eeaa
which gives the operator $\TT_3(A)$, as in \cite{GKS}. This completes the derivation of the Teukolsky equation for $A$.

\section{Derivation of the generalized Regge-Wheeler system}\label{proof-7.1-section}\label{RWeqapp}

\subsection{Proof of Proposition \ref{proposition-commutator}}\label{sec:proof-commutators}

\subsubsection{The commutators for $\PP_C$}

\begin{lemma}\label{commutator-RW} Let $\Psi \in \sk_k(\CCC)$ of conformal type $s$. Recall the definition of $\PP_C(\Psi)$, see \eqref{definition-P-C}, 
\beaa
\mathcal{P}_{C}(\Psi)= \nabc_3 \Psi + C  \Psi \in \sk_k(\CCC)
\eeaa 
Let $F \in \sk_1(\CCC)$ of conformal type $s$. Then the following commutators hold:
\beaa
\, [\PP_C, \nabc_3]F&=&  - (\nabc_3C)  F\\
\, [\PP_C, \nabc_4]F&=&2(\eta-\etab) \c \nabc F +\left(2s\left(\rho+\rhoF^2+\dual\rhoF^2-\eta\c\etab\right)+2 i \left(-\rhod+\eta \wedge \etab\right)-\nabc_4 C\right)  F  \\
\, [\PP_C, \nabc_a]F&=&-\frac  1 2   \trchb\, \nabc_a F-\frac 1 2 \atrchb\, \dual \nabc_a  F+\eta_a \nabc_3 F+( \mathcal{V}^s_{[3,a]} - \nabc_a C ) F \\
  \, [\PP_C, \DDc \hot ]F&=& - \frac 1 2 \tr \Xb \DDc \hot F + H \hot \nabc_3 F +\left( - \DDc  C  - \frac 1 2(s+1) \tr \Xb  H \right)\hot F 
  \eeaa
Let $U \in \sk_2(\CCC)$ of conformal type $s$. The following commutators hold:
\beaa
\, [\PP_C, \nabc_3]U&=&  -( \nabc_3C) U\\
\, [\PP_C, \nabc_4]U&=&2(\eta-\etab) \c \nabc U +\left(2s\left(\rho+\rhoF^2+\dual\rhoF^2-\eta\c\etab\right)+4 i \left(-\rhod+\eta \wedge \etab\right)-\nabc_4 C\right)  U\\
\, [\PP_C, \nabc_a]U&=&-\frac  1 2   \trchb\, \nabc_a U-\frac 1 2 \atrchb\, \dual \nabc_a  U+\eta_a \nabc_3 U+( \mathcal{V}^s_{[3,a]} - \nabc_a C ) U\\
\, [\PP_C, \ov{\DDc} \c ]U&=& - \frac 1 2\ov{\tr\Xb}\,  \ov{\DDc} \c U+\ov{H} \c \nabc_3 U+\left(- \ov{\DDc} C - \frac 1 2(s-  2) \ov{\tr\Xb}\,  \ov{H}\right)\c U
\eeaa
\end{lemma}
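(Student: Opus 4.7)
The core observation is that $\PP_C$ decomposes as a sum $\nabc_3 + C\cdot$ of a derivation and a multiplication operator, so for any linear differential operator $D$ acting on horizontal tensors one has
\[
[\PP_C, D]\,\Psi \;=\; [\nabc_3, D]\,\Psi + [\,C\cdot\,, D]\,\Psi.
\]
The second piece is algebraic: each of the operators $D \in \{\nabc_3,\nabc_4,\nabc_a,\DDc\hot,\ov{\DDc}\c\}$ satisfies a Leibniz rule against multiplication by a scalar of the appropriate conformal type, so $[C\cdot, D]\Psi$ equals $-(DC)\Psi$, with the product interpreted via whichever horizontal operation $D$ uses. (One only needs to confirm that the conformal weights balance: $C$ has type $-1$ and $\Psi$ has type $s$, so $C\Psi$ has type $s-1$, which is exactly the type on which $\PP_C(\Psi)$ lives; the conformal corrections in $\nabc_3,\nabc_4,\nabc_a,\DDc,\ov{\DDc}$ then guarantee the Leibniz rule holds without extra $\om,\omb,Z$ contributions.) This reduces the entire lemma to assembling the known commutators of $\nabc_3$ with the remaining operators.

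For $D=\nabc_3$ the first commutator vanishes and one reads off $-(\nabc_3 C)\Psi$. For $D=\nabc_4$ I would invoke the formula \eqref{commutator-3-4-F} already derived in Section~A.2 for $F \in \sk_1(\CCC)$ (and its $\sk_2(\CCC)$ analogue, obtained in exactly the same way from the commutators in Proposition~2.34 of~\cite{GKS}). The first-order piece is $2(\eta-\etab)\c\nabc$ in both cases; to match the stated zeroth-order scalar it suffices to rewrite
\[
H\c\ov{\Hb} \;=\; 2\eta\c\etab - 2i\,\eta\wedge\etab,\qquad \ov{H}\c\Hb \;=\; 2\eta\c\etab + 2i\,\eta\wedge\etab,
\]
together with $P=\rho+i\rhod$ and $\PF\ov{\PF}=\rhoF^2+\dual\rhoF^2$, and collect terms; the spin-dependent combination $(s-1)P+(s+1)\ov P$ yields $2s\rho-2i\rhod$, the $H\c\ov{\Hb}$ terms give $-2s\,\eta\c\etab+2i\,\eta\wedge\etab$, and the factor $2$ versus $4$ multiplying $i(-\rhod+\eta\wedge\etab)$ between the $\sk_1$ and $\sk_2$ cases comes directly from the spin-weighted coefficient in \eqref{commutator-3-4-F} (respectively in its $\sk_2$ analogue).

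For $D=\nabc_a$ the commutator $[\nabc_3,\nabc_a]$ is an instance of the general Ricci identity of Proposition~\ref{prop-nullstr:complex}: the outgoing directions are transported along $e_3$ by $-\frac 12\chib$ and by the rotation $\eta$, which explains the prefactors $-\tfrac12\trchb,\ -\tfrac12\atrchb,\ \eta_a$ on $\nabc_a F,\dual\nabc_a F,\nabc_3 F$ respectively, while all remaining Ricci/curvature contributions are, as is standard in the GKS formalism, packaged into the scalar $\mathcal V^s_{[3,a]}$ whose explicit value is not needed for the subsequent analysis. The last two identities, for $D=\DDc\hot$ and $D=\ov{\DDc}\c$, are immediate from the already-cited commutators
$\,[\nabc_3,\DDc\hot]F \,$ \eqref{commutator-nabc-3-F-formula} and $\,[\nabc_3,\ov{\DDc}\c]U\,$ \eqref{commutator-nabc-3-ov-DDc-U}, to which we simply add the algebraic contributions $-(\DDc C)\hot F$ and $-(\ov{\DDc} C)\c U$.

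The computation is essentially bookkeeping; the only delicate point to keep in mind is the spin-weight dependence in the $(s+1)$ and $(s-2)$ prefactors appearing in the $\DDc\hot$ and $\ov{\DDc}\c$ commutators, which propagate directly into the stated expressions without any cancellation. I do not expect any genuine obstacle, beyond verifying consistently that $C$ is taken of conformal type $-1$ so that every Leibniz step above is a clean identity.
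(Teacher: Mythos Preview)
Your proposal is correct and follows essentially the same approach as the paper: both reduce each commutator $[\PP_C,D]$ to $[\nabc_3,D]\Psi - (DC)\Psi$ via the Leibniz rule for conformally weighted derivatives, and then invoke the already-established commutators \eqref{commutator-3-4-F}, \eqref{commutator-nabc-3-F-formula}, \eqref{commutator-nabc-3-ov-DDc-U}, and the $[\nabc_3,\nabc_a]$ formula from Lemma~5.3 in \cite{GKS}. The only cosmetic difference is that you spell out the real--imaginary conversion for $[\nabc_3,\nabc_4]$ explicitly, whereas the paper simply cites the real-coefficient form \eqref{commutator-3-4-F-real} directly; and your pointer to Proposition~\ref{prop-nullstr:complex} for the $\nabc_a$ case is slightly off --- the precise commutator with the $\mathcal V^s_{[3,a]}$ packaging is Lemma~5.3 of \cite{GKS}, not the Ricci identities themselves.
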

\begin{proof} We compute 
\beaa
\, [\PP_C, \nabc_3]F&=&(\nabc_3 + C )(\nabc_3 F)- \nabc_3 (\nabc_3F + C  F)= - (\nabc_3C)  F
\eeaa
and similarly for $U$. We compute
\beaa
\, [\PP_C, \nabc_4]F&=&(\nabc_3 + C )(\nabc_4 F)- \nabc_4 (\nabc_3F + C \ F)=[\nabc_3,  \nabc_4]  F - (\nabc_4 C ) F 
\eeaa
Recall from \eqref{commutator-3-4-F} and from \cite{GKS}:
 \bea
  \,[\nabc_3, \nabc_4] F&=2(\eta-\etab) \c \nabc F +\left(2s\left(\rho+\rhoF^2+\dual\rhoF^2-\eta\c\etab\right)+2 i \left(-\rhod+\eta \wedge \etab\right)\right)  F\label{commutator-3-4-F-real}\\
   \,[\nabc_3, \nabc_4] U&=2(\eta-\etab) \c \nabc U +\left(2s\left(\rho+\rhoF^2+\dual\rhoF^2-\eta\c\etab\right)+4 i \left(-\rhod+\eta \wedge \etab\right)\right)  U\label{commutator-3-4-U}
\eea
Using \eqref{commutator-3-4-F-real} and \eqref{commutator-3-4-U}, we obtain the stated expressions. 

We compute 
\beaa
\, [\PP_C, \nabc_a]F&=&(\nabc_3 + C )(\nabc_a F)- \nabc_a (\nabc_3F + C \ F)=[\nabc_3,  \nabc_a]  F - (\nabc_a C ) F 
\eeaa
Using that, see Lemma 5.3 in \cite{GKS},
 \bea
\, [\nabc_3, \nabc_a] F_{b} &=&-\frac  1 2   \trchb\, \nabc_a F_{b}-\frac 1 2 \atrchb\, \dual \nabc_a  F_{b}+\eta_a \nabc_3 F_{b}+\mathcal{V}^s_{[3,a]}(F) \label{commutator-3-a-F}\\
\,  [\nabc_3, \nabc_a] U_{bc} &=&-\frac  1 2   \trchb\, \nabc_a U_{bc}-\frac 1 2 \atrchb\, \dual \nabc_a  U_{bc}+\eta_a \nabc_3 U_{bc}+\mathcal{V}^s_{[3,a]}(U)\label{commutator-3-a-U}
\eea
where
\beaa
\mathcal{V}^s_{[3,a]}(F)&=& -\frac 1 2 \trchb \big(  s \eta_aU_b+\eta_b U_a-\de_{ab} \eta \c U \big) -\frac 1 2 \atrchb \big( s \dual \eta_a U_b+\eta_b \dual U_a-\in_{ab} \eta\c U\big)\\
\mathcal{V}^s_{[3,a]}(U)&=& -\frac  1 2   \trchb\, \Big(s(\eta_a) U_{bc}+\eta_bU_{ac}+\eta_c U_{ab}-\de_{a b}(\eta \c U)_c-\de_{a c}(\eta \c U)_b \Big)\\
&&-\frac 1 2 \atrchb\, \Big(s (\dual\eta_a) U_{bc} +\eta_b \dual U_{ac}+\eta_c \dual U_{ab}- \in_{a b}(\eta \c  U)_c- \in_{a c}(\eta \c  U)_b \Big)
\eeaa
we obtain the stated expressions.

We compute
\beaa
  \, [\PP_C, \DDc \hot ]F&=& (\nabc_3 + C )(\DDc \hot F) - \DDc \hot (\nabc_3F + C F)=[\nabc_3 , \DDc \hot] F -( \DDc  C) \hot  F
\eeaa
Using \eqref{commutator-nabc-3-F-formula}, we obtain the stated expression. 
We compute
\beaa
\, [\PP_C, \ov{\DDc} \c ]U&=& (\nabc_3 + C )(\ov{\DDc} \c U) - \ov{\DDc} \c (\nabc_3U + C U)=[\nabc_3, \ov{\DDc} \c] U- (\ov{\DDc} C) \c U
\eeaa
Using \eqref{commutator-nabc-3-ov-DDc-U}, we obtain the stated expression. 
\end{proof}

\subsubsection{The commutators for $[\PP_{C_1}, \TT_1]$ and $[\PP_{C_2}, \TT_2]$}

Using \eqref{operator-Teukolsky-B} and \eqref{operator-Teukolsky-F}, we separate the computations of $[  \PP_{C_1}, \TT_1]$ and $[  \PP_{C_2}, \TT_2]$ into the following terms:
\bea
\, [\PP_{C_1}, \TT_1]&=& I^{\Bfr}+J^{\Bfr}+K^{\Bfr}+L^{\Bfr}+M^{\Bfr}+N^{\Bfr} \label{comm-sum-1}\\
\, [\PP_{C_2}, \TT_2]&=& I^{\Ffr}+J^{\Ffr}+K^{\Ffr}+L^{\Ffr}+M^{\Ffr}+N^{\Ffr} \label{comm-sum-2}
\eea
where
\beaa
I^{\Bfr}&=&  -[\mathcal{P}_{C_1}, \nabc_3\nabc_4] \Bfr, \qquad I^{\Ffr}=  -[\mathcal{P}_{C_2}, \nabc_3\nabc_4] \Ffr \\
J^{\Bfr}&=&\frac 1 2 [\mathcal{P}_{C_1}, \ov{\DDc}\c  \DDc \hot ]\mathfrak{B}, \qquad J^{\Ffr}=\frac 1 2 [\mathcal{P}_{C_2}, \DDc \hot\ov{\DDc}\c ]\mathfrak{F}\\
K^{\Bfr}&=&[\PP_{C_1}, -3\ov{\tr X} \nabc_3]\Bfr, \qquad K^{\Ffr}=[\PP_{C_2},-\left(\frac 3 2 \ov{\tr X} +\frac 1 2  \tr X\right) \nabc_3]\Ffr\\
L^{\Bfr}&=&[\PP_{C_1},- \left(\frac{3}{2}\tr\Xb +\frac 1 2\ov{\tr\Xb}\right)\nabc_4]\Bfr, \qquad L^{\Ffr}=[\PP_{C_2},- \frac 1 2 \left(\tr \Xb+\ov{\tr \Xb}  \right)\nabc_4]\Ffr \\
M^{\Bfr}&=&[\PP_{C_1},\left( 6H+ \ov{H}+ 3  \ov{ \Hb}  \right)\c  \nabc ]\Bfr, \qquad M^{\Ffr}=[\PP_{C_2},\left(4   H+ \ov{H}+  \Hb \right)\c \nabc ]\Ffr \\
N^{\Bfr}&=&[\PP_{C_1},\left(-\frac{9}{2}\tr\Xb \ov{\tr X} -4 \PF \ov{\PF}+9 \ov{ \Hb} \c  H\right) ]\Bfr, \\
 N^{\Ffr}&=&[\PP_{C_2},+\left(- \frac 3 4 \tr \Xb  \ov{\tr X}- \frac 1 4\ov{\tr \Xb}   \tr X+3\ov{P} -P +4\PF\ov{\PF} - \frac 3 2\ov{\DDc}\c H\right) ]\Ffr +\frac 1 2 [\PP_{C_2}, \Hb \hot  \ov{H} \c] \Ffr
\eeaa

\subsubsection{Expressions for $I^{\Bfr}$ and $I^{\Ffr}$}

We have 
\beaa
I^{\Bfr}&=&  -[\mathcal{P}_{C_1}, \nabc_3\nabc_4] \Bfr=  -[\mathcal{P}_{C_1},  \nabc_3] \nabc_4\Bfr - \nabc_3([ \mathcal{P}_{C_1}, \nabc_4]\Bfr)\\
&=&  (  \nabc_3C_1 ) \nabc_4\Bfr - \nabc_3([ \mathcal{P}_{C_1}, \nabc_4]\Bfr)
\eeaa
Using Lemma \ref{commutator-RW} applied to $F=\Bfr$ of conformal type $s=1$,
\bea\label{PPC_1nabc4}
\begin{split}
\, [\PP_{C_1}, \nabc_4]\Bfr&=2(\eta-\etab) \c \nabc \Bfr \\
&+\left(2\left(\rho+\rhoF^2+\dual\rhoF^2-\eta\c\etab\right)+2 i \left(-\rhod+\eta \wedge \etab\right)-\nabc_4 C_1\right)  \Bfr
\end{split}
\eea
we deduce 
\beaa
\nabc_3([ \mathcal{P}_{C_1}, \nabc_4]\Bfr)&=& 2(\eta-\etab) \c \nabc_3\nabc \Bfr+2\nabc_3(\eta-\etab) \c \nabc \Bfr\\
&&+\left(2\left(\rho+\rhoF^2+\dual\rhoF^2-\eta\c\etab\right)+2 i \left(-\rhod+\eta \wedge \etab\right)-\nabc_4 C_1\right)  \nabc_3 \Bfr  \\
&&+\nabc_3\left(2\left(\rho+\rhoF^2+\dual\rhoF^2-\eta\c\etab\right)+2 i \left(-\rhod+\eta \wedge \etab\right)-\nabc_4 C_1\right)  \Bfr 
\eeaa
Using \eqref{commutator-3-a-F} we write the above as
\beaa
&&\nabc_3([ \mathcal{P}_{C_1}, \nabc_4]\Bfr)\\
&=& 2(\eta-\etab) \c \nabc\nabc_3 \Bfr+ \big( 2\nabc_3(\eta-\etab)-\trchb (\eta-\etab)+\atrchb \dual (\eta-\etab) \big)\c \nabc \Bfr\\
&&+\left(2\left(\rho+\rhoF^2+\dual\rhoF^2-\eta\c\etab+ \eta \c (\eta-\etab)\right)+2 i \left(-\rhod+\eta \wedge \etab\right)-\nabc_4 C_1\right)  \nabc_3 \Bfr  \\
&&+\Big[\nabc_3\left(2\left(\rho+\rhoF^2+\dual\rhoF^2-\eta\c\etab\right)+2 i \left(-\rhod+\eta \wedge \etab\right)-\nabc_4 C_1\right)+ 2 (\eta-\etab) \c  \mathcal{V}^{s=1}_{[3,a]} \Big]\Bfr 
\eeaa
We therefore obtain
\bea\label{I-B}
I^{\Bfr}&=&-2(\eta-\etab) \c \nabc\nabc_3 \Bfr+  I^{\Bfr}_4 \nabc_4\Bfr +I^{\Bfr}_{3} \nabc_3 \Bfr +I^{\Bfr}_a\c \nabc_a \Bfr+ I^{\Bfr}_0 \Bfr 
\eea
where
\bea
I^{\Bfr}_4&=& \nabc_3C_1 \nonumber\\
I^{\Bfr}_{3}&=& -2\rho -2\rhoF^2 -2\dual\rhoF^2 -2 \eta \c (\eta-2\etab)+ i \left(2\rhod-2\eta \wedge \etab\right)+\nabc_4 C_1 \nonumber \\
I^{\Bfr}_a&=& -  2\nabc_3(\eta-\etab)+\trchb (\eta-\etab)-\atrchb \dual (\eta-\etab) \label{eq:I-Bfr-a}\\
I^{\Bfr}_0&=& \nabc_3\left[-2\left(\rho+\rhoF^2+\dual\rhoF^2-\eta\c\etab\right)+2 i \left(\rhod-\eta \wedge \etab\right)+\nabc_4 C_1\right]- 2 (\eta-\etab) \c  \mathcal{V}^{s=1}_{[3,a]} \nonumber
\eea

We have 
\beaa
I^{\Ffr}&=&  (  \nabc_3C_2 ) \nabc_4\Ffr - \nabc_3([ \mathcal{P}_{C_2}, \nabc_4]\Ffr)
\eeaa
Using Lemma \ref{commutator-RW} applied to $U=\Ffr$ of conformal type $s=1$,
\bea\label{PPC_2nabc4}
\begin{split}
\, [\PP_{C_2}, \nabc_4]\Ffr&=2(\eta-\etab) \c \nabc \Ffr \\
&+\left(2\left(\rho+\rhoF^2+\dual\rhoF^2-\eta\c\etab\right)+4 i \left(-\rhod+\eta \wedge \etab\right)-\nabc_4 C_2\right)  \Ffr
\end{split}
\eea
We similarly obtain
\bea\label{I-F}
I^{\Ffr}&=&-2(\eta-\etab) \c \nabc\nabc_3 \Ffr+  I^{\Ffr}_4 \nabc_4\Ffr +I^{\Ffr}_{3} \nabc_3 \Ffr +I^{\Ffr}_a\c \nabc_a \Ffr+ I^{\Ffr}_0 \Ffr 
\eea
where
\bea
I^{\Ffr}_4&=& \nabc_3C_2 \nonumber\\
I^{\Ffr}_{3}&=& -2\rho -2\rhoF^2 -2\dual\rhoF^2 -2 \eta \c (\eta-2\etab)+ i \left(4\rhod-4\eta \wedge \etab\right)+\nabc_4 C_2 \nonumber \\
I^{\Ffr}_a&=& -  2\nabc_3(\eta-\etab)+\trchb (\eta-\etab)-\atrchb \dual (\eta-\etab) \label{eq:I-Ffr-a}\\
I^{\Ffr}_0&=& \nabc_3\left[-2\left(\rho+\rhoF^2+\dual\rhoF^2-\eta\c\etab\right)+4 i \left(\rhod-\eta \wedge \etab\right)+\nabc_4 C_2\right]- 2 (\eta-\etab) \c  \mathcal{V}^{s=1}_{[3,a]} \nonumber
\eea

\subsubsection{Expressions for $J^{\Bfr}$ and $J^{\Ffr}$}

We have
\beaa
J^{\Bfr}&=&\frac 1 2 [\mathcal{P}_{C_1}, \ov{\DDc}\c ] (\DDc \hot \mathfrak{B})+ \frac 1 2 \ov{\DDc}\c([ \mathcal{P}_{C_1} , \DDc \hot] \mathfrak{B})
\eeaa
Using Lemma \ref{commutator-RW} applied to $U=\DDc \hot \Bfr$ of conformal type $s=1$, we have 
\beaa
&&\, [\PP_{C_1}, \ov{\DDc} \c ](\DDc \hot \mathfrak{B})\\
&=& - \frac 1 2\ov{\tr\Xb}\,  \ov{\DDc} \c (\DDc \hot \mathfrak{B})+\ov{H} \c \nab_3 (\DDc \hot \mathfrak{B})+\left(- \ov{\DDc} C_1 + \frac 1 2\ov{\tr\Xb}\, \ov{H}\right)\c (\DDc \hot \mathfrak{B})\\
&=& - \frac 1 2\ov{\tr\Xb}\,  \ov{\DDc} \c (\DDc \hot \mathfrak{B})+\ov{H} \c \DDc \hot \nab_3 \mathfrak{B}+\ov{H} \c\left(- \frac 1 2 \tr \Xb \left( \DDc\hot \Bfr + 2H \hot \Bfr \right)  + H \hot \nabc_3 \Bfr \right)\\
&&+\left(- \ov{\DDc} C_1 + \frac 1 2\ov{\tr\Xb}\, \ov{H}\right)\c (\DDc \hot \mathfrak{B})\\
&=& - \frac 1 2\ov{\tr\Xb}\,  \ov{\DDc} \c (\DDc \hot \mathfrak{B})+2\ov{H} \c \nabc  \nabc_3  \mathfrak{B}+\left(- 2\ov{\DDc} C_1 +(\ov{\tr\Xb}-\tr\Xb )\, \ov{H}\right)\c \nabc  \mathfrak{B}\\
&&+ (H \c \ov{H}) \nabc_3 \Bfr -  \tr \Xb (\ov{H} \c H) \Bfr
\eeaa
We also have
\beaa
&&\ov{\DDc}\c([ \mathcal{P}_{C_1} , \DDc \hot] \mathfrak{B})\\
&=&\ov{\DDc}\c( - \frac 1 2 \tr \Xb \DDc \hot  \mathfrak{B} + H \hot \nabc_3  \mathfrak{B} +\left( - \DDc  C_1  -  \tr \Xb  H \right)\hot  \mathfrak{B})\\
&=& - \frac 1 2 \tr \Xb  \ov{\DDc}\c(\DDc \hot  \mathfrak{B}) + 2 H \c  \nabc \nabc_3  \mathfrak{B} + \left( - 2\DDc  C_1  - 2 \tr \Xb  H -  (\ov{\tr\Xb}-\tr \Xb)\ov{\Hb} \right) \c  \nabc\mathfrak{B}\\
&&+(\ov{\DDc} \c H)\nabc_3 \Bfr+\ov{\DDc} \c\left( - \DDc  C_1  -  \tr \Xb  H \right) \mathfrak{B}
\eeaa
where we used $\DDc\ov{\tr\Xb} -(\tr\Xb-\ov{\tr \Xb})\Hb= 0$.

Putting the above together we obtain
\beaa
J^{\Bfr}&=&- \frac 1 2(\tr\Xb+\ov{\tr\Xb})\,  \left( \frac 12 \ov{\DDc} \c (\DDc \hot \mathfrak{B})\right)+2\eta \c \nabc  \nabc_3  \mathfrak{B}+ \tilde{J}^{\Bfr}_3 \nabc_3 \Bfr  + \tilde{J}^{\Bfr}_a \c  \nabc\mathfrak{B}+\tilde{J}^{\Bfr}_0 \Bfr
\eeaa
where
\beaa
\tilde{J}^{\Bfr}_3&=& \frac 1 2 (\ov{\DDc} \c H)+ \frac 1 2 (H \c \ov{H})\\
\tilde{J}^{\Bfr}_a&=& - 2\nabc  C_1  -  \tr \Xb  H - \frac 1 2 (\ov{\tr\Xb}-\tr \Xb)\ov{\Hb} +\frac 1 2 (\ov{\tr\Xb}-\tr\Xb )\, \ov{H}\\
\tilde{J}^{\Bfr}_0&=& \frac 1 2 \ov{\DDc} \c\left( - \DDc  C_1  -  \tr \Xb  H \right)-  \frac 1 2 \tr \Xb (\ov{H} \c H)
\eeaa

Using the Teukolsky equation for $\mathfrak{B}$ given by $\TT_1(\mathfrak{B})=\M_1[\mathfrak{F}, \mathfrak{X}]$ and the expression for the Teukolsky operator \eqref{operator-Teukolsky-B} we can write 
\beaa
 \frac 1 2 \ov{\DDc}\c \DDc \hot \mathfrak{B}&=&   \nabc_3\nabc_4\mathfrak{B}+3\ov{\tr X} \nabc_3\mathfrak{B}+ \left(\frac{3}{2}\tr\Xb +\frac 1 2\ov{\tr\Xb}\right)\nabc_4\mathfrak{B}\\
   &&-\left( 6H+ \ov{H}+ 3  \ov{ \Hb}  \right)\c  \nabc  \Bfr+\left(\frac{9}{2}\tr\Xb \ov{\tr X} +4 \PF \ov{\PF}-9 \ov{ \Hb} \c  H\right)\mathfrak{B}+\M_1[\mathfrak{F}, \mathfrak{X}]\\
   &=&   \nabc_4\nabc_3\mathfrak{B}+3\ov{\tr X} \nabc_3\mathfrak{B}+ \left(\frac{3}{2}\tr\Xb +\frac 1 2\ov{\tr\Xb}\right)\nabc_4\mathfrak{B}\\
   &&+\Big(2(\eta-\etab)  -\left( 6H+ \ov{H}+ 3  \ov{ \Hb}  \right)\Big) \c  \nabc  \Bfr\\
   &&+\left(\frac{9}{2}\tr\Xb \ov{\tr X} +2\ov{P}+6 \PF \ov{\PF}-10 \ov{ \Hb} \c  H\right)\mathfrak{B}+\M_1[\mathfrak{F}, \mathfrak{X}]
   \eeaa
where we used \eqref{commutator-3-4-F} to write $  \,[\nabc_3, \nabc_4] \Bfr   =2(\eta-\etab) \c \nabc \Bfr +\left(2\ov{P}+2 \PF \ov{\PF}-(H \c \ov{\Hb}  ) \right)  \Bfr$. 
We therefore obtain
\bea\label{J-B}
\begin{split}
J^{\Bfr}&=- \frac 1 2(\tr\Xb+\ov{\tr\Xb})\,   \nabc_4\nabc_3\mathfrak{B} +2\eta \c \nabc  \nabc_3  \mathfrak{B}\\
&+J^{\Bfr}_4\nabc_4\mathfrak{B}+J^{\Bfr}_3 \nabc_3 \Bfr  + J^{\Bfr}_a \c  \nabc\mathfrak{B}+J^{\Bfr}_0 \Bfr- \frac 1 2(\tr\Xb+\ov{\tr\Xb})\M_1[\mathfrak{F}, \mathfrak{X}]
\end{split}
\eea
where
\bea
J^{\Bfr}_3&=& - \frac 3 2 \ov{\tr X}\left(\tr \Xb +\ov{\tr\Xb}\right)+ \tilde{J}^{\Bfr}_3 \nonumber\\
J^{\Bfr}_4&=& - \frac 1 2 \left(\tr \Xb +\ov{\tr\Xb}\right)\left(\frac{3}{2}\tr\Xb +\frac 1 2\ov{\tr\Xb}\right) \nonumber\\
J^{\Bfr}_a&=&- \frac 1 2(\tr\Xb+\ov{\tr\Xb})\Big(2(\eta-\etab)  -\left( 6H+ \ov{H}+ 3  \ov{ \Hb}  \right)\Big) + \tilde{J}^{\Bfr}_a \label{eq:J-Bfr-a}\\
J^{\Bfr}_0&=& - \frac 1 2 \left(\tr \Xb +\ov{\tr\Xb}\right)\left(\frac{9}{2}\tr\Xb \ov{\tr X} +2\ov{P}+6 \PF \ov{\PF}-10 \ov{ \Hb} \c  H\right)+\tilde{J}^{\Bfr}_0 \nonumber
\eea

We have
\beaa
J^{\Ffr}&=& \frac 1 2 [\mathcal{P}_{C_2}, \DDc \hot ](\ov{\DDc}\c \mathfrak{F})+ \frac 1 2\DDc \hot( [\mathcal{P}_{C_2},  \ov{\DDc}\c ]\mathfrak{F})
\eeaa
Using Lemma \ref{commutator-RW} applied to $F=\ov{\DDc}\c \mathfrak{F}$ of conformal type $s=1$, we have 
\beaa
&&  \, [\PP_{C_2}, \DDc \hot ](\ov{\DDc}\c \mathfrak{F})\\
&=& - \frac 1 2 \tr \Xb \DDc \hot (\ov{\DDc}\c \mathfrak{F}) + H \hot \nabc_3 (\ov{\DDc}\c \mathfrak{F}) +\left( - \DDc  C_2  -  \tr \Xb  H \right)\hot (\ov{\DDc}\c \mathfrak{F})\\
&=& - \frac 1 2 \tr \Xb \DDc \hot (\ov{\DDc}\c \mathfrak{F}) + H \hot( \ov{\DDc}\c  \nabc_3\mathfrak{F}- \frac 1 2\ov{\tr\Xb}\, ( \ov{\DDc} \c \Ffr - \ov{H} \c \Ffr)+\ov{H} \c \nab_3 \Ffr)\\
&& +\left( - \DDc  C_2  -  \tr \Xb  H \right)\hot (\ov{\DDc}\c \mathfrak{F})\\
&=& - \frac 1 2 \tr \Xb \DDc \hot (\ov{\DDc}\c \mathfrak{F}) +2 H \c \nabc  \nabc_3\mathfrak{F}+( H \c \ov{H} ) \nab_3 \Ffr\\
&& +\left( -2 \DDc  C_2  - (2 \tr \Xb+\ov{\tr\Xb} ) H \right)\c \nabc \mathfrak{F}+ \frac 1 2\ov{\tr\Xb}\,( H \c \ov{H} )  \Ffr
\eeaa
 We also have
\beaa
&&\DDc \hot( [\mathcal{P}_{C_2},  \ov{\DDc}\c ]\mathfrak{F})\\
&=& \DDc \hot(  - \frac 1 2\ov{\tr\Xb}\,  \ov{\DDc} \c \mathfrak{F}+\ov{H} \c \nabc_3 \mathfrak{F}+\left(- \ov{\DDc} C_2 + \frac 1 2\ov{\tr\Xb}\,  \ov{H}\right)\c \mathfrak{F})\\
&=& - \frac 1 2\ov{\tr\Xb} \DDc \hot(   \ov{\DDc} \c \mathfrak{F})+ 2 \ov{H} \c  \nabc \nabc_3 \mathfrak{F}+  \left(- 2\ov{\DDc} C_2 + \ov{\tr\Xb}\,  \ov{H}-\left( \tr\Xb - \ov{\tr\Xb}\right) \Hb\right)\c  \nabc \mathfrak{F} \\
&&+(\DDc \c \ov{H})  \nabc_3 \mathfrak{F}+ \DDc \c \left(- \ov{\DDc} C_2 + \frac 1 2\ov{\tr\Xb}\,  \ov{H}\right) \mathfrak{F}
\eeaa
where we used $\DDc\ov{\tr\Xb} -(\tr\Xb-\ov{\tr \Xb})\Hb= 0$.

Putting the above together we obtain
\beaa
J^{\Ffr}&=&  - \frac 1 2 \left(\tr \Xb +\ov{\tr\Xb}\right)\left( \frac 1 2 \DDc \hot (\ov{\DDc}\c \mathfrak{F}) \right) + 2\eta \c \nabc  \nabc_3\mathfrak{F}+\tilde{J}^{\Ffr}_3 \nab_3 \Ffr +\tilde{J}^{\Ffr}_a \c \nabc \mathfrak{F}+\tilde{J}^{\Ffr}_0 \Ffr
\eeaa
where
\beaa
\tilde{J}^{\Ffr}_3&=& \frac 1 2(\DDc \c \ov{H})+ \frac 1 2 (H \c \ov{H})\\
\tilde{J}^{\Ffr}_a&=&- 2\nabc  C_2+ \frac 1 2 \ov{\tr\Xb}\,  \ov{H}-\frac 1 2\left( \tr\Xb - \ov{\tr\Xb}\right) \Hb  -\frac 1 2  (2 \tr \Xb+\ov{\tr\Xb} ) H\\
\tilde{J}^{\Ffr}_0&=&\frac 1 2  \DDc \c \left(- \ov{\DDc} C_2 + \frac 1 2\ov{\tr\Xb}\,  \ov{H}\right) + \frac 1 4\ov{\tr\Xb}\,( H \c \ov{H} )
\eeaa
Using the Teukolsky equation for $\mathfrak{F}$ given by $\TT_2(\mathfrak{F})=\M_2[A, \mathfrak{X}, \mathfrak{B}]$ and the expression for the Teukolsky operator \eqref{operator-Teukolsky-F} we can write
\beaa
 \frac 1 2 \DDc \hot (\ov{\DDc} \c \mathfrak{F})&=& \nabc_3\nabc_4 \mathfrak{F}+\left(\frac 3 2 \ov{\tr X} +\frac 1 2  \tr X\right)\nabc_3\mathfrak{F}+ \frac 1 2 \left(\tr \Xb+\ov{\tr \Xb}  \right) \nabc_4 \mathfrak{F}\\
&&+\left( \frac 3 4 \tr \Xb  \ov{\tr X}+\frac 1 4\ov{\tr \Xb}   \tr X-3\ov{P} +P -4\PF\ov{\PF} +\frac 3 2\ov{\DDc}\c H\right)\mathfrak{F}-\frac 1 2 \Hb \hot ( \ov{H} \c \mathfrak{F})\\
&&- \left(4   H+ \ov{H}+  \Hb \right)\c \nabc \Ffr+\M_2[A, \mathfrak{X}, \mathfrak{B}]\\
&=& \nabc_4 \nabc_3\mathfrak{F}+\left(\frac 3 2 \ov{\tr X} +\frac 1 2  \tr X\right)\nabc_3\mathfrak{F}+ \frac 1 2 \left(\tr \Xb+\ov{\tr \Xb}  \right) \nabc_4 \mathfrak{F}\\
&&+\left( 2(\eta-\etab ) - \left(4   H+ \ov{H}+  \Hb \right)\right) \c  \nabc  \Ffr +\M_2[A, \mathfrak{X}, \mathfrak{B}]\\
&&+\left( \frac 3 4 \tr \Xb  \ov{\tr X}+\frac 1 4\ov{\tr \Xb}   \tr X -2\PF\ov{\PF} +\frac 3 2\ov{\DDc}\c H-3\eta\c\etab  +3i \eta \wedge \etab  \right)\mathfrak{F}
\eeaa
where we used \eqref{commutator-3-4-F} to write $ [\nabc_3, \nabc_4] \Ffr  =  2(\eta-\etab ) \c  \nabc  \Ffr  +\Big(-P+3\ov{P} +2 \PF \ov{\PF}-2\eta\c\etab  +4i \eta \wedge \etab  \Big)\Ffr$, and $\frac 1 2 \Hb   \hot (\ov{H} \c \Ffr)=\left( \eta \c \etab +i \eta \wedge \etab \right)\Ffr$.

We therefore obtain
\bea\label{J-F}
\begin{split}
J^{\Ffr}&=- \frac 1 2(\tr\Xb+\ov{\tr\Xb})\,   \nabc_4\nabc_3\mathfrak{F} +2\eta \c \nabc  \nabc_3  \mathfrak{F}\\
&+J^{\Ffr}_4\nabc_4\mathfrak{F}+J^{\Ffr}_3 \nabc_3 \Ffr  + J^{\Ffr}_a \c  \nabc\mathfrak{F}+J^{\Ffr}_0 \Ffr- \frac 1 2(\tr\Xb+\ov{\tr\Xb})\M_2[A, \mathfrak{X}, \mathfrak{B}]
\end{split}
\eea
where
\bea
J^{\Ffr}_3&=& - \frac 1 2   \left(\tr \Xb +\ov{\tr\Xb}\right)  \left(\frac 3 2 \ov{\tr X} +\frac 1 2  \tr X\right)+ \tilde{J}^{\Ffr}_3 \nonumber \\
J^{\Ffr}_4&=&  - \frac 1 4 \left(\tr \Xb +\ov{\tr\Xb}\right)   \left(\tr \Xb+\ov{\tr \Xb}  \right)\nonumber \\
J^{\Ffr}_a&=&- \frac 1 2(\tr\Xb+\ov{\tr\Xb})\Big(2(\eta-\etab)  - \left(4   H+ \ov{H}+  \Hb \right)\Big) + \tilde{J}^{\Ffr}_a \label{eq:J-Ffr-a}\\
J^{\Ffr}_0&=& - \frac 1 2 \left(\tr \Xb +\ov{\tr\Xb}\right)\left( \frac 3 4 \tr \Xb  \ov{\tr X}+\frac 1 4\ov{\tr \Xb}   \tr X -2\PF\ov{\PF} +\frac 3 2\ov{\DDc}\c H-3\eta\c\etab  +3i \eta \wedge \etab  \right)+\tilde{J}^{\Ffr}_0\nonumber
\eea

\subsubsection{Expressions for $K^{\Bfr}$ and $K^{\Ffr}$}

Observe that
\bea\label{formula-PC_1}
\mathcal{P}_{C}( g F)&=& (\nabc_3 + C )(gF)=(\nabc_3 g) F + g \nabc_3 F + C g F=  g \mathcal{P}_{C}(F)+(\nabc_3 g) F
\eea
We have, for $g=-3\ov{\tr X}$
\beaa
K^{\Bfr}&=&[\mathcal{P}_{C_1}, g \nabc_3] \Bfr= g [\mathcal{P}_{C_1}, \nabc_3] \Bfr+(\nabc_3 g) \nabc_3 \Bfr =(\nabc_3 g) \nabc_3 \Bfr- g  \nabc_3C_1 \Bfr
\eeaa
We therefore obtain
\bea\label{K-B}
K^{\Bfr}&=&K^{\Bfr}_3 \nabc_3 \mathfrak{B}+K^{\Bfr}_0 \mathfrak{B}
\eea
where
\beaa
K^{\Bfr}_3&=& -3 \nabc_3( \ov{\tr X}), \qquad K^{\Bfr}_0= 3\ov{\tr X}\nabc_3C_1
\eeaa
Similarly,
\bea\label{K-F}
K^{\Ffr}&=&K^{\Ffr}_3 \nabc_3 \mathfrak{F}+K^{\Ffr}_0  \mathfrak{F}
\eea
where
\beaa
K^{\Ffr}_3&=& -\nabc_3\left(\frac 3 2 \ov{\tr X} +\frac 1 2  \tr X\right), \qquad K^{\Ffr}_0= \left(\frac 3 2 \ov{\tr X} +\frac 1 2  \tr X\right)  \nabc_3C_2
\eeaa

\subsubsection{Expressions for $L^{\Bfr}$ and $L^{\Ffr}$}
Using \eqref{formula-PC_1}, we obtain
\beaa
L&=&[\mathcal{P}_{C}, g \nabc_4] \Psi=\mathcal{P}_{C}( g \nabc_4 \Psi) -g \nabc_4 (\mathcal{P}_{C} \Psi)= g [\mathcal{P}_{C}, \nabc_4] \Psi+(\nabc_3 g) \nabc_4 \Psi 
\eeaa
Using  \eqref{PPC_1nabc4} with $g=- \left(\frac{3}{2}\tr\Xb +\frac 1 2\ov{\tr\Xb}\right)$, we obtain
\bea\label{L-B}
L^{\Bfr}&=&L^{\Bfr}_4 \nabc_4 \Bfr + L^{\Bfr}_a \c \nabc \Bfr+ L^{\Bfr}_0 \Bfr
\eea
where
\bea
L^{\Bfr}_4&=& -\nabc_3 \left(\frac{3}{2}\tr\Xb +\frac 1 2\ov{\tr\Xb}\right) \nonumber\\
L^{\Bfr}_a&=&- \left( 3\tr\Xb +\ov{\tr\Xb}\right) (\eta-\etab) \label{eq:L-Bfr-a}\\
L^{\Bfr}_0&=& - \left(\frac{3}{2}\tr\Xb +\frac 1 2\ov{\tr\Xb}\right)\left(2\left(\rho+\rhoF^2+\dual\rhoF^2-\eta\c\etab\right)+2 i \left(-\rhod+\eta \wedge \etab\right)-\nabc_4 C_1\right) \nonumber
\eea

Similarly, using \eqref{PPC_2nabc4} with $g=- \frac 1 2 \left(\tr \Xb+\ov{\tr \Xb}  \right)$
we obtain 
\bea\label{L-F}
L^{\Ffr}&=&L^{\Ffr}_4\nabc_4 \mathfrak{F}+ L^{\Ffr}_a \c \nabc \Ffr+ L^{\Ffr}_0 \Ffr
\eea
where
\bea
L^\Ffr_4&=&- \frac 1 2 \nabc_3\left(\tr \Xb+\ov{\tr \Xb}  \right) \nonumber\\
L^{\Ffr}_a&=&-  \left(\tr \Xb+\ov{\tr \Xb}  \right) (\eta-\etab)  \label{eq:L-Ffr-a}\\
L^{\Ffr}_0&=& - \frac 1 2 \left(\tr \Xb+\ov{\tr \Xb}  \right)\left(2\left(\rho+\rhoF^2+\dual\rhoF^2-\eta\c\etab\right)+4 i \left(-\rhod+\eta \wedge \etab\right)-\nabc_4 C_2\right) \nonumber
\eea

\subsubsection{Expressions for $M^{\Bfr}$ and $M^{\Ffr}$}
Observe that
\beaa
\mathcal{P}_{C_1}( F \c U )&=& (\nabc_3 + C_1 )(F \c U )=\nabc_3 F \c U + F \c \nabc_3 U  + C_1  F \c U =  F \c \mathcal{P}_{C_1}(U)+(\nabc_3 F) \c  U
\eeaa
We have, for $F=\left( 6H+ \ov{H}+ 3  \ov{ \Hb}  \right)$, and using Lemma \ref{commutator-RW}
\beaa
M^{\Bfr}&=&[\PP_{C_1},F\c  \nabc ]\Bfr=F \c [\mathcal{P}_{C_1}, \nabc]\Bfr+(\nabc_3 F) \c  \nabc\Bfr\\
&=& -\frac  1 2   \trchb\, F \c  \nabc \Bfr-\frac 1 2 \atrchb\,F \c   \dual \nabc  \Bfr+F \c  \eta \nabc_3 \Bfr+F \c  ( \mathcal{V}^s_{[3,a]} - \nabc C_1 ) \Bfr\\
&&+(\nabc_3 F) \c  \nabc\Bfr
\eeaa
We therefore obtain
\bea\label{M-B}
M^{\Bfr}&=& M^{\Bfr}_3 \nabc_3 \Bfr+M_a^{\Bfr} \c  \nabc\Bfr+M^{\Bfr}_0 \Bfr
\eea
where
\bea
M^{\Bfr}_3&=& \eta \c \left( 6H+ \ov{H}+ 3  \ov{ \Hb}  \right) \nonumber\\
M_a^{\Bfr}&=& \nabc_3 \left( 6H+ \ov{H}+ 3  \ov{ \Hb}  \right)-\frac  1 2   \trchb\, \left( 6H+ \ov{H}+ 3  \ov{ \Hb}  \right)+\frac 1 2 \atrchb\, \dual \left( 6H+ \ov{H}+ 3  \ov{ \Hb}  \right)  \label{eq:M-a-Bfr}\\
M^{\Bfr}_0&=& \left( 6H+ \ov{H}+ 3  \ov{ \Hb}  \right) \c  ( \mathcal{V}^s_{[3,a]} - \nabc C_1 )\nonumber
\eea

Similarly, we obtain
\bea\label{M-F}
M^{\Ffr}&=& M^{\Bfr}_3 \nabc_3 \Ffr+M_a^{\Ffr} \c  \nabc\Ffr+M^{\Ffr}_0 \Ffr
\eea
where
\bea
M^{\Ffr}_3&=& \eta \c \left(4   H+ \ov{H}+  \Hb \right)\nonumber \\
M_a^{\Ffr}&=& \nabc_3 \left(4   H+ \ov{H}+  \Hb \right)-\frac  1 2   \trchb\, \left(4   H+ \ov{H}+  \Hb \right)+\frac 1 2 \atrchb\, \dual \left(4   H+ \ov{H}+  \Hb \right)  \label{eq:M-a-Ffr} \\
M^{\Ffr}_0&=& \left(4   H+ \ov{H}+  \Hb \right) \c  ( \mathcal{V}^s_{[3,a]} - \nabc C_2 )\nonumber
\eea

\subsubsection{Expressions for $N^{\Bfr}$ and $N^{\Ffr}$}
Using \eqref{formula-PC_1} we have
\beaa
N&=& [\mathcal{P}_{C}, g] \Psi=( \nabc_3g )\Psi 
\eeaa
We therefore obtain
\bea\label{N-B}
N^{\Bfr}&=& N^{\Bfr}_0 \mathfrak{B}
\eea
where
\beaa
N^\Bfr_0&=& \nabc_3 \left(-\frac{9}{2}\tr\Xb \ov{\tr X} -4 \PF \ov{\PF}+9 \ov{ \Hb} \c  H\right) 
\eeaa
and
\bea\label{N-F}
N^{\Ffr}&=& N^{\Ffr}_0 \mathfrak{F}
\eea
where
\beaa
N^\Ffr_0&=& \nabc_3 \left(- \frac 3 4 \tr \Xb  \ov{\tr X}- \frac 1 4\ov{\tr \Xb}   \tr X+3\ov{P} -P +4\PF\ov{\PF} - \frac 3 2\ov{\DDc}\c H+ \eta \c \etab +i \eta \wedge \etab  \right) 
\eeaa

\subsubsection{The sum}

From \eqref{comm-sum-1} and \eqref{comm-sum-2}, we obtain
\beaa
\, [\mathcal{P}_{C_1}, \TT_1](\mathfrak{B})&=& \eqref{I-B}+\eqref{J-B}+\eqref{K-B}+\eqref{L-B}+\eqref{M-B}+\eqref{N-B}\\
&=&2\etab \c \nabc\nabc_3 \Bfr- \frac 1 2(\tr\Xb+\ov{\tr\Xb})\,   \nabc_4\nabc_3\mathfrak{B}\\
&&+ \left(  I^{\Bfr}_4+J^{\Bfr}_4+L^{\Bfr}_4  \right) \nabc_4\Bfr +\left( I^{\Bfr}_{3} +J^{\Bfr}_3 +K^{\Bfr}_3+M^{\Bfr}_3 \right) \nabc_3 \Bfr \\
&&+\left( I^{\Bfr}_a + J^{\Bfr}_a + L^{\Bfr}_a+M_a^{\Bfr}\right) \c \nabc \Bfr+\left( I^{\Bfr}_0+J^{\Bfr}_0+K^{\Bfr}_0+ L^{\Bfr}_0 +M^{\Bfr}_0+ N^{\Bfr}_0\right) \Bfr \\
&&- \frac 1 2(\tr\Xb+\ov{\tr\Xb})\M_1[\mathfrak{F}, \mathfrak{X}]
\eeaa
and
\beaa
 [\mathcal{P}_{C_2}, \TT_2](\mathfrak{F})&=& \eqref{I-F}+\eqref{J-F}+\eqref{K-F}+\eqref{L-F}+\eqref{M-F}+\eqref{N-F}\\
 &=&2\etab \c \nabc\nabc_3 \Ffr- \frac 1 2(\tr\Xb+\ov{\tr\Xb})\,   \nabc_4\nabc_3\mathfrak{F}\\
&&+ \left(  I^{\Ffr}_4+J^{\Ffr}_4+L^{\Ffr}_4  \right) \nabc_4\Ffr +\left( I^{\Ffr}_{3} +J^{\Ffr}_3 +K^{\Ffr}_3+M^{\Ffr}_3 \right) \nabc_3 \Ffr \\
&&+\left( I^{\Ffr}_a + J^{\Ffr}_a + L^{\Ffr}_a+M_a^{\Ffr}\right) \c \nabc \Ffr+\left( I^{\Ffr}_0+J^{\Ffr}_0+K^{\Ffr}_0+ L^{\Ffr}_0 +M^{\Ffr}_0+ N^{\Ffr}_0\right) \Ffr \\
&&- \frac 1 2(\tr\Xb+\ov{\tr\Xb})\M_2[A, \mathfrak{X},  \mathfrak{B}]
\eeaa
Using \eqref{definition-P-Q} and \eqref{definition-P-Q2} to write 
\beaa
\nabc_3 \mathfrak{B}&=&\mathfrak{P}- C_1 \ \mathfrak{B}, \qquad \nabc_3 \mathfrak{F}=\mathfrak{Q}- C_2 \ \mathfrak{F}
\eeaa
and therefore
\beaa
\nabc_4\nabc_3\mathfrak{B}&=& \nabc_4\left(\mathfrak{P}- C_1 \ \mathfrak{B} \right)= \nabc_4\mathfrak{P} -  C_1 \nabc_4 \mathfrak{B} -(\nabc_4 C_1) \mathfrak{B}\\
\nabc \nabc_3\mathfrak{B}&=& \nabc \left(\mathfrak{P}- C_1 \ \mathfrak{B} \right)= \nabc \mathfrak{P} -  C_1 \nabc \mathfrak{B} -(\nabc C_1) \mathfrak{B}\\
 \nabc_4\nabc_3\mathfrak{F}&=& \nabc_4\left(\mathfrak{Q}- C_2 \ \mathfrak{F} \right)= \nabc_4\mathfrak{Q} -  C_2 \nabc_4 \mathfrak{F} -(\nabc_4 C_2) \mathfrak{F}\\
 \nabc \nabc_3\mathfrak{F}&=& \nabc \left(\mathfrak{Q}- C_2 \ \mathfrak{F} \right)= \nabc \mathfrak{Q} -  C_2 \nabc \mathfrak{F} -(\nabc C_2) \mathfrak{F}
\eeaa
Hence we obtain
\beaa
\, [\mathcal{P}_{C_1}, \TT_1](\mathfrak{B})&=&2\etab \c \nabc \mathfrak{P}  - \frac 1 2(\tr\Xb+\ov{\tr\Xb})\,   \nabc_4\mathfrak{P}+ \left(  I^{\Bfr}_4+J^{\Bfr}_4+L^{\Bfr}_4 +\frac 1 2(\tr\Xb+\ov{\tr\Xb})C_1 \right) \nabc_4\Bfr\\
&& +\hat{V}_1 \mathfrak{P} +Z^{\Bfr}_{a} \c \nabc \Bfr+\tilde{Z}^{\Bfr}_0 \Bfr - \frac 1 2(\tr\Xb+\ov{\tr\Xb})\M_1[\mathfrak{F}, \mathfrak{X}]\\
\, [\mathcal{P}_{C_1}, \TT_1](\mathfrak{B})&=&2\etab \c \nabc \mathfrak{Q}  - \frac 1 2(\tr\Xb+\ov{\tr\Xb})\,   \nabc_4\mathfrak{Q}+ \left(  I^{\Ffr}_4+J^{\Ffr}_4+L^{\Ffr}_4 +\frac 1 2(\tr\Xb+\ov{\tr\Xb})C_2 \right) \nabc_4\Ffr\\
&& +\hat{V}_2 \mathfrak{P} +Z^{\Ffr}_{a} \c \nabc \Ffr+\tilde{Z}^{\Ffr}_0 \Ffr - \frac 1 2(\tr\Xb+\ov{\tr\Xb})\M_2[A, \mathfrak{X},  \mathfrak{B}]
\eeaa
where
\bea
\hat{V}_1&=& I^{\Bfr}_{3} +J^{\Bfr}_3 +K^{\Bfr}_3+M^{\Bfr}_3 \\
Z^{\Bfr}_{a}&=& I^{\Bfr}_a + J^{\Bfr}_a + L^{\Bfr}_a+M_a^{\Bfr}-2\etab \c C_1 \label{eq:Z-Bfr-a} \\
\tilde{Z}^{\Bfr}_0&=& I^{\Bfr}_0+J^{\Bfr}_0+K^{\Bfr}_0+ L^{\Bfr}_0 +M^{\Bfr}_0+ N^{\Bfr}_0+\frac 1 2(\tr\Xb+\ov{\tr\Xb})\nabc_4 C_1 \nonumber\\
&&-C_1\left( I^{\Bfr}_{3} +J^{\Bfr}_3 +K^{\Bfr}_3+M^{\Bfr}_3 \right)-2\etab \c \nabc C_1
\eea
and 
\bea
\hat{V}_2&=& I^{\Ffr}_{3} +J^{\Ffr}_3 +K^{\Ffr}_3+M^{\Ffr}_3 \\
Z^{\Ffr}_{a}&=& I^{\Ffr}_a + J^{\Ffr}_a + L^{\Ffr}_a+M_a^{\Ffr}-2\etab \c C_2  \label{eq:Z-Ffr-a}\\
\tilde{Z}^{\Ffr}_0&=& I^{\Ffr}_0+J^{\Ffr}_0+K^{\Ffr}_0+ L^{\Ffr}_0 +M^{\Ffr}_0+ N^{\Ffr}_0+\frac 1 2(\tr\Xb+\ov{\tr\Xb})\nabc_4 C_2 \nonumber\\
&&-C_2\left( I^{\Ffr}_{3} +J^{\Ffr}_3 +K^{\Ffr}_3+M^{\Ffr}_3 \right)-2\etab \c \nabc C_2
\eea

Observe that the coefficients of $\nabc_4 \mathfrak{B}$ and $\nabc_4 \mathfrak{F}$
are given by
\beaa
&& I^\Bfr_4+J^\Bfr_4+L^\Bfr_4+\frac 1 2\left(\tr \Xb +\ov{\tr\Xb}\right)C_1 \\
&=&  \nabc_3C_1- \frac 1 2 \left(\tr \Xb +\ov{\tr\Xb}\right)\left(\frac{3}{2}\tr\Xb +\frac 1 2\ov{\tr\Xb}\right)-\nabc_3 \left(\frac{3}{2}\tr\Xb +\frac 1 2\ov{\tr\Xb}\right)+\frac 1 2\left(\tr \Xb +\ov{\tr\Xb}\right)C_1 \\
&=&  \nabc_3C_1+\frac {1}{ 2}\left(\tr \Xb +\ov{\tr\Xb}\right) C_1-  \tr\Xb\ov{\tr\Xb}
\eeaa
and
\beaa
&& I^\Ffr_4+J^\Ffr_4+L^\Ffr_4+\frac 1 2\left(\tr \Xb +\ov{\tr\Xb}\right)C_2 \\
&=& \nabc_3 C_2-   \frac 1 4 \left(\tr \Xb +\ov{\tr\Xb}\right)   \left(\tr \Xb +\ov{\tr\Xb}\right)  -\frac 1 2 \nabc_3 \left(\tr \Xb +\ov{\tr\Xb}\right)  +\frac 1 2\left(\tr \Xb +\ov{\tr\Xb}\right)C_2\\
&=& \nabc_3 C_2+\frac {1}{ 2}\left(\tr \Xb +\ov{\tr\Xb}\right)C_2-   \frac 1 2  \tr \Xb \ov{\tr\Xb}
\eeaa
which give conditions \eqref{transport-nabc-3-C1} and \eqref{transport-nabc-3-C2} for the vanishing of those coefficients.

\subsubsection{The lower order terms }

Defining 
\beaa
L_{\Pfr}[\Bfr, \Ffr]&:=&-Z^{\Bfr}_{a} \c \nabc \Bfr-\tilde{Z}^{\Bfr}_0 \Bfr, \\
L_{\Qfr}[\Bfr, \Ffr]&:=&-Z^{\Ffr}_{a} \c \nabc \Ffr-\tilde{Z}^{\Ffr}_0 \Ffr
\eeaa
to complete the proof of the Proposition, we need to compute the terms $\tilde{Z}^{\Bfr}_0$ and $\tilde{Z}^{\Ffr}_0$.

Observe that, according to \eqref{definition-C1-C2} and \eqref{definition-C1-C22}, we can write
\beaa
C_1&=&2\trchb + O\left(\frac{|a|}{r^2} \right) \qquad C_2=\trchb  + O\left(\frac{|a|}{r^2} \right)
 \eeaa
 This gives
 \beaa
 \nabc_3 C_1&=& -\trchb^2+ O\left(\frac{|a|}{r^3} \right), \qquad  \nabc_3 C_2= -\frac 1 2 \trchb^2+ O\left(\frac{|a|}{r^3} \right), \\
  \nabc_4 C_1&=& - \trch\trchb +4\rho+ O\left(\frac{|a|}{r^3} \right), \qquad  \nabc_4 C_2= -\frac1 2  \trch\trchb +2\rho+ O\left(\frac{|a|}{r^3} \right)
 \eeaa
We compute
\bea
I^{\Bfr}_{3}&=& -2\rho -2\rhoF^2 -2\dual\rhoF^2 -2 \eta \c (\eta-2\etab)+ i \left(2\rhod-2\eta \wedge \etab\right)+\nabc_4 C_1 \label{I-Bfr-3}\\
&=&- \trch\trchb +2\rho -2\rhoF^2+ O\left(\frac{|a|}{r^3} \right) \nonumber\\
J^\Bfr_3&=& - \frac 3 2 \ov{\tr X}\left(\tr \Xb +\ov{\tr\Xb}\right)+\frac 1 2 (\ov{\DDc} \c H)+ \frac 1 2 (H \c \ov{H})= - 3  \trch\trchb + O\left(\frac{|a|}{r^3} \right)\label{J-Bfr-3}\\
K^\Bfr_3&=& -3 \nabc_3( \ov{\tr X})=\frac 3 2 \trch\trchb-6\rho+ O\left(\frac{|a|}{r^3} \right)\label{K-Bfr-3}\\
M^{\Bfr}_3&=& \eta \c \left( 6H+ \ov{H}+ 3  \ov{ \Hb}  \right)=O\left(\frac{|a|}{r^4} \right) \label{M-Bfr-3}
\eea
which gives
\beaa
I^\Bfr_3+J^\Bfr_3+K^\Bfr_3+M^{\Bfr}_3&=&  - \frac 5 2 \trch \trchb -4\rho -2 \rhoF^2+ O\left(\frac{|a|}{r^3} \right)
\eeaa
Similarly, 
\bea
I^{\Ffr}_3&=& -2\rho -2\rhoF^2 -2\dual\rhoF^2 -2 \eta \c (\eta-2\etab)+ i \left(4\rhod-4\eta \wedge \etab\right)+\nabc_4 C_2 \label{I-Ffr-3}\\
&=&-\frac1 2  \trch\trchb -2\rhoF^2 + O\left(\frac{|a|}{r^3} \right)\nonumber\\
J^{\Ffr}_3&=&- \frac 1 2   \left(\tr \Xb +\ov{\tr\Xb}\right)  \left(\frac 3 2 \ov{\tr X} +\frac 1 2  \tr X\right)+\frac 1 2(\DDc \c \ov{H})+ \frac 1 2 (H \c \ov{H}) \label{J-Ffr-3}\\
&=&-2\trch\trchb  + O\left(\frac{|a|}{r^3} \right)\nonumber\\
K^{\Ffr}_3&=& -\nabc_3\left(\frac 3 2 \ov{\tr X} +\frac 1 2  \tr X\right)= \trch\trchb-4\rho + O\left(\frac{|a|}{r^3} \right) \label{K-Ffr-3}\\
M^{\Ffr}_3&=& \eta \c \left(4   H+ \ov{H}+  \Hb \right)=O\left(\frac{|a|}{r^4} \right) \label{M-Ffr-3}
\eea
which gives
\beaa
I^\Ffr_3+J^\Ffr_3+K^\Ffr_3+M^{\Ffr}_3&=&  - \frac 3 2 \trch \trchb -4\rho -2 \rhoF^2+ O\left(\frac{|a|}{r^3} \right)
\eeaa

We also compute
\beaa
I^{\Bfr}_0&=& \nabc_3\left[-2\left(\rho+\rhoF^2+\dual\rhoF^2-\eta\c\etab\right)+2 i \left(\rhod-\eta \wedge \etab\right)+\nabc_4 C_1\right]- 2 (\eta-\etab) \c  \mathcal{V}^{s=1}_{[3,a]}\\
&=& -2( -\frac{3}{2}\trchb \rho -\trchb \rhoF^2)+4\trchb \rhoF^2+\nabc_3 \left( - \trch\trchb +4\rho \right)+ O\left(\frac{|a|}{r^4} \right) \\
&=&\trch\trchb^2-5\trchb \rho +2\trchb \rhoF^2  + O\left(\frac{|a|}{r^4} \right) \\
J^{\Bfr}_0&=& - \frac 1 2 \left(\tr \Xb +\ov{\tr\Xb}\right)\left(\frac{9}{2}\tr\Xb \ov{\tr X} +2\ov{P}+6 \PF \ov{\PF}-10 \ov{ \Hb} \c  H\right)+ O\left(\frac{|a|}{r^4} \right) \\
&=&-\frac 9 2 \trch\trchb^2-2\trchb \rho -6\trchb \rhoF^2 +  O\left(\frac{|a|}{r^4} \right)\\
K^{\Bfr}_0&=&3\ov{\tr X}\nabc_3C_1=-3\trch\trchb^2+ O\left(\frac{|a|}{r^4} \right)\\
 L^{\Bfr}_0 &=&- \left(\frac{3}{2}\tr\Xb +\frac 1 2\ov{\tr\Xb}\right)\left(2\left(\rho+\rhoF^2+\dual\rhoF^2-\eta\c\etab\right)+2 i \left(-\rhod+\eta \wedge \etab\right)-\nabc_4 C_1\right)\\
  &=&- 2 \trch\trchb^2 +4\trchb \rho-4\trchb \rhoF^2 +  O\left(\frac{|a|}{r^4} \right) \\
 M^{\Bfr}_0&=&  \left( 6H+ \ov{H}+ 3  \ov{ \Hb}  \right) \c  ( \mathcal{V}^s_{[3,a]} - \nabc C_1 )= O\left(\frac{|a|}{r^4} \right) \\
  N^{\Bfr}_0&=&  \nabc_3 \left(-\frac{9}{2}\tr\Xb \ov{\tr X} -4 \PF \ov{\PF}+9 \ov{ \Hb} \c  H\right)\\
  &=&\frac 9 2  \trch\trchb^2 -9\trchb \rho+8\trchb \rhoF^2 + O\left(\frac{|a|}{r^4} \right)
\eeaa
which gives
\beaa
I^{\Bfr}_0+J^{\Bfr}_0+K^{\Bfr}_0+ L^{\Bfr}_0 +M^{\Bfr}_0+ N^{\Bfr}_0&=& -4  \trch\trchb^2 -12\trchb \rho+ O\left(\frac{|a|}{r^4} \right)
\eeaa
This finally implies
\beaa
\tilde{Z}^{\Bfr}_0&=& I^{\Bfr}_0+J^{\Bfr}_0+K^{\Bfr}_0+ L^{\Bfr}_0 +M^{\Bfr}_0+ N^{\Bfr}_0+\frac 1 2(\tr\Xb+\ov{\tr\Xb})\nabc_4 C_1\\
&&-C_1\left( I^{\Bfr}_{3} +J^{\Bfr}_3 +K^{\Bfr}_3+M^{\Bfr}_3 \right)-2\etab \c \nabc C_1
\\
&=& -4  \trch\trchb^2 -12\trchb \rho+\trchb ( - \trch\trchb +4\rho)-2\trchb \left(  - \frac 5 2 \trch \trchb -4\rho -2 \rhoF^2 \right)+ O\left(\frac{|a|}{r^4} \right)\\
&=& 4\trchb \rhoF^2+ O\left(\frac{|a|}{r^4} \right)
\eeaa

We compute
\beaa
 I^{\Ffr}_0&=& \nabc_3\left[-2\left(\rho+\rhoF^2+\dual\rhoF^2-\eta\c\etab\right)+4 i \left(\rhod-\eta \wedge \etab\right)+\nabc_4 C_2\right]- 2 (\eta-\etab) \c  \mathcal{V}^{s=1}_{[3,a]} \\
&=& -2( -\frac{3}{2}\trchb \rho -\trchb \rhoF^2)+4\trchb \rhoF^2+\nabc_3 \left( -\frac 1 2  \trch\trchb +2\rho \right)+ O\left(\frac{|a|}{r^4} \right) \\
&=&\frac 1 2 \trch\trchb^2-\trchb \rho +4\trchb \rhoF^2  + O\left(\frac{|a|}{r^4} \right) \\
 J^{\Ffr}_0&=& - \frac 1 2 \left(\tr \Xb +\ov{\tr\Xb}\right)\left( \frac 3 4 \tr \Xb  \ov{\tr X}+\frac 1 4\ov{\tr \Xb}   \tr X -2\PF\ov{\PF} +\frac 3 2\ov{\DDc}\c H-3\eta\c\etab  +3i \eta \wedge \etab  \right) + O\left(\frac{|a|}{r^4} \right) \\
 &=& - \trch\trchb^2 +2\trchb \rhoF^2  + O\left(\frac{|a|}{r^4} \right)\\
 K^{\Ffr}_0&=& \left(\frac 3 2 \ov{\tr X} +\frac 1 2  \tr X\right)  \nabc_3C_2= - \trch\trchb^2 + O\left(\frac{|a|}{r^4} \right)\\
  L^{\Ffr}_0 &=& - \frac 1 2 \left(\tr \Xb+\ov{\tr \Xb}  \right)\left(2\left(\rho+\rhoF^2+\dual\rhoF^2-\eta\c\etab\right)+4 i \left(-\rhod+\eta \wedge \etab\right)-\nabc_4 C_2\right) \\
  &=& - \frac1 2  \trch\trchb^2 -2\trchb \rhoF^2 + O\left(\frac{|a|}{r^4} \right) \\
  M^{\Ffr}_0&=&\left(4   H+ \ov{H}+  \Hb \right) \c  ( \mathcal{V}^s_{[3,a]} - \nabc C_2 )=O\left(\frac{|a|}{r^4} \right)\\
   N^{\Ffr}_0&=& \nabc_3 \left(- \frac 3 4 \tr \Xb  \ov{\tr X}- \frac 1 4\ov{\tr \Xb}   \tr X+3\ov{P} -P +4\PF\ov{\PF} - \frac 3 2\ov{\DDc}\c H+ \eta \c \etab +i \eta \wedge \etab  \right) \\
   &=&\trch\trchb-5\trchb\rho -10\trchb\rhoF^2  +O\left(\frac{|a|}{r^4} \right)
\eeaa
which gives
\beaa
 I^{\Ffr}_0+J^{\Ffr}_0+K^{\Ffr}_0+ L^{\Ffr}_0 +M^{\Ffr}_0+ N^{\Ffr}_0&=&- \trch\trchb-6\trchb\rho -6\trchb\rhoF^2  +O\left(\frac{|a|}{r^4} \right)
\eeaa
This finally implies
\beaa
\tilde{Z}^{\Ffr}_0&=& I^{\Ffr}_0+J^{\Ffr}_0+K^{\Ffr}_0+ L^{\Ffr}_0 +M^{\Ffr}_0+ N^{\Ffr}_0+\frac 1 2(\tr\Xb+\ov{\tr\Xb})\nabc_4 C_2\\
&&-C_2\left( I^{\Ffr}_{3} +J^{\Ffr}_3 +K^{\Ffr}_3+M^{\Ffr}_3 \right)-2\etab \c \nabc C_2\\
&=&- \trch\trchb-6\trchb\rho -6\trchb\rhoF^2  +\trchb (-\frac1 2  \trch\trchb +2\rho)-\trchb \left(  - \frac 3 2 \trch \trchb -4\rho -2 \rhoF^2 \right)+O\left(\frac{|a|}{r^4} \right)\\
&=& - 4 \trchb \rhoF^2+O\left(\frac{|a|}{r^4} \right)
\eeaa
which concludes the proof of Proposition \ref{proposition-commutator}.

\subsection{Proof of Proposition \ref{prop:rescaling-f}}\label{section-proof-rescaling}

Let $f$ be given by 
 \beaa
 f&=& (q)^n (\ov{q})^{m}.
 \eeaa
Recall, see Proposition 8.9 in \cite{GKS},
 \beaa
 \nab_3(f)&=& \left(\frac n 2 \ov{\tr \Xb} +\frac m 2   \tr \Xb \right) f, \\ 
  \nab_4(f) &=& \left(\frac n 2 \tr X +\frac m 2   \ov{\tr X} \right) f,\\
  2  \nab f    &=&  \left(m H+n\ov{H}+n \Hb+ m  \ov{\Hb}  \right)f.
    \eeaa
and for $\Psi \in \sk_k(\CCC)$:
 \bea\label{square-f-Q-1}
\squared_k ( f \Psi )&=& \square(f) \Psi+f \squared_k\Psi- \nab_3 f \nab_4\Psi- \nab_4f \nab_3 \Psi +2\nab f \c \nab \Psi.
\eea
We then obtain for $\pf=f_1 \Pfr$, using  \eqref{intermediate-square-mathfrak-P}:
\beaa
\squared_1\pf&=& f_1 \Big[ \frac 5 2 \ov{\tr X} \nab_3\Pfr+ \left(2\tr\Xb +\frac 1 2 \ov{\tr\Xb}\right)\nab_4\Pfr-\left( 5H+\Hb+ 4  \ov{ \Hb}  \right)\c  \nab  \Pfr+\tilde{V}_1\Pfr\\
&&\mathcal{P}_{C_1}\Big(\M_1[\mathfrak{F}, \mathfrak{X}]\Big) + \frac 1 2 \left(\tr \Xb +\ov{\tr\Xb}\right)\M_1[\mathfrak{F}, \mathfrak{X}]+L_{\Pfr}[\Bfr, \Ffr] \Big]+ \square(f_1) \Pfr\\
&&-  \left(\frac n 2 \ov{\tr \Xb} +\frac m 2   \tr \Xb \right) f_1 \nab_4\Pfr- \left(\frac n 2 \tr X +\frac m 2   \ov{\tr X} \right) f_1 \nab_3 \Pfr + \left(m H+n\ov{H}+n \Hb+ m  \ov{\Hb}  \right)f_1 \c \nab \Pfr
\eeaa
which gives
\beaa
\squared_1\pf&=&\left((\frac 1 2 -\frac n 2) \ov{\tr \Xb}+(2 -\frac m 2  ) \tr \Xb \right) f_1 \nab_4\Pfr+ \left(-\frac n 2 \tr X +(\frac 5 2-\frac m 2  ) \ov{\tr X} \right) f_1 \nab_3 \Pfr \\
&&+ \left((m-5) H+n\ov{H}+(n-1) \Hb+ (m-4)  \ov{\Hb}  \right)f_1 \c \nab \Pfr+\left( \tilde{V}_1+ f_1^{-1}\square(f_1)\right) \pf \\
&&+ f_1 \Big[\mathcal{P}_{C_1}\Big(\M_1[\mathfrak{F}, \mathfrak{X}]\Big) + \frac 1 2 \left(\tr \Xb +\ov{\tr\Xb}\right)\M_1[\mathfrak{F}, \mathfrak{X}]+L_{\Pfr}[\Bfr, \Ffr] \Big]
\eeaa
 Observe that the real part of the coefficients of all the first derivatives are multiple of $m+n-5$. To cancel their real part we then take $m=5-n$, which implies $f_1= (q)^n (\ov{q})^{5-n}$, and gives
 \beaa
\squared_1\pf&=&i f_1 \Big[ (1 - n ) \atrchb   \nab_4\Pfr+   n \atrch   \nab_3 \Pfr + \left(-2n \dual \eta +2(n-1) \dual \etab  \right) \c \nab \Pfr \Big]\\
&&+\left( \tilde{V}_1+ f_1^{-1}\square(f_1)\right) \pf + f_1 \Big[\mathcal{P}_{C_1}\Big(\M_1[\mathfrak{F}, \mathfrak{X}]\Big) + \frac 1 2 \left(\tr \Xb +\ov{\tr\Xb}\right)\M_1[\mathfrak{F}, \mathfrak{X}]+L_{\Pfr}[\Bfr, \Ffr] \Big]
\eeaa

Similarly, for $\qf^\F=f_2 \Qfr$, using \eqref{intermediate=square2Q}, we obtain
\beaa
\squared_2\qf^\F&=& f_2 \Big[ \frac 3 2 \ov{\tr X} \nab_3\Qfr+ \left(\frac 1 2 \tr \Xb+\ov{\tr \Xb}  \right)\nab_4\Qfr - \left(3   H+ 2 \Hb +\ov{\Hb}\right)\c \nab \Qfr+\tilde{V}_2  \Qfr \\
&&+\mathcal{P}_{C_2}\Big(\M_2[A, \mathfrak{X}, \mathfrak{B}] \Big)   + \frac 1 2 \left(\tr \Xb +\ov{\tr\Xb}\right) \M_2[A, \mathfrak{X}, \mathfrak{B}] +L_{\Qfr}[\Bfr, \Ffr] \Big]+ \square(f_2) \Qfr\\
&&-  \left(\frac n 2 \ov{\tr \Xb} +\frac m 2   \tr \Xb \right) f_2 \nab_4\Qfr- \left(\frac n 2 \tr X +\frac m 2   \ov{\tr X} \right) f_2 \nab_3 \Qfr + \left(m H+n\ov{H}+n \Hb+ m  \ov{\Hb}  \right)f_2 \c \nab \Qfr
\eeaa
which gives
\beaa
\squared_2\qf^\F&=&\left((1  -\frac n 2) \ov{\tr \Xb}+(\frac 1 2 -\frac m 2  ) \tr \Xb \right) f_2 \nab_4\Qfr+ \left(-\frac n 2 \tr X +(\frac 3 2-\frac m 2  ) \ov{\tr X} \right) f_2 \nab_3 \Qfr \\
&&+ \left((m-3) H+n\ov{H}+(n-2) \Hb+ (m-1)  \ov{\Hb}  \right)f_2 \c \nab \Qfr+\left( \tilde{V}_2+ f_2^{-1}\square(f_2)\right) \qf^\F \\
&&+ f_2 \Big[\mathcal{P}_{C_2}\Big(\M_2[A, \mathfrak{X}, \mathfrak{B}] \Big)   + \frac 1 2 \left(\tr \Xb +\ov{\tr\Xb}\right) \M_2[A, \mathfrak{X}, \mathfrak{B}] +L_{\Qfr}[\Bfr, \Ffr]  \Big]
\eeaa
Observe that the real part of the coefficients of all the first derivatives are multiple of $m+n-3$. To cancel their real part we then take $m=3-n$, which implies $f_2= (q)^n (\ov{q})^{3-n}$, and gives
 \beaa
\squared_2\qf^\F&=&i f_2 \Big[ (2 - n ) \atrchb   \nab_4\Qfr+   n \atrch   \nab_3 \Qfr + \left(-2n \dual \eta +2(n-2) \dual \etab  \right) \c \nab \Qfr \Big]\\
&&+\left( \tilde{V}_2+ f_2^{-1}\square(f_2)\right) \qf^\F + f_2 \Big[\mathcal{P}_{C_2}\Big(\M_2[A, \mathfrak{X}, \mathfrak{B}] \Big)   + \frac 1 2 \left(\tr \Xb +\ov{\tr\Xb}\right) \M_2[A, \mathfrak{X}, \mathfrak{B}] +L_{\Qfr}[\Bfr, \Ffr]  \Big]
\eeaa

 Using the values in Kerr-Newman:
 \beaa
 \atrchb e_4&=& \frac{2a\Delta\cos\th}{|q|^4}\nab_{r}+ \frac{2a\cos\th(r^2+ a^2)}{|q|^4}\nab_t + \frac{2a^2\cos\th}{|q|^4} \nab_{\vphi} \\
 \atrch e_3&=&- \frac{2a\Delta\cos\th}{|q|^4}\nab_{r} +\frac{2a\cos\th(r^2+a^2)}{|q|^4}\nab_t+ \frac{2a^2\cos\th}{|q|^4} \nab_{\vphi}\\
\dual \eta_1&=& \frac{a\sin\th r}{|q|^3}, \qquad \dual \eta_2= \frac{a^2\sin\th \cos\th}{|q|^3},\\
\dual \etab_1&=& -\frac{a\sin\th(r)}{|q|^3}, \qquad \dual \etab_2 =\frac{a^2\sin\th \cos\th}{|q|^3}.
\eeaa
we respectively obtain
\beaa
&&(1 - n ) \atrchb   \nab_4+   n \atrch   \nab_3  + \left(-2n \dual \eta +2(n-1) \dual \etab  \right) \c \nab \\
&=&(1 - n )\Big(  \frac{2a\Delta\cos\th}{|q|^4}\nab_{r}+ \frac{2a\cos\th(r^2+ a^2)}{|q|^4}\nab_t + \frac{2a^2\cos\th}{|q|^4} \nab_{\vphi}\Big)\\
&&+   n \Big(- \frac{2a\Delta\cos\th}{|q|^4}\nab_{r} +\frac{2a\cos\th(r^2+a^2)}{|q|^4}\nab_t+ \frac{2a^2\cos\th}{|q|^4} \nab_{\vphi} \Big)  \\
&&+ \left(-2n \dual \eta_1 +2(n-1) \dual \etab_1  \right)  \nab_1+ \left(-2n \dual \eta_2 +2(n-1) \dual \etab_2  \right)  \nab_2\\
&=&(1 - 2n ) \frac{2a\Delta\cos\th}{|q|^4}\nab_{r}+ \frac{2a\cos\th(r^2+ a^2)}{|q|^4}\nab_t + \frac{2a^2\cos\th}{|q|^4} \nab_{\vphi}\\
&&+ 2(1-2n)  \frac{a\sin\th r}{|q|^3}   \nab_1 -2 \frac{a^2\sin\th \cos\th}{|q|^3}   \nab_2
\eeaa
and 
\beaa
&&(2 - n ) \atrchb   \nab_4+   n \atrch   \nab_3  + \left(-2n \dual \eta +2(n-2) \dual \etab  \right) \c \nab \\
&=&(2 - 2n ) \frac{2a\Delta\cos\th}{|q|^4}\nab_{r}+2 \frac{2a\cos\th(r^2+ a^2)}{|q|^4}\nab_t +2 \frac{2a^2\cos\th}{|q|^4} \nab_{\vphi}\\
&&+ 2(2-2n)  \frac{a\sin\th r}{|q|^3}   \nab_1 -4 \frac{a^2\sin\th \cos\th}{|q|^3}   \nab_2
\eeaa
 Writing that $\nab_1=\frac{1}{|q|}\nab_\th$, and $\nab_2=\frac{a\sin\th}{|q|}\nab_t+\frac{1}{|q|\sin\th}\nab_\vphi $, we finally respectively have
 \beaa
&&(1 - n ) \atrchb   \nab_4+   n \atrch   \nab_3  + \left(-2n \dual \eta +2(n-1) \dual \etab  \right) \c \nab \\
&=& \frac{2a\cos\th}{|q|^2}\nab_t +(1 - 2n )\Big(  \frac{2a\Delta\cos\th}{|q|^4}\nab_{r}+   \frac{2a\sin\th r}{|q|^4}   \nab_\th\Big)
\eeaa
and 
\beaa
&&(2 - n ) \atrchb   \nab_4+   n \atrch   \nab_3  + \left(-2n \dual \eta +2(n-2) \dual \etab  \right) \c \nab \\
&=& \frac{4a\cos\th}{|q|^2}\nab_t +(1 - n )\Big(  \frac{4a\Delta\cos\th}{|q|^4}\nab_{r}+   \frac{4a\sin\th r}{|q|^4}   \nab_\th\Big)
\eeaa
which completes the proof.

\subsection{Proof of Proposition  \ref{right-hand-side-lemma}}\label{section-proof-lemma-rhs-1}

We compute here the right hand sides of the main equations.

\subsubsection{The right hand side of the equation for $\pf$}

Using the definition \eqref{definition-MM1} of $\M_1[\mathfrak{F}, \mathfrak{X}]$, we can write
\beaa 
\M_1[\mathfrak{F}, \mathfrak{X}]=(2\PF\ov{\PF}) \tilde{ \M_1}[\mathfrak{F}, \mathfrak{X}], \qquad \tilde{ \M_1}[\mathfrak{F}, \mathfrak{X}]=2\ov{\DDc}\c\mathfrak{F}+4\ov{\Hb}\c\mathfrak{F}- \left(2\tr \Xb -  \ov{\tr \Xb}\right) \ \mathfrak{X}
\eeaa
Using \eqref{formula-PC_1}, we can therefore compute
\beaa
&&\mathcal{P}_{C_1}\Big(\M_1[\mathfrak{F}, \mathfrak{X}]\Big)+ \frac 1 2 \left(\tr \Xb +\ov{\tr\Xb}\right)\M_1[\mathfrak{F}, \mathfrak{X}]\\
&=& \mathcal{P}_{C_1}\Big(2\PF\ov{\PF}\tilde{ \M_1}[\mathfrak{F}, \mathfrak{X}]\Big)+ \frac 1 2 \left(\tr \Xb +\ov{\tr\Xb}\right)(2\PF\ov{\PF}) \tilde{ \M_1}[\mathfrak{F}, \mathfrak{X}]\\
&=&( 2\PF\ov{\PF}) \mathcal{P}_{C_1}\left(\tilde{ \M_1}[\mathfrak{F}, \mathfrak{X}]\right) +2\nabc_3(\PF\ov{\PF})\tilde{ \M_1}[\mathfrak{F}, \mathfrak{X}]+ \frac 1 2 \left(\tr \Xb +\ov{\tr\Xb}\right)(2\PF\ov{\PF}) \tilde{ \M_1}[\mathfrak{F}, \mathfrak{X}]\\
&=& 2 \PF\ov{\PF}\Big[ \mathcal{P}_{C_1}\left(\tilde{ \M_1}[\mathfrak{F}, \mathfrak{X}]\right)-\frac 1 2 (\tr \Xb +\ov{\tr\Xb}) \left(\tilde{ \M_1}[\mathfrak{F}, \mathfrak{X}] \right)\Big]
\eeaa
where we used that $\nabc_3(\PF \ov{\PF})=-(\tr \Xb +\ov{\tr\Xb}) \PF \ov{\PF}$.
We then compute $\mathcal{P}_{C_1}\left(\tilde{ \M_1}[\mathfrak{F}, \mathfrak{X}]\right)$, using Lemma \ref{commutator-RW}:
\beaa
\mathcal{P}_{C_1}\left(\tilde{ \M_1}[\mathfrak{F}, \mathfrak{X}]\right)&=& \mathcal{P}_{C_1}\left(2\ov{\DDc}\c\mathfrak{F}+4\ov{\Hb}\c\mathfrak{F}- \left(2\tr \Xb -  \ov{\tr \Xb}\right) \ \mathfrak{X} \right)\\
&=&2 \ov{\DDc}\c(\mathcal{P}_{C_1}\mathfrak{F})+2[ \mathcal{P}_{C_1}, \ov{\DDc}\c]\mathfrak{F}+4 \ov{\Hb}\c \mathcal{P}_{C_1}(\mathfrak{F})+4 \nabc_3 \ov{\Hb}\c\mathfrak{F}\\
&&- \left(2\tr \Xb -  \ov{\tr \Xb}\right) \mathcal{P}_{C_1}( \mathfrak{X})- \nabc_3\left(2\tr \Xb -  \ov{\tr \Xb}\right) \ \mathfrak{X}\\
&=&2 \ov{\DDc}\c(\mathcal{P}_{C_1}\mathfrak{F}) - \ov{\tr\Xb}\,  \ov{\DDc} \c \Ffr+4 \ov{\Hb}\c \mathcal{P}_{C_1}(\mathfrak{F})+2\ov{H} \c \nabc_3 \Ffr\\
&&+\left(- 2\ov{\DDc} C_1 +4 \nabc_3 \ov{\Hb}+ \ov{\tr\Xb}\,  \ov{H}\right)\c \Ffr\\
&&- \left(2\tr \Xb -  \ov{\tr \Xb}\right) \mathcal{P}_{C_1}( \mathfrak{X})- \nabc_3\left(2\tr \Xb -  \ov{\tr \Xb}\right) \ \mathfrak{X}
\eeaa
We now write:
\beaa
\mathcal{P}_{C_1}\mathfrak{F}&=&\nabc_3 \mathfrak{F}+C_1 \mathfrak{F}=( \nabc_3 \mathfrak{F}+ C_2 \Ffr)+(C_1-C_2) \mathfrak{F}=\mathfrak{Q}+(C_1-C_2) \mathfrak{F}\\
\ov{\DDc}\c(\mathcal{P}_{C_1}\mathfrak{F})&=& \ov{\DDc}\c(\mathfrak{Q}+(C_1-C_2) \mathfrak{F})= \ov{\DDc}\c \mathfrak{Q}+(C_1-C_2)  \ov{\DDc}\c\mathfrak{F}+(\ov{\DDc} C_1-\ov{\DDc}C_2)\c \mathfrak{F} \\
\nabc_3 \Ffr&=&\Qfr - C_2 \Ffr
\eeaa
Using \eqref{nabc-3-mathfrak-X} we can write
\beaa
\mathcal{P}_{C_1}( \mathfrak{X})&=& \nabc_3\mathfrak{X} + C_1 \mathfrak{X}=\left(-\frac 1 2 \ov{\tr \Xb}+C_1\right) \ \mathfrak{X}-\ov{\DDc} \c \mathfrak{F}  -\ov{H} \c \mathfrak{F}-2\mathfrak{B}
\eeaa
By substituting in the above expression we obtain
\beaa
\mathcal{P}_{C_1}\left(\tilde{ \M_1}[\mathfrak{F}, \mathfrak{X}]\right)&=&2  \ov{\DDc}\c \mathfrak{Q}+\left( 4 \ov{\Hb}+2\ov{H} \right) \c \mathfrak{Q}+(2C_1-2C_2+2\tr \Xb-2 \ov{\tr\Xb})  \ov{\DDc}\c\mathfrak{F}\\
&&+\left(-2\ov{\DDc}C_2+4 \nabc_3 \ov{\Hb}+ 4(C_1-C_2)\ov{\Hb}+(2\tr \Xb -2 C_2 )\ov{H}\right)\c \mathfrak{F}\\
&&+ \left(4\tr \Xb -  2\ov{\tr \Xb}\right) \mathfrak{B} -\Big(  \nabc_3\left(2\tr \Xb -  \ov{\tr \Xb}\right)+\left(2\tr \Xb -  \ov{\tr \Xb}\right)\left(-\frac 1 2 \ov{\tr \Xb}+C_1\right) \Big) \ \mathfrak{X}
\eeaa
This gives
\beaa
&& \mathcal{P}_{C_1}\left(\tilde{ \M_1}[\mathfrak{F}, \mathfrak{X}]\right)-\frac 1 2 (\tr \Xb +\ov{\tr\Xb}) \left(\tilde{ \M_1}[\mathfrak{F}, \mathfrak{X}]\right) \\
&=& 2  \ov{\DDc}\c \mathfrak{Q}+\left( 4 \ov{\Hb}+2\ov{H} \right) \c \mathfrak{Q}+(2C_1-2C_2+\tr \Xb-3 \ov{\tr\Xb})  \ov{\DDc}\c\mathfrak{F}\\
&&+\left(-2\ov{\DDc}C_2+4 \nabc_3 \ov{\Hb}+ (4C_1-4C_2-2\tr \Xb -2\ov{\tr\Xb})\ov{\Hb}+(2\tr \Xb -2 C_2 )\ov{H}\right)\c \mathfrak{F}\\
&&+ \left(4\tr \Xb -  2\ov{\tr \Xb}\right) \mathfrak{B} -\Big(  \nabc_3\left(2\tr \Xb -  \ov{\tr \Xb}\right)+\left(2\tr \Xb -  \ov{\tr \Xb}\right)\left(-\frac 1 2 \tr \Xb - \ov{\tr \Xb}+C_1\right) \Big) \ \mathfrak{X}
\eeaa
Using \eqref{nabc-3-trXb-red} and \eqref{nabc-3-Hb-red}
we simplify the above to 
\beaa
 \mathcal{P}_{C_1}\left(\tilde{ \M_1}[\mathfrak{F}, \mathfrak{X}]\right)-\frac 1 2 (\tr \Xb +\ov{\tr\Xb}) \left(\tilde{ \M_1}[\mathfrak{F}, \mathfrak{X}]\right) &=& 2 \left( \ov{\DDc}\c \mathfrak{Q}+\left( 2 \ov{\Hb}+\ov{H} \right) \c \mathfrak{Q}\right)+ \left(4\tr \Xb -  2\ov{\tr \Xb}\right) \mathfrak{B} \\
 &&+Y^{\Ffr}_{a} \ov{\DDc}\c\mathfrak{F}+Y^{\Ffr}_0 \c \mathfrak{F}+Y^{\Xfr}_0 \ \mathfrak{X}
\eeaa
where
\bea
Y^{\Ffr}_{a}&=& 2C_1-2C_2+\tr \Xb-3 \ov{\tr\Xb} \label{eq:Y-Ffr-a}\\
Y^{\Ffr}_0&=& -2\ov{\DDc}C_2+ (4C_1-4C_2-4\tr \Xb -2\ov{\tr\Xb})\ov{\Hb}+(4\tr \Xb -2 C_2 )\ov{H} \\
Y^{\Xfr}_0&=& 2(\tr\Xb)^2 -  \frac 3 2 \ov{\tr \Xb}^2+\frac 3 2 \tr \Xb\ov{\tr \Xb}-\left(2\tr \Xb -  \ov{\tr \Xb}\right)C_1
\eea
To complete the proof of the first part of the Proposition, we need to compute the terms $Y^{\Ffr}_{a}$ and $Y^{\Xfr}_0$. 
Recall that, according to \eqref{definition-C1-C2} and \eqref{definition-C1-C22}, we can write
\beaa
C_1&=&2\trchb + O\left(\frac{|a|}{r^2} \right) \qquad C_2=\trchb  + O\left(\frac{|a|}{r^2} \right)
 \eeaa
We then have
\beaa
Y^{\Ffr}_{a}&=& 2C_1-2C_2+\tr \Xb-3 \ov{\tr\Xb}= 4\trchb-2\trchb+\trchb-3 \trchb+ O\left(\frac{|a|}{r^2} \right)=O\left(\frac{|a|}{r^2} \right) \\
Y^{\Xfr}_0&=& 2(\tr\Xb)^2 -  \frac 3 2 \ov{\tr \Xb}^2+\frac 3 2 \tr \Xb\ov{\tr \Xb}-\left(2\tr \Xb -  \ov{\tr \Xb}\right)C_1\\
&=& 2\trchb^2 -  \frac 3 2 \trchb^2+\frac 3 2 \trchb^2-\left(2\trchb -  \trchb\right)2\trchb+O\left(\frac{|a|}{r^3} \right) =O\left(\frac{|a|}{r^3} \right) 
\eeaa
Finally by writing $ 4 \PF\ov{\PF}\left(2\tr \Xb -  \ov{\tr \Xb}\right) \mathfrak{B}= 4 \trchb \rhoF^2 \Bfr+ (2 \PF \ov{\PF}) Y^{\Bfr}_0 \Bfr$, for $Y^{\Bfr}_0=O\left(\frac{|a|}{r^2} \right)$, we obtain the stated relation.

\subsubsection{The right hand side of the equation for $\qf^\F$}
We have
\beaa
&&\mathcal{P}_{C_2}\Big(\M_2[A, \mathfrak{X}, \mathfrak{B}] \Big)+\frac 1 2 \left(\tr \Xb +\ov{\tr\Xb}\right) \M_2[A, \mathfrak{X}, \mathfrak{B}]\\
&=& \nabc_3\Big(\M_2[A, \mathfrak{X}, \mathfrak{B}] \Big)+\left(C_2+ \frac 1 2 \left(\tr \Xb +\ov{\tr\Xb}\right)\right) \M_2[A, \mathfrak{X}, \mathfrak{B}]
\eeaa
Using the definition \eqref{definition-MM2} of $\M_2[A, \mathfrak{X}, \mathfrak{B}]$, we compute
\beaa
&& \nabc_3\Big(\M_2[A, \mathfrak{X}, \mathfrak{B}] \Big)\\
&=&  \nabc_3\Big( -  \PF \left(\nabc_3A +\frac 1 2 \left(3\tr \Xb-\ov{\tr \Xb}  \right) A\right)    + \left(\frac 3 2  \nabc_3 H \right) \hot \mathfrak{X} +\left(2  H- \Hb \right) \hot \mathfrak{B} \Big)\\
 &=& -  \PF \left(\nabc_3\nabc_3A +\frac 1 2 \left(3\tr \Xb-\ov{\tr \Xb}  \right) \nabc_3A+\frac 1 2 \nabc_3\left(3\tr \Xb-\ov{\tr \Xb}  \right) A\right)\\
 &&+\tr\Xb  \PF \left(\nabc_3A +\frac 1 2 \left(3\tr \Xb-\ov{\tr \Xb}  \right) A\right)   + \left(\frac 3 2  \nabc_3 H \right) \hot \nabc_3\mathfrak{X} + \left(\frac 3 2  \nabc_3\nabc_3 H \right) \hot \mathfrak{X} \\
 &&+\left(2  H- \Hb \right) \hot \nabc_3\mathfrak{B} +\nabc_3\left(2  H- \Hb \right) \hot \mathfrak{B} 
 \eeaa
 Using \eqref{nabc-3-mathfrak-X} to express $\nabc_3 \Xfr$, and writing $\nabc_3 \Bfr=\Pfr-C_1 \Bfr$, we obtain
 \beaa
&& \nabc_3\Big(\M_2[A, \mathfrak{X}, \mathfrak{B}] \Big)\\
 &=& -  \PF \left(\nabc_3\nabc_3A +\frac 1 2 \left(\tr \Xb-\ov{\tr \Xb}  \right) \nabc_3A+ \left(-\frac 9 4 \tr \Xb^2+\frac 1 2 \ov{\tr \Xb} \tr\Xb+\frac 1 4 \ov{\tr \Xb}^2  \right) A\right)\\
 && -  3   \nabc_3 H  \c \nabc \mathfrak{F}   - \left(\frac 3 2  \nabc_3 H \c  \ov{H} \right)  \mathfrak{F} + \left(\frac 3 2  \nabc_3\nabc_3 H-\frac 3 4  \ov{\tr \Xb}  \nabc_3 H \right) \hot \mathfrak{X} \\
 &&+\left(2  H- \Hb \right) \hot \Pfr +\left( -\nabc_3\left( H+\Hb \right)-C_1\left(2  H- \Hb \right)\right)  \hot \mathfrak{B} 
 \eeaa
We therefore obtain
\beaa
&&\mathcal{P}_{C_2}\Big(\M_2[A, \mathfrak{X}, \mathfrak{B}] \Big)+\frac 1 2 \left(\tr \Xb +\ov{\tr\Xb}\right) \M_2[A, \mathfrak{X}, \mathfrak{B}]\\
&=& -  \PF \left(\nabc_3\nabc_3A + \left(C_2 + \tr \Xb \right) \nabc_3A+ \left(-\frac 32 \tr \Xb^2+ \ov{\tr \Xb} \tr\Xb +\frac 1 2 \left(3\tr \Xb-\ov{\tr \Xb}  \right) C_2 \right) A\right)\\
 && -  3   \nabc_3 H  \c \nabc \mathfrak{F}   - \left(\frac 3 2  \nabc_3 H \c  \ov{H} \right)  \mathfrak{F} + \left(\frac 3 2  \nabc_3\nabc_3 H+\frac 3 2 \left(C_2+ \frac 1 2\tr \Xb \right)  \nabc_3 H \right) \hot \mathfrak{X} \\
 &&+\left(2  H- \Hb \right) \hot \Pfr +\left( -\nabc_3\left( H+\Hb \right)+(C_2-C_1+ \frac 1 2 \left(\tr \Xb +\ov{\tr\Xb}\right))\left(2  H- \Hb \right)\right)  \hot \mathfrak{B} 
\eeaa
 We now want to relate the first line of the above to the
 relation \eqref{relation-F-A-B}. Observe that
 \beaa
&&  \nabc_3 \left(\PF  \left(\nabc_3A+\frac{1}{2}\tr\Xb A\right)\right)\\
&=& \PF  \left(\nabc_3\nabc_3A+\frac{1}{2}\tr\Xb \nabc_3A-\frac{1}{4}\tr\Xb^2 A\right)-\tr\Xb \PF  \left(\nabc_3A+\frac{1}{2}\tr\Xb A\right)\\
 &=& \PF  \left(\nabc_3\nabc_3A-\frac{1}{2}\tr\Xb \nabc_3A-\frac{3}{4}\tr\Xb^2 A\right)
 \eeaa
 The first line of the above can then be written as
 \beaa
 &&-  \PF \left(\nabc_3\nabc_3A + \left(C_2 + \tr \Xb \right) \nabc_3A+ \left(-\frac 32 \tr \Xb^2+ \ov{\tr \Xb} \tr\Xb +\frac 1 2 \left(3\tr \Xb-\ov{\tr \Xb}  \right) C_2 \right) A\right)\\
&=&-  \PF \Big(\nabc_3\nabc_3A-\frac 1 2 \tr\Xb \nabc_3 A-\frac 34 \tr \Xb^2 A \Big)-\left(C_2 +\frac 3 2  \tr \Xb \right) \PF\Big(  \nabc_3A+\frac 1 2 \tr\Xb A\Big)  \\
&&-\PF \left(-\frac 32 \tr \Xb^2+ \ov{\tr \Xb} \tr\Xb +\frac 1 2 \left(2\tr \Xb-\ov{\tr \Xb}  \right) C_2 \right) A \\
&=&-  \nabc_3 \left(\PF  \left(\nabc_3A+\frac{1}{2}\tr\Xb A\right)\right)-\left(C_2 +\frac 3 2  \tr \Xb \right) \PF\Big(  \nabc_3A+\frac 1 2 \tr\Xb A\Big)  \\
&&-\PF \left(-\frac 32 \tr \Xb^2+ \ov{\tr \Xb} \tr\Xb +\frac 1 2 \left(2\tr \Xb-\ov{\tr \Xb}  \right) C_2 \right) A 
 \eeaa
Using \eqref{relation-F-A-B} we obtain
\beaa
-  \nabc_3 \left(\PF  \left(\nabc_3A+\frac{1}{2}\tr\Xb A\right)\right)&=& -  \nabc_3 \left( \frac 1 2 \DDc \hot \mathfrak{B}+  3H  \hot  \mathfrak{B} -\left(3 \ov{P}+2\PF\ov{\PF}\right)\mathfrak{F}\right)\\
&=& -   \frac 1 2 \nabc_3\DDc \hot \mathfrak{B}-  3H  \hot  \nabc_3\mathfrak{B}-  3\nabc_3H  \hot  \mathfrak{B} \\
&&+\left(3 \ov{P}+2\PF\ov{\PF}\right)\nabc_3\mathfrak{F}+\nabc_3\left(3 \ov{P}+2\PF\ov{\PF}\right)\mathfrak{F}
\eeaa
Using now \eqref{commutator-nabc-3-F-formula} applied to $\Bfr$, we obtain
\beaa
&&-  \nabc_3 \left(\PF  \left(\nabc_3A+\frac{1}{2}\tr\Xb A\right)\right)\\
&=& -   \frac 1 2 \DDc \hot \nabc_3 \mathfrak{B} +   \frac 1 4 \tr \Xb  \DDc\hot \Bfr  -  \frac 7 2 H  \hot  \nabc_3\mathfrak{B}+\left( -  3\nabc_3H  +\frac 1 2 \tr\Xb H \right) \hot  \mathfrak{B} \\
&&+\left(3 \ov{P}+2\PF\ov{\PF}\right)\nabc_3\mathfrak{F}+\nabc_3\left(3 \ov{P}+2\PF\ov{\PF}\right)\mathfrak{F}
\eeaa
We therefore obtain, using once again \eqref{relation-F-A-B}:
\beaa
&&-  \nabc_3 \left(\PF  \left(\nabc_3A+\frac{1}{2}\tr\Xb A\right)\right)-\left(C_2 +\frac 3 2  \tr \Xb \right) \PF\Big(  \nabc_3A+\frac 1 2 \tr\Xb A\Big)\\
&=& -   \frac 1 2 \DDc \hot \nabc_3 \mathfrak{B} -\frac 1 2 \left( C_2 + \tr \Xb \right)  \DDc\hot \Bfr  -  \frac 7 2 H  \hot  \nabc_3\mathfrak{B}+\left( -  3\nabc_3H  +(-3C_2-4 \tr\Xb)  H \right) \hot  \mathfrak{B} \\
&&+\left(3 \ov{P}+2\PF\ov{\PF}\right)\nabc_3\mathfrak{F}+\Big( \nabc_3\left(3 \ov{P}+2\PF\ov{\PF}\right)+\left(C_2 +\frac 3 2  \tr \Xb \right) \left(3 \ov{P}+2\PF\ov{\PF}\right) \Big) \mathfrak{F}
\eeaa
 Writing $\nabc_3 \Bfr=\mathfrak{P}-C_1\Bfr$ and $\nabc_3 \Ffr=\mathfrak{Q}-C_2\Ffr$, we obtain
 \beaa
&&-  \nabc_3 \left(\PF  \left(\nabc_3A+\frac{1}{2}\tr\Xb A\right)\right)-\left(C_2 +\frac 3 2  \tr \Xb \right) \PF\Big(  \nabc_3A+\frac 1 2 \tr\Xb A\Big)\\
&=& -   \frac 1 2 \DDc \hot \mathfrak{P}  -  \frac 7 2 H  \hot  \mathfrak{P}+\frac 1 2 \left(C_1- C_2 - \tr \Xb \right)  \DDc\hot \Bfr \\
&&+\left( -  3\nabc_3H+\frac 1 2 \DDc C_1  +(\frac 7 2 C_1-3C_2-4 \tr\Xb)  H \right) \hot  \mathfrak{B} \\
&&+\left(3 \ov{P}+2\PF\ov{\PF}\right)\mathfrak{Q} -  2\PF \ov{\PF}\left( \tr\Xb+ \ov{\tr \Xb}\right)\mathfrak{F}
\eeaa
 where we used that  $\nabc_3\left(3 \ov{P}+2\PF\ov{\PF}\right)+\left(\frac 3 2  \tr \Xb \right) \left(3 \ov{P}+2\PF\ov{\PF}\right)=-  2\PF \ov{\PF}\left( \tr\Xb+ \ov{\tr \Xb}\right)$.

 Finally, to express the last line we recall \eqref{relation-F-B-A} and then write
\beaa
&&-\PF \left(-\frac 32 \tr \Xb^2+ \ov{\tr \Xb} \tr\Xb +\frac 1 2 \left(2\tr \Xb-\ov{\tr \Xb}  \right) C_2 \right) A\\
&=& \left(-\frac 32 \tr \Xb^2+ \ov{\tr \Xb} \tr\Xb +\frac 1 2 \left(2\tr \Xb-\ov{\tr \Xb}  \right) C_2 \right) \nabc_4 \mathfrak{F}\\
&& \left(-\frac 32 \tr \Xb^2+ \ov{\tr \Xb} \tr\Xb +\frac 1 2 \left(2\tr \Xb-\ov{\tr \Xb}  \right) C_2 \right) \left(\frac 3 2 \ov{\tr X} +\frac 1 2  \tr X\right)\mathfrak{F}\\ 
&& \left(-\frac 32 \tr \Xb^2+ \ov{\tr \Xb} \tr\Xb +\frac 1 2 \left(2\tr \Xb-\ov{\tr \Xb}  \right) C_2 \right) \frac 1 2 \DDc \hot \mathfrak{X} \\
&&  \left(-\frac 32 \tr \Xb^2+ \ov{\tr \Xb} \tr\Xb +\frac 1 2 \left(2\tr \Xb-\ov{\tr \Xb}  \right) C_2 \right)  \frac 1 2 \left( 3    H+ \Hb \right) \hot \mathfrak{X} 
\eeaa

By putting everything together we finally obtain
\beaa
&&\mathcal{P}_{C_2}\Big(\M_2[A, \mathfrak{X}, \mathfrak{B}] \Big)+\frac 1 2 \left(\tr \Xb +\ov{\tr\Xb}\right) \M_2[A, \mathfrak{X}, \mathfrak{B}]\\
&=&\left(3 \ov{P}+2\PF\ov{\PF}\right)\mathfrak{Q}-   \frac 1 2 \left( \DDc \hot \mathfrak{P}  + (3H+2\Hb)  \hot  \mathfrak{P}\right)  -  2\PF \ov{\PF}\left( \tr\Xb+ \ov{\tr \Xb}\right)\mathfrak{F} \\
&& +W_4^{\Ffr} \nabc_4 \mathfrak{F}+W_a^{\Ffr}  \c \nabc \mathfrak{F} + W_0^{\Ffr} \mathfrak{F}+ W_{a}^{\Bfr} \DDc\hot \Bfr +W_0^{\Bfr} \hot  \mathfrak{B}+W_a^{\Xfr} \DDc \hot \mathfrak{X} + W_0^{\Xfr} \hot \mathfrak{X}  
\eeaa
where
\bea
W_4^{\Ffr}&=& -\frac 32 \tr \Xb^2+ \ov{\tr \Xb} \tr\Xb +\frac 1 2 \left(2\tr \Xb-\ov{\tr \Xb}  \right) C_2 \label{eq:W4Ffr}\\
W_a^{\Ffr}&=&  -  3   \nabc_3 H \label{eq:W-a-Ffr}\\
W_0^{\Ffr}&=&  \left(-\frac 32 \tr \Xb^2+ \ov{\tr \Xb} \tr\Xb +\frac 1 2 \left(2\tr \Xb-\ov{\tr \Xb}  \right) C_2 \right) \left(\frac 3 2 \ov{\tr X} +\frac 1 2  \tr X\right)-\frac 3 2  \nabc_3 H \c  \ov{H}\\
W_{a}^{\Bfr}&=& \frac 1 2 \left(C_1- C_2 - \tr \Xb \right) \label{eq:WaBfr}\\
W_0^{\Bfr}&=&  -  3\nabc_3H+\frac 1 2 \DDc C_1  +(\frac 7 2 C_1-3C_2-4 \tr\Xb)  H \\
&&-\nabc_3\left( H+\Hb \right)+(C_2-C_1+ \frac 1 2 \left(\tr \Xb +\ov{\tr\Xb}\right))\left(2  H- \Hb \right)\\
W_a^{\Xfr}&=&  \frac 1 2\left(-\frac 32 \tr \Xb^2+ \ov{\tr \Xb} \tr\Xb +\frac 1 2 \left(2\tr \Xb-\ov{\tr \Xb}  \right) C_2 \right) \label{eq:WaXfr}\\
W_0^{\Xfr}&=&  \left(-\frac 32 \tr \Xb^2+ \ov{\tr \Xb} \tr\Xb +\frac 1 2 \left(2\tr \Xb-\ov{\tr \Xb}  \right) C_2 \right)  \frac 1 2 \left( 3    H+ \Hb \right)\\
&&+\frac 3 2  \nabc_3\nabc_3 H+\frac 3 2 \left(C_2+ \frac 1 2\tr \Xb \right)  \nabc_3 H 
\eea
  To complete the proof of the second part of the Proposition, we need to compute the terms $W_4^{\Ffr}$, $W_0^{\Ffr}$, $W_{a}^{\Bfr}$, $W_a^{\Xfr}$ and $W_0^{\Xfr}$. 
Recall that, according to \eqref{definition-C1-C2} and \eqref{definition-C1-C22}, we can write
\beaa
C_1&=&2\trchb + O\left(\frac{|a|}{r^2} \right) \qquad C_2=\trchb  + O\left(\frac{|a|}{r^2} \right)
 \eeaa
We then have
\beaa
W_4^{\Ffr}&=& -\frac 32 \tr \Xb^2+ \ov{\tr \Xb} \tr\Xb +\frac 1 2 \left(2\tr \Xb-\ov{\tr \Xb}  \right) C_2\\
&=& -\frac 32 \trchb^2+ \trchb^2 +\frac 1 2 \trchb^2+ O\left(\frac{|a|}{r^3} \right) =O\left(\frac{|a|}{r^3} \right)\\
W_{a}^{\Bfr}&=& \frac 1 2 \left(C_1- C_2 - \tr \Xb \right) =\frac 1 2 \left(2\trchb-\trchb - \trchb \right) + O\left(\frac{|a|}{r^2} \right) = O\left(\frac{|a|}{r^2} \right) 
\eeaa
  Finally observe that $W_0^{\Ffr}$, $W_a^{\Xfr}$ and $W_0^{\Xfr}$ are $O(|a|)$ because they are multiplied by $W_4^{\Ffr}=O\left(\frac{|a|}{r^3} \right)$.

\subsection{Proof of Proposition \ref{prop:potentials-real}}\label{sec:proof-potentials}

        Recall that 
        \beaa
            V_1&:=&  \tilde{V}_1+ f_1^{-1}\square(f_1)\\
        V_2&:=&  \tilde{V}_2+ f_2^{-1}\square(f_2)+3 \ov{P}+2\PF\ov{\PF}
        \eeaa
        We start by computing the real and imaginary part of $f^{-1} \square(f)$. 
        \begin{lemma} For $f=q^n \ov{q}^m$, we have
            \beaa
   \Re( f^{-1} \square f )   &=&-  \frac{(m+n)(m+n+1)}{ 4} \trch\trchb  -\frac{(n-m)^2+m + n}{ 4}   \atrch\atrchb \\
    &&  - (n+m) \rho + 2nm|\etab|^2+ (m^2+n^2+m+n) \eta \c \etab 
    \eeaa
    and 
       \beaa
   \Im( f^{-1} \square f )   &=& (m-n)\Big(-\frac 1 2(n+m+1) \trchb  \atrch + \rhod    -(m+n+1) \eta \wedge \etab \Big)
    \eeaa
    In particular,  for $f_1=(q)^{1/2} (\ov{q})^{9/2}$ and  $f_2=q \ov{q}^2$, we obtain
                \beaa
   \Re( f_1^{-1} \square f_1 )   &=&-  \frac{15}{ 2} \trch\trchb  -\frac{21}{ 4}   \atrch\atrchb   - 5 \rho + \frac 9 2 |\etab|^2+ \frac{51}{2} \eta \c \etab \\
   \Im( f_1^{-1} \square f_1 )   &=& -12 \trchb  \atrch +4 \rhod    -24 \eta \wedge \etab 
    \eeaa
    and
    \beaa
      \Re( f_2^{-1} \square f_2 )   &=&-  3 \trch\trchb  -  \atrch\atrchb   - 3 \rho + 4|\etab|^2+ 8 \eta \c \etab \\
       \Im( f_2^{-1} \square f_2 )   &=&-2 \trchb  \atrch + \rhod    -4 \eta \wedge \etab 
    \eeaa
    \end{lemma}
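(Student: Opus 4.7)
The plan is to apply the scalar case $k=0$ of Lemma \ref{lemma:expression-wave-operator}, namely
\begin{align*}
 \square_\g f &= -\tfrac{1}{2}(\nab_3\nab_4 f + \nab_4\nab_3 f) + \lap_0 f\\
 & + (\omb - \tfrac{1}{2}\trchb)\nab_4 f + (\om - \tfrac{1}{2}\trch)\nab_3 f + (\eta+\etab)\c \nab f,
\end{align*}
to $f=q^n\ov{q}^m$, and then read off the real and imaginary parts using the Kerr-Newman background values from Section \ref{sec:values-KN} (in particular $\om=0$, $\etab=-\ze$). All first order derivatives of $f$ come in closed form from the transport equations \eqref{transport-for-q} and their complex conjugates $\nab_3 \ov{q} = \tfrac12 \tr\Xb\,\ov{q}$, $\nab_4 \ov{q} = \tfrac12 \ov{\tr X}\,\ov{q}$, $\DD\ov{q} = \ov{q} H$, $\ov{\DD}\ov{q} = \ov{q}\ov{\Hb}$. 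Applying the Leibniz rule yields
\begin{align*}
\nab_3 f &= \tfrac{f}{2}(n\ov{\tr\Xb}+m\tr\Xb), & \nab_4 f &= \tfrac{f}{2}(n\tr X+m\ov{\tr X}),\\
\DD f &= f(n\Hb+mH), & \ov{\DD} f &= f(n\ov{H}+m\ov{\Hb}),
\end{align*}
and in particular $2\nab f = f\bigl(n(\Hb+\ov{H})+m(H+\ov{\Hb})\bigr)$.

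Differentiating once more and substituting the reduced Ricci identities of Proposition \ref{prop:reduced-equations} (specifically \eqref{nabc-3-trXb-red}--\eqref{nabc-4-trXb-red} to handle $\nabc_3\tr X$ and $\nabc_4\tr\Xb$, and \eqref{nabc-3-Hb-red}--\eqref{Codazzi-2-red} for the derivatives of $H, \Hb$) produces the second-order pieces $\nab_3\nab_4 f + \nab_4\nab_3 f$ and $\lap_0 f = \tfrac14(\DD\c\ov{\DD}+\ov{\DD}\c\DD)f$ as explicit polynomials in $\tr X,\tr\Xb,H,\Hb,\PF$ and $P$ multiplied by $f$. Assembling them in the displayed wave operator formula and separating real/imaginary parts via $\tr X = \trch + i\atrch$, $H=\eta + i\dual\eta$, $P=\rho + i\rhod$, $\PF=\rhoF + i\dual\rhoF$ then produces the claimed combinations. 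The symmetry $\ov{f}=\ov{q}^n q^m$ and reality of $\square_\g$ force $\Im(f^{-1}\square f)$ to be antisymmetric under $n\leftrightarrow m$, explaining the overall $(m-n)$ factor; the real part is correspondingly symmetric, accounting for the appearance only of $n+m$, $(n+m)^2$ and $nm$. The two specializations are then direct substitutions with $(n,m)=(\tfrac12,\tfrac92)$ and $(1,2)$.

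The principal difficulty is the algebraic bookkeeping in $\lap_0 f$: the cross terms $(n\Hb+mH)\c(n\ov{H}+m\ov{\Hb})$ combine with the $\ov{\DD}\c H$, $\DD\c\ov{H}$ contributions arising from the Leibniz rule. The reduced identity $\DDc\c\ov{H}+H\c\ov{H}+2P = \nabc_3\tr X + \tfrac12 \tr\Xb\tr X$, and its conjugate, is the key algebraic input that collapses these into the clean dependence on $\rho$, and also ensures that the terms involving $\rhoF^2+\dual\rhoF^2$ cancel against the corresponding contribution from $\tr X \tr\Xb$ at this order. The remaining challenge is then purely arithmetic: carefully tracking the $i\atrch$ and $i\dual\eta$ signs so that the coefficient of each real invariant ($\trch\trchb$, $\atrch\atrchb$, $\rho$, $|\etab|^2$, $\eta\c\etab$ for $\Re$; and $\trchb\atrch$, $\rhod$, $\eta\wedge\etab$ for $\Im$) emerges in the precise polynomial form stated.
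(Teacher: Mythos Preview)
Your plan is essentially the same as the paper's: compute the first derivatives of $f=q^n\ov q^m$ from \eqref{transport-for-q}, differentiate again using the reduced Ricci identities, assemble in the scalar wave operator, and separate real/imaginary parts. There are two points where you diverge from the paper and should be careful.

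First, the paper does not use the symmetrized formula from Lemma~\ref{lemma:expression-wave-operator}. It uses the asymmetric version
\[
f^{-1}\square f=-f^{-1}e_4e_3 f-\tfrac12\trchb\, f^{-1}e_4 f-\tfrac12\trch\, f^{-1}e_3 f+f^{-1}\lap f+2\etab\c f^{-1}\nab f,
\]
which contains neither $\om$ nor $\omb$. This matters because in Kerr--Newman $\omb\ne0$. With your symmetric formula you must compute $\nab_3\nab_4 f$, which requires $\nab_3\tr X$; the reduced identity \eqref{nabc-3-tr-X} gives only $\nabc_3\tr X$, and $\nab_3\tr X=\nabc_3\tr X+2\omb\,\tr X$ since $\tr X$ has conformal type~$1$. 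The resulting $\omb\nab_4 f$ does cancel against the explicit $\omb\nab_4 f$ in your first-order term, but you need to perform this conversion; otherwise the answer is off by $\omb\nab_4 f$. The paper's asymmetric choice avoids this detour entirely, since $\nab_4\tr\Xb=\nabc_4\tr\Xb$ already (because $\om=0$).

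Second, your remark about $\rhoF^2+\dual\rhoF^2$ cancelling is a confusion: no electromagnetic quantities enter this scalar computation at all. The reduced identities for $\tr X$, $\tr\Xb$, $H$, $\Hb$ involve only $P=\rho+i\rhod$, never $\PF$, so there is nothing to cancel. Finally, to reduce the angular terms to the stated basis $\{|\etab|^2,\eta\c\etab,\div\etab\}$ and $\{\eta\wedge\etab,\curl\etab\}$ you will need the background identities $|\eta|^2=|\etab|^2$, $\div\eta=\div\etab$, $\curl\eta=-\curl\etab$ (specific to the principal null frame in Kerr--Newman), which the paper uses explicitly.
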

      \begin{proof} Recall that for a scalar
\bea\label{wave-GKS}
\begin{split}
f^{-1} \square f &=- f^{-1} e_4e_3 f -\frac 1 2 \trchb f^{-1}e_4 f -\frac 1 2 \trch f^{-1}e_3 f +f^{-1} \lap f+ 2\etab \c   f^{-1}\nab f. 
\end{split}
\eea
Using 
 \beaa
 f^{-1}  \nab_3(f)&=& \left(\frac n 2 \ov{\tr \Xb} +\frac m 2   \tr \Xb \right) , \\ 
 f^{-1}   \nab_4(f) &=& \left(\frac n 2 \tr X +\frac m 2   \ov{\tr X} \right) ,\\
  2   f^{-1} \nab f    &=&  \left(m H+n\ov{H}+n \Hb+ m  \ov{\Hb}  \right).
    \eeaa
    we have
    \beaa
    \nab_4 \nab_3 f &=& \left(\frac n 2 \nab_4\ov{\tr \Xb} +\frac m 2   \nab_4\tr \Xb \right)f + \left(\frac n 2 \ov{\tr \Xb} +\frac m 2   \tr \Xb \right)\nab_4 f\\
    &=& \left(\frac n 2 (-\frac 1 2 \ov{\tr X}\ov{ \tr \Xb} + \ov{\DD} \c \Hb+ \Hb \c \ov{\Hb} + 2 P) +\frac m 2  (-\frac 1 2 \tr X \tr \Xb + \DD \c \ov{\Hb}+ \Hb \c \ov{\Hb} + 2 \ov{P}) \right)f \\
    &&+ \left(\frac n 2 \ov{\tr \Xb} +\frac m 2   \tr \Xb \right) \left(\frac n 2 \tr X +\frac m 2   \ov{\tr X} \right) f 
    \eeaa
    which gives
    \beaa
    f^{-1} \nab_4 \nab_3 f     &=&  \frac{nm - n}{ 4} \ov{\tr X}\ov{ \tr \Xb} + \frac{nm- m}{ 4} \tr X \tr \Xb+\frac{n^2}{4} \tr X \ov{\tr\Xb}+\frac{m^2}{4} \tr\Xb \ov{\tr X} \\
    &&+ \frac n 2\ov{\DD} \c \Hb+\frac m 2 \DD \c \ov{\Hb} +\frac {n+m}{ 2} \Hb \c \ov{\Hb} + n P+ m \ov{P} 
    \eeaa
    Observe that $\Re( \ov{\tr X}\ov{ \tr \Xb})=\trch\trchb - \atrch\atrchb$, while $\Re(\tr X \ov{\tr \Xb})=\trch\trchb + \atrch\atrchb$.
    In particular 
        \beaa
\Re(    f^{-1} \nab_4 \nab_3 f   )  &=&  \frac{2nm-m - n}{ 4} (\trch\trchb - \atrch\atrchb) +\frac{n^2+m^2}{4} (\trch\trchb + \atrch\atrchb) \\
    &&+ (n+m) \div \etab  +(n+m)| \etab|^2 + (n+m) \rho\\
     &=&  \frac{(m+n)(m+n-1)}{ 4} \trch\trchb  +\frac{(n-m)^2+m + n}{ 4}   \atrch\atrchb \\
    &&+ (n+m) \div \etab  +(n+m)| \etab|^2 + (n+m) \rho
    \eeaa
          Observe that $\Im( \ov{\tr X}\ov{ \tr \Xb})=\trch\atrchb+ \trchb\atrch=0$ and $\Im(\tr X\tr \Xb)=-\trch\atrchb- \trchb\atrch=0$, while $\Im(\tr X \ov{\tr \Xb})=-2\trchb \atrch$.
    In particular 
        \beaa
  \Im(  f^{-1} \nab_4 \nab_3 f  )   &=&-\frac{n^2}{2}\trchb \atrch+\frac{m^2}{2}\trchb \atrch+ (n-m)\left( \rhod+ \curl \etab\right) \\
  &=& (n-m)\left( -\frac{1}{2}(n+m)\trchb \atrch+\rhod+ \curl \etab\right) 
    \eeaa

    Also 
    \beaa
    \lap f &=&\frac 1 2  \nab \c ( \left(m H+n\ov{H}+n \Hb+ m  \ov{\Hb}  \right) f ) \\
  &=&\frac 1 2  \nab \c \left(m H+n\ov{H}+n \Hb+ m  \ov{\Hb}\right) f+ \frac 1 4  \left(m H+n\ov{H}+n \Hb+ m  \ov{\Hb}  \right) \c  \left(m H+n\ov{H}+n \Hb+ m  \ov{\Hb}  \right) f
    \eeaa
    In particular
    \beaa
    \Re(f^{-1}  \lap f )&=& \frac 1 2  \nab \c \left((m+n) \eta+(m+n) \etab\right) + \frac 1 4 \left( (m+n) \eta+(m+n) \etab \right) \c \left( (m+n) \eta+(m+n) \etab\right) \\
    &&- \frac 1 4 ((m -n)\dual \eta +(n-m) \dual \etab) \c ((m -n)\dual \eta +(n-m) \dual \etab)\\
    &=& \frac 1 2(m+n)  \left( \div\eta+\div \etab\right)+ \frac 1 4(m+n)^2 \left(  \eta+ \etab \right) \c \left(  \eta+ \etab\right) - \frac 1 4(m -n)^2 ( \eta -  \etab) \c ( \eta - \etab)\\
    &=&(m+n)  \left( \div \etab\right) + 2nm|\etab|^2+ (m^2+n^2) \eta \c \etab
    \eeaa
since $\div \eta=\div \etab$ and $|\eta|^2=|\etab|^2$, and 
       \beaa
\Im( f^{-1}  \lap f  ) &=&\frac 1 2  \nab \c \left((m -n)\dual \eta +(n-m) \dual \etab \right) + \frac 1 2 ((m+n) \eta + (m+n) \etab )\c \left((m -n)\dual \eta +(n-m) \dual \etab \right)\\
&=&\frac 1 2 (m-n ) \Big( \curl \eta - \curl  \etab  +(m+n) ( -2\eta \c \dual \etab  ) \Big)\\
&=& (m-n ) \Big(  - \curl  \etab  -(m+n) \eta \wedge \etab   \Big)
   \eeaa
   since $\curl \eta = - \curl \etab$. 

    We therefore have
    \beaa
   \Re( f^{-1} \square f )&=&-\Re( f^{-1} e_4e_3 f) -\frac 1 2 \trchb  \Re(f^{-1}e_4 f) -\frac 1 2 \trch \Re(f^{-1}e_3 f) +\Re(f^{-1} \lap f)+ 2\etab \c  \Re( f^{-1}\nab f) \\
   &=&-  \frac{(m+n)(m+n-1)}{ 4} \trch\trchb  -\frac{(n-m)^2+m + n}{ 4}   \atrch\atrchb \\
    &&- (n+m) \div \etab  -(n+m)| \etab|^2 - (n+m) \rho\\
    && -\frac 1 2 \trchb  \frac{n+m}{2} \trch -\frac 1 2 \trch \frac{n+m}{2} \trchb +(m+n)  \left( \div \etab\right) + 2nm|\etab|^2+ (m^2+n^2) \eta \c \etab\\
    &&+ \etab \c ((m+n) \eta+(m+n) \etab) 
    \eeaa
    which finally gives
        \beaa
   \Re( f^{-1} \square f )   &=&-  \frac{(m+n)(m+n+1)}{ 4} \trch\trchb  -\frac{(n-m)^2+m + n}{ 4}   \atrch\atrchb \\
    &&  - (n+m) \rho + 2nm|\etab|^2+ (m^2+n^2+m+n) \eta \c \etab 
    \eeaa

 Also,
    \beaa
   \Im( f^{-1} \square f )&=&-\Im( f^{-1} e_4e_3 f) -\frac 1 2 \trchb  \Im(f^{-1}e_4 f) -\frac 1 2 \trch \Im(f^{-1}e_3 f) +\Im(f^{-1} \lap f)+ 2\etab \c  \Im( f^{-1}\nab f) \\
   &=& (m-n)\Big( -\frac{1}{2}(n+m)\trchb \atrch+\rhod+ \curl \etab  -\frac 1 4 \trchb  \atrch  +\frac 1 4 \trch  \atrchb  \\
   &&- \curl  \etab  -(m+n) \eta \wedge \etab  + \etab \c  (\dual \eta - \dual \etab )\Big)\\
   &=& (m-n)\Big(-\frac 1 2(n+m+1) \trchb  \atrch + \rhod    -(m+n+1) \eta \wedge \etab \Big)
    \eeaa
as stated. 
        \end{proof}
        
        We now compute $\tilde{V}_1$. Using \eqref{tilde-V1}, we have
        \beaa
\Re(\tilde{V}_1)&=&\Re( \frac{9}{2}\tr\Xb \ov{\tr X}-9 \ov{ \Hb} \c  H) +4\rhoF^2+4\dual\rhoF^2-\Re(\hat{V}_1)\\
&&+ \frac 14 \trch\trchb+\frac 1 4 \atrch\atrchb+ \rho-\rhoF^2-\dual\rhoF^2\\
&=&\frac {19}{ 4} \trch\trchb + \frac{19}{4} \atrch\atrchb+ \rho+3\rhoF^2+3\dual\rhoF^2 -18\eta \c \etab-\Re(\hat{V}_1)
\eeaa
and
\beaa
\Im(\tilde{V}_1)&=&\Im( \frac{9}{2}\tr\Xb \ov{\tr X}-9 \ov{ \Hb} \c  H)-\Im(\hat{V}_1)- \rhod+ \eta \wedge \etab  \\
&=& 9 \trchb \atrch - \rhod +19 \eta \wedge \etab-\Im(\hat{V}_1)
\eeaa
Using \eqref{V1hat}, we have
\beaa
\Re(\hat{V}_1)&=& \Re( I^{\Bfr}_{3}) +\Re(J^{\Bfr}_3) +\Re(K^{\Bfr}_3)+\Re(M^{\Bfr}_3)\\
\Im(\hat{V}_1)&=&\Im( I^{\Bfr}_{3}) +\Im(J^{\Bfr}_3) +\Im(K^{\Bfr}_3)+\Im(M^{\Bfr}_3)
\eeaa
Using \eqref{I-Bfr-3}, and writing $C_1=2\trchb+i p_1 \atrchb$, we obtain
\beaa
\Re(I^{\Bfr}_{3})&=& -2\rho -2\rhoF^2 -2\dual\rhoF^2 -2 \eta \c (\eta-2\etab)+2\nabc_4 \trchb\\
&=& -2\rho -2\rhoF^2 -2\dual\rhoF^2 -2 \eta \c (\eta-2\etab)+2(-\frac 1 2 \trch\trchb+\frac 1 2 \atrch\atrchb+ 2 \div \etab+2|\etab|^2+2\rho)\\
&=&- \trch\trchb+ \atrch\atrchb +2\rho -2\rhoF^2 -2\dual\rhoF^2+ 4 \div \etab +4 \eta \c \etab+2|\etab|^2
\eeaa
and using that $\trchb\atrch +\trch\atrchb=0$, 
\beaa
\Im(I^{\Bfr}_{3})&=&  2\rhod-2\eta \wedge \etab+p_1\nabc_4 \atrchb  \\
&=&  2\rhod-2\eta \wedge \etab+p_1\left(-\frac 1 2(\atrch \trchb+\trch\atrchb)+ 2 \curl \etab +2 \dual \rho\right) \\
&=&(2+2 p_1)\dual \rho+ 2 p_1 \curl \etab-2\eta \wedge \etab 
\eeaa
Using \eqref{J-Bfr-3}, we have
\beaa
\Re(J^\Bfr_3)&=&\Re( -  3  \ov{\tr X}\trchb +\frac 1 2 (\ov{\DDc} \c H)+ \frac 1 2 (H \c \ov{H}))=-3\trch\trchb+\div\etab+|\etab|^2\\
\Im(J^\Bfr_3)&=& \Im(-  3  \ov{\tr X}\trchb +\frac 1 2 (\ov{\DDc} \c H))= -  3\trchb \atrch   -\curl \etab
\eeaa
Using \eqref{K-Bfr-3}, we have
\beaa
\Re(K^\Bfr_3)&=& -3 \nabc_3( \trch)= \frac 3 2 \trchb\trch-\frac 3 2 \atrch\atrchb-6\rho-6\div\etab-6|\etab|^2\\
\Im(K^\Bfr_3)&=& -3 \nabc_3( \atrch)=6 \dual \rho+6 \curl \etab
\eeaa
Using \eqref{M-Bfr-3}, we have
\beaa
\Re(M^{\Bfr}_3)&=&\Re( \eta \c \left( 6(\eta+ i \dual \eta)+ \eta- i \dual \eta + 3(\etab - i \dual \etab) \right))=7 |\etab|^2 +3\eta \c \etab\\
\Im(M^{\Bfr}_3)&=&\Im( \eta \c \left( 6(\eta+ i \dual \eta)+ \eta- i \dual \eta + 3(\etab - i \dual \etab)  \right) )=-3 \eta \c \dual \etab=-3 \eta \wedge \etab
\eeaa
This gives
\beaa
\Re(\hat{V}_1)&=&-\frac 5 2  \trch\trchb-\frac 1 2  \atrch\atrchb -4\rho -2\rhoF^2 -2\dual\rhoF^2- \div \etab +7 \eta \c \etab+4|\etab|^2 \\
\Im(\hat{V}_1)&=& -  3\trchb \atrch +(8+2 p_1)\dual \rho+( 2 p_1 +5)\curl \etab-5\eta \wedge \etab  
\eeaa
and therefore
        \beaa
\Re(\tilde{V}_1)&=&\frac {29}{ 4} \trch\trchb + \frac{21}{4} \atrch\atrchb+ 5\rho+5\rhoF^2+5\dual\rhoF^2  + \div \etab -25 \eta \c \etab-4|\etab|^2\\
\Im(\tilde{V}_1)&=&12\trchb \atrch   -(9+2 p_1)\dual \rho-( 2 p_1 +5)\curl \etab+24\eta \wedge \etab  
\eeaa
Finally this gives:
\beaa
\Re( V_1)&=& \Re( \tilde{V}_1)+ \Re(f_1^{-1}\square(f_1))\\
&=& \frac {29}{ 4} \trch\trchb + \frac{21}{4} \atrch\atrchb+ 5\rho+5\rhoF^2+5\dual\rhoF^2  + \div \etab -25 \eta \c \etab-4|\etab|^2\\
&&-  \frac{15}{ 2} \trch\trchb  -\frac{21}{ 4}   \atrch\atrchb   - 5 \rho + \frac 9 2 |\etab|^2+ \frac{51}{2} \eta \c \etab\\
&=& -\frac {1}{ 4} \trch\trchb +5\rhoF^2+5\dual\rhoF^2  + \div \etab +\frac 1 2  \eta \c \etab+\frac 1 2 |\etab|^2
 \eeaa
 and
 \beaa
\Im(  V_1)&=& \Im( \tilde{V}_1)+ \Im(f_1^{-1}\square(f_1))\\
&=&12\trchb \atrch   -(9+2 p_1)\dual \rho-( 2 p_1 +5)\curl \etab+24\eta \wedge \etab  -12 \trchb  \atrch +4 \rhod    -24 \eta \wedge \etab \\
&=&  -(2 p_1+5)\dual \rho-( 2 p_1 +5)\curl \etab
\eeaa
Observe that for $p_1=-\frac 5 2 $, we obtain $\Im(V_1)=0$, as desired.

We now compute $\tilde{V_2}$. Using \eqref{tilde-V2}, we have
\beaa
\Re(\tilde{V}_2)&=& \Re( \frac 3 4 \tr \Xb  \ov{\tr X}+ \frac 1 4\ov{\tr \Xb}   \tr X-3\ov{P} +P -4\PF\ov{\PF} + \frac 3 2\ov{\DDc}\c H)-\eta \c \etab-\Re( \hat{V}_2)\\
&&-\frac 1 2  \trch\trchb- \frac 1 2 \atrch\atrchb-2\rho+2\rhoF^2+2\dual\rhoF^2\\
&=&\frac 1 2  \trch\trchb+ \frac 1 2 \atrch\atrchb-4\rho-2\rhoF^2-2\dual\rhoF^2+3\div \etab-\eta \c \etab-\Re( \hat{V}_2)
\eeaa
and
\beaa
\Im(\tilde{V}_2)&=&\Im( \frac 3 4 \tr \Xb  \ov{\tr X}+ \frac 1 4\ov{\tr \Xb}   \tr X-3\ov{P} +P + \frac 3 2\ov{\DDc}\c H) - \eta \wedge \etab- \Im(\hat{V}_2) - 2\rhod+2 \eta \wedge \etab  \\
&=&  \trchb \atrch + 2 \dual \rho -3\curl\etab + \eta \wedge \etab  - \Im(\hat{V}_2)
\eeaa
where 
\beaa
\Re(\hat{V}_2)&=&\Re( I^{\Ffr}_{3} )+\Re(J^{\Ffr}_3) +\Re(K^{\Ffr}_3)+\Re(M^{\Ffr}_3)\\
\Im(\hat{V}_2)&=&\Im( I^{\Ffr}_{3} )+\Im(J^{\Ffr}_3) +\Im(K^{\Ffr}_3)+\Im(M^{\Ffr}_3)
\eeaa
Using \eqref{I-Ffr-3}, and writing $C_2=\trchb+ i p_2\atrchb$ we obtain
\beaa
\Re(I^{\Ffr}_3)&=& -2\rho -2\rhoF^2 -2\dual\rhoF^2 -2 \eta \c (\eta-2\etab)+\nabc_4 \trchb\\
&=& -\frac 1 2 \trch\trchb+\frac 1 2 \atrch\atrchb -2\rhoF^2 -2\dual\rhoF^2+ 2 \div \etab+4\eta\c\etab\\
\Im(I^{\Ffr}_3)&=& 4\rhod-4\eta \wedge \etab+p_2\nabc_4 \atrchb \\
&=& (4+2 p_2)\dual \rho+ 2 p_2 \curl \etab-4\eta \wedge \etab  
\eeaa
Using \eqref{J-Ffr-3}, we have
\beaa
\Re(J^{\Ffr}_3)&=&\Re(- \trchb  \left(\frac 3 2 \ov{\tr X} +\frac 1 2  \tr X\right)+\frac 1 2(\DDc \c \ov{H})+ \frac 1 2 (H \c \ov{H}))=-2\trch\trchb+\div\etab+|\etab|^2\\
\Im(J^{\Ffr}_3)&=&\Im(- \frac 1 2   \left(\tr \Xb +\ov{\tr\Xb}\right)  \left(\frac 3 2 \ov{\tr X} +\frac 1 2  \tr X\right)+\frac 1 2(\DDc \c \ov{H}))=- \trchb\atrch +\curl \etab
\eeaa
Using \eqref{K-Ffr-3}, we have
\beaa
\Re(K^{\Ffr}_3)&=& -2\nabc_3\left(\trch \right)=  \trchb\trch- \atrch\atrchb-4\rho-4\div\etab-4|\etab|^2\\
\Im(K^{\Ffr}_3)&=&-\nabc_3\atrch=2 \dual \rho+2 \curl \etab 
\eeaa
Using \eqref{M-Ffr-3}, we have
\beaa
\Re(M^{\Ffr}_3)&=&\Re( \eta \c \left(4   (\eta+i \dual \eta)+ (\eta- i \dual \eta)+  \etab + i \dual \etab \right))=5|\etab|^2+\eta\c\etab\\
\Im(M^{\Ffr}_3)&=&\Im( \eta \c \left(4   (\eta+i \dual \eta)+ (\eta- i \dual \eta)+  \etab + i \dual \etab \right))=\eta \wedge \etab
\eeaa
This gives
\beaa
\Re(\hat{V}_2)&=&-\frac 3 2 \trch\trchb-\frac 1 2 \atrch\atrchb-4\rho -2\rhoF^2 -2\dual\rhoF^2- 2 \div \etab+5\eta\c\etab+\div\etab+2|\etab|^2\\
\Im(\hat{V}_2)&=&- \trchb\atrch+(2 p_2+6)\dual \rho+( 2 p_2+3) \curl \etab -3\eta \wedge \etab 
\eeaa
and therefore
\beaa
\Re(\tilde{V}_2)&=&2 \trch\trchb+ \atrch\atrchb+4\div \etab-6\eta \c \etab-2|\etab|^2\\
\Im(\tilde{V}_2)&=&2  \trchb \atrch    -(2 p_2+4)\dual \rho-( 2 p_2+6) \curl \etab +4\eta \wedge \etab 
\eeaa
Finally this gives:
\beaa
\Re(V_2)&=& \Re( \tilde{V}_2)+ \Re(f_2^{-1}\square(f_2))+\Re(3 \ov{P}+2\PF\ov{\PF})
\\
&=&2 \trch\trchb+ \atrch\atrchb+4\div \etab-6\eta \c \etab-2|\etab|^2\\
&&-  3 \trch\trchb  -  \atrch\atrchb   - 3 \rho + 4|\etab|^2+ 8 \eta \c \etab+3\rho+2\rhoF^2+2\dual\rhoF^2\\
&=&- \trch\trchb+2\rhoF^2+2\dual\rhoF^2+4\div \etab+2\eta \c \etab+2|\etab|^2
\eeaa
and
\beaa
\Im(V_2)&=& \Im( \tilde{V}_2)+ \Im(f_2^{-1}\square(f_2))+\Im(3 \ov{P})\\
&=&2  \trchb \atrch    -(2 p_2+4)\dual \rho-( 2 p_2+6) \curl \etab +4\eta \wedge \etab -2 \trchb  \atrch + \rhod    -4 \eta \wedge \etab-3\dual\rho\\
&=&  -(2 p_2+6)\dual \rho-( 2 p_2+6) \curl \etab
\eeaa
Observe that for $p_2=-3$, we obtain $\Im(V_2)=0$, as desired. This completes the proof of the proposition.

\subsection{Proof of Lemma \ref{lemma:lot-terms}}\label{sec:proof-lemma-lot-terms}
From \eqref{eq:W4Ffr} and $C_2=\trchb -3 i \atrchb$ as in \eqref{definition-C1-C22-final}, we compute
\beaa
\Im(W_4^{\Ffr})&=&\Im( -\frac 32 \tr \Xb^2+ \ov{\tr \Xb} \tr\Xb  +\frac 1 2 \left(2\tr \Xb-\ov{\tr \Xb}  \right) C_2)\\
&=& 3 \trchb \atrchb + \Im(\frac 1 2 \left(\trchb - 3 i \atrchb  \right) (\trchb - 3 i \atrchb))=0
\eeaa
and similarly, from \eqref{eq:WaXfr}
\beaa
\Im(W_a^{\Xfr})&=&  \frac 1 2\Im\left(-\frac 32 \tr \Xb^2+ \ov{\tr \Xb} \tr\Xb +\frac 1 2 \left(2\tr \Xb-\ov{\tr \Xb}  \right) C_2 \right) =0
\eeaa

From \eqref{eq:WaBfr}, we obtain
\beaa
 W_{a}^{\Bfr}&=& \frac 1 2 \left(C_1- C_2 - \tr \Xb \right) \\
 &=& \frac 1 2 \left(2\trchb -\frac 5 2  i  \atrchb- \trchb +3 i \atrchb - \trchb+i \atrchb \right) =  \frac 3 4  i  \atrchb 
 \eeaa
 
 From \eqref{eq:Y-Ffr-a}
 \beaa
 Y^{\Ffr}_{a}&=& 2C_1-2C_2+\tr \Xb-3 \ov{\tr\Xb} \\
 &=& 2(2\trchb -\frac 5 2  i  \atrchb)-2(\trchb -3 i \atrchb)+\trchb - i \atrchb -3(\trchb + i \atrchb )= -3  i \atrchb 
 \eeaa
 
 We now compute the imaginary parts of $Z^{\Bfr}_{a}$ and $W_a^{\Ffr}  -Z^{\Ffr}_{a}$ . From \eqref{eq:Z-Bfr-a} and \eqref{eq:Z-Ffr-a}, we have
 \beaa
\Im( Z^{\Bfr}_{a})&=&\Im( I^{\Bfr}_a) + \Im(J^{\Bfr}_a) + \Im(L^{\Bfr}_a)+\Im(M_a^{\Bfr})-2\etab \c \Im(C_1)\\
\Im( Z^{\Ffr}_{a})&=&\Im( I^{\Ffr}_a) + \Im(J^{\Ffr}_a) + \Im(L^{\Ffr}_a)+\Im(M_a^{\Ffr})-2\etab \c \Im(C_2 )
 \eeaa
 Using \eqref{eq:I-Bfr-a} and \eqref{eq:I-Ffr-a}, we have
 \beaa
\Im( I^{\Bfr}_a)=\Im( I^{\Ffr}_a)&=&\Im \big( -  2\nabc_3(\eta-\etab)+\trchb (\eta-\etab)-\atrchb \dual (\eta-\etab) \big)=0
 \eeaa
 Using \eqref{eq:J-Bfr-a} and \eqref{eq:J-Ffr-a}, we write
 \beaa
J^{\Bfr}_a&=&- \frac 1 2(\tr\Xb+\ov{\tr\Xb})\Big(2(\eta-\etab)  -\left( 6H+ \ov{H}+ 3  \ov{ \Hb}  \right)\Big) \\
&& - 2\nabc  C_1  -  \tr \Xb  H - \frac 1 2 (\ov{\tr\Xb}-\tr \Xb)\ov{\Hb} +\frac 1 2 (\ov{\tr\Xb}-\tr\Xb )\, \ov{H}
 \eeaa
 and
 \beaa
 J^{\Ffr}_a&=&- \frac 1 2(\tr\Xb+\ov{\tr\Xb})\Big(2(\eta-\etab)  - \left(4   H+ \ov{H}+  \Hb \right)\Big) \\
 &&- 2\nabc  C_2+ \frac 1 2 \ov{\tr\Xb}\,  \ov{H}-\frac 1 2\left( \tr\Xb - \ov{\tr\Xb}\right) \Hb  -\frac 1 2  (2 \tr \Xb+\ov{\tr\Xb} ) H
 \eeaa
 By writing $\nabc C_1=\nab C_1-\ze C_1$ and $\nabc C_2=\nab C_2-\ze C_2$, we obtain
  \beaa
\Im(J^{\Bfr}_a)&=&\trchb \Im \left( 6H+ \ov{H}+ 3  \ov{ \Hb}  \right)  - 2\nab \Im(C_1)+2\ze \Im(C_1)\\
&&  -\atrchb \etab  +\atrchb \eta  -(\trchb \dual \eta - \atrchb \eta)\\
&=&\trchb \left( 5\dual \eta - 3 \dual \etab \right)  - 2\nab \Im(C_1)+2\ze \Im(C_1)  -\atrchb \etab  +2\atrchb \eta  -\trchb \dual \eta \\
&=&  - 2\nab \Im(C_1)+2\ze \Im(C_1)  -\atrchb \etab  +2\atrchb \eta  +4\trchb \dual \eta-3 \trchb  \dual \etab  
 \eeaa
 and
 \beaa
\Im( J^{\Ffr}_a)&=& \trchb \Im \left(4   H+ \ov{H}+  \Hb \right) - 2\nab \Im(C_2)+2\ze \Im(C_2)\\
&&-\trchb \dual \eta +\atrchb \etab    -(\trchb \dual \eta - \atrchb \eta)\\
&=& \trchb  \left(3\dual \eta + \dual \etab \right) - 2\nab \Im(C_2)+2\ze \Im(C_2)-2\trchb \dual \eta +\atrchb \etab   + \atrchb \eta\\
&=&  - 2\nab \Im(C_2)+2\ze \Im(C_2) +\atrchb \etab   + \atrchb \eta +\trchb \dual \eta+\trchb  \dual \etab 
 \eeaa 
 Using \eqref{eq:L-Bfr-a} and \eqref{eq:L-Ffr-a}, we have
 \beaa
\Im( L^{\Bfr}_a)&=&\Im(- \left( 3\tr\Xb +\ov{\tr\Xb}\right) (\eta-\etab) )=2\atrchb (\eta-\etab)\\
\Im( L^{\Ffr}_a)&=&\Im(-  \left(\tr \Xb+\ov{\tr \Xb}  \right) (\eta-\etab)  )=0
 \eeaa
 Using \eqref{eq:M-a-Bfr} and \eqref{eq:M-a-Ffr}, we have
 \beaa
\Im( M_a^{\Bfr})&=& \nabc_3 \Im\left( 6H+ \ov{H}+ 3  \ov{ \Hb}  \right)-\frac  1 2   \trchb\, \Im\left( 6H+ \ov{H}+ 3  \ov{ \Hb}  \right)+\frac 1 2 \atrchb\, \dual\Im \left( 6H+ \ov{H}+ 3  \ov{ \Hb}  \right)  \\
&=& \nab_3 \left( 5\dual \eta - 3 \dual \etab \right)-\frac  1 2   \trchb\, \left( 5\dual \eta - 3 \dual \etab \right)+\frac 1 2 \atrchb\,  \left( -5 \eta + 3  \etab  \right)  
 \eeaa
and 
\beaa
\Im(M_a^{\Ffr})&=& \nabc_3 \Im\left(4   H+ \ov{H}+  \Hb \right)-\frac  1 2   \trchb\, \Im \left(4   H+ \ov{H}+  \Hb \right)+\frac 1 2 \atrchb\, \dual \Im \left(4   H+ \ov{H}+  \Hb \right) \\
&=& \nab_3 \left(3\dual \eta + \dual \etab \right)-\frac  1 2   \trchb\,  \left(3\dual \eta + \dual \etab \right)+\frac 1 2 \atrchb\,  \left(-3 \eta - \etab \right) 
\eeaa
We therefore obtain
\beaa
\Im( Z^{\Bfr}_{a})&=&   - 2\nab \Im(C_1)+2\ze \Im(C_1)  -\atrchb \etab  +2\atrchb \eta  +4\trchb \dual \eta-3 \trchb  \dual \etab  \\
&&+2\atrchb (\eta-\etab)+\nab_3 \left( 5\dual \eta - 3 \dual \etab \right)-\frac  1 2   \trchb\, \left( 5\dual \eta - 3 \dual \etab \right)+\frac 1 2 \atrchb\,  \left( -5 \eta + 3  \etab  \right)  -2\etab \c \Im(C_1)\\
&=&   - 2\nab \Im(C_1)+2(\ze-\etab) \Im(C_1)    +\frac 3 2 \atrchb( \eta -\etab) +\frac 3 2 \trchb (\dual \eta-  \dual \etab ) +\nab_3 \left( 5\dual \eta - 3 \dual \etab \right)
\eeaa
and, also using \eqref{eq:W-a-Ffr}
\beaa
\Im( Z^{\Ffr}_{a}-W_a^{\Ffr})&=&  - 2\nab \Im(C_2)+2\ze \Im(C_2) +\atrchb \etab   + \atrchb \eta +\trchb \dual \eta+\trchb  \dual \etab \\
&& +\nab_3 \left(3\dual \eta + \dual \etab \right)-\frac  1 2   \trchb\,  \left(3\dual \eta + \dual \etab \right)+\frac 1 2 \atrchb\,  \left(-3 \eta - \etab \right)-2\etab \c \Im(C_2 )+  3   \nab_3 \dual \eta\\
&=&  - 2\nab \Im(C_2)+2(\ze-\etab) \Im(C_2) +\frac 1 2 \atrchb (\etab-\eta)+\frac 1 2 \trchb ( \dual \etab -\dual \eta) +\nab_3 \left(6\dual \eta + \dual \etab \right)
\eeaa

We now evaluate in the outgoing frame. Since $\ze=-\etab$, and by writing $\Im(C_1)=p_1\atrchb$,  and $\Im(C_1)=p_2\atrchb$ we have
\beaa
\Im( Z^{\Bfr}_{a})&=&   - 2p_1\nab\atrchb -4p_1 \atrchb   \etab  +\frac 3 2 \atrchb( \eta -\etab) +\frac 3 2 \trchb (\dual \eta-  \dual \etab ) +\nab_3 \left( 5\dual \eta - 3 \dual \etab \right)
\eeaa
and
\beaa
\Im( Z^{\Ffr}_{a}-W_a^{\Ffr})&=&  - 2p_2\nab \atrchb-4p_2\atrchb \etab+\frac 1 2 \atrchb (\etab-\eta)+\frac 1 2 \trchb ( \dual \etab -\dual \eta) +\nab_3 \left(6\dual \eta + \dual \etab \right)
\eeaa
We start by evaluating at $2$. Then since $\eta_2=-\etab_2$ and $\dual \eta_2=\dual\etab_2$, we have
\beaa
\Im( Z^{\Bfr}_{2})&=& -4p_1 \atrchb   \etab_2  - 3  \atrchb \etab_2+\nab_3 \left( 5\dual \eta - 3 \dual \etab \right)_2
\eeaa
Also using that $\nabc_3 \dual \eta_2=-2\atrchb \etab_2$ and $(\nabc_3 \dual \etab)_2 =-\atrchb\etab_2$, see \cite{GKS}, we have
\beaa
\Im( Z^{\Bfr}_{2})&=& -4p_1 \atrchb   \etab_2  - 3  \atrchb \etab_2-10\atrchb \etab_2+3\atrchb\etab_2\\
&=& -2(2p_1+5) \atrchb   \etab_2 
\eeaa
which indeed vanish for $p_1=-\frac 5 2$. We now also evaluate at $1$. We have, using that $\nab_1(\atrchb)= - 3   \atrchb  \etab_1- \trchb  \dual \etab_1$ and $\eta_1=\etab_1$ and $\dual \eta_1=-\dual\etab_1$, 
\beaa
\Im( Z^{\Bfr}_{1})&=&   - 2p_1(- 3   \atrchb  \etab_1- \trchb  \dual \etab_1) -4p_1 \atrchb   \etab_1 - 3  \trchb   \dual \etab_1  +\nab_3 \left( 5\dual \eta - 3 \dual \etab \right)_1\\
&=&   2p_1  \atrchb  \etab_1+(2p_1-3) \trchb  \dual \etab_1  +\nab_3 \left( 5\dual \eta - 3 \dual \etab \right)_1
\eeaa
Also using that $\nabc_3 \dual \eta_1=\trchb\dual \etab_1+ \atrchb \etab_1$ and $(\nabc_3 \dual \etab)_1= -\trchb\dual \etab_1$, we have
\beaa
\Im( Z^{\Bfr}_{1})&=&   2p_1  \atrchb  \etab_1+(2p_1-3) \trchb  \dual \etab_1 +5\trchb\dual \etab_1+ 5\atrchb \etab_1 +3\trchb\dual \etab_1\\
&=&  ( 2p_1 +5)( \atrchb  \etab_1+\trchb  \dual \etab_1)
\eeaa
which vanishes for $p_1=-\frac 5 2$. 

Similarly, we have
\beaa
\Im( Z^{\Ffr}_{2}-W_2^{\Ffr})&=& -4p_2\atrchb \etab_2+ \atrchb \etab_2 +\nab_3 \left(6\dual \eta + \dual \etab \right)_2\\
&=& -4p_2\atrchb \etab_2+ \atrchb \etab_2 -12\atrchb \etab_2-\atrchb\etab_2\\
&=& -(4p_2+12)\atrchb \etab_2
\eeaa
and
\beaa
\Im( Z^{\Ffr}_{1}-W_1^{\Ffr})&=&  - 2p_2\nab_1 \atrchb-4p_2\atrchb \etab_1+ \trchb  \dual \etab_1 +\nab_3 \left(6\dual \eta + \dual \etab \right)_1\\
&=& 2p_2   \atrchb  \etab_1+(2p_2+1)\trchb  \dual \etab_1 +\nab_3 \left(6\dual \eta + \dual \etab \right)_1\\
&=& 2p_2   \atrchb  \etab_1+(2p_2+1)\trchb  \dual \etab_1 +6\trchb\dual \etab_1+6 \atrchb \etab_1 -\trchb\dual \etab_1\\
&=& (2p_2 +6) ( \atrchb  \etab_1+\trchb  \dual \etab_1 )
\eeaa
which vanishes for $p_2=-3$. This completes the proof of the Lemma.

\normalsize

\end{document}